\newtheorem{theorem}{Theorem}[section]
\newtheorem{axiom}[theorem]{Axiom}
\newtheorem{conjecture}[theorem]{Conjecture}
\newtheorem{corollary}[theorem]{Corollary}
\newtheorem{definition}[theorem]{Definition}
\newtheorem{example}[theorem]{Example}
\newtheorem{exercise}[theorem]{Exercise}
\newtheorem{lemma}[theorem]{Lemma}
\newtheorem{proposition}[theorem]{Proposition}
\newtheorem{remark}[theorem]{Remark}
\newenvironment{proof}[1][Proof]{\noindent\textbf{#1.} }{\ \rule{0.5em}{0.5em}}
\renewcommand{\theequation}{\thesection.\arabic{equation}}
\let\pdfoutput=\undefined\fi
\chardef\@x10\chardef\@xv60
\def\tcitime{
\def\@time{%
  \@minute\time\@hour\@minute\divide\@hour\@xv
  \ifnum\@hour<\@x 0\fi\the\@hour:%
  \multiply\@hour\@xv\advance\@minute-\@hour
  \ifnum\@minute<\@x 0\fi\the\@minute
  }}%
\def\x@hyperref#1#2#3{%
   \catcode`\~ = 12
   \catcode`\$ = 12
   \catcode`\_ = 12
   \catcode`\# = 12
   \catcode`\& = 12
   \y@hyperref{#1}{#2}{#3}%
}
\def\y@hyperref#1#2#3#4{%
   #2\ref{#4}#3
   \catcode`\~ = 13
   \catcode`\$ = 3
   \catcode`\_ = 8
   \catcode`\# = 6
   \catcode`\& = 4
}
\def\QCTOpt[#1]#2{%
  \def\QCTOptB{#1}
  \def\QCTOptA{#2}
}
\def\QCTNOpt#1{%
  \def\QCTOptA{#1}
  \let\QCTOptB\empty
}
\def\Qct{%
  \@ifnextchar[{%
    \QCTOpt}{\QCTNOpt}
}
\def\QCBOpt[#1]#2{%
  \def\QCBOptB{#1}%
  \def\QCBOptA{#2}%
}
\def\QCBNOpt#1{%
  \def\QCBOptA{#1}%
  \let\QCBOptB\empty
}
\def\Qcb{%
  \@ifnextchar[{%
    \QCBOpt}{\QCBNOpt}%
}
\def\PrepCapArgs{%
  \ifx\QCBOptA\empty
    \ifx\QCTOptA\empty
      {}%
    \else
      \ifx\QCTOptB\empty
        {\QCTOptA}%
      \else
        [\QCTOptB]{\QCTOptA}%
      \fi
    \fi
  \else
    \ifx\QCBOptA\empty
      {}%
    \else
      \ifx\QCBOptB\empty
        {\QCBOptA}%
      \else
        [\QCBOptB]{\QCBOptA}%
      \fi
    \fi
  \fi
}
\def\GRAPHICSPS#1{%
 \ifcase\GRAPHICSTYPE
   \special{ps: #1}%
 \or
   \special{language "PS", include "#1"}%
 \fi
}%
\def\graffile#1#2#3#4{%
    \bgroup
	   \@inlabelfalse
       \leavevmode
       \@ifundefined{bbl@deactivate}{\def~{\string~}}{\activesoff}%
        \raise -#4 \BOXTHEFRAME{%
           \hbox to #2{\raise #3\hbox to #2{\null #1\hfil}}}%
    \egroup
}%
\def\draftbox#1#2#3#4{%
 \leavevmode\raise -#4 \hbox{%
  \frame{\rlap{\protect\tiny #1}\hbox to #2%
   {\vrule height#3 width\z@ depth\z@\hfil}%
  }%
 }%
}%
\let\nographics=\@msidraft
\newif\ifwasdraft
\def\GRAPHIC#1#2#3#4#5{%
   \ifnum\@msidraft=\@ne\draftbox{#2}{#3}{#4}{#5}%
   \else\graffile{#1}{#3}{#4}{#5}%
   \fi
}
\def\addtoLaTeXparams#1{%
    \edef\LaTeXparams{\LaTeXparams #1}}%
\newif\ifBoxFrame \BoxFramefalse
\newif\ifOverFrame \OverFramefalse
\newif\ifUnderFrame \UnderFramefalse
\def\BOXTHEFRAME#1{%
   \hbox{%
      \ifBoxFrame
         \frame{#1}%
      \else
         {#1}%
      \fi
   }%
}
\def\doFRAMEparams#1{\BoxFramefalse\OverFramefalse\UnderFramefalse\readFRAMEparams#1\end}%
\def\readFRAMEparams#1{%
 \ifx#1\end%
  \let\next=\relax
  \else
  \ifx#1i\dispkind=\z@\fi
  \ifx#1d\dispkind=\@ne\fi
  \ifx#1f\dispkind=\tw@\fi
  \ifx#1t\addtoLaTeXparams{t}\fi
  \ifx#1b\addtoLaTeXparams{b}\fi
  \ifx#1p\addtoLaTeXparams{p}\fi
  \ifx#1h\addtoLaTeXparams{h}\fi
  \ifx#1X\BoxFrametrue\fi
  \ifx#1O\OverFrametrue\fi
  \ifx#1U\UnderFrametrue\fi
  \ifx#1w
    \ifnum\@msidraft=1\wasdrafttrue\else\wasdraftfalse\fi
    \@msidraft=\@ne
  \fi
  \let\next=\readFRAMEparams
  \fi
 \next
 }%
\def\IFRAME#1#2#3#4#5#6{%
      \bgroup
      \let\QCTOptA\empty
      \let\QCTOptB\empty
      \let\QCBOptA\empty
      \let\QCBOptB\empty
      #6%
      \parindent=0pt
      \leftskip=0pt
      \rightskip=0pt
      \setbox0=\hbox{\QCBOptA}%
      \@tempdima=#1\relax
      \ifOverFrame
          \typeout{This is not implemented yet}%
          \show\HELP
      \else
         \ifdim\wd0>\@tempdima
            \advance\@tempdima by \@tempdima
            \ifdim\wd0 >\@tempdima
               \setbox1 =\vbox{%
                  \unskip\hbox to \@tempdima{\hfill\GRAPHIC{#5}{#4}{#1}{#2}{#3}\hfill}%
                  \unskip\hbox to \@tempdima{\parbox[b]{\@tempdima}{\QCBOptA}}%
               }%
               \wd1=\@tempdima
            \else
               \textwidth=\wd0
               \setbox1 =\vbox{%
                 \noindent\hbox to \wd0{\hfill\GRAPHIC{#5}{#4}{#1}{#2}{#3}\hfill}\\%
                 \noindent\hbox{\QCBOptA}%
               }%
               \wd1=\wd0
            \fi
         \else
            \ifdim\wd0>0pt
              \hsize=\@tempdima
              \setbox1=\vbox{%
                \unskip\GRAPHIC{#5}{#4}{#1}{#2}{0pt}%
                \break
                \unskip\hbox to \@tempdima{\hfill \QCBOptA\hfill}%
              }%
              \wd1=\@tempdima
           \else
              \hsize=\@tempdima
              \setbox1=\vbox{%
                \unskip\GRAPHIC{#5}{#4}{#1}{#2}{0pt}%
              }%
              \wd1=\@tempdima
           \fi
         \fi
         \@tempdimb=\ht1
         \advance\@tempdimb by -#2
         \advance\@tempdimb by #3
         \leavevmode
         \raise -\@tempdimb \hbox{\box1}%
      \fi
      \egroup%
}%
\def\DFRAME#1#2#3#4#5{%
  \vspace\topsep
  \hfil\break
  \bgroup
     \leftskip\@flushglue
	 \rightskip\@flushglue
	 \parindent\z@
	 \parfillskip\z@skip
     \let\QCTOptA\empty
     \let\QCTOptB\empty
     \let\QCBOptA\empty
     \let\QCBOptB\empty
	 \vbox\bgroup
        \ifOverFrame 
           #5\QCTOptA\par
        \fi
        \GRAPHIC{#4}{#3}{#1}{#2}{\z@}%
        \ifUnderFrame 
           \break#5\QCBOptA
        \fi
	 \egroup
  \egroup
  \vspace\topsep
  \break
}%
\def\FFRAME#1#2#3#4#5#6#7{%
  \@ifundefined{floatstyle}
    {
     \begin{figure}[#1]%
    }
    {
	 \ifx#1h
      \begin{figure}[H]%
	 \else
      \begin{figure}[#1]%
	 \fi
	}
  \let\QCTOptA\empty
  \let\QCTOptB\empty
  \let\QCBOptA\empty
  \let\QCBOptB\empty
  \ifOverFrame
    #4
    \ifx\QCTOptA\empty
    \else
      \ifx\QCTOptB\empty
        \caption{\QCTOptA}%
      \else
        \caption[\QCTOptB]{\QCTOptA}%
      \fi
    \fi
    \ifUnderFrame\else
      \label{#5}%
    \fi
  \else
    \UnderFrametrue%
  \fi
  \begin{center}\GRAPHIC{#7}{#6}{#2}{#3}{\z@}\end{center}%
  \ifUnderFrame
    #4
    \ifx\QCBOptA\empty
      \caption{}%
    \else
      \ifx\QCBOptB\empty
        \caption{\QCBOptA}%
      \else
        \caption[\QCBOptB]{\QCBOptA}%
      \fi
    \fi
    \label{#5}%
  \fi
  \end{figure}%
 }%
\def\makeactives{
  \catcode`\"=\active
  \catcode`\;=\active
  \catcode`\:=\active
  \catcode`\'=\active
  \catcode`\~=\active
}
   \gdef\activesoff{%
      \def"{\string"}%
      \def;{\string;}%
      \def:{\string:}%
      \def'{\string'}%
      \def~{\string~}%
    }
\def\FRAME#1#2#3#4#5#6#7#8{%
 \bgroup
 \ifnum\@msidraft=\@ne
   \wasdrafttrue
 \else
   \wasdraftfalse%
 \fi
 \def\LaTeXparams{}%
 \dispkind=\z@
 \def\LaTeXparams{}%
 \doFRAMEparams{#1}%
 \ifnum\dispkind=\z@\IFRAME{#2}{#3}{#4}{#7}{#8}{#5}\else
  \ifnum\dispkind=\@ne\DFRAME{#2}{#3}{#7}{#8}{#5}\else
   \ifnum\dispkind=\tw@
    \edef\@tempa{\noexpand\FFRAME{\LaTeXparams}}%
    \@tempa{#2}{#3}{#5}{#6}{#7}{#8}%
    \fi
   \fi
  \fi
  \ifwasdraft\@msidraft=1\else\@msidraft=0\fi{}%
  \egroup
 }%
\def\TEXUX#1{"texux"}
\def\NEG#1{\leavevmode\hbox{\rlap{\thinspace/}{$#1$}}}%
\def\func#1{\mathop{\rm #1}\nolimits}%
\long\def\QQQ#1#2{%
     \long\expandafter\def\csname#1\endcsname{#2}}%
\long\def\QQA#1#2{}%
\def\QTR#1#2{{\csname#1\endcsname {#2}}}%
\def\EXPAND#1[#2]#3{}%
\def\NOEXPAND#1[#2]#3{}%
\def\LaTeXparent#1{}%
\def\ChildStyles#1{}%
\def\ChildDefaults#1{}%
\def\QTagDef#1#2#3{}%
  \providecommand{\UNICODE}[2][]{\protect\rule{.1in}{.1in}}
  \providecommand{\U}[1]{\protect\rule{.1in}{.1in}}
\def\QQfnmark#1{\footnotemark}
 \def\abstract{%
  \if@twocolumn
   \section*{Abstract (Not appropriate in this style!)}%
   \else \small 
   \begin{center}{\bf Abstract\vspace{-.5em}\vspace{\z@}}\end{center}%
   \quotation 
   \fi
  }%
   \def\registered{\relax\ifmmode{}\r@gistered
                    \else$\m@th\r@gistered$\fi}%
 \def\r@gistered{^{\ooalign
  {\hfil\raise.07ex\hbox{$\scriptstyle\rm\text{R}$}\hfil\crcr
  \mathhexbox20D}}}}{}%
\newdimen\theight
\def\newfmtname{LaTeX2e}
  \DeclareOldFontCommand{\rm}{\normalfont\rmfamily}{\mathrm}
  \DeclareOldFontCommand{\sf}{\normalfont\sffamily}{\mathsf}
  \DeclareOldFontCommand{\tt}{\normalfont\ttfamily}{\mathtt}
  \DeclareOldFontCommand{\bf}{\normalfont\bfseries}{\mathbf}
  \DeclareOldFontCommand{\it}{\normalfont\itshape}{\mathit}
  \DeclareOldFontCommand{\sl}{\normalfont\slshape}{\@nomath\sl}
  \DeclareOldFontCommand{\sc}{\normalfont\scshape}{\@nomath\sc}
\def\alpha{{\Greekmath 010B}}%
\def\beta{{\Greekmath 010C}}%
\def\gamma{{\Greekmath 010D}}%
\def\delta{{\Greekmath 010E}}%
\def\epsilon{{\Greekmath 010F}}%
\def\zeta{{\Greekmath 0110}}%
\def\eta{{\Greekmath 0111}}%
\def\theta{{\Greekmath 0112}}%
\def\iota{{\Greekmath 0113}}%
\def\kappa{{\Greekmath 0114}}%
\def\lambda{{\Greekmath 0115}}%
\def\mu{{\Greekmath 0116}}%
\def\nu{{\Greekmath 0117}}%
\def\xi{{\Greekmath 0118}}%
\def\pi{{\Greekmath 0119}}%
\def\rho{{\Greekmath 011A}}%
\def\sigma{{\Greekmath 011B}}%
\def\tau{{\Greekmath 011C}}%
\def\upsilon{{\Greekmath 011D}}%
\def\phi{{\Greekmath 011E}}%
\def\chi{{\Greekmath 011F}}%
\def\psi{{\Greekmath 0120}}%
\def\omega{{\Greekmath 0121}}%
\def\varepsilon{{\Greekmath 0122}}%
\def\vartheta{{\Greekmath 0123}}%
\def\varpi{{\Greekmath 0124}}%
\def\varrho{{\Greekmath 0125}}%
\def\varsigma{{\Greekmath 0126}}%
\def\varphi{{\Greekmath 0127}}%
\def\nabla{{\Greekmath 0272}}
\def\FindBoldGroup{%
   {\setbox0=\hbox{$\mathbf{x\global\edef\theboldgroup{\the\mathgroup}}$}}%
}
\def\Greekmath#1#2#3#4{%
    \if@compatibility
        \ifnum\mathgroup=\symbold
           \mathchoice{\mbox{\boldmath$\displaystyle\mathchar"#1#2#3#4$}}%
                      {\mbox{\boldmath$\textstyle\mathchar"#1#2#3#4$}}%
                      {\mbox{\boldmath$\scriptstyle\mathchar"#1#2#3#4$}}%
                      {\mbox{\boldmath$\scriptscriptstyle\mathchar"#1#2#3#4$}}%
        \else
           \mathchar"#1#2#3#4%
        \fi 
    \else 
        \FindBoldGroup
        \ifnum\mathgroup=\theboldgroup 
           \mathchoice{\mbox{\boldmath$\displaystyle\mathchar"#1#2#3#4$}}%
                      {\mbox{\boldmath$\textstyle\mathchar"#1#2#3#4$}}%
                      {\mbox{\boldmath$\scriptstyle\mathchar"#1#2#3#4$}}%
                      {\mbox{\boldmath$\scriptscriptstyle\mathchar"#1#2#3#4$}}%
        \else
           \mathchar"#1#2#3#4%
        \fi     	    
	  \fi}
\newif\ifGreekBold  \GreekBoldfalse
\let\SAVEPBF=\pbf
\def\pbf{\GreekBoldtrue\SAVEPBF}%
  \newcounter{equationnumber}  
  \def\mathletters{%
     \addtocounter{equation}{1}
     \edef\@currentlabel{\theequation}%
     \setcounter{equationnumber}{\c@equation}
     \setcounter{equation}{0}%
     \edef\theequation{\@currentlabel\noexpand\alph{equation}}%
  }
    \def\BibTeX{{\rm B\kern-.05em{\sc i\kern-.025em b}\kern-.08em
                 T\kern-.1667em\lower.7ex\hbox{E}\kern-.125emX}}}{}%
\def\AmS{{\protect\usefont{OMS}{cmsy}{m}{n}%
                A\kern-.1667em\lower.5ex\hbox{M}\kern-.125emS}}}{}%
\def\@@eqncr{\let\@tempa\relax
    \ifcase\@eqcnt \def\@tempa{& & &}\or \def\@tempa{& &}%
      \else \def\@tempa{&}\fi
     \@tempa
     \if@eqnsw
        \iftag@
           \@taggnum
        \else
           \@eqnnum\stepcounter{equation}%
        \fi
     \fi
     \global\tag@false
     \global\@eqnswtrue
     \global\@eqcnt\z@\cr}
\def\TCItag{\@ifnextchar*{\@TCItagstar}{\@TCItag}}
\def\@TCItag#1{%
    \global\tag@true
    \global\def\@taggnum{(#1)}%
    \global\def\@currentlabel{#1}}
\def\@TCItagstar*#1{%
    \global\tag@true
    \global\def\@taggnum{#1}%
    \global\def\@currentlabel{#1}}
\def\ExitTCILatex{\makeatother }
\if@compatibility\message{amsmath already loaded}\fi\aftergroup\ExitTCILatex}
\if@compatibility\message{amstex already loaded}\fi\aftergroup\ExitTCILatex}
\if@compatibility\message{amsgen already loaded}\fi\aftergroup\ExitTCILatex}
\let\DOTSI\relax
\def\RIfM@{\relax\ifmmode}%
\def\FN@{\futurelet\next}%
\def\iint{\DOTSI\intno@\tw@\FN@\ints@}%
\def\iiint{\DOTSI\intno@\thr@@\FN@\ints@}%
\def\iiiint{\DOTSI\intno@4 \FN@\ints@}%
\def\idotsint{\DOTSI\intno@\z@\FN@\ints@}%
\def\ints@{\findlimits@\ints@@}%
\newif\iflimtoken@
\newif\iflimits@
\def\findlimits@{\limtoken@true\ifx\next\limits\limits@true
 \else\ifx\next\nolimits\limits@false\else
 \limtoken@false\ifx\ilimits@\nolimits\limits@false\else
 \ifinner\limits@false\else\limits@true\fi\fi\fi\fi}%
\def\multint@{\int\ifnum\intno@=\z@\intdots@                          
 \else\intkern@\fi                                                    
 \ifnum\intno@>\tw@\int\intkern@\fi                                   
 \ifnum\intno@>\thr@@\int\intkern@\fi                                 
 \int}
\def\multintlimits@{\intop\ifnum\intno@=\z@\intdots@\else\intkern@\fi
 \ifnum\intno@>\tw@\intop\intkern@\fi
 \ifnum\intno@>\thr@@\intop\intkern@\fi\intop}%
\def\intic@{%
    \mathchoice{\hskip.5em}{\hskip.4em}{\hskip.4em}{\hskip.4em}}%
\def\negintic@{\mathchoice
 {\hskip-.5em}{\hskip-.4em}{\hskip-.4em}{\hskip-.4em}}%
\def\ints@@{\iflimtoken@                                              
 \def\ints@@@{\iflimits@\negintic@
   \mathop{\intic@\multintlimits@}\limits                             
  \else\multint@\nolimits\fi                                          
  \eat@}
 \else                                                                
 \def\ints@@@{\iflimits@\negintic@
  \mathop{\intic@\multintlimits@}\limits\else
  \multint@\nolimits\fi}\fi\ints@@@}%
\def\intkern@{\mathchoice{\!\!\!}{\!\!}{\!\!}{\!\!}}%
\def\plaincdots@{\mathinner{\cdotp\cdotp\cdotp}}%
\def\intdots@{\mathchoice{\plaincdots@}%
 {{\cdotp}\mkern1.5mu{\cdotp}\mkern1.5mu{\cdotp}}%
 {{\cdotp}\mkern1mu{\cdotp}\mkern1mu{\cdotp}}%
 {{\cdotp}\mkern1mu{\cdotp}\mkern1mu{\cdotp}}}%
\def\RIfM@{\relax\protect\ifmmode}
\def\text{\RIfM@\expandafter\text@\else\expandafter\mbox\fi}
\let\nfss@text\text
\def\text@#1{\mathchoice
   {\textdef@\displaystyle\f@size{#1}}%
   {\textdef@\textstyle\tf@size{\firstchoice@false #1}}%
   {\textdef@\textstyle\sf@size{\firstchoice@false #1}}%
   {\textdef@\textstyle \ssf@size{\firstchoice@false #1}}%
   \glb@settings}
\def\textdef@#1#2#3{\hbox{{%
                    \everymath{#1}%
                    \let\f@size#2\selectfont
                    #3}}}
\newif\iffirstchoice@
\def\Let@{\relax\iffalse{\fi\let\\=\cr\iffalse}\fi}%
\def\vspace@{\def\vspace##1{\crcr\noalign{\vskip##1\relax}}}%
\def\multilimits@{\bgroup\vspace@\Let@
 \baselineskip\fontdimen10 \scriptfont\tw@
 \advance\baselineskip\fontdimen12 \scriptfont\tw@
 \lineskip\thr@@\fontdimen8 \scriptfont\thr@@
 \lineskiplimit\lineskip
 \vbox\bgroup\ialign\bgroup\hfil$\m@th\scriptstyle{##}$\hfil\crcr}%
\def\Sb{_\multilimits@}%
\def\endSb{\crcr\egroup\egroup\egroup}%
\def\Sp{^\multilimits@}%
\newdimen\ex@
\def\rightarrowfill@#1{$#1\m@th\mathord-\mkern-6mu\cleaders
 \hbox{$#1\mkern-2mu\mathord-\mkern-2mu$}\hfill
 \mkern-6mu\mathord\rightarrow$}%
\def\leftarrowfill@#1{$#1\m@th\mathord\leftarrow\mkern-6mu\cleaders
 \hbox{$#1\mkern-2mu\mathord-\mkern-2mu$}\hfill\mkern-6mu\mathord-$}%
\def\leftrightarrowfill@#1{$#1\m@th\mathord\leftarrow
\mkern-6mu\cleaders
 \hbox{$#1\mkern-2mu\mathord-\mkern-2mu$}\hfill
 \mkern-6mu\mathord\rightarrow$}%
\def\overrightarrow{\mathpalette\overrightarrow@}%
\def\overrightarrow@#1#2{\vbox{\ialign{##\crcr\rightarrowfill@#1\crcr
 \noalign{\kern-\ex@\nointerlineskip}$\m@th\hfil#1#2\hfil$\crcr}}}%
\def\overleftarrow{\mathpalette\overleftarrow@}%
\def\overleftarrow@#1#2{\vbox{\ialign{##\crcr\leftarrowfill@#1\crcr
 \noalign{\kern-\ex@\nointerlineskip}$\m@th\hfil#1#2\hfil$\crcr}}}%
\def\overleftrightarrow{\mathpalette\overleftrightarrow@}%
\def\overleftrightarrow@#1#2{\vbox{\ialign{##\crcr
   \leftrightarrowfill@#1\crcr
 \noalign{\kern-\ex@\nointerlineskip}$\m@th\hfil#1#2\hfil$\crcr}}}%
\def\underrightarrow{\mathpalette\underrightarrow@}%
\def\underrightarrow@#1#2{\vtop{\ialign{##\crcr$\m@th\hfil#1#2\hfil
  $\crcr\noalign{\nointerlineskip}\rightarrowfill@#1\crcr}}}%
\def\underleftarrow{\mathpalette\underleftarrow@}%
\def\underleftarrow@#1#2{\vtop{\ialign{##\crcr$\m@th\hfil#1#2\hfil
  $\crcr\noalign{\nointerlineskip}\leftarrowfill@#1\crcr}}}%
\def\underleftrightarrow{\mathpalette\underleftrightarrow@}%
\def\underleftrightarrow@#1#2{\vtop{\ialign{##\crcr$\m@th
  \hfil#1#2\hfil$\crcr
 \noalign{\nointerlineskip}\leftrightarrowfill@#1\crcr}}}%
\def\qopnamewl@#1{\mathop{\operator@font#1}\nlimits@}
\let\nlimits@\displaylimits
\def\setboxz@h{\setbox\z@\hbox}
\def\varlim@#1#2{\mathop{\vtop{\ialign{##\crcr
 \hfil$#1\m@th\operator@font lim$\hfil\crcr
 \noalign{\nointerlineskip}#2#1\crcr
 \noalign{\nointerlineskip\kern-\ex@}\crcr}}}}
 \def\rightarrowfill@#1{\m@th\setboxz@h{$#1-$}\ht\z@\z@
  $#1\copy\z@\mkern-6mu\cleaders
  \hbox{$#1\mkern-2mu\box\z@\mkern-2mu$}\hfill
  \mkern-6mu\mathord\rightarrow$}
\def\leftarrowfill@#1{\m@th\setboxz@h{$#1-$}\ht\z@\z@
  $#1\mathord\leftarrow\mkern-6mu\cleaders
  \hbox{$#1\mkern-2mu\copy\z@\mkern-2mu$}\hfill
  \mkern-6mu\box\z@$}
\def\projlim{\qopnamewl@{proj\,lim}}
\def\injlim{\qopnamewl@{inj\,lim}}
\def\varinjlim{\mathpalette\varlim@\rightarrowfill@}
\def\varprojlim{\mathpalette\varlim@\leftarrowfill@}
\def\varliminf{\mathpalette\varliminf@{}}
\def\varliminf@#1{\mathop{\underline{\vrule\@depth.2\ex@\@width\z@
   \hbox{$#1\m@th\operator@font lim$}}}}
\def\varlimsup{\mathpalette\varlimsup@{}}
\def\varlimsup@#1{\mathop{\overline
  {\hbox{$#1\m@th\operator@font lim$}}}}
\def\align{\@verbatim \frenchspacing\@vobeyspaces \@alignverbatim
You are using the "align" environment in a style in which it is not defined.}
\let\csname endalign*\endcsname =\endtrivlist
\def\alignat{\@verbatim \frenchspacing\@vobeyspaces \@alignatverbatim
You are using the "alignat" environment in a style in which it is not defined.}
\let\csname endalignat*\endcsname =\endtrivlist
\def\xalignat{\@verbatim \frenchspacing\@vobeyspaces \@xalignatverbatim
You are using the "xalignat" environment in a style in which it is not defined.}
\let\csname endxalignat*\endcsname =\endtrivlist
\def\gather{\@verbatim \frenchspacing\@vobeyspaces \@gatherverbatim
You are using the "gather" environment in a style in which it is not defined.}
\let\csname endgather*\endcsname =\endtrivlist
\def\multiline{\@verbatim \frenchspacing\@vobeyspaces \@multilineverbatim
You are using the "multiline" environment in a style in which it is not defined.}
\let\csname endmultiline*\endcsname =\endtrivlist
\def\arrax{\@verbatim \frenchspacing\@vobeyspaces \@arraxverbatim
You are using a type of "array" construct that is only allowed in AmS-LaTeX.}
\def\tabulax{\@verbatim \frenchspacing\@vobeyspaces \@tabulaxverbatim
You are using a type of "tabular" construct that is only allowed in AmS-LaTeX.}
\let\csname endarrax*\endcsname =\endtrivlist
\let\csname endtabulax*\endcsname =\endtrivlist
 \def\endequation{%
     \ifmmode\ifinner 
      \iftag@
        \addtocounter{equation}{-1} 
        $\hfil
           \displaywidth\linewidth\@taggnum\egroup \endtrivlist
        \global\tag@false
        \global\@ignoretrue   
      \else
        $\hfil
           \displaywidth\linewidth\@eqnnum\egroup \endtrivlist
        \global\tag@false
        \global\@ignoretrue 
      \fi
     \else   
      \iftag@
        \addtocounter{equation}{-1} 
        \eqno \hbox{\@taggnum}
        \global\tag@false%
        $$\global\@ignoretrue
      \else
        \eqno \hbox{\@eqnnum}
        $$\global\@ignoretrue
      \fi
     \fi\fi
 } 
 \newif\iftag@ \tag@false
 \def\TCItag{\@ifnextchar*{\@TCItagstar}{\@TCItag}}
 \def\@TCItag#1{%
     \global\tag@true
     \global\def\@taggnum{(#1)}%
     \global\def\@currentlabel{#1}}
 \def\@TCItagstar*#1{%
     \global\tag@true
     \global\def\@taggnum{#1}%
     \global\def\@currentlabel{#1}}
     \def\tag{\@ifnextchar*{\@tagstar}{\@tag}}
     \def\@tag#1{%
         \global\tag@true
         \global\def\@taggnum{(#1)}}
     \def\@tagstar*#1{%
         \global\tag@true
         \global\def\@taggnum{#1}}
\begin{document}

\title{$G_{2}$-structures and octonion bundles}
\author{Sergey Grigorian \\
School of Mathematical and Statistical Sciences\\
University of Texas Rio Grande Valley\\
1201 W. University Drive\\
Edinburg, TX 78539}
\maketitle

\begin{abstract}
We use a $G_{2}$-structure on a $7$-dimensional Riemannian manifold with a
fixed metric to define an octonion bundle with a fiberwise non-associative
product. We then define a metric-compatible octonionic covariant derivative
on this bundle that is compatible with the octonion product. The torsion of
the $G_{2}$-structure is then shown to be an octonionic connection for this
covariant derivative with curvature given by the component of the Riemann
curvature that lies in the $7$-dimensional representation of $G_{2}.$ We
also interpret the choice of a particular $G_{2}$-structure within the same
metric class as a choice of gauge and show that under a change of this
gauge, the torsion does transform as an octonion-valued connection $1$-form.
Finally, we also show an explicit relationship between the octonion bundle
and the spinor bundle, define an octonionic Dirac operator and explore an
energy functional for octonion sections. We then prove that critical points
correspond to divergence-free torsion, which is shown to be an octonionic
analog of the Coulomb gauge.
\end{abstract}

\tableofcontents

\section{Introduction}

Seven-dimensional manifolds with a $G_{2}$-structure have been of great
interest in differential geometry and theoretical physics ever since Alfred
Gray studied vector cross products on orientable manifolds in 1969 \cite%
{Gray-VCP}. It turns out that a $2$-fold vector cross product - that is, one
that takes two vectors and outputs another one, exists only in $3$
dimensions and in $7$ dimensions. The $3$-dimensional vector cross product
is very well known in $\mathbb{R}^{3}$ and on a general oriented $3$%
-manifold it comes from the volume $3$-form, so it is a special case of a $%
\left( n-1\right) $-fold vector cross product in a $n$-dimensional space,
where it also comes from the volume form. In $7$ dimensions, however, the
vector cross product structure is even more special, since it is not part of
an infinite sequence. The $3$-dimensional and $7$-dimensional vector cross
products do however have something in common since they are closely related
to the normed division algebras - the quaternions and octonions, which are $%
4 $ and $8$ dimensional, respectively. In fact, the $3$-dimensional vector
cross product can be obtained by restricting the quaternion product to the
purely imaginary quaternions and then taking the projection to the imaginary
part. The $7$-dimensional vector cross product is induced from the octonion
product in a similar way. Note that the only other normed division algebras
are $\mathbb{R}\ $and $\mathbb{C},$ so the only non-trivial vector cross
products obtained this way are in $3$ and $7$ dimensions. In $3$ dimensions,
the group that preserves the vector product is $SO\left( 3\right) ,$
therefore, on a $3$-dimensional manifold, given an oriented orthonormal
frame bundle, i.e. an $SO\left( 3\right) $-structure, we can always define a
vector cross product, and moreover in will be parallel with respect to the
Levi-Civita connection. On a $7$-manifold, the group that preserves the
vector cross product is now $G_{2}$ - this is the automorphism group of the
octonion algebra, which is in particular a $14$-dimensional exceptional Lie
group. Therefore, in order to be able to define a vector cross product
globally on a $7$-manifold, we need to introduce a $G_{2}$\emph{-structure},
which is now a reduction of the frame bundle to $G_{2}.$ There is now a
further topological obstruction for such a reduction - apart from the first
Stiefel-Whitney class $w_{1}$ vanishing (which gives orientability), we also
need the second Stiefel-Whitney class $w_{2}$ to vanish \cite{FernandezGray,
FriedrichNPG2}. This is the condition for the manifold to admit a spin
structure. Once we have a $7$-manifold with both $w_{1}$ and $w_{2}$
vanishing, any Riemannian metric will give rise to an $SO\left( 7\right) $%
-structure, and this could then be reduced to a $G_{2}$-structure. By
specifying a $G_{2}$-principal bundle, we are effectively also defining a $%
G_{2}$-invariant $3$-form $\varphi ,$ which gives rise to the structure
constants for the vector cross product. A good review of vector cross
product geometries can also be found in \cite{ConanChapter}.

In general, the $3$-form $\varphi $ will not be parallel with respect to the
Levi-Civita connection, and thus the $G_{2}$-structure will have torsion.
The different torsion classes have originally been classified by Fern\'{a}%
ndez and Gray \cite{FernandezGray}. Understanding the existence properties
of different torsion classes is of particular interest to theoretical
physics, because in a compactification of $11$-dimensional $M$-theory to an
observable $4$-dimensional space, it is necessary to use a $7$-dimensional
manifold which will necessarily admit a $G_{2}$-structure. The torsion of
this $G_{2}$-structure will then affect the physical properties of the
theory \cite{KasteMinasianFlux}. Of even greater interest, both in
mathematics and physics, are \emph{torsion-free }$G_{2}$-structures. A
torsion-free $G_{2}$-structure then corresponds to a Riemannian metric with
a reduced holonomy group. In particular, the holonomy group would have to be
a subgroup of $G_{2}.$ In even dimensions, thanks to Yau's Theorem \cite%
{CalabiYau}, we have necessary and sufficient conditions for the existence
of $SU\left( n\right) $ holonomy metrics - the Calabi-Yau metrics. For $%
G_{2} $ holonomy manifolds currently there is even no conjecture as to what
the conditions could be.

Due to the close relationship between $G_{2}$ and octonions, it is natural
to introduce an octonionic structure on a $7$-manifold with a $G_{2}$%
-structure. The aim of this paper is to develop the properties of an
octonion bundle on a $7$-manifold with $G_{2}$-structure. A number of
properties of $G_{2}$-structures are re-expressed in a very natural form
using the octonion formalism, and we believe that further progress in the
study of $G_{2}$-structures could be made using this approach.

In Section \ref{secg2struct} we give a brief introduction to $G_{2}$%
-structures and some of their basic properties. More detailed accounts of
properties of $G_{2}$-structures can be found in \cite%
{bryant-2003,GrigorianG2Review,GrigorianG2Torsion1,karigiannis-2005-57,karigiannis-2007}%
. In Section \ref{secOctobundle} we then introduce the octonion bundle -
which is a rank $8$ bundle with fibers $\mathbb{R}\oplus T_{p}M$. The scalar
part corresponds to the real part of an octonion and the vector part
corresponds to the imaginary part. The $G_{2}$-structure is then used to
define a fiberwise nonassociative normed division algebra. We then also give
some properties of the associator that will be used later. The subbundle of
unit octonion sections then has a fiberwise \emph{Moufang loop }structure on
it - this is a nonassociative analog of a group, but with associativity
replaced by weaker properties.

It is well-known that given a Riemannian metric $g$ on a $7$-manifold $M$
that admits $G_{2}$-structures, there is a family of compatible $G_{2}$%
-structures. Pointwise, such a family is parametrized by $SO\left( 7\right)
/G_{2}\cong \mathbb{R}P^{7}.$ In particular, given a $G_{2}$-structure $3$%
-form $\varphi ,$ any unit section $A$ of the octonion bundle can be used to
define a new $G_{2}$-structure $\sigma _{A}\left( \varphi \right) $ where $%
\sigma _{A}:\Omega ^{3}\left( M\right) \longrightarrow \Omega ^{3}\left(
M\right) $ is a map of $3$-forms that is quadratic in $A.$ The original
expression for $\sigma _{A}\left( \varphi \right) $ is due to Bryant \cite%
{bryant-2003}. All the $G_{2}$-structures that correspond to $g$ are then of
the form $\sigma _{A}\left( \varphi \right) $ for some unit octonion section 
$A$ and are called \emph{isometric }$G_{2}$-structures. In Section \ref%
{secIsomG2} we focus on the properties of isometric $G_{2}$-structures and
the map $\sigma _{A}$. In particular, we show that the action of $\sigma
_{A} $ on $\varphi $ corresponds to the action of the adjoint map $\func{Ad}%
_{A^{-\frac{1}{3}}}$ on the octonions (Theorem \ref{ThmSigmaAd}) and we use
that to show that $\sigma $ is compatible with octonion multiplication -
that is, $\sigma _{A}\left( \sigma _{B}\left( \varphi \right) \right)
=\sigma _{AB}\left( \varphi \right) $ where $AB$ is the octonion product of $%
A$ and $B$ with respect to $\varphi $ (Theorem \ref{ThmSigmaUV}). This gives
also a representation of the unit octonions on $3$-forms.

In Section \ref{secTorsion}, we give a brief overview of the $G_{2}$%
-structure torsion and in\ Section \ref{secOctoCovDiv}, we use the torsion $T
$ and octonion multiplication to introduce an octonionic covariant
derivative $D$ given 
\begin{equation}
D_{X}V=\nabla _{X}V-VT_{X}
\end{equation}%
where $T_{X}=X\lrcorner T$ is interpreted as an imaginary octonion section,
and $VT_{X}$ is the octonion product of $T_{X}$ and $V.$ This is then shown
to be partially compatible with octonion multiplication, that is given two
octonion sections $U\ $and $V,$ 
\begin{equation}
D_{X}\left( UV\right) =\left( \nabla _{X}U\right) V+U\left( D_{X}V\right) 
\end{equation}%
We then prove that it is moreover metric compatible. The $G_{2}$-structure
torsion then is interpreted as an $\func{Im}\mathbb{O}$-\emph{valued }$1$%
-form. In our case, the tangent bundle to the unit octonion Moufang loop is
precisely the space of imaginary octonions, so this is the exact analog of a
\textquotedblleft Lie algebra-valued $1$-form\textquotedblright\ that
represents a connection on a principal bundle. Note that while the idea of
constructing $G_{2}$-compatible connections using the torsion has been used
in the past, such as in \cite%
{AgricolaSrni,AgricolaSpinors,AgricolaFriedrich1}, the idea to interpret the 
$G_{2}$ torsion as a connection $1$-form on a nonassociative bundle is new
and is the key point in this paper. The curvature of this connection has a
standard part that comes from Levi-Civita connection and a part that comes
from the octonion structure. We prove that the octonion part of the
curvature of this octonionic connection is $\frac{1}{4}\pi _{7}\func{Riem}$,
which is the component of the Riemann curvature that lies in the $7$%
-dimensional representation of $G_{2}.$ It is well known \cite%
{BonanG2,karigiannis-2007} that the vanishing of $\pi _{7}\func{Riem}$ is a
necessary condition for a torsion-free $G_{2}$-structure, and now we have a
new interpretation of this quantity as an $\func{Im}\mathbb{O}$-valued $2$%
-form that is the octonionic exterior covariant derivative of the torsion $1$%
-form.

The octonion covariant derivative $D$ is defined with respect to a fixed $%
G_{2}$-structure $\varphi .$ However, we know that we have a choice of
isometric $G_{2}$-structures given by $\sigma _{V}\left( \varphi \right) $
for any unit octonion section $V.$ In Section \ref{secDeform} we consider
how $D$ is affected by a change of the $G_{2}$-structure within the metric
class. To do this, we first prove in Theorem \ref{ThmTorsV} how the torsion $%
1$-form $T^{\left( V\right) }$ for the $G_{2}$-structure $\sigma _{V}\left(
\varphi \right) $ is related to the original torsion $T.$ In turns out that 
\begin{equation}
T^{\left( V\right) }=VTV^{-1}+V\left( \nabla V^{-1}\right) =-\left(
DV\right) V^{-1}.  \label{torsdeform}
\end{equation}%
This relationship further reinforces the idea that the correct way of
thinking of $G_{2}$ torsion is to regard it as an octonionic connection $1$%
-form, since the expression (\ref{torsdeform}) is very similar to the
transformation of a principal bundle connection $1$-form under a change of
trivialization, i.e. a change of gauge. Moreover, this shows that the choice
of the particular $G_{2}$-structure $3$-form within the metric class
corresponds to picking a gauge. Finally, using (\ref{torsdeform}), we
conclude that $D$ is indeed \emph{covariant} with respect to a change of $%
G_{2}$-structure within the same metric class. In particular, if $\tilde{D}$
is the covariant derivative with respect to $\sigma _{V}\left( \varphi
\right) ,$ then 
\begin{equation}
\tilde{D}\left( AV^{-1}\right) =\left( DA\right) V^{-1}.
\end{equation}%
This is then used to note that the metric class contains a torsion-free $%
G_{2}$-structure if and only if there exists a nowhere-vanishing octonion
section that is parallel with respect to $D.$ This condition is independent
of the initial choice of the $G_{2}$-structure.

In turns out that much of the structure of the octonion bundle mirrors that
of the spinor bundle on a $7$-manifold. In Section \ref{secSpinor} we make
this relationship precise. It is well known that a $G_{2}$-structure may be
defined by a unit spinor on the manifold. Under the correspondence between
the spinor bundle and the octonion bundle, the fixed spinor is then mapped
to $1.$ A change of the unit spinor then corresponds to a transformation $%
\sigma _{V}$ of the $G_{2}$-structure for some appropriate $V.$ As it is
well known, the enveloping algebra of the octonions, i.e. the algebra of
left multiplication maps by an octonion under composition, is isomorphic to
the Clifford algebra on spinors \cite{HarveyBook}. However, the enveloping
algebra is by definition associative, so the correspondence of the octonion
bundle with the spinor bundle only captures part of the structure of the
octonion bundle. The full non-associative structure of the octonion bundle
cannot be seen in the spinor bundle, therefore it is expected that the
octonion carries more information than the spinor bundle, although the
difference is subtle. In particular, while there is no natural binary
operation on the spinor bundle, we can multiply octonions. In fact, Clifford
multiplication of a vector and a spinor translates to multiplication of two
octonions - therefore we are implicitly using the \emph{triality}
correspondence between vector and spinor representations.

Using the correspondence between spinors and octonions, in\ Section \ref%
{secDirac} we define an octonionic Dirac operator. In the torsion-free case,
an octonionic Dirac operator had been defined by Karigiannis in \cite%
{karigiannis-2006notes}, however this is the generalization for arbitrary $%
G_{2}$-structure torsion. Using octonion algebra we then give a direct proof
of the octonionic Lichnerowicz-Weitzenb\"{o}ck formula. It also follows that
the Dirac operator of an octonion section $V$ determines the $1$-dimensional
and $7$-dimensional components of the torsion of the corresponding $G_{2}$%
-structure $\sigma _{V}\left( \varphi \right) $ (this has also been recently
shown directly for spinors in \cite{AgricolaSpinors}). It is then a simple
observation to note that on compact manifold, a $G_{2}$-structure that
corresponds to a metric with vanishing total scalar curvature is
torsion-free if and only if the $1$-dimensional and $7$-dimensional
components of the torsion vanish.

The interpretation of the $G_{2}$-structure torsion as a connection $1$-form
and the choice of $G_{2}$-structure within a fixed metric class as a choice
of gauge suggests that there needs to be a way to select the
\textquotedblleft best\textquotedblright\ gauge. A natural way of doing this
is to consider critical points of a functional. On a compact manifold, a
reasonable functional to consider is the $L_{2}$-norm of the torsion tensor,
now regarded as a functional $\mathcal{E}\left( V\right) $ on the space of
unit octonion sections, so that the metric remains fixed:%
\begin{equation}
\mathcal{E}\left( V\right) =\frac{1}{2}\int_{M}\left\vert T^{\left( V\right)
}\right\vert ^{2}\func{vol}=\frac{1}{2}\int_{M}\left\vert DV\right\vert ^{2}%
\func{vol}.  \label{introEfunc}
\end{equation}%
We consider the basic properties of this functional in Section \ref%
{secEnergy}. Such a functional, but without the constraint on the metric,
has been been considered from different points of view by Weiss and Witt 
\cite{WeissWitt2,WeissWitt1} - as a functional on $3$-forms, and by Ammann,
Weiss and Witt \cite{AmmannWeissWitt1} - as a functional on spinors. In our
case, this reduces to an energy functional on unit octonion sections. The
equation for a critical point is then shown to be%
\begin{equation}
D^{\ast }DV-\left\vert DV\right\vert ^{2}V=0.
\end{equation}%
This is very similar to the equation for \emph{harmonic unit vector fields} 
\cite{GilMedranoVF1,WiegminkBending,WoodUnitVF}, which is one of the
equations that a vector field must satisfy in order to be a \emph{harmonic
map }from a manifold to the unit tangent bundle. In terms of $G_{2}$%
-structures, critical points of (\ref{introEfunc}) then are shown to
correspond to divergence-free torsion, i.e. a \emph{Coulomb gauge}. This is
not surprising, since in gauge theory, the Coulomb gauge corresponds
precisely to critical points of the $L_{2}$-norm of the connection \cite%
{DonaldsonGauge,UhlenbeckConnection}. This further reinforces the point of
view that $G_{2}$-structure torsion is a connection $1$-form for a
non-associative gauge theory.

\begin{description}
\item[Conventions] In this paper we will be using the following convention
for Ricci and Riemann curvature:%
\begin{equation}
\func{Ric}_{jl}=g^{ij}\func{Riem}_{ijkl}  \label{riemconvention}
\end{equation}%
Also, the convention for the orientation of a $G_{2}$-structure will same as
the one adopted by Bryant \cite{bryant-2003} and follows the author's
previous papers. In particular, this causes $\psi =\ast \varphi $ to have an
opposite sign compared to the works of Karigiannis, so many identities and
definitions cited from \cite%
{karigiannis-2005-57,karigiannis-2006notes,karigiannis-2007} may have
differing signs.

\item[Acknowledgments] The author would like to thank the anonymous referees
for the very detailed comments that really helped to improve this paper.
\end{description}

\section{$G_{2}$-structures}

\setcounter{equation}{0}\label{secg2struct}The 14-dimensional group $G_{2}$
is the smallest of the five exceptional Lie groups and is closely related to
the octonions, which is the noncommutative, nonassociative, $8$-dimensional
normed division algebra. In particular, $G_{2}$ can be defined as the
automorphism group of the octonion algebra. Given the octonion algebra $%
\mathbb{O}$, there exists a unique orthogonal decomposition into a real
part, that is isomorphic to $\mathbb{R},$ and an \emph{imaginary }(or \emph{%
pure}) part, that is isomorphic to $\mathbb{R}^{7}$:%
\begin{equation}
\mathbb{O}\cong \mathbb{R}\oplus \mathbb{R}^{7}
\end{equation}%
Correspondingly, given an octonion $a\in \mathbb{O}$, we can uniquely write 
\begin{equation*}
a=\func{Re}a+\func{Im}a
\end{equation*}%
where $\func{Re}a\in \mathbb{R}$, and $\func{Im}a\in \mathbb{R}^{7}$. We can
now use octonion multiplication to define a vector cross product $\times $
on $\mathbb{R}^{7}$. Given $u,v\in \mathbb{R}^{7}$, we regard them as
octonions in $\func{Im}\mathbb{O}$, multiply them together using octonion
multiplication, and then project the result to $\func{Im}\mathbb{O}$ to
obtain a new vector in $\mathbb{R}^{7}$:%
\begin{equation}
u\times v=\func{Im}\left( uv\right) .  \label{octovp}
\end{equation}
The subgroup of $GL\left( 7,\mathbb{R}\right) $ that preserves this vector
cross product is then precisely the group $G_{2}$. A detailed account of the
properties of the octonions and their relationship to exceptional Lie groups
is given by John Baez in \cite{BaezOcto}. The structure constants of the
vector cross product define a $3$-form on $\mathbb{R}^{7}$, hence $G_{2}$ is
alternatively defined as the subgroup of $GL\left( 7,\mathbb{R}\right) $
that preserves a particular $3$-form $\varphi _{0}$ \cite{Joycebook}.

\begin{definition}
Let $\left( e^{1},e^{2},...,e^{7}\right) $ be the standard basis for $\left( 
\mathbb{R}^{7}\right) ^{\ast }$, and denote $e^{i}\wedge e^{j}\wedge e^{k}$
by $e^{ijk} $. Then define $\varphi _{0}$ to be the $3$-form on $\mathbb{R}%
^{7}$ given by 
\begin{equation}
\varphi _{0}=e^{123}+e^{145}+e^{167}+e^{246}-e^{257}-e^{347}-e^{356}.
\label{phi0def}
\end{equation}%
Then $G_{2}$ is defined as the subgroup of $GL\left( 7,\mathbb{R}\right) $
that preserves $\varphi _{0}$.
\end{definition}

In general, given a $n$-dimensional manifold $M$, a $G$-structure on $M$ for
some Lie subgroup $G$ of $GL\left( n,\mathbb{R}\right) $ is a reduction of
the frame bundle $F$ over $M$ to a principal subbundle $P$ with fibre $G$. A 
$G_{2}$-structure is then a reduction of the frame bundle on a $7$%
-dimensional manifold $M$ to a $G_{2}$-principal subbundle. The obstructions
for the existence of a $G_{2}$-structure are purely topological.

\begin{theorem}[\protect\cite{FernandezGray, FriedrichNPG2, Gray-VCP}]
\label{ThmG2exist}Let $M$ be a $7$-dimensional smooth manifold. Then, $M$
admits a $G_{2}$-structure if and only if the Stiefel-Whitney classes $w_{1}$
and $w_{2}$ both vanish.
\end{theorem}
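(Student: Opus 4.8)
The plan is to prove both directions by using the classical correspondence between a $G_{2}$-structure and a spin structure equipped with a nowhere-vanishing spinor field, which is the natural bridge because $\mathrm{Spin}(7)$ acts transitively on the unit sphere of its real spinor representation with stabilizer $G_{2}$.

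For the forward direction, suppose $M$ carries a $G_{2}$-structure. Since $G_{2}\subset \mathrm{SO}(7)\subset \mathrm{GL}(7,\mathbb{R})$, the frame bundle reduces to $\mathrm{SO}(7)$, so $M$ is orientable and $w_{1}=0$. Because $G_{2}$ is connected and simply connected, the inclusion $G_{2}\hookrightarrow \mathrm{SO}(7)$ lifts canonically through the double cover $\mathrm{Spin}(7)\to \mathrm{SO}(7)$ to an embedding $G_{2}\hookrightarrow \mathrm{Spin}(7)$ (equivalently, $-1\notin G_{2}$, since $-1$ acts on the spinor representation by $-\mathrm{id}$ and so fixes no unit spinor, whereas every element of $G_{2}$ does; hence the covering restricted to $G_{2}$ is an isomorphism onto its image). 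Extending the structure group of the $G_{2}$-reduction along $G_{2}\hookrightarrow \mathrm{Spin}(7)$ produces a $\mathrm{Spin}(7)$-principal bundle double-covering the orthonormal frame bundle, i.e. a spin structure, so $w_{2}=0$ as well.

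For the converse, assume $w_{1}=w_{2}=0$. Orientability together with a choice of Riemannian metric gives an $\mathrm{SO}(7)$-structure $P_{\mathrm{SO}}$, and vanishing of $w_{2}$ lets us fix a spin structure $P_{\mathrm{Spin}}\to P_{\mathrm{SO}}$. Form the real spinor bundle $\mathbb{S}=P_{\mathrm{Spin}}\times _{\mathrm{Spin}(7)}\Delta$, where $\Delta \cong \mathbb{R}^{8}$ is the real $8$-dimensional spinor representation of $\mathrm{Spin}(7)$; this is a rank-$8$ real vector bundle over the $7$-manifold $M$. Its unit sphere bundle has fiber $S^{7}$, which is $6$-connected, so by obstruction theory a section over the $7$-skeleton of $M$ — hence over all of $M$ — exists: the successive obstructions lie in $H^{k}(M;\pi _{k-1}(S^{7}))$, and $\pi _{k-1}(S^{7})=0$ for every $k\le 7$. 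Thus $\mathbb{S}$ admits a nowhere-vanishing section, which after normalization is a unit spinor field $s$.

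Finally, since $\mathrm{Spin}(7)$ acts transitively on the unit sphere $S^{7}\subset \Delta$ with stabilizer $G_{2}$, we have $\mathrm{Spin}(7)/G_{2}\cong S^{7}$, and the unit spinor field $s$ is exactly a section of the associated bundle $P_{\mathrm{Spin}}\times _{\mathrm{Spin}(7)}(\mathrm{Spin}(7)/G_{2})$. By the standard correspondence between reductions of structure group and sections of associated homogeneous-space bundles, such a section is the same datum as a reduction of $P_{\mathrm{Spin}}$, and hence of the frame bundle, to $G_{2}$; this is the desired $G_{2}$-structure. The only substantive ingredients beyond bookkeeping are the identification $\mathrm{Spin}(7)/G_{2}\cong S^{7}$ (transitivity on unit spinors with $G_{2}$ stabilizer) and the obstruction-theoretic step, and the main obstacle is simply making sure the latter is invoked with the correct — here trivial — local coefficient systems and that the equivalence ``nowhere-vanishing spinor $\Leftrightarrow$ $G_{2}$-reduction'' is applied correctly. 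One could instead try to reduce $P_{\mathrm{SO}}$ directly to $G_{2}$ via $\mathrm{SO}(7)/G_{2}\cong \mathbb{R}P^{7}$, but then one must separately identify the primary obstruction in $H^{2}(M;\mathbb{Z}_{2})$ with $w_{2}$, which the spin-structure route makes automatic.
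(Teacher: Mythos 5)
Your proof is correct, but note that the paper does not prove this statement at all: Theorem \ref{ThmG2exist} is quoted from \cite{FernandezGray, FriedrichNPG2, Gray-VCP} as a known classical fact, so there is no argument in the text to compare against. What you have written is the standard spin-structure proof, and all the essential steps are in order: $G_{2}\subset SO(7)$ gives $w_{1}=0$; simple connectivity of $G_{2}$ forces the preimage of $G_{2}$ under $\mathrm{Spin}(7)\to SO(7)$ to be a disconnected double cover, so the $G_{2}$-reduction extends to a spin structure and $w_{2}=0$; conversely the rank-$8$ real spinor bundle has $6$-connected sphere fibers, so all obstructions in $H^{k}(M;\pi_{k-1}(S^{7}))$, $k\le 7$, vanish (regardless of any twisting of coefficients) and a unit spinor exists, which by $\mathrm{Spin}(7)/G_{2}\cong S^{7}$ is exactly a $G_{2}$-reduction. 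Two small remarks. First, your parenthetical justification that $-1\notin G_{2}$ because every element of $G_{2}$ fixes a unit spinor is mildly circular if, as in this paper, $G_{2}$ is \emph{defined} as the stabilizer of $\varphi_{0}$ in $GL(7,\mathbb{R})$ rather than as a spinor stabilizer; the simple-connectivity argument you give first is the clean one, and the identification of the stabilizer of a unit spinor in $\mathrm{Spin}(7)$ with ($ $the lift of$ $) $G_{2}$ is the one genuinely nontrivial classical input you are importing, so it deserves an explicit citation. Second, in the converse you should say one more sentence to pass from a reduction of $P_{\mathrm{Spin}}$ to a reduction of the frame bundle: the projection $\mathrm{Spin}(7)\to SO(7)$ restricted to $G_{2}$ is injective (by the first remark), so the $G_{2}$-subbundle of $P_{\mathrm{Spin}}$ maps isomorphically onto a $G_{2}$-subbundle of $P_{SO}\subset F$. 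With those points made explicit the argument is complete.
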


Thus, given a $7$-dimensional smooth manifold that is both orientable ($%
w_{1}=0$) and admits a spin structure ($w_{2}=0$), there always exists a $%
G_{2}$-structure on it.

It turns out that there is a $1$-$1$ correspondence between $G_{2}$%
-structures on a $7$-manifold and smooth $3$-forms $\varphi $ for which the $%
7$-form-valued bilinear form $B_{\varphi }$ as defined by (\ref{Bphi}) is
positive definite (for more details, see \cite{Bryant-1987} and the arXiv
version of \cite{Hitchin:2000jd}). 
\begin{equation}
B_{\varphi }\left( u,v\right) =\frac{1}{6}\left( u\lrcorner \varphi \right)
\wedge \left( v\lrcorner \varphi \right) \wedge \varphi  \label{Bphi}
\end{equation}%
Here the symbol $\lrcorner $ denotes contraction of a vector with the
differential form: 
\begin{equation}
\left( u\lrcorner \varphi \right) _{mn}=u^{a}\varphi _{amn}.
\label{contractdef}
\end{equation}%
Note that we will also use this symbol for contractions of differential
forms using the metric.

A smooth $3$-form $\varphi $ is said to be \emph{positive }if $B_{\varphi }$
is the tensor product of a positive-definite bilinear form and a
nowhere-vanishing $7$-form. In this case, it defines a unique Riemannian
metric $g_{\varphi }$ and volume form $\mathrm{vol}_{\varphi }$ such that
for vectors $u$ and $v$, the following holds 
\begin{equation}
g_{\varphi }\left( u,v\right) \mathrm{vol}_{\varphi }=\frac{1}{6}\left(
u\lrcorner \varphi \right) \wedge \left( v\lrcorner \varphi \right) \wedge
\varphi  \label{gphi}
\end{equation}%
An equivalent way of defining a positive $3$-form $\varphi $, is to say that
at every point, $\varphi $ is in the $GL\left( 7,\mathbb{R}\right) $-orbit
of $\varphi _{0}$. It can be easily checked that the metric (\ref{gphi}) for 
$\varphi =\varphi _{0}$ is in fact precisely the standard Euclidean metric $%
g_{0}$ on $\mathbb{R}^{7}$. Therefore, every $\varphi $ that is in the $%
GL\left( 7,\mathbb{R}\right) $-orbit of $\varphi _{0}$ has an \emph{%
associated} Riemannian metric $g$, that is in the $GL\left( 7,\mathbb{R}%
\right) $-orbit of $g_{0}.$ The only difference is that the stabilizer of $%
g_{0}$ (along with orientation) in this orbit is the group $SO\left(
7\right) $, whereas the stabilizer of $\varphi _{0}$ is $G_{2}\subset
SO\left( 7\right) $. This shows that positive $3$-forms forms that
correspond to the same metric, i.e., are \emph{isometric}, are parametrized
by $SO\left( 7\right) /G_{2}\cong \mathbb{RP}^{7}\cong S^{7}/\mathbb{Z}_{2}$%
. Therefore, on a Riemannian manifold, metric-compatible $G_{2}$-structures
are parametrized by sections of an $\mathbb{RP}^{7}$-bundle, or
alternatively, by sections of an $S^{7}$-bundle, with antipodal points
identified. In fact, the precise parametrization of isometric $G_{2}$%
-structures is well-known.

\begin{theorem}[\protect\cite{bryant-2003}]
\label{ThmSamegfam}Let $M$ be a $7$-dimensional smooth manifold. Suppose $%
\varphi $ is a positive $3$-form on $M$ with associated Riemannian metric $g$%
. Then, any positive $3$-form $\tilde{\varphi}$ for which $g$ is also the
associated metric, is given by the following expression:%
\begin{equation}
\tilde{\varphi}=\left( a^{2}-\left\vert \alpha \right\vert ^{2}\right)
\varphi -2a\alpha \lrcorner \left( \ast \varphi \right) +2\alpha \wedge
\left( \alpha \lrcorner \varphi \right)  \label{phisameg1}
\end{equation}%
where $a$ is a scalar function on $M$ and $\alpha $ is a vector field such
that 
\begin{equation}
a^{2}+\left\vert \alpha \right\vert ^{2}=1  \label{asqvsq}
\end{equation}
\end{theorem}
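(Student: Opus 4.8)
The plan is to reduce to pointwise linear algebra and then read the stated formula as an explicit parametrization of the homogeneous space of $g$-compatible positive $3$-forms. First I would fix a point $p\in M$ and choose a linear isomorphism $P\colon \mathbb{R}^{7}\to T_{p}M$ with $P^{\ast}\varphi=\varphi_{0}$; since the bilinear form $B_{\varphi}$ in (\ref{Bphi}), the metric $g_{\varphi}$ in (\ref{gphi}), and the wedge and contraction operations are all natural under linear maps, $P$ is automatically an isometry $(\mathbb{R}^{7},g_{0})\to(T_{p}M,g)$, so the Hodge star transfers as well. It therefore suffices to prove the linear-algebra statement on $\mathbb{R}^{7}$: every positive $3$-form whose associated metric is $g_{0}$ and which induces the standard orientation equals
\begin{equation*}
\Phi(a,\alpha):=\left( a^{2}-\left\vert \alpha\right\vert ^{2}\right) \varphi_{0}-2a\,\alpha\lrcorner\left( \ast\varphi_{0}\right) +2\,\alpha\wedge\left( \alpha\lrcorner\varphi_{0}\right)
\end{equation*}
for some $(a,\alpha)\in\mathbb{R}\oplus\mathbb{R}^{7}$ with $a^{2}+\left\vert \alpha\right\vert ^{2}=1$; smoothness of the resulting scalar function $a$ and vector field $\alpha$ on $M$ then follows from smoothness of $P$ and of the local inverse of $\Phi$ constructed below.

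The target is $\mathcal{O}$, the $SO(7)$-orbit of $\varphi_{0}$ in $\Lambda^{3}\left( \mathbb{R}^{7}\right) ^{\ast}$, which is precisely the set of positive $3$-forms inducing the metric $g_{0}$ together with the standard orientation; it is a connected homogeneous space $SO(7)/G_{2}$, well known to be diffeomorphic to $\mathbb{RP}^{7}$ (equivalently $Spin(7)/G_{2}\cong S^{7}$), so $\dim\mathcal{O}=7$. Since $\Phi$ is quadratic, $\Phi(-a,-\alpha)=\Phi(a,\alpha)$, so it descends to a smooth map $\overline{\Phi}\colon\mathbb{RP}^{7}\to\Lambda^{3}\left( \mathbb{R}^{7}\right) ^{\ast}$. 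The first claim to verify is that $\overline{\Phi}$ maps into $\mathcal{O}$. Here I would use $G_{2}$-equivariance: because $\varphi_{0}$ and $\ast\varphi_{0}$ are $G_{2}$-invariant and $\wedge,\lrcorner$ are natural, $\Phi(a,h\alpha)=h\cdot\Phi(a,\alpha)$ for $h\in G_{2}$ (natural action on forms), and since $G_{2}$ acts transitively on the unit sphere in $\mathbb{R}^{7}$ it is enough to evaluate $\Phi$ along the circle $\left( a,\alpha\right) =\left( \cos\theta,\sin\theta\,e_{1}\right) $. For that family I would compute directly from (\ref{phi0def}) and the known expression for $\ast\varphi_{0}$, and exhibit a rotation $R_{\theta}\in SO(7)$ — a product of rotations in the coordinate $2$-planes that $e_{1}$ is paired with under the cross product — with $\Phi\left( \cos\theta,\sin\theta\,e_{1}\right) =R_{\theta}^{\ast}\varphi_{0}$. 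This simultaneously puts $\Phi\left( \cos\theta,\sin\theta\,e_{1}\right) $ in $\mathcal{O}$ and shows it equals $\varphi_{0}$ only when $\sin\theta=0$.

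For the converse — that every $\tilde{\varphi}\in\mathcal{O}$ is hit by $\Phi$ — I would use a degree argument. Differentiating $\Phi$ at $\left( 1,0\right) $ along $\alpha=t\beta$, $a=\sqrt{1-t^{2}\left\vert \beta\right\vert ^{2}}$, only the middle term survives and one gets $\tfrac{d}{dt}\Phi\big|_{t=0}=-2\,\beta\lrcorner\left( \ast\varphi_{0}\right) $; since $\beta\mapsto\beta\lrcorner\left( \ast\varphi_{0}\right) $ is an injection of $\mathbb{R}^{7}$ onto the $7$-dimensional space $\Lambda_{7}^{3}=T_{\varphi_{0}}\mathcal{O}$, the differential $d\overline{\Phi}_{\left[ 1:0\right] }$ is an isomorphism, so $\overline{\Phi}$ is a local diffeomorphism near $\left[ 1:0\right] $ and $\varphi_{0}$ is a regular value. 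The circle computation of the previous paragraph shows $\overline{\Phi}^{-1}\left( \varphi_{0}\right) =\left\{ \left[ 1:0\right] \right\} $ (if $\alpha\neq0$ then $\Phi(a,\alpha)$ is an $SO(7)$-translate of $\varphi_{0}$ by an element outside $G_{2}$, hence $\neq\varphi_{0}$). Therefore the mod-$2$ degree of $\overline{\Phi}\colon\mathbb{RP}^{7}\to\mathcal{O}$ equals $1$; a map of nonzero degree between closed manifolds of equal dimension onto a connected target is surjective, which yields the theorem. Transporting back by $P$ gives the assertion on $M$, with $a$ and $\alpha$ smooth. (Running the $d\overline{\Phi}$ computation at a general point of the circle as well would upgrade $\overline{\Phi}$ to a diffeomorphism, recovering the $\mathbb{RP}^{7}$-worth of isometric $G_{2}$-structures exactly, but this is not needed for the statement as phrased.)

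The main obstacle is the explicit identity $\Phi\left( \cos\theta,\sin\theta\,e_{1}\right) =R_{\theta}^{\ast}\varphi_{0}$: it requires the standard contraction identities among $\varphi_{0}$, $\ast\varphi_{0}$ and $g_{0}$ together with careful bookkeeping of the quadratic term $2\,\alpha\wedge\left( \alpha\lrcorner\varphi_{0}\right) $, whose interplay with $a^{2}-\left\vert \alpha\right\vert ^{2}$ and the constraint $a^{2}+\left\vert \alpha\right\vert ^{2}=1$ is precisely what forces the associated metric to remain $g_{0}$. A slicker route, once the octonion formalism of the next section is available, is to write $A=a+\alpha$ with $\left\vert A\right\vert =1$ and recognize $\Phi(a,\alpha)$ as the pullback of $\varphi_{0}$ by the $SO(7)$-element given by the adjoint action of $A$ on $\func{Im}\mathbb{O}$ — this is the content of Theorem \ref{ThmSigmaAd} — after which membership in $\mathcal{O}$ and the group-like behaviour become transparent; but at this stage the direct computation following \cite{bryant-2003} is the natural path.
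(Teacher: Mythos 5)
The paper does not actually prove this theorem: it is imported from Bryant \cite{bryant-2003} with no argument, so there is nothing internal to compare you against. The closest the paper comes is Theorem \ref{ThmSigmaAd}, which you correctly flag as the ``slicker route'': it identifies $\sigma _{V^{3}}\left( \varphi \right) $ with the pullback of $\varphi $ by $\func{Ad}_{V}\in SO\left( 7\right) $, which gives the ``into the orbit'' half but still leaves surjectivity to Bryant. Your strategy --- reduce to a point, use $G_{2}$-equivariance to collapse to the circle $\left( \cos \theta ,\sin \theta \,e_{1}\right) $, exhibit an explicit rotation to land in $\mathcal{O}=SO\left( 7\right) /G_{2}$, then get surjectivity from a mod-$2$ degree count --- is sound. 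The degree step is clean: $T_{\varphi _{0}}\mathcal{O}=\Lambda _{7}^{3}$ because $\mathfrak{so}\left( 7\right) =\mathfrak{g}_{2}\oplus \Lambda _{7}^{2}$ and $\mathfrak{g}_{2}$ annihilates $\varphi _{0}$, your differential $\beta \mapsto -2\beta \lrcorner \psi _{0}$ is an isomorphism onto it, and a nonzero mod-$2$ degree onto a connected closed target forces surjectivity. One point you fix implicitly that the statement glosses over: $-\varphi _{0}$ is also positive with associated metric $g_{0}$ but reversed orientation, and it is not in the image (the $\Lambda _{1}^{3}$-component of $\Phi $ is $8a^{2}-1\geq -1$, never $-7$), so the theorem must be read as fixing the induced orientation as well, exactly as you do.

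The one real gap is the deferred circle identity, on which everything rests; you should carry it out, and it does work. From (\ref{phi0def}) and $\psi _{0}=e^{4567}+e^{2367}+e^{2345}+e^{1357}-e^{1346}-e^{1256}-e^{1247}$ one gets
\begin{equation*}
\Phi \left( \cos \theta ,\sin \theta \,e_{1}\right) =e^{123}+e^{145}+e^{167}+\cos 2\theta \left( e^{246}-e^{257}-e^{347}-e^{356}\right) +\sin 2\theta \left( e^{247}+e^{256}+e^{346}-e^{357}\right) ,
\end{equation*}
i.e.\ the three $e^{1}$-terms are untouched and the remaining four terms are $\func{Re}\left( e^{-2i\theta }\Omega \right) $ with $\Omega =\left( e^{2}+ie^{3}\right) \wedge \left( e^{4}+ie^{5}\right) \wedge \left( e^{6}+ie^{7}\right) $. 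This equals $R_{\theta }^{\ast }\varphi _{0}$ for $R_{\theta }$ the simultaneous rotation by angle $2\theta /3$ (note: not $\theta $) in the planes $\left( e_{2},e_{3}\right) ,\left( e_{4},e_{5}\right) ,\left( e_{6},e_{7}\right) $, and it equals $\varphi _{0}$ precisely when $\sin \theta =0$, since $\func{Re}\Omega $ and $\func{Im}\Omega $ are linearly independent --- which is exactly what your regular-value and preimage count require. Two smaller loose ends remain: smoothness of $\left( a,\alpha \right) $ on $M$ needs $\overline{\Phi }$ to be an immersion at every point, not only at $\left[ 1:0\right] $ (by equivariance it suffices to check along the circle, which the displayed formula makes routine); and your argument produces a priori only a section of the $\mathbb{R}P^{7}$-bundle, so the global lift to an actual pair $\left( a,\alpha \right) $ deserves a word --- though the paper itself concedes right after the theorem that the honest parametrization is by $\mathbb{R}P^{7}$-sections.
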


Note that the relation (\ref{asqvsq}) shows that indeed $\left( a,\alpha
\right) \in S^{7}$, and moreover, in the expression (\ref{phisameg1}),
simultaneously changing the sign of $a$ and $\alpha $ keeps $\tilde{\varphi}$
unchanged. The pair $\left( a,\alpha \right) $ can in fact be also
interpreted as a \emph{unit octonion} section, where $a$ is the real part,
and $\alpha $ is the imaginary part. It turns out that this is a natural
formalism in which to study isometric $G_{2}$-structures, and the main aim
of this paper is to develop this point of view.

An alternative way of studying $G_{2}$-structures is in terms of spinors. A
detailed account of the relationship between $Spin$ groups and $G_{2}$ can
be found in \cite{BaezOcto}, while explicit descriptions of different $G_{2}$%
-structures in terms of spinors can be found in \cite{AgricolaSpinors,
BaumFriedrichSpinors}, among others. This approach also makes a connection
with other types of $G$-structures on manifolds, with $SU\left( 3\right) $%
-structures on $6$-manifolds being of particular interest. Also, since in
this approach, a $G_{2}$-structure is defined by a single nowhere vanishing
spinor, it is also of relevance in theoretical physics, e.g. \cite%
{KasteMinasianFlux}. Now, given a $7$-dimensional manifold $M$ with $%
w_{1}=w_{2}=0$, that is, one that satisfies the conditions for the existence
of $G_{2}$-structures from Theorem \ref{ThmG2exist}, a Riemannian metric
will define a spinor bundle which will necessarily admit a nowhere vanishing
section. Any such spinor section will then define a positive $3$-form, and
hence a compatible $G_{2}$-structure on $M$. Moreover, spinor sections
within the same projective class define the same $G_{2}$-structure.
Therefore, metric-compatible $G_{2}$-structures are parametrized by sections
of the induced $\mathbb{R}P^{7}$-bundle. Thus, also using Theorem \ref%
{ThmSamegfam}, we have the following corollary of Theorem \ref{ThmG2exist}.

\begin{corollary}
Suppose $M$ is a smooth $7$-dimensional manifold that satisfies $w_{1}=0$
and $w_{2}=0$. Given any Riemannian metric $g$ on $M$, there exists a family
of $G_{2}$-structures for which $g$ is the associated metric.
\end{corollary}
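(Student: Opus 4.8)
The plan is to deduce the corollary from Theorem~\ref{ThmG2exist} in two steps: first produce a \emph{single} positive $3$-form on $M$ whose associated Riemannian metric is the prescribed $g$, and then invoke Theorem~\ref{ThmSamegfam} to see that such $3$-forms occur in a genuine family. Since $w_{1}=w_{2}=0$, Theorem~\ref{ThmG2exist} already supplies some $G_{2}$-structure on $M$, namely a positive $3$-form $\varphi_{1}$; write $g_{1}$ for its associated metric. The problem is therefore to transport $\varphi_{1}$ so that its associated metric becomes $g$, and the natural tool is a bundle automorphism of $TM$ carrying $g$ to $g_{1}$.

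Concretely, I would set $h:=g_{1}^{-1}g$ (regarding the metrics as isomorphisms $TM\to T^{\ast}M$), a bundle endomorphism of $TM$ which at each point is self-adjoint and positive-definite with respect to $g_{1}$; it therefore has a unique positive, $g_{1}$-self-adjoint square root $A:=h^{1/2}$, depending smoothly on the point. A short computation gives $g_{1}(Au,Av)=g(u,v)$, so $A\colon(TM,g)\to(TM,g_{1})$ is a fiberwise linear isometry. Define $\varphi:=A^{\ast}\varphi_{1}$, the pullback $3$-form $\varphi(u,v,w)=\varphi_{1}(Au,Av,Aw)$. Since positivity of a $3$-form is the pointwise condition of lying in the $GL(7,\mathbb{R})$-orbit of $\varphi_{0}$ (as recalled after~(\ref{gphi})) and $A$ is a pointwise linear isomorphism, $\varphi$ is again positive.

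I would then check that the metric associated to $\varphi$ is exactly $g$. The assignment $\varphi\mapsto g_{\varphi}$ determined by~(\ref{gphi}) is natural with respect to bundle isomorphisms, because the bilinear form $B_{\varphi}$ of~(\ref{Bphi}) is built solely from interior multiplication and wedge product; using $u\lrcorner A^{\ast}\varphi_{1}=A^{\ast}\big((Au)\lrcorner\varphi_{1}\big)$ one finds
\begin{equation}
B_{\varphi}(u,v)=A^{\ast}\big(B_{\varphi_{1}}(Au,Av)\big)=g_{1}(Au,Av)\,A^{\ast}\mathrm{vol}_{\varphi_{1}}=g(u,v)\,A^{\ast}\mathrm{vol}_{\varphi_{1}}.
\end{equation}
As $A$ is a $g$-to-$g_{1}$ isometry, $A^{\ast}\mathrm{vol}_{\varphi_{1}}$ is, up to orientation, the Riemannian volume form of $g$, so the pair $\big(g,\mathrm{vol}_{g}\big)$ satisfies~(\ref{gphi}) for $\varphi$; by the uniqueness of the associated metric and volume form, $g_{\varphi}=g$. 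Thus $\varphi$ is a positive $3$-form on $M$ with associated metric $g$.

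Finally, with one such $\varphi$ in hand, Theorem~\ref{ThmSamegfam} identifies all positive $3$-forms with associated metric $g$ as the forms $\big(a^{2}-|\alpha|^{2}\big)\varphi-2a\,\alpha\lrcorner(\ast\varphi)+2\alpha\wedge(\alpha\lrcorner\varphi)$ with $a^{2}+|\alpha|^{2}=1$; taking $(a,\alpha)=(1,0)$ returns $\varphi$, and since $\chi(M)=0$ there are nowhere-vanishing vector fields $\alpha$ giving infinitely many further $G_{2}$-structures, so these genuinely form a family. The only point requiring care is the naturality/uniqueness step for $\varphi\mapsto g_{\varphi}$ used above; it is essentially formal but should be stated explicitly. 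Alternatively one can bypass the pullback construction altogether and argue with spinors, as indicated just before the corollary: $w_{2}=0$ gives a spin structure, the associated real spinor bundle has rank $8$ over the $7$-manifold $M$ and hence admits a nowhere-vanishing section (its Euler class lies in $H^{8}(M)=0$), and a unit spinor section determines a $G_{2}$-structure whose associated metric is $g$.
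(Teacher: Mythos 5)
Your proposal is correct, but your main argument takes a genuinely different route from the one the paper sketches in the paragraph preceding the corollary. The paper works spinorially: since $w_{2}=0$ the metric $g$ determines a spin structure, the rank-$8$ real spinor bundle over the $7$-manifold admits a nowhere-vanishing section for dimensional reasons, each unit spinor determines a positive $3$-form compatible with $g$ via the bilinear construction (\ref{phixispin}), and the family is then parametrized by sections of the induced $\mathbb{R}P^{7}$-bundle (equivalently, by Theorem \ref{ThmSamegfam}). You instead take an arbitrary $G_{2}$-structure $\varphi_{1}$ supplied by Theorem \ref{ThmG2exist}, with associated metric $g_{1}$, and transport it by the fiberwise isometry $A=(g_{1}^{-1}g)^{1/2}\colon(TM,g)\to(TM,g_{1})$; the key point, which you correctly isolate and verify, is the naturality of $\varphi\mapsto B_{\varphi}$ under bundle isomorphisms, via $u\lrcorner A^{\ast}\varphi_{1}=A^{\ast}\bigl((Au)\lrcorner\varphi_{1}\bigr)$ and the fact that $\det A>0$ removes any orientation ambiguity in identifying $A^{\ast}\mathrm{vol}_{\varphi_{1}}$ with $\mathrm{vol}_{g}$. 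Your approach buys independence from spin geometry — it uses only the $3$-form definition of a $G_{2}$-structure, linear algebra of positive square roots, and the uniqueness of the normalized pair $(g_{\varphi},\mathrm{vol}_{\varphi})$ — while the paper's spinor route directly sets up the correspondence that is exploited later in Section \ref{secSpinor} and exhibits the parametrization by projective spinor classes. Two minor remarks: the appeal to $\chi(M)=0$ is unnecessary for exhibiting a nontrivial family, since Theorem \ref{ThmSamegfam} accepts any vector field $\alpha$ with $a^{2}+|\alpha|^{2}=1$ pointwise (one can rescale an arbitrary not-identically-zero vector field to satisfy this); and your closing spinor alternative is exactly the paper's intended argument, with the Euler-class observation $H^{8}(M)=0$ correctly justifying the nowhere-vanishing spinor.
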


\begin{definition}
If two $G_{2}$-structures $\varphi _{1}$ and $\varphi _{2}$ on $M$ have the
same associated metric $g$, we say that $\varphi _{1}$ and $\varphi _{2}$
are in the same \emph{metric class}.
\end{definition}

Using this observation we see that the set of all $G_{2}$-structures on $M$
is parametrized by the set of Riemannian metrics on $M$, and then within
each metric class, by projective classes of unit octonion sections $\left(
a,\alpha \right) $. In what follows, we will assume that $M$ is a smooth $7$%
-dimensional manifold with vanishing $w_{1}$ and $w_{2}$, which has a fixed
Riemannian metric $g$. We will then study the family of $G_{2}$-structures
for which $g$ is the associated metric.

A further property of $G_{2}$-structures is that the spaces of $p$-forms
decompose according to irreducible representations of $G_{2}$. Thus, $2$%
-forms split as $\Lambda ^{2}=\Lambda _{7}^{2}\oplus \Lambda _{14}^{2}$,
where 
\begin{equation*}
\Lambda _{7}^{2}=\left\{ \alpha \lrcorner \varphi \text{: for a vector field 
}\alpha \right\}
\end{equation*}%
and 
\begin{equation*}
\Lambda _{14}^{2}=\left\{ \omega \in \Lambda ^{2}\text{: }\left( \omega
_{ab}\right) \in \mathfrak{g}_{2}\right\} =\left\{ \omega \in \Lambda ^{2}%
\text{: }\omega \lrcorner \varphi =0\right\} .
\end{equation*}%
The $3$-forms split as $\Lambda ^{3}=\Lambda _{1}^{3}\oplus \Lambda
_{7}^{3}\oplus \Lambda _{27}^{3}$, where the one-dimensional component
consists of forms proportional to $\varphi $, forms in the $7$-dimensional
component are defined by a vector field $\Lambda _{7}^{3}=\left\{ \alpha
\lrcorner \psi \text{: for a vector field }\alpha \right\} $, and forms in
the $27$-dimensional component are defined by traceless, symmetric matrices: 
\begin{equation}
\Lambda _{27}^{3}=\left\{ \chi \in \Lambda ^{3}:\chi _{abc}=\mathrm{i}%
_{\varphi }\left( h\right) =h_{[a}^{d}\varphi _{bc]d}^{{}}\text{ for }h_{ab}~%
\text{traceless, symmetric}\right\} .  \label{lam327}
\end{equation}%
By Hodge duality, similar decompositions exist for $\Lambda ^{4}$ and $%
\Lambda ^{5}$. Further details of these decompositions can be found in \cite%
{bryant-2003,GrigorianG2Torsion1,GrigorianYau1,karigiannis-2005-57}.

\section{Octonion bundle}

\setcounter{equation}{0}\label{secOctobundle}Let $\left( M,g\right) $ be a
smooth $7$-dimensional Riemannian manifold, with $w_{1}=w_{2}=0.$ We know $M$
admits $G_{2}$-structures. In particular, let $\varphi $ be a $G_{2}$%
-structure for $M$ for which $g$ is the associated metric. We also use $g$
to define the Levi-Civita connection $\nabla $, and the Hodge star $\ast $.
In particular, $\ast \varphi $ is a $4$-form dual to $\varphi ,$ which we
will denote by $\psi $.

\begin{definition}
The \emph{octonion bundle }$\mathbb{O}M$ on $M$ is the rank $8$ real vector
bundle given by 
\begin{equation}
\mathbb{O}M\cong \Lambda ^{0}\oplus TM  \label{OMdef}
\end{equation}%
where $\Lambda ^{0}\cong M\times \mathbb{R}\ $is a trivial line bundle. At
each point $p\in M$, $\mathbb{O}_{p}M\cong \mathbb{R}\oplus T_{p}M.$
\end{definition}

The definition (\ref{OMdef}) simply mimics the decomposition of octonions
into real and imaginary parts. The bundle $\mathbb{O}M$ is defined as a real
bundle, but which will have additional structure as discussed below. Now let 
$A\in \Gamma \left( \mathbb{O}M\right) $ be a section of the octonion
bundle. We will call $A$ simply an \emph{octonion} \emph{on }$M.$ From (\ref%
{OMdef}), $A$ has a scalar component in $\Gamma \left( \Lambda ^{0}\right) $%
, i.e. just a function on $M$, as well as a vector component in $\Gamma
\left( TM\right) $, i.e. a vector field on $M$. We then have globally
defined projections%
\begin{eqnarray*}
\func{Re} &:&\Gamma \left( \mathbb{O}M\right) \longrightarrow \Gamma \left(
\Lambda ^{0}\right) \\
\func{Im} &:&\Gamma \left( \mathbb{O}M\right) \longrightarrow \Gamma \left(
TM\right) .
\end{eqnarray*}%
Therefore, we write $A=\func{Re}A+\func{Im}A.$ For convenience, we may also
write $A=\left( \func{Re}A,\func{Im}A\right) $ or $A=\left( 
\begin{array}{c}
\func{Re}A \\ 
\func{Im}A%
\end{array}%
\right) $. We also have a natural involution on $\mathbb{O}M$ - \emph{%
conjugation}. As for complex or quaternionic sections, define 
\begin{equation}
\bar{A}=\left( \func{Re}A,-\func{Im}A\right) .  \label{octoconj}
\end{equation}

Since $\mathbb{O}M$ is defined as a tensor bundle, the Riemannian metric $g$
on $M$ induces a metric on $\mathbb{O}M.$ Let $A=\left( a,\alpha \right) \in
\Gamma \left( \mathbb{O}M\right) .$ Then, 
\begin{eqnarray}
\left\vert A\right\vert ^{2} &=&\left\langle A,A\right\rangle =a^{2}+g\left(
\alpha ,\alpha \right)  \notag \\
&=&a^{2}+\left\vert \alpha \right\vert ^{2}  \label{Omet}
\end{eqnarray}%
We will be using the same notation for the norm, metric and inner product
for sections of $\mathbb{O}M$ as for standard tensors on $M$. It will be
clear from the context which is being used. If however we need to specify
that only the octonion inner product is used, we will use the notation $%
\left\langle \cdot ,\cdot \right\rangle _{\mathbb{O}}.$ The metric (\ref%
{Omet}) ensures that the real and imaginary parts are orthogonal to each
other.

\begin{definition}
Given the $G_{2}$-structure $\varphi $ on $M,$ we define a \emph{vector
cross product with respect to }$\varphi $ on $M.$ Let $\alpha $ and $\beta $
be two vector fields, then define%
\begin{equation}
\left\langle \alpha \times _{\varphi }\beta ,\gamma \right\rangle =\varphi
\left( \alpha ,\beta ,\gamma \right)  \label{vcrossdef}
\end{equation}%
for any vector field $\gamma $ \cite{Gray-VCP,karigiannis-2005-57}.
\end{definition}

In index notation, we can thus write 
\begin{equation}
\left( \alpha \times _{\varphi }\beta \right) ^{a}=\varphi _{\ bc}^{a}\alpha
^{b}\beta ^{c}  \label{vcrossdef2}
\end{equation}%
Note that $\alpha \times _{\varphi }\beta =-\beta \times _{\varphi }\alpha $%
. If there is no ambiguity as to which $G_{2}$-structure is being used to
define the cross product, we will simply denote it by $\times $, dropping
the subscript.

Using the contraction identity for $\varphi $ \cite%
{bryant-2003,GrigorianG2Review, karigiannis-2005-57}%
\begin{equation}
\varphi _{abc}\varphi _{mn}^{\ \ \ \ c}=g_{am}g_{bn}-g_{an}g_{bm}+\psi
_{abmn}  \label{phiphi1}
\end{equation}%
we obtain the following identity for the double cross product.

\begin{lemma}
Let $\alpha ,\beta ,\gamma $ be vector fields, then 
\begin{equation}
\alpha \times \left( \beta \times \gamma \right) =\left\langle \alpha
,\gamma \right\rangle \beta -\left\langle \alpha ,\beta \right\rangle \gamma
+\psi \left( \cdot ^{\sharp },\alpha ,\beta ,\gamma \right)
\label{doublecrossprod}
\end{equation}%
where $^{\sharp }$ means that we raise the index using the inverse metric $%
g^{-1}$.
\end{lemma}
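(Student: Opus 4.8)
The plan is to reduce (\ref{doublecrossprod}) to the contraction identity (\ref{phiphi1}) by a direct computation in index notation, using only the total antisymmetry of $\varphi$ and $\psi$ to line up index placements correctly.

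First I would expand the left-hand side componentwise. Using (\ref{vcrossdef2}) twice, $(\beta\times\gamma)^{c}=\varphi^{c}_{\ de}\beta^{d}\gamma^{e}$ and hence $\left(\alpha\times(\beta\times\gamma)\right)^{a}=\varphi^{a}_{\ bc}\varphi^{c}_{\ de}\,\alpha^{b}\beta^{d}\gamma^{e}$, so everything comes down to simplifying the product $\varphi^{a}_{\ bc}\varphi^{c}_{\ de}$. The identity (\ref{phiphi1}) is stated with the summed index sitting in the last slot of both factors (one raised, one lowered), so I would first rearrange into that shape: lowering the free index $a$ and using the cyclic symmetry of $\varphi$ to write $\varphi^{c}_{\ de}=g^{cf}\varphi_{def}$, I get $\varphi_{abc}\varphi^{c}_{\ de}=\varphi_{ab}{}^{f}\varphi_{def}$, which is now exactly the pattern of (\ref{phiphi1}) with the two factors playing the indicated roles. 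This yields $\varphi_{ab}{}^{f}\varphi_{def}=g_{ad}g_{be}-g_{ae}g_{bd}+\psi_{abde}$, where I have used that $\psi$ is a $4$-form to rewrite $\psi_{deab}=\psi_{abde}$ (an even permutation). Raising $a$ again and contracting with $\alpha^{b}\beta^{d}\gamma^{e}$, the first two terms collapse through the Kronecker deltas to $\langle\alpha,\gamma\rangle\beta^{a}-\langle\alpha,\beta\rangle\gamma^{a}$, while the last term becomes $\psi^{a}_{\ bde}\alpha^{b}\beta^{d}\gamma^{e}$, which is precisely the $a$-component of $\psi(\cdot^{\sharp},\alpha,\beta,\gamma)$, i.e. $\psi$ evaluated on $\alpha,\beta,\gamma$ in its last three slots with the remaining covector slot raised by $g^{-1}$. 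Assembling the three pieces gives (\ref{doublecrossprod}).

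The only real obstacle here is bookkeeping: arranging the index positions so that (\ref{phiphi1}) applies verbatim, and tracking the signs produced by the antisymmetry of $\varphi$ and $\psi$ — in particular the cyclic rearrangement used to recast $\varphi^{c}_{\ de}$ and the pair-swap $\psi_{deab}=\psi_{abde}$. Once the index placements are set up correctly, no genuine computation remains. An alternative, coordinate-free route would be to polarize the classical identity for the octonionic vector cross product on $\mathbb{R}^{7}$ and invoke the fact, noted above, that every positive $3$-form is pointwise in the $GL(7,\mathbb{R})$-orbit of $\varphi_{0}$; but the index computation above is shorter and entirely self-contained given (\ref{phiphi1}).
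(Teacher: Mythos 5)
Your computation is correct and is exactly the argument the paper intends: the lemma is stated there as an immediate consequence of the contraction identity (\ref{phiphi1}), and your index bookkeeping (cyclic rearrangement of $\varphi$ and the even pair-swap of $\psi$'s indices) fills in the only steps the paper leaves implicit. No issues.
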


Using the inner product and the cross product, we can now define the \emph{%
octonion product }on $\mathbb{O}M$.

\begin{definition}
Let $A,B\in \Gamma \left( \mathbb{O}M\right) .$ Suppose $A=\left( a,\alpha
\right) $ and $B=\left( b,\beta \right) $. Given the vector cross product (%
\ref{vcrossdef}) on $M,$ we define the \emph{octonion product }$A\circ
_{\varphi }B$\emph{\ with respect to }$\varphi $\emph{\ }as follows:%
\begin{equation}
A\circ _{\varphi }B=\left( 
\begin{array}{c}
ab-\left\langle \alpha ,\beta \right\rangle \\ 
a\beta +b\alpha +\alpha \times _{\varphi }\beta%
\end{array}%
\right)  \label{octoproddef}
\end{equation}
\end{definition}

If there is no ambiguity as to which $G_{2}$-structure is being used to
define the octonion product, for convenience, we will simply write$\ AB$ to
denote it. The octonion product behaves as expected with respect to
conjugation:

\begin{lemma}
Suppose $A$ and $B$ are sections of $\mathbb{O}M$, then 
\begin{subequations}%
\begin{eqnarray}
\overline{AB} &=&\overline{B}\overline{A} \\
A\overline{A} &=&\overline{A}A=\left\vert A\right\vert ^{2}
\end{eqnarray}%
\end{subequations}%
\end{lemma}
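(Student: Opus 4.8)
The plan is to verify both identities by a direct pointwise computation, using only the defining formula (\ref{octoproddef}) for the octonion product, the definition (\ref{octoconj}) of conjugation, and the elementary algebraic properties of the ingredients: symmetry of the inner product $\langle\cdot,\cdot\rangle$, antisymmetry of the cross product $\times_{\varphi}$, and the consequent vanishing $\alpha\times_{\varphi}\alpha=0$. Since all of these hold fiberwise, it suffices to work at a single point, so the calculation is formally identical to the classical one for octonions over $\mathbb{R}$.

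For the first identity, write $A=(a,\alpha)$ and $B=(b,\beta)$. From (\ref{octoproddef}), $AB=(ab-\langle\alpha,\beta\rangle,\ a\beta+b\alpha+\alpha\times_{\varphi}\beta)$, so applying (\ref{octoconj}) gives $\overline{AB}=(ab-\langle\alpha,\beta\rangle,\ -a\beta-b\alpha-\alpha\times_{\varphi}\beta)$. On the other side, $\overline{B}=(b,-\beta)$ and $\overline{A}=(a,-\alpha)$, and expanding $\overline{B}\,\overline{A}$ by (\ref{octoproddef}) produces a real part $ba-\langle-\beta,-\alpha\rangle$ and an imaginary part $b(-\alpha)+a(-\beta)+(-\beta)\times_{\varphi}(-\alpha)$. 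Using $\langle-\beta,-\alpha\rangle=\langle\alpha,\beta\rangle$ and $(-\beta)\times_{\varphi}(-\alpha)=\beta\times_{\varphi}\alpha=-\alpha\times_{\varphi}\beta$, this expression is seen to coincide termwise with $\overline{AB}$.

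For the second identity, specialize $B=\overline{A}=(a,-\alpha)$ in (\ref{octoproddef}): the real part is $a\cdot a-\langle\alpha,-\alpha\rangle=a^{2}+|\alpha|^{2}$, and the imaginary part is $a(-\alpha)+a\alpha+\alpha\times_{\varphi}(-\alpha)=-\alpha\times_{\varphi}\alpha=0$. By (\ref{Omet}), $a^{2}+|\alpha|^{2}=|A|^{2}$, and identifying the scalar $|A|^{2}$ with the octonion section $(|A|^{2},0)$ yields $A\overline{A}=|A|^{2}$. The identity $\overline{A}A=|A|^{2}$ follows by the same computation with the roles of the two factors exchanged, the imaginary part now being $a\alpha-a\alpha+(-\alpha)\times_{\varphi}\alpha=0$.

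There is no substantive obstacle here: the statement is a formal consequence of the bilinearity of the octonion product together with the symmetry of $\langle\cdot,\cdot\rangle$ and the antisymmetry of $\times_{\varphi}$. The only point worth watching is the sign bookkeeping when conjugating both factors — the two sign flips in the vector arguments of the cross product cancel, which is exactly what forces the order-reversal $\overline{AB}=\overline{B}\,\overline{A}$ rather than $\overline{A}\,\overline{B}$.
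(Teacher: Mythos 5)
Your computation is correct and is exactly the direct verification from the definitions (\ref{octoproddef}) and (\ref{octoconj}) that the paper implicitly relies on, since it states this lemma without proof as an immediate consequence of the product formula. The sign bookkeeping — in particular the cancellation of the two sign flips inside the cross product, which forces the order reversal in $\overline{AB}=\overline{B}\,\overline{A}$ — is handled correctly.
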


\begin{lemma}
Using the octonion product (\ref{octoproddef}), the inner product $%
\left\langle A,B\right\rangle $ of two octonions $A=\left( a,\alpha \right) $
and $B=\left( b,\beta \right) $ is given by%
\begin{equation}
\left\langle A,B\right\rangle =\frac{1}{2}\left( A\bar{B}+B\bar{A}\right) .
\label{octoinnerprod}
\end{equation}%
The \emph{commutator }$\left[ A,B\right] $ of $A$ and $B$ is given by 
\begin{eqnarray}
\left[ A,B\right] &=&AB-BA  \notag \\
&=&2\alpha \times \beta  \notag \\
&=&2\varphi \left( \cdot ^{\sharp },\alpha ,\beta \right)  \label{octocomm}
\end{eqnarray}%
The \emph{associator }$\left[ A,B,C\right] $ of three octonions $A,B$ and $%
C=\left( c,\gamma \right) $ is given by 
\begin{eqnarray}
\left[ A,B,C\right] &=&A\left( BC\right) -\left( AB\right) C  \notag \\
&=&2\psi \left( \cdot ^{\sharp },\alpha ,\beta ,\gamma \right)
\label{octoassoc}
\end{eqnarray}
\end{lemma}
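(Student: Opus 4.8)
The plan is to establish all three identities by direct computation from the definition (\ref{octoproddef}) of the octonion product, using the conjugation (\ref{octoconj}), the defining property (\ref{vcrossdef}) of the cross product, and the double cross product identity (\ref{doublecrossprod}).

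For the inner product formula, write $A=(a,\alpha)$, $B=(b,\beta)$ so that $\bar B=(b,-\beta)$ by (\ref{octoconj}). Expanding $A\bar B$ and $B\bar A$ with (\ref{octoproddef}), the imaginary parts are negatives of one another (the terms $a\beta$ and $b\alpha$ cancel pairwise, and $\alpha\times(-\beta)=-\alpha\times\beta=\beta\times\alpha$ cancels against its counterpart), while each real part equals $ab+\langle\alpha,\beta\rangle$; hence $\tfrac12(A\bar B+B\bar A)=ab+\langle\alpha,\beta\rangle=\langle A,B\rangle$ by (\ref{Omet}). The commutator follows from the same expansion: the real parts of $AB$ and $BA$ coincide, while the imaginary parts differ by $\alpha\times\beta-\beta\times\alpha=2\,\alpha\times\beta$, and $2\,\alpha\times\beta=2\,\varphi(\cdot^{\sharp},\alpha,\beta)$ is just a rewriting of (\ref{vcrossdef}).

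The associator is the one computation requiring care. I would expand $BC$, then $A(BC)$, and separately $AB$, then $(AB)C$, via (\ref{octoproddef}), and subtract. All the ``mixed'' terms ($ab\gamma$, $ac\beta$, $bc\alpha$, $a\,\beta\times\gamma$, $b\,\alpha\times\gamma$, $c\,\alpha\times\beta$) cancel. The real part of $[A,B,C]$ collapses to $\langle\alpha\times\beta,\gamma\rangle-\langle\alpha,\beta\times\gamma\rangle$, and both terms equal $\varphi(\alpha,\beta,\gamma)$ by the cyclic symmetry of $\varphi$, so the associator is purely imaginary. Its imaginary part reduces to
\[
\langle\alpha,\beta\rangle\gamma-\langle\beta,\gamma\rangle\alpha+\alpha\times(\beta\times\gamma)-(\alpha\times\beta)\times\gamma .
\]
Now apply (\ref{doublecrossprod}) to $\alpha\times(\beta\times\gamma)$ directly and to $\gamma\times(\alpha\times\beta)=-(\alpha\times\beta)\times\gamma$: the $\langle\,\cdot\,,\,\cdot\,\rangle$ terms produced cancel the first two terms above, leaving $\psi(\cdot^{\sharp},\alpha,\beta,\gamma)+\psi(\cdot^{\sharp},\gamma,\alpha,\beta)$. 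Since cyclically permuting the last three arguments of the $4$-form $\psi$ is an even permutation, this equals $2\,\psi(\cdot^{\sharp},\alpha,\beta,\gamma)$, which is (\ref{octoassoc}).

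The only genuine pitfall is the associator step, where one must be consistent about the order of arguments fed into $\varphi$, $\psi$, and the two instances of (\ref{doublecrossprod}); a sign error there would break the cancellations. Everything else is bookkeeping.
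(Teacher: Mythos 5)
Your proposal is correct and follows exactly the route the paper takes: the inner product and commutator identities are read off directly from the product formula (\ref{octoproddef}), and the associator is reduced to the double cross product identity (\ref{doublecrossprod}) together with the total antisymmetry of $\psi$. The paper merely states these steps without writing out the cancellations; your expansion, including the sign bookkeeping in $\psi(\cdot^{\sharp},\gamma,\alpha,\beta)=\psi(\cdot^{\sharp},\alpha,\beta,\gamma)$, checks out.
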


\begin{proof}
The identities for the inner product and the commutator follow immediately
from (\ref{octoproddef}). The identity for (\ref{octoassoc}) follows from
the double cross product identity (\ref{doublecrossprod}).
\end{proof}

Crucially, (\ref{octoassoc}) shows that the associator is skew-symmetric.
This property of the octonion algebra is known as \emph{alternativity}. Note
that the associator of a non-associative algebra is usually defined with the
opposite sign to what we have, however due to the relation with $\psi $ it
is more convenient this way. Thus, given a $G_{2}$-structure on $M$, we can
fully transfer the octonion algebra structure to $\mathbb{O}M.$ The
expressions (\ref{octoproddef})-(\ref{octoassoc}) are exactly the same as
for the standard octonion algebra, as given, for example, in \cite{BaezOcto}%
. Therefore, $\mathbb{O}M$ is a bundle that carries a non-associative
division algebra structure on it.

We will need a few identities that the octonions satisfy. The proofs are
straightforward computations and are given in Appendix \ref{appProofs}.

\begin{lemma}
\label{lemOctoexp}Let $A=\left( 0,\alpha \right) \in \Gamma \left( \func{Im}%
\mathbb{O}M\right) .$ Then the exponential of $A,$ $e^{A}=\sum_{k=0}^{\infty
}\frac{1}{k!}A^{k}$, is given by%
\begin{equation}
e^{A}=\cos \left\vert \alpha \right\vert +\alpha \frac{\sin \left\vert
\alpha \right\vert }{\left\vert \alpha \right\vert }  \label{octoexp}
\end{equation}
\end{lemma}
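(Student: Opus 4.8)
The strategy is to observe that a purely imaginary octonion behaves just like the imaginary unit: it squares to a negative real scalar. Once that is established, computing $e^{A}$ becomes the standard power-series derivation of $\cos $ and $\sin $, with no genuine interference from non-associativity.

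First I would compute $A^{2}=A\circ _{\varphi }A$ directly from the product formula (\ref{octoproddef}). With $A=\left( 0,\alpha \right) $, and using that $\times _{\varphi }$ is skew so that $\alpha \times _{\varphi }\alpha =0$, one gets $A^{2}=\left( -\left\langle \alpha ,\alpha \right\rangle ,0\right) =-\left\vert \alpha \right\vert ^{2}$, a real scalar. Next I would note that although $\mathbb{O}M$ is non-associative, powers of a single section are unambiguous: by alternativity the subalgebra generated by $A$ and $1$ is associative (in fact commutative, since $\left[ A,A\right] =2\alpha \times \alpha =0$ by (\ref{octocomm})), so $A^{k}$ is well defined. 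Since $A^{2}=-\left\vert \alpha \right\vert ^{2}$ is a central real scalar, an immediate induction gives
\[
A^{2k}=\left( -1\right) ^{k}\left\vert \alpha \right\vert ^{2k},\qquad A^{2k+1}=\left( -1\right) ^{k}\left\vert \alpha \right\vert ^{2k}\alpha ,
\]
where $\left( 0,\alpha \right) $ is identified with $\alpha \in \Gamma \left( TM\right) \subset \Gamma \left( \mathbb{O}M\right) $.

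From these formulas $\left\vert A^{k}\right\vert =\left\vert \alpha \right\vert ^{k}$, so $\sum _{k}\frac{1}{k!}\left\vert A^{k}\right\vert =e^{\left\vert \alpha \right\vert }<\infty $ and the series defining $e^{A}$ converges absolutely at each point and may be split into its even and odd parts:
\[
e^{A}=\sum_{k=0}^{\infty }\frac{A^{2k}}{\left( 2k\right) !}+\sum_{k=0}^{\infty }\frac{A^{2k+1}}{\left( 2k+1\right) !}=\sum_{k=0}^{\infty }\frac{\left( -1\right) ^{k}\left\vert \alpha \right\vert ^{2k}}{\left( 2k\right) !}+\alpha \sum_{k=0}^{\infty }\frac{\left( -1\right) ^{k}\left\vert \alpha \right\vert ^{2k}}{\left( 2k+1\right) !}=\cos \left\vert \alpha \right\vert +\alpha \frac{\sin \left\vert \alpha \right\vert }{\left\vert \alpha \right\vert },
\]
which is exactly (\ref{octoexp}); the case $\alpha =0$ is covered by the convention $\frac{\sin \left\vert \alpha \right\vert }{\left\vert \alpha \right\vert }=1$, giving $e^{0}=1$ consistently. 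There is essentially no hard step here — the only point requiring a word of care is the well-definedness of $A^{k}$ in a non-associative algebra, and that is immediate from alternativity; once $A^{2}$ is seen to be a real scalar, the remainder is the classical argument.
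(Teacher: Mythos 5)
Your proof is correct and follows essentially the same route as the paper's: compute $A^{2}=-\left\vert \alpha \right\vert ^{2}$ from the product formula, deduce the pattern of powers, and split the exponential series into even and odd parts to recover $\cos$ and $\sin$. Your extra remarks on well-definedness of $A^{k}$ via alternativity and on absolute convergence are sound and slightly more careful than the paper's version, but do not change the argument.
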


\begin{corollary}
\label{corrOctopow}Suppose $B=\left( b,\beta \right) \in \Gamma \left( 
\mathbb{O}M\right) $, then for any $k\in \mathbb{Z}$ (assuming nowhere
vanishing $B$ if $k$ is negative) 
\begin{equation}
B^{k}=\left\vert B\right\vert ^{k}\left( \cos k\theta +\hat{\beta}\frac{\sin
k\theta }{\sin \theta }\right)  \label{octopower}
\end{equation}%
where $\hat{\beta}=\frac{\beta }{\left\vert B\right\vert }$ and $\theta \in 
\mathbb{R}$ is such that $\cos \theta =\frac{b}{\left\vert B\right\vert }$
and $\sin \theta =\left\vert \hat{\beta}\right\vert $.
\end{corollary}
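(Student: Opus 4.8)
The plan is to deduce (\ref{octopower}) from Lemma \ref{lemOctoexp} by writing $B$ as the exponential of an imaginary octonion and then applying de Moivre's formula inside the copy of $\mathbb{C}$ generated by $B$. First I would dispose of the degenerate case $\beta=0$: then $B=b$ is real with $b=\pm|B|$, one takes $\theta\in\{0,\pi\}$ so that $\cos\theta=b/|B|$, and the asserted identity reduces to $B^{k}=|B|^{k}\cos k\theta=(\pm|B|)^{k}=b^{k}$, the imaginary term vanishing identically (when $\beta=0$ the expression $\hat\beta\,(\sin k\theta)/\sin\theta$ is to be read as $0$, equivalently rewritten as $(\beta/|\beta|)\sin k\theta$ and extended by $0$).

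Now assume $\beta\neq 0$ and set $\hat u=\beta/|\beta|\in\Gamma(\func{Im}\mathbb{O}M)$, a unit imaginary octonion, so that $\hat u^{2}=-\hat u\bar{\hat u}=-|\hat u|^{2}=-1$ using $\bar{\hat u}=-\hat u$ and $A\bar A=|A|^{2}$. By (\ref{Omet}) we have $(b/|B|)^{2}+(|\beta|/|B|)^{2}=1$, so there is a $\theta$ with $\cos\theta=b/|B|$ and $\sin\theta=|\beta|/|B|=|\hat\beta|\ge 0$. Applying Lemma \ref{lemOctoexp} to $\theta\hat u$ gives $e^{\theta\hat u}=\cos\theta+\hat u\sin\theta$, whence $|B|\,e^{\theta\hat u}=|B|\cos\theta+|\beta|\hat u=b+\beta=B$. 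The real span of $1$ and $\hat u$ is a subalgebra of the fibre $\mathbb{O}_{p}M$ that is commutative and associative -- a copy of $\mathbb{C}$ with $\hat u$ playing the role of $\mathrm{i}$ -- this being the only place non-associativity could intervene, and it is harmless by alternativity (equivalently, by Artin's theorem applied to the subalgebra generated by the single element $B$). Hence powers of $B$ may be computed inside this copy of $\mathbb{C}$, where $(e^{\theta\hat u})^{k}=e^{k\theta\hat u}$ for all $k\in\mathbb{Z}$ -- the negative powers being legitimate precisely because $B$ is assumed nowhere vanishing, with $B^{-1}=|B|^{-2}\bar B=|B|^{-1}e^{-\theta\hat u}$ by $B\bar B=|B|^{2}$. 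A second use of Lemma \ref{lemOctoexp} then gives $B^{k}=|B|^{k}e^{k\theta\hat u}=|B|^{k}(\cos k\theta+\hat u\sin k\theta)$, and since $\hat\beta=\beta/|B|$ forces $|\beta|=|B|\,|\hat\beta|=|B|\sin\theta$, the imaginary term is $\hat u\sin k\theta=(\beta/|\beta|)\sin k\theta=\hat\beta\,(\sin k\theta)/\sin\theta$, which is (\ref{octopower}).

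The only real obstacle is justifying that the $k$-fold product stays inside an associative subalgebra; I would either invoke alternativity (established above through the skew-symmetry of the associator (\ref{octoassoc})), or sidestep it by proving (\ref{octopower}) by induction on $k\ge 0$ straight from the product formula (\ref{octoproddef}). In that inductive step one computes $B\circ B^{k}$ componentwise: the scalar part collapses to $|B|^{k+1}\cos(k+1)\theta$ via $\langle\beta,\hat\beta\rangle=|B|\sin^{2}\theta$ together with the cosine addition formula, while the vector part collapses to $|B|^{k+1}\hat\beta\,(\sin(k+1)\theta)/\sin\theta$ using $\beta\times\hat\beta=0$ and the sine addition formula; the negative integers then follow by applying the $k\ge 0$ case to $B^{-1}=|B|^{-2}\bar B$, whose scalar, vector and norm data are $(b/|B|^{2},\,-\beta/|B|^{2},\,|B|^{-1})$, noting that $B^{-1}$ lies in the commutative associative subalgebra generated by $B$.
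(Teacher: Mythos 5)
Your proposal is correct and follows essentially the same route as the paper: write $B=\left\vert B\right\vert e^{A}$ for an imaginary octonion $A$ via Lemma \ref{lemOctoexp} and then apply de Moivre inside the (power-associative) subalgebra spanned by $1$ and $\hat{\beta}/\vert\hat{\beta}\vert$. You are in fact slightly more careful than the paper, which dismisses the $\beta=0$ case as trivial and asserts $\left(e^{A}\right)^{k}=e^{kA}$ directly from the series expansion without explicitly invoking alternativity.
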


The above Lemma \ref{lemOctoexp} and Corollary \ref{corrOctopow} are direct
analogs of similar well-known results for complex numbers. It is a useful
fact that the for any integer power $k,$ $\func{Im}B^{k}$ is a real multiple
of $\func{Im}B$. Using the alternative property of the associator, this also
shows that the octonions are \emph{power-associative, }and in fact, any
subalgebra generated by two elements and their conjugates is also
associative. In Lemma \ref{lemAssocIds} we collect some related identities.

\begin{lemma}
\label{lemAssocIds}For any $A,B,C\in \Gamma \left( \mathbb{O}M\right) $, and 
$k\in \mathbb{Z}^{+},$ the following identities hold

\begin{enumerate}
\item $\left[ A,B,C\right] =-\left[ \bar{A},B,C\right] $

\item $\left[ A^{k},A,C\right] =0$

\item $A\left[ A,B,C\right] =\left[ A,B,C\right] \bar{A}$

\item $\left[ A,A^{k}B,C\right] =\bar{A}^{k}\left[ A,B,C\right] $

\item $\left[ A,BA^{k},C\right] =\left[ A,B,C\right] \bar{A}^{k}$

\item $\left[ A^{k+1},B,C\right] =\left[ A^{k},B,C\right] \bar{A}+\left[
A,B,C\right] A^{k}$
\end{enumerate}
\end{lemma}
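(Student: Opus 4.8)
The plan is to dispatch identities (1) and (2) directly from what has already been set up, and then to reduce (4)--(6) to a single base case with the help of Corollary \ref{corrOctopow}. By (\ref{octoassoc}) the associator $\left[ A,B,C\right] =2\psi \left( \cdot ^{\sharp },\alpha ,\beta ,\gamma \right) $ is $\mathbb{R}$-multilinear and vanishes as soon as any argument is real, hence it depends only on $\func{Im}A,\func{Im}B,\func{Im}C$. Since $\func{Im}\bar{A}=-\func{Im}A$, this gives (1) at once. For (2), Corollary \ref{corrOctopow} gives $\func{Im}\left( A^{k}\right) =\lambda \func{Im}A$ for some $\lambda \in \mathbb{R}$, so $\left[ A^{k},A,C\right] $ has two proportional imaginary slots and vanishes by alternativity.

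For (3), put $W=\left[ A,B,C\right] $. By (\ref{octoassoc}), $W$ is imaginary and $\left\langle \func{Im}A,W\right\rangle =2\psi \left( \alpha ,\alpha ,\beta ,\gamma \right) =0$ by skew-symmetry of $\psi $. For imaginary octonions the product rule (\ref{octoproddef}) gives $\func{Im}A\,W+W\func{Im}A=-2\left\langle \func{Im}A,W\right\rangle $, so here $\func{Im}A\,W=-W\func{Im}A$, and therefore $AW=\left( \func{Re}A\right) W+\func{Im}A\,W=\left( \func{Re}A\right) W-W\func{Im}A=W\bar{A}$, which is (3). Pairing $AW=W\bar{A}$ with $\left( A+\bar{A}\right) W=2\left( \func{Re}A\right) W=W\left( A+\bar{A}\right) $ also yields the commutation rule $\bar{A}W=WA$, which I will reuse for (6).

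The common engine behind (4)--(6) is that, by Corollary \ref{corrOctopow} (equivalently, by the quadratic relation $A^{2}=2\left( \func{Re}A\right) A-\left\vert A\right\vert ^{2}$), every power is a real linear combination $A^{k}=p_{k}+q_{k}A$ of $1$ and $A$, and consequently $\bar{A}^{k}=p_{k}+q_{k}\bar{A}$. Substituting this into $A^{k}B$ (resp.\ $BA^{k}$) and using bilinearity of the associator reduces (4) and (5) to their cases $k=1$, namely
\[
\left[ A,AB,C\right] =\bar{A}\left[ A,B,C\right] ,\qquad \left[ A,BA,C\right] =\left[ A,B,C\right] \bar{A}.
\]
For the first I would invoke the Teichm\"{u}ller identity
\[
\left[ XY,Z,U\right] -\left[ X,YZ,U\right] +\left[ X,Y,ZU\right] =X\left[ Y,Z,U\right] +\left[ X,Y,Z\right] U,
\]
a purely formal identity valid in any algebra, and taking exactly this form in the present sign convention for the associator, with $\left( X,Y,Z,U\right) =\left( A,A,B,C\right) $: alternativity kills $\left[ A,A,BC\right] $ and $\left[ A,A,B\right] $, leaving $\left[ A^{2},B,C\right] -\left[ A,AB,C\right] =A\left[ A,B,C\right] $, and since $\left[ A^{2},B,C\right] =2\left( \func{Re}A\right) \left[ A,B,C\right] $ this gives $\left[ A,AB,C\right] =\left( 2\func{Re}A-A\right) \left[ A,B,C\right] =\bar{A}\left[ A,B,C\right] $. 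The case $k=1$ of (5) then follows by applying Teichm\"{u}ller with $\left( X,Y,Z,U\right) =\left( A,B,A,C\right) $, using $\left[ A,B,A\right] =0$, and substituting the case $k=1$ of (4), identity (3), and skew-symmetry of the associator into the three surviving terms $\left[ AB,A,C\right] ,\ \left[ A,B,AC\right] ,\ A\left[ B,A,C\right] $.

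Finally, for (6) I would apply Teichm\"{u}ller with $\left( X,Y,Z,U\right) =\left( A,A^{k},B,C\right) $: identity (2) kills $\left[ A,A^{k},BC\right] $ and $\left[ A,A^{k},B\right] $, leaving $\left[ A^{k+1},B,C\right] =\left[ A,A^{k}B,C\right] +A\left[ A^{k},B,C\right] $; then (4) rewrites the first term as $\bar{A}^{k}\left[ A,B,C\right] $, and using $\bar{A}^{k}=p_{k}+q_{k}\bar{A}$, the commutation rules $AW=W\bar{A}$ and $\bar{A}W=WA$ with $W=\left[ A,B,C\right] $, together with $\left[ A^{k},B,C\right] =q_{k}\left[ A,B,C\right] $, the right-hand side rearranges into $\left[ A^{k},B,C\right] \bar{A}+\left[ A,B,C\right] A^{k}$. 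The main obstacle here is organizational rather than conceptual: the rearrangements for (5) and (6) require repeatedly discarding the ``extra'' associator terms produced by Teichm\"{u}ller via alternativity and skew-symmetry, and feeding in the earlier identities in the correct order, all while keeping the non-standard sign of the associator straight. If this route feels too indirect, an equivalent but more computational alternative, using nothing beyond the excerpt, is to expand each claim directly through (\ref{octoproddef}) and the double cross product identity (\ref{doublecrossprod}), reducing every identity to a contraction identity for $\varphi $ and $\psi $ in the spirit of (\ref{phiphi1}).
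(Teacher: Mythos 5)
Your proof is correct, and for items (4)--(6) it takes a genuinely different route from the paper. Items (1)--(3) are handled essentially as in the paper: (1) and (2) via the fact that the associator $2\psi(\cdot^{\sharp},\alpha,\beta,\gamma)$ sees only imaginary parts together with Corollary \ref{corrOctopow}, and (3) via $\left\langle \func{Im}A,[A,B,C]\right\rangle =0$ (the paper phrases this through the inner-product formula (\ref{octoinnerprod}) rather than through anticommutation of orthogonal imaginary octonions, but it is the same computation). For (4)--(6) the paper instead proves the $k=1$ case of (4) by writing $[A,AB,C]=-[\bar{A},AB,C]$ and expanding the associator definition directly using $[\bar{A},A,\cdot]=0$, obtains (5) from (4) by conjugating the middle slot, proves (6) by evaluating $[A^{k},B,CA]$ in two ways, and handles general $k$ in (4) and (5) by induction. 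You replace all of this with two tools: the Teichm\"{u}ller identity (which I checked does hold verbatim, with no sign adjustment, for the convention $[X,Y,Z]=X(YZ)-(XY)Z$ used here --- it is a formal identity in any algebra) and the observation that $A^{k}=p_{k}+q_{k}A$ lies in the real span of $1$ and $A$, which eliminates the inductions entirely and reduces (4) and (5) to their $k=1$ cases by bilinearity. The auxiliary commutation rule $\bar{A}W=WA$ that you extract from (3) is exactly what is needed to close (6), and your bookkeeping there checks out. The trade-off is that your argument imports an identity the paper never states (though it is elementary to verify by expansion), in exchange for a more systematic derivation in which every instance of (4)--(6) is a specialization of one four-variable identity; the paper's route stays entirely inside the toolkit it has already built but requires ad hoc conjugation manipulations and induction.
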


For $k=1$ and $k=2$, the last identity in Lemma \ref{lemAssocIds} gives us
important special cases.

\begin{corollary}
\label{CorrAssocId2}In particular,

\begin{enumerate}
\item $\left[ A^{2},B,C\right] =\left[ A,B,C\right] \left( A+\bar{A}\right) $

\item $\left[ A^{3},B,C\right] =\left[ A,B,C\right] \left( \bar{A}%
^{2}+\left\vert A\right\vert ^{2}+A^{2}\right) $
\end{enumerate}
\end{corollary}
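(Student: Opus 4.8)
The plan is to obtain both identities by specializing the recursion in part 6 of Lemma \ref{lemAssocIds}, namely $[A^{k+1},B,C]=[A^{k},B,C]\bar{A}+[A,B,C]A^{k}$, to $k=1$ and then to $k=2$, together with right-distributivity of the octonion product and the fact that $A+\bar{A}=2\func{Re}A$ is a real scalar.

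First I would take $k=1$. Since $A^{1}=A$, the recursion reads $[A^{2},B,C]=[A,B,C]\bar{A}+[A,B,C]A$, and distributing the common left factor gives $[A^{2},B,C]=[A,B,C]\left(A+\bar{A}\right)$, which is the first identity.

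Next I would take $k=2$, so that $[A^{3},B,C]=[A^{2},B,C]\bar{A}+[A,B,C]A^{2}$. Substituting the first identity for $[A^{2},B,C]$ yields $[A^{3},B,C]=\bigl([A,B,C]\left(A+\bar{A}\right)\bigr)\bar{A}+[A,B,C]A^{2}$. The one point requiring care is the reassociation of the first summand: because $A+\bar{A}=2\func{Re}A$ is a section of $\Lambda^{0}$, i.e. a real multiple of the identity, it is central and associates with every octonion, so $\bigl([A,B,C]\left(A+\bar{A}\right)\bigr)\bar{A}=[A,B,C]\bigl(\left(A+\bar{A}\right)\bar{A}\bigr)=[A,B,C]\bigl(A\bar{A}+\bar{A}^{2}\bigr)$. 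Using $A\bar{A}=\left\vert A\right\vert^{2}$ this becomes $[A,B,C]\bigl(\left\vert A\right\vert^{2}+\bar{A}^{2}\bigr)$, and adding back $[A,B,C]A^{2}$ and distributing once more gives $[A,B,C]\bigl(\bar{A}^{2}+\left\vert A\right\vert^{2}+A^{2}\bigr)$, the second identity.

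There is no real obstacle here: the only subtlety is the reassociation step, and it is justified either by the scalar being real, or, alternatively, by power-associativity (Artin's theorem) already invoked above, since $[A,B,C]$ and $\bar{A}$ generate an associative subalgebra. Thus the corollary follows immediately from Lemma \ref{lemAssocIds}.
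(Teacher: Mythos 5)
Your proof is correct and follows exactly the route the paper intends: the paper states the corollary as the $k=1$ and $k=2$ specializations of identity 6 in Lemma \ref{lemAssocIds}, and your handling of the reassociation via the reality of $A+\bar{A}$ (or equivalently the associativity of the subalgebra generated by $[A,B,C]$ and $A$) is exactly the justification needed.
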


For a given octonion $B\in \Gamma \left( \mathbb{O}M\right) $ we may define
the right translation map $R_{B}:\Gamma \left( \mathbb{O}M\right)
\longrightarrow \Gamma \left( \mathbb{O}M\right) $ and the left translation
map $L_{B}:\Gamma \left( \mathbb{O}M\right) \longrightarrow \Gamma \left( 
\mathbb{O}M\right) $ by%
\begin{eqnarray*}
R_{B}A &=&AB \\
L_{B}A &=&BA
\end{eqnarray*}%
Whenever $B\neq 0,$ these are invertible maps since $R_{B}R_{B^{-1}}A=\left(
AB^{-1}\right) B=A\left( B^{-1}B\right) =A$ and similarly for $L$. As
expected, the conjugates of $R_{B}$ and $L_{B}$ with respect to the octonion
metric are given by $R_{\bar{B}}$ and $L_{\bar{B}}$ respectively.

\begin{lemma}
\label{lemRLmult}For any octonions $A,B,C\in \Gamma \left( \mathbb{O}%
M\right) $, we have 
\begin{eqnarray}
\left\langle R_{B}A,C\right\rangle &=&\left\langle A,R_{\bar{B}%
}C\right\rangle \\
\left\langle L_{B}A,C\right\rangle &=&\left\langle A,L_{\bar{B}%
}C\right\rangle
\end{eqnarray}
\end{lemma}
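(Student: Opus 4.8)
The plan is to reduce both identities to a handful of elementary facts about the octonion algebra on $\mathbb{O}M$ that are already available: the inner product can be written as $\left\langle X,Y\right\rangle =\func{Re}\left( X\bar{Y}\right) $; conjugation reverses the order of products; and both the commutator (\ref{octocomm}) and the associator (\ref{octoassoc}) are purely imaginary. The first follows from (\ref{octoinnerprod}), since $X\bar{Y}+Y\bar{X}=X\bar{Y}+\overline{X\bar{Y}}=2\func{Re}\left( X\bar{Y}\right) $. The second is the conjugation lemma $\overline{XY}=\bar{Y}\bar{X}$ proved above. Because $\left[ X,Y\right] $ and $\left[ X,Y,Z\right] $ take values in $\func{Im}\mathbb{O}M$ by (\ref{octocomm}) and (\ref{octoassoc}), I obtain two consequences that I will use repeatedly: cyclicity of the real part, $\func{Re}\left( XY\right) =\func{Re}\left( YX\right) $, and insensitivity of the real part to reassociation, $\func{Re}\left( \left( XY\right) Z\right) =\func{Re}\left( X\left( YZ\right) \right) $.

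For the right-translation identity, unwinding the definitions gives $\left\langle R_{B}A,C\right\rangle =\left\langle AB,C\right\rangle =\func{Re}\left( \left( AB\right) \bar{C}\right) $. Reassociating under $\func{Re}$ yields $\func{Re}\left( A\left( B\bar{C}\right) \right) $, and since $B\bar{C}=\overline{C\bar{B}}$ by the conjugation rule, this equals $\func{Re}\left( A\,\overline{C\bar{B}}\right) =\left\langle A,C\bar{B}\right\rangle =\left\langle A,R_{\bar{B}}C\right\rangle $, as claimed.

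For the left-translation identity the same strategy works, but the $B$-factor is now on the outside, so one extra use of cyclicity is needed: $\left\langle L_{B}A,C\right\rangle =\func{Re}\left( \left( BA\right) \bar{C}\right) =\func{Re}\left( \bar{C}\left( BA\right) \right) =\func{Re}\left( \left( \bar{C}B\right) A\right) =\func{Re}\left( A\left( \bar{C}B\right) \right) $, using cyclicity, then reassociation, then cyclicity. Since $\bar{C}B=\overline{\bar{B}C}$, the last expression is $\left\langle A,\bar{B}C\right\rangle =\left\langle A,L_{\bar{B}}C\right\rangle $.

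There is no serious obstacle here; the only point requiring care is that associativity is unavailable, so each change of parenthesization must be justified by the vanishing of $\func{Re}\left[ X,Y,Z\right] $ and each reordering of a product by the vanishing of $\func{Re}\left[ X,Y\right] $ — both of which hold precisely because of the $G_{2}$-structure identities (\ref{octocomm}) and (\ref{octoassoc}). As a completely routine alternative, one could instead write $A=\left( a,\alpha \right) $, $B=\left( b,\beta \right) $, $C=\left( c,\gamma \right) $, expand all the octonion products using (\ref{octoproddef}), and verify the two scalar equalities by hand; this works but carries more bookkeeping than the conceptual argument above.
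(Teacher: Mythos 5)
Your proof is correct and follows essentially the same route as the paper's: both reduce the claim to the conjugate-symmetric formula (\ref{octoinnerprod}) for the inner product together with the fact that the associator (and, for the left-translation case, the commutator) is purely imaginary. The only difference is presentational — the paper writes out the two associator terms and shows they cancel, while you package the same cancellation as "\,$\func{Re}$ is cyclic and insensitive to reassociation\," — and you also spell out the $L_{B}$ case that the paper leaves as "similarly."
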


From this we can see that 
\begin{equation*}
\left\langle R_{B}A,R_{B}C\right\rangle =\left\vert B\right\vert
^{2}\left\langle A,C\right\rangle
\end{equation*}%
and similarly for the left translation map. Therefore, whenever $B\neq 0$,
every $R_{B}$ and $L_{B}$ is an element of the conformal group of $\mathbb{R}%
^{8}$. Moreover, when $\left\vert B\right\vert =1$, $R_{B}$ and $L_{B}$
preserve the octonion metric, and are thus elements of the $O\left( 8\right) 
$ group. Due to the nonassociativity of octonion multiplication, neither the
left nor the right translation maps form subgroups of $O\left( 8\right) .$
In fact, the left and right maps in general do not commute, and the
associator can be thought of as the commutator of the right and left
translations:%
\begin{equation}
\left[ L_{A},R_{C}\right] B=A\left( BC\right) -\left( AB\right) C=\left[
A,B,C\right]  \label{LRcommassoc}
\end{equation}

Since multiplication by unit octonions preserves the norm, we can restrict
the octonion multiplication to unit octonions. Hence we define the subbundle
of \emph{unit octonions. }

\begin{definition}
Define the \emph{unit octonion bundle }$S\mathbb{O}M$\emph{\ on }$M$ as the
unit sphere subbundle of $\mathbb{O}M$ where at each point $p\in M,$ the
fiber is given by $S\mathbb{O}_{p}M=\left\{ A\in \mathbb{O}_{p}M:\left\vert
A\right\vert =1\right\} $.
\end{definition}

We can restrict octonion multiplication to unit octonions, so the fiber at
each point is the $7$-sphere $S^{7}$ with a non-associative binary operation
defined on it. In fact, the set of unit octonions form a \emph{Moufang loop}
- an algebraic structure with similar properties to a group, except that it
is non-associative. Instead of associativity, we have weaker properties, as
given by Lemma \ref{lemAssocIds}. The bundle $S\mathbb{O}M$ can then be
regarded as a \emph{principal Moufang loop bundle} - analogous to a
principal bundle.

\section{Isometric $G_{2}$-structures}

\setcounter{equation}{0}\label{secIsomG2}Since the octonions are
power-associative we can unambiguously define the \emph{adjoint map.}

\begin{definition}
For any nowhere-vanishing $V\in \Gamma \left( \mathbb{O}M\right) $, define
the map 
\begin{equation*}
\func{Ad}_{V}:\Gamma \left( \mathbb{O}M\right) \longrightarrow \Gamma \left( 
\mathbb{O}M\right)
\end{equation*}%
given by 
\begin{equation}
\func{Ad}_{V}A=VAV^{-1}  \label{defAdmap}
\end{equation}%
for any $A\in \Gamma \left( \mathbb{O}M\right) .$
\end{definition}

The adjoint map satisfies a number of properties. In particular, it is easy
to see that $\left( \func{Ad}_{V}\right) ^{-1}=\func{Ad}_{V^{-1}}$, so it is
invertible. Also, as we show below, $\func{Ad}_{V}$ preserves the octonion
metric. Let $A,B\in \Gamma \left( \mathbb{O}M\right) ,$ then%
\begin{eqnarray*}
\left\langle \func{Ad}_{V}A,\func{Ad}_{V}B\right\rangle &=&\left\langle
VAV^{-1},VBV^{-1}\right\rangle \\
&=&\frac{1}{\left\vert V\right\vert ^{4}}\left\langle VAV,VBV\right\rangle \\
&=&\left\langle A,B\right\rangle
\end{eqnarray*}%
Therefore, $\func{Ad}_{V}\in O\left( 8\right) $. However, $\func{Ad}_{V}$
preserves the multiplicative identity of $\mathbb{O}$, and therefore maps
imaginary octonions to imaginary octonions. It also follows trivially that
for $\alpha \in \func{Im}\Gamma \left( \mathbb{O}M\right) $, $\overline{%
\func{Ad}_{V}\alpha }=\overline{V\alpha V^{-1}}=-\func{Ad}_{V}\alpha $.
Hence it restricts to pure imaginary octonions, and this restriction $\left. 
\func{Ad}_{V}\right\vert _{\func{Im}\mathbb{O}}$ lies in $O\left( 7\right) $%
. Note that for brevity we will sometimes use $\func{Ad}_{V}$ to denote the
restriction $\left. \func{Ad}_{V}\right\vert _{\func{Im}\mathbb{O}}.$ It
will be clear from the context that this is regarded as a map of imaginary
octonions.

Note that 
\begin{equation*}
\func{Ad}_{kV}=\func{Ad}_{V}
\end{equation*}%
for any nowhere-vanishing scalar $k$, so in fact we can always assume that $%
V $ is a unit octonion.

Using the octonion multiplication rules in terms of the $G_{2}$-structure $%
\varphi $ we can write out $\left. \func{Ad}_{V}\right\vert _{\func{Im}%
\mathbb{O}}$ explicitly as an element in $O\left( 7\right) $. Suppose $\beta 
$ is pure imaginary, and let $V=\left( v_{0},v\right) $, then 
\begin{eqnarray}
\func{Ad}_{V}B &=&V\beta V^{-1}  \notag \\
&=&\frac{1}{\left\Vert V\right\Vert ^{2}}\left( v_{0}\mathbf{+}v\right)
\beta \left( v_{0}\mathbf{-}v\right)  \notag \\
&=&\frac{1}{\left\Vert V\right\Vert ^{2}}\left( v_{0}\mathbf{+}v\right)
\left( \left\langle v,\beta \right\rangle +v_{0}\beta +v\times \beta \right)
\notag \\
&=&\frac{1}{\left\Vert V\right\Vert ^{2}}\left( v_{0}^{2}\beta
+2v_{0}v\times \beta +v\left\langle v,\beta \right\rangle +v\times \left(
v\times \beta \right) \right)  \notag \\
&=&\frac{1}{\left\Vert V\right\Vert ^{2}}\left( \left( v_{0}^{2}-\left\vert
v\right\vert ^{2}\right) \beta +2v_{0}v\times \beta +2v\left\langle v,\beta
\right\rangle \right)  \label{AdVexpress}
\end{eqnarray}%
In index notation, this then gives the components of the matrix $\left. 
\func{Ad}_{V}\right\vert _{\func{Im}\mathbb{O}}$:%
\begin{equation}
\left( \left. \func{Ad}_{V}\right\vert _{\func{Im}\mathbb{O}}\right) _{\
b}^{a}=\frac{1}{\left\Vert V\right\Vert ^{2}}\left( \left(
v_{0}^{2}-\left\vert v\right\vert ^{2}\right) \delta _{\ b}^{a}-2v_{0}\left(
v\lrcorner \varphi \right) _{\ b}^{a}+2v^{a}v_{b}\right) .
\label{AdVexpressind}
\end{equation}%
Using this explicit description of $\left. \func{Ad}_{V}\right\vert _{\func{%
Im}\mathbb{O}}$, a computation of the determinant in \emph{Maple} shows that 
$\det \left( \left. \func{Ad}_{V}\right\vert _{\func{Im}\mathbb{O}}\right)
=+1$ for any non-zero $V$. The explicit calculation is somewhat messy.
Therefore, in fact, $\left. \func{Ad}_{V}\right\vert _{\func{Im}\mathbb{O}%
}\in SO\left( 7\right) $. Since $\left. \func{Ad}_{V}\right\vert _{\func{Re}%
\mathbb{O}}=+1,$ we find that $\func{Ad}_{V}\in SO\left( 8\right) $.

Consider also the following identities.

\begin{lemma}
\label{lemAssocIds2}Given a nowhere-vanishing octonion $V$, the following
identities hold for any $A,B\in \Gamma \left( \mathbb{O}M\right) $

\begin{enumerate}
\item $\left( VA\right) \left( BV^{-1}\right) =\func{Ad}_{V}\left( AB\right)
+\left[ A,B,V^{-1}\right] \left( V+\bar{V}\right) $

\item $\left( AV^{-1}\right) \left( VB\right) =AB+\left[ A,B,V^{-1}\right] V$
\end{enumerate}
\end{lemma}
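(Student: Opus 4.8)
The plan is to prove both identities by the same reduction: substitute $V^{-1}=|V|^{-2}\bar V$ and then expand every conjugate via $\bar V=2\func{Re}(V)-V$, which turns the statements into pure associator manipulations controlled by the skew-symmetry of $[\cdot,\cdot,\cdot]$ (noted after \eqref{octoassoc}) and by items 3--6 of Lemma \ref{lemAssocIds}. Two auxiliary moves will be used constantly. First, the associator annihilates scalar arguments (immediate from \eqref{octoassoc}, since $\psi$ is a $4$-form and a scalar has zero imaginary part), so $[A,B,\bar V]=-[A,B,V]$ and $[A,B,V^{-1}]=-|V|^{-2}[A,B,V]$. Second, for $P=[A,B,V]$ one has $VP=P\bar V$ (item 3 of Lemma \ref{lemAssocIds} together with cyclicity), hence $\bar V P=(2\func{Re}(V)-V)P=PV$. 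Note also that $V+\bar V=2\func{Re}(V)$ is a \emph{scalar}; this is exactly why the combination $(V+\bar V)$ appears on the right of the first identity.

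For the second identity I would begin from the associator $[A,V^{-1},VB]=A\bigl(V^{-1}(VB)\bigr)-(AV^{-1})(VB)$ and use the inverse property $V^{-1}(VB)=B$, valid because $V$, $B$ and their conjugates generate an associative subalgebra (the remark following Corollary \ref{corrOctopow}). This gives $(AV^{-1})(VB)=AB-[A,V^{-1},VB]$, so it only remains to identify $[A,V^{-1},VB]=-[A,B,V^{-1}]V$. Writing $V^{-1}=|V|^{-2}(2\func{Re}(V)-V)$, the scalar part drops out and leaves $-|V|^{-2}[A,V,VB]$; a cyclic permutation plus item 4 of Lemma \ref{lemAssocIds} rewrites $[V,VB,A]$ as $\bar V[V,B,A]$, and then antisymmetry together with the commutation rule $\bar VP=PV$ produces exactly $-[A,B,V^{-1}]V$.

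For the first identity I would clear the factor $|V|^{-2}$ and aim to prove $(VA)(B\bar V)=V(AB)\bar V+[A,B,\bar V](V+\bar V)$. Expanding the right factor of $(VA)(B\bar V)$ via $\bar V=2\func{Re}(V)-V$ gives $2\func{Re}(V)\,(VA)B-(VA)(BV)$, where $(VA)(BV)=V(AB)V$ is the middle Moufang identity (which itself follows from items 3--4 of Lemma \ref{lemAssocIds} and flexibility $[x,y,x]=0$). Expanding the right-hand side the same way --- using that $V+\bar V$ is scalar, and flexibility for $(V(AB))V$ --- both sides reduce, after the common term $-V(AB)V$ cancels, to $2\func{Re}(V)$ times the defining relation $[V,A,B]=V(AB)-(VA)B$; so the identity holds term by term, with no division by $\func{Re}(V)$ needed. (An alternative route to the first identity is the Teichm\"uller identity $[VA,B,V^{-1}]-[V,AB,V^{-1}]+[V,A,BV^{-1}]=V[A,B,V^{-1}]+[V,A,B]V^{-1}$, combined with the vanishing $[V,X,V^{-1}]=0$, which holds because $\func{Im}V$ and $\func{Im}V^{-1}$ are parallel; this yields $[VA,B,V^{-1}]=[A,B,V^{-1}]V$, from which the identity follows after plugging into $(VA)(BV^{-1})=\func{Ad}_V(AB)-[V,A,B]V^{-1}+[VA,B,V^{-1}]$.)

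The main obstacle is purely bookkeeping. Because $L_V$ and $R_V$ do not commute \eqref{LRcommassoc}, each rewrite moves $V$ and $\bar V$ into different slots, and one must stay disciplined about which cyclic/transposition symmetry of the skew-symmetric associator is in play and about which of items 3--6 of Lemma \ref{lemAssocIds} applies (they differ only in whether the conjugate powers land on the left or on the right). Keeping the two observations ``$V+\bar V$ is scalar'' and ``$\bar V P=PV$ for $P$ an associator'' at the forefront is what makes the cancellations transparent; granting those, neither identity needs anything beyond what is already established in the excerpt.
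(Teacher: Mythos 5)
Your argument is correct, and checking it against the paper's proof shows a split: for identity 2 you are doing essentially what the paper does --- using the defining relation of the associator to peel the triple product apart and then invoking Lemma \ref{lemAssocIds} (the paper starts from $(AV^{-1})(VB)=((AV^{-1})V)B+[AV^{-1},V,B]$, you from $[A,V^{-1},VB]=A(V^{-1}(VB))-(AV^{-1})(VB)$, but the bookkeeping is the same). For identity 1 your main route is genuinely different: the paper expands $(VA)(BV^{-1})=((VA)B)V^{-1}+[VA,B,V^{-1}]$ and pushes $[VA,B,V^{-1}]$ through items 4 and 3 of Lemma \ref{lemAssocIds}, whereas you substitute $\bar V=2\func{Re}(V)-V$ and reduce everything to the middle Moufang identity $(VA)(BV)=(V(AB))V$ plus $2\func{Re}(V)$ times the defining relation $[V,A,B]=V(AB)-(VA)B$. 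Your version isolates exactly why the scalar $V+\bar V$ appears, at the cost of first establishing the Moufang identity --- which, as you note, does follow from items 3--4 and flexibility, so nothing outside the excerpt is used. One caution about your parenthetical Teichm\"{u}ller route: in $[VA,B,V^{-1}]-[V,AB,V^{-1}]+[V,A,BV^{-1}]=V[A,B,V^{-1}]+[V,A,B]V^{-1}$ only the middle term on the left is of the form $[V,X,V^{-1}]$ and vanishes; the third term $[V,A,BV^{-1}]$ does not vanish and must still be converted via item 5 and the commutation rule $\bar VP=PV$ before $[VA,B,V^{-1}]=[A,B,V^{-1}]V$ drops out. Since that route is explicitly offered as an alternative and your primary argument stands on its own, this does not affect the correctness of the proof.
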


The proof of Lemma \ref{lemAssocIds2} is given in the Appendix. Using these
identities we can now see what happens to the octonion product under the
action of $\func{Ad}$:

\begin{proposition}
\label{PropAdCprod}Given a nowhere-vanishing octonion $V$, the octonion
product is transformed as follows%
\begin{equation}
\left( \func{Ad}_{V}A\right) \left( \func{Ad}_{V}B\right) =\func{Ad}%
_{V}\left( AB\right) +\left[ A,B,V^{-1}\right] \left( V+\bar{V}+\frac{1}{%
\left\vert V\right\vert ^{2}}V^{3}\right)  \label{AdAdId}
\end{equation}%
and in particular, 
\begin{eqnarray}
\func{Ad}_{V^{-1}}\left[ \left( \func{Ad}_{V}A\right) \left( \func{Ad}%
_{V}B\right) \right] &=&AB+\left[ A,B,V^{-3}\right] V^{3}  \label{AdinAdAdid}
\\
&=&\left( AV^{-3}\right) \left( V^{3}B\right)
\end{eqnarray}
\end{proposition}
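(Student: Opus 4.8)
The plan is to reduce $\left(\func{Ad}_{V}A\right)\left(\func{Ad}_{V}B\right)=\left(VAV^{-1}\right)\left(VBV^{-1}\right)$ to $\func{Ad}_{V}\left(AB\right)$ plus associator corrections using the two identities of Lemma \ref{lemAssocIds2}, and then to collapse those corrections into the single term $\left[A,B,V^{-1}\right]\left(V+\bar{V}+\left\vert V\right\vert^{-2}V^{3}\right)$ using the skew-symmetry of the associator (Lemma \ref{lemAssocIds}) together with the fact noted in Section \ref{secOctobundle} that $V$, $\bar{V}$, $V^{-1}$ and any one further octonion generate an associative subalgebra of $\mathbb{O}M$. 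Concretely, I would write $\func{Ad}_{V}A=V\left(AV^{-1}\right)$ and $\func{Ad}_{V}B=\left(VB\right)V^{-1}$ (each unambiguous, involving $V$ and one other element), apply Lemma \ref{lemAssocIds2}(1) with $A,B$ there replaced by $AV^{-1},VB$, and then Lemma \ref{lemAssocIds2}(2); using linearity of $\func{Ad}_{V}$ this gives $\left(VAV^{-1}\right)\left(VBV^{-1}\right)=\func{Ad}_{V}\left(AB\right)+\func{Ad}_{V}\left(\left[A,B,V^{-1}\right]V\right)+\left[AV^{-1},VB,V^{-1}\right]\left(V+\bar{V}\right)$, and it remains to evaluate the last two terms.

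For the middle term, the flexible identity $V\left(WV\right)=\left(VW\right)V$ (a consequence of the skew-symmetry of the associator) together with associativity of the subalgebra generated by $W:=\left[A,B,V^{-1}\right]$ and $V$ gives $\func{Ad}_{V}\left(WV\right)=\left(\left(VW\right)V\right)V^{-1}=\left(VW\right)\left(VV^{-1}\right)=VW$; and skew-symmetry of the associator combined with Lemma \ref{lemAssocIds}(3) gives $\bar{V}W=WV$, hence $VW=\left(V+\bar{V}-\bar{V}\right)W=\left(V+\bar{V}\right)W-WV=W\bar{V}$, so the middle term equals $\left[A,B,V^{-1}\right]\bar{V}$. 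For the last term I would expand $\left[AV^{-1},VB,V^{-1}\right]=\left(AV^{-1}\right)\left(\left(VB\right)V^{-1}\right)-\left(\left(AV^{-1}\right)\left(VB\right)\right)V^{-1}$ and apply Lemma \ref{lemAssocIds2}(2) to each of the two products (the first time with $B$ replaced by $BV^{-1}$); after cancelling the two occurrences of $A\left(BV^{-1}\right)-\left(AB\right)V^{-1}=\left[A,B,V^{-1}\right]$ this reduces to $\left[AV^{-1},VB,V^{-1}\right]=\left[A,BV^{-1},V^{-1}\right]V$. Moving $V^{-1}$ into the first slot by skew-symmetry and applying Lemma \ref{lemAssocIds}(5) yields $\left[A,BV^{-1},V^{-1}\right]=\left[A,B,V^{-1}\right]\overline{V^{-1}}$, hence $\left[AV^{-1},VB,V^{-1}\right]=\left\vert V\right\vert^{-2}\left[A,B,V^{-1}\right]V^{2}$ since $\overline{V^{-1}}V=\left\vert V\right\vert^{-2}V^{2}$. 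Multiplying by $V+\bar{V}$ and using $V^{2}\left(V+\bar{V}\right)=V^{3}+\left\vert V\right\vert^{2}V$, the last term becomes $\left[A,B,V^{-1}\right]\left(V+\left\vert V\right\vert^{-2}V^{3}\right)$; adding $\left[A,B,V^{-1}\right]\bar{V}$ gives exactly (\ref{AdAdId}).

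For the ``in particular'' statement I would apply $\func{Ad}_{V^{-1}}=\left(\func{Ad}_{V}\right)^{-1}$ to both sides of (\ref{AdAdId}): the term $\func{Ad}_{V}\left(AB\right)$ returns $AB$, while for the associator term, writing $W=\left[A,B,V^{-1}\right]$ and $Z=V+\bar{V}+\left\vert V\right\vert^{-2}V^{3}$ (which lies in the commutative subalgebra generated by $V$), a computation inside the associative subalgebra generated by $W$ and $V$ gives $\func{Ad}_{V^{-1}}\left(WZ\right)=\left(V^{-1}WV\right)Z=\left\vert V\right\vert^{-2}WV^{2}Z$. On the other hand, $\left[A,B,V^{-1}\right]=-\left\vert V\right\vert^{-2}\left[A,B,V\right]$, and Corollary \ref{CorrAssocId2}(2) (together with skew-symmetry) gives $\left[A,B,V^{3}\right]=\left[A,B,V\right]\left(\bar{V}^{2}+\left\vert V\right\vert^{2}+V^{2}\right)$, from which $\left[A,B,V^{-3}\right]V^{3}=\left\vert V\right\vert^{-2}WV^{2}Z$ as well; hence $\func{Ad}_{V^{-1}}\left[\left(\func{Ad}_{V}A\right)\left(\func{Ad}_{V}B\right)\right]=AB+\left[A,B,V^{-3}\right]V^{3}$. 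The final equality $AB+\left[A,B,V^{-3}\right]V^{3}=\left(AV^{-3}\right)\left(V^{3}B\right)$ is then just Lemma \ref{lemAssocIds2}(2) applied to the nowhere-vanishing octonion $V^{3}$ in place of $V$.

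The main obstacle is the handling of the ``mixed'' associators $\left[AV^{-1},VB,V^{-1}\right]$ and $\left[A,BV^{-1},V^{-1}\right]$, which are not among the tabulated identities of Lemma \ref{lemAssocIds}: the point is to exploit that the associator is alternating and that $V,\bar{V},V^{-1}$ together with a single extra octonion always sit in an associative subalgebra, so that the only genuinely noncommutative relation that survives the reduction is $VW=W\bar{V}$ for $W=\left[A,B,V^{-1}\right]$. One should also be careful to keep the sign bookkeeping from the skew-symmetry consistent throughout.
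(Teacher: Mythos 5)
Your argument is correct and follows essentially the same route as the paper: both proofs peel apart $\left(VAV^{-1}\right)\left(VBV^{-1}\right)$ using the two identities of Lemma \ref{lemAssocIds2} and then collapse the resulting associator corrections via the alternativity identities of Lemma \ref{lemAssocIds} and Corollary \ref{CorrAssocId2}. The only difference is cosmetic — you apply identity (1) before identity (2), where the paper does the reverse, so your intermediate mixed associator is $\left[AV^{-1},VB,V^{-1}\right]$ rather than $\left[VA,BV^{-1},V^{-1}\right]$ — and your handling of it, together with the identification of $\func{Ad}_{V^{-1}}\left(WZ\right)$ with $\left[A,B,V^{-3}\right]V^{3}$, checks out.
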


\begin{proof}
To work out $\left( \func{Ad}_{V}A\right) \left( \func{Ad}_{V}B\right) ,$ we
first use the identity 2 from Lemma \ref{lemAssocIds2} and the identities
from Lemma \ref{lemAssocIds}: 
\begin{eqnarray}
\left( \func{Ad}_{V}A\right) \left( \func{Ad}_{V}B\right) &=&\left(
VAV^{-1}\right) \left( VBV^{-1}\right)  \notag \\
&=&\left( VA\right) \left( BV^{-1}\right) +\left[ VA,BV^{-1},V^{-1}\right] V
\notag \\
&=&\left( VA\right) \left( BV^{-1}\right) +\bar{V}\left[ A,B,V^{-1}\right] 
\frac{V^{2}}{\left\vert V\right\vert ^{2}}  \notag \\
&=&\left( VA\right) \left( BV^{-1}\right) +\left[ A,B,V^{-1}\right] \frac{%
V^{3}}{\left\vert V\right\vert ^{2}}
\end{eqnarray}%
Now we apply identity 1 from Lemma \ref{lemAssocIds2}:%
\begin{eqnarray}
\left( \func{Ad}_{V}A\right) \left( \func{Ad}_{V}B\right) &=&\func{Ad}%
_{V}\left( AB\right) +\left[ A,B,V^{-1}\right] \left( \bar{V}+V\right) +%
\left[ A,B,V^{-1}\right] \frac{V^{3}}{\left\vert V\right\vert ^{2}}  \notag
\\
&=&\func{Ad}_{V}\left( AB\right) +\left[ A,B,V^{-1}\right] \left( \bar{V}+V+%
\frac{V^{3}}{\left\vert V\right\vert ^{2}}\right)
\end{eqnarray}%
For the second part, we just apply $\func{Ad}_{V^{-1}}$ to (\ref{AdAdId})
and then rewrite using the fact that the subalgebra generated by the two
elements $V$ and $\left[ A,B,V^{-1}\right] $ is associative. 
\begin{eqnarray}
\func{Ad}_{V^{-1}}\left[ \left( \func{Ad}_{V}A\right) \left( \func{Ad}%
_{V}B\right) \right] &=&AB+V^{-1}\left( \left[ A,B,V^{-1}\right] \left( \bar{%
V}+V+\frac{V^{3}}{\left\vert V\right\vert ^{2}}\right) \right) V  \notag \\
&=&AB+\left( V^{-1}\left[ A,B,V^{-1}\right] \right) \left( \left( \bar{V}+V+%
\frac{V^{3}}{\left\vert V\right\vert ^{2}}\right) V\right)
\end{eqnarray}%
Applying the identities from Lemma \ref{lemAssocIds}, we get 
\begin{eqnarray}
\func{Ad}_{V^{-1}}\left[ \left( \func{Ad}_{V}A\right) \left( \func{Ad}%
_{V}B\right) \right] &=&AB-\left( \left[ A,B,V\right] V\right) \left( \left( 
\bar{V}+V+\frac{V^{3}}{\left\vert V\right\vert ^{2}}\right) \frac{V}{%
\left\vert V\right\vert ^{4}}\right)  \notag \\
&=&AB-\left[ A,B,V\right] \left( \left( \bar{V}+V+\frac{V^{3}}{\left\vert
V\right\vert ^{2}}\right) \frac{V^{2}}{\left\vert V\right\vert ^{4}}\right) 
\notag \\
&=&AB-\left[ A,B,V\right] \left( \bar{V}^{2}+\left\vert V\right\vert
^{2}+V^{2}\right) \frac{V^{3}}{\left\vert V\right\vert ^{6}}
\end{eqnarray}%
Now we can use Corollary \ref{CorrAssocId2} to simplify the right-hand side
of (\ref{adinadadid2}) to obtain 
\begin{eqnarray}
\func{Ad}_{V^{-1}}\left[ \left( \func{Ad}_{V}A\right) \left( \func{Ad}%
_{V}B\right) \right] &=&AB-\left\vert V\right\vert ^{-6}\left[ A,B,V^{3}%
\right] V^{3}  \notag \\
&=&AB+\left[ A,B,V^{-3}\right] V^{3}  \label{advinvadad}
\end{eqnarray}%
Finally, using the identity 2 from Lemma \ref{lemAssocIds2}, we can rewrite
this as 
\begin{equation*}
\func{Ad}_{V^{-1}}\left[ \left( \func{Ad}_{V}A\right) \left( \func{Ad}%
_{V}B\right) \right] =\left( AV^{-3}\right) \left( V^{3}B\right) .
\end{equation*}
\end{proof}

Let us use the action of $\func{Ad}_{V}$ to define a new octonion product $%
A\circ _{V^{3}}B$ given by 
\begin{equation}
A\circ _{V^{3}}B=\func{Ad}_{V}\left[ \left( \func{Ad}_{V^{-1}}A\right)
\left( \func{Ad}_{V^{-1}}B\right) \right]
\end{equation}%
The corresponding $3$-form $\varphi _{V^{3}}$ that defines the product $%
A\circ _{V^{3}}B$ is then given by 
\begin{equation}
\varphi _{V^{3}}\left( A,B,C\right) =\varphi \left( \func{Ad}_{V^{-1}}A,%
\func{Ad}_{V^{-1}}B,\func{Ad}_{V^{-1}}C\right)  \label{phiadVdef}
\end{equation}%
We know that $\func{Ad}_{V}$ is an invertible map. Therefore, $\varphi _{V}$
is pointwise in the $GL\left( 7,\mathbb{R}\right) $-orbit of the original $3$%
-form $\varphi $, and is therefore another positive $3$-form, so it defines
a new $G_{2}$-structure. However, since $\func{Ad}_{V}\ $preserves the
metric, $\varphi _{V}$ has the same associated metric $g$ as $\varphi $. We
will now give an explicit description of (\ref{phiadVdef}) and will show
that the descriptions of isometric $G_{2}$-structures (\ref{phisameg1}) and (%
\ref{phiadVdef}) are equivalent. For convenience, let us define a map of $3$%
-forms that gives (\ref{phisameg1}):

\begin{definition}
Let $A=\left( a,\alpha \right) $ be a nowhere-vanishing octonion. Then,
define the map $\sigma _{A}:\Omega ^{3}\left( M\right) \longrightarrow
\Omega ^{3}\left( M\right) $ given by 
\begin{equation}
\sigma _{A}\left( \varphi \right) =\frac{1}{\left\vert A\right\vert ^{2}}%
\left( \left( a^{2}-\left\vert \alpha \right\vert ^{2}\right) \varphi
-2a\alpha \lrcorner \left( \ast \varphi \right) +2\alpha \wedge \left(
\alpha \lrcorner \varphi \right) \right)  \label{sigmaAdef}
\end{equation}
\end{definition}

In particular, from Theorem \ref{ThmSamegfam}, $\sigma _{A}$ is a map of
positive $3$-forms, and moreover, it preserves the metric class. Note that $%
\sigma _{A}=\sigma _{fA}$ for any nowhere vanishing function $f$. Therefore,
usually it is enough to take $A$ as a unit octonion.

\begin{theorem}
\label{ThmSigmaAd}Let $\left( \varphi ,g\right) $ be a $G_{2}$-structure on
a smooth $7$-dimensional manifold $M$. Then, for any nowhere-vanishing
octonion $V$, 
\begin{equation}
\sigma _{V^{3}}\left( \varphi \right) \left( \cdot ,\cdot ,\cdot \right)
=\varphi \left( \func{Ad}_{V^{-1}}\cdot ,\func{Ad}_{V^{-1}}\cdot ,\func{Ad}%
_{V^{-1}}\cdot \right)  \label{phiadadad}
\end{equation}
\end{theorem}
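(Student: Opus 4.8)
The plan is to recognize the right-hand side of \eqref{phiadadad} as the $3$-form attached to an isotope of the octonion product, and then to compute that $3$-form directly. By definition $A\circ _{V^{3}}B=\func{Ad}_{V}\left[ \left( \func{Ad}_{V^{-1}}A\right) \left( \func{Ad}_{V^{-1}}B\right) \right] $, and applying \eqref{AdinAdAdid} of Proposition \ref{PropAdCprod} with $V$ replaced by $V^{-1}$ (legitimate by power-associativity) gives
\[
A\circ _{V^{3}}B=\left( AV^{3}\right) \left( V^{-3}B\right) .
\]
Since $\func{Ad}_{V}$ preserves the octonion metric and fixes $1$, the product $\circ _{V^{3}}$ is again of $G_{2}$ type, and its defining $3$-form is precisely $\varphi _{V^{3}}\left( \cdot ,\cdot ,\cdot \right) =\varphi \left( \func{Ad}_{V^{-1}}\cdot ,\func{Ad}_{V^{-1}}\cdot ,\func{Ad}_{V^{-1}}\cdot \right) $. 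Hence it suffices to prove the more general statement: for every nowhere-vanishing $W=\left( w_{0},w\right) $, the $3$-form of the product $A\circ _{W}B:=\left( AW\right) \left( W^{-1}B\right) $ equals $\sigma _{W}\left( \varphi \right) $. Specializing $W=V^{3}$ then proves the theorem, since $\left\vert V^{3}\right\vert ^{2}=\left\vert V\right\vert ^{6}$ reproduces the normalizing factor in \eqref{sigmaAdef} and $\func{Re}V^{3},\func{Im}V^{3}$ are the scalar and vector parts occurring there.

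To extract the $3$-form of $\circ _{W}$, I evaluate for imaginary $\alpha ,\beta ,\gamma $ the number $\left\langle \left( \alpha W\right) \left( W^{-1}\beta \right) ,\gamma \right\rangle $, which equals $\left\langle \func{Im}\left[ \left( \alpha W\right) \left( W^{-1}\beta \right) \right] ,\gamma \right\rangle $ because the real part is orthogonal to $\gamma $. From \eqref{octoproddef}, $\alpha W=\left( -\langle \alpha ,w\rangle ,\,w_{0}\alpha +\alpha \times w\right) $ and $W^{-1}\beta =\left\vert W\right\vert ^{-2}\left( \langle w,\beta \rangle ,\,w_{0}\beta -w\times \beta \right) $. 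Multiplying these out with \eqref{octoproddef} and expanding the cross product $\left( w_{0}\alpha +\alpha \times w\right) \times \left( w_{0}\beta -w\times \beta \right) $ by three applications of the double cross product identity \eqref{doublecrossprod} (using $\langle \alpha \times w,w\rangle =0$), the pure $\alpha $- and pure $\beta $-terms cancel and one obtains
\[
\left\vert W\right\vert ^{2}\func{Im}\left[ \left( \alpha W\right) \left( W^{-1}\beta \right) \right] =w_{0}^{2}\left( \alpha \times \beta \right) +\langle \alpha ,w\rangle \left( w\times \beta \right) +\langle w,\beta \rangle \left( \alpha \times w\right) -\varphi \left( \alpha ,w,\beta \right) w-2w_{0}\psi \left( \cdot ^{\sharp },\alpha ,w,\beta \right) -\psi \left( \cdot ^{\sharp },\alpha \times w,w,\beta \right) .
\]

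Pairing with $\gamma $ and using $\langle \alpha \times \beta ,\gamma \rangle =\varphi \left( \alpha ,\beta ,\gamma \right) $, this must be compared with
\[
\left\vert W\right\vert ^{2}\sigma _{W}\left( \varphi \right) \left( \alpha ,\beta ,\gamma \right) =\left( w_{0}^{2}-\left\vert w\right\vert ^{2}\right) \varphi \left( \alpha ,\beta ,\gamma \right) -2w_{0}\left( w\lrcorner \psi \right) \left( \alpha ,\beta ,\gamma \right) +2\left( w\wedge \left( w\lrcorner \varphi \right) \right) \left( \alpha ,\beta ,\gamma \right) .
\]
The $-2w_{0}\psi \left( \cdot ^{\sharp },\alpha ,w,\beta \right) $ term matches $-2w_{0}\left( w\lrcorner \psi \right) $ after an even reordering of the arguments of $\psi $; and $\langle \alpha ,w\rangle \varphi \left( w,\beta ,\gamma \right) +\langle w,\beta \rangle \varphi \left( \alpha ,w,\gamma \right) -\varphi \left( \alpha ,w,\beta \right) \langle w,\gamma \rangle $ is exactly $\left( w\wedge \left( w\lrcorner \varphi \right) \right) \left( \alpha ,\beta ,\gamma \right) $, i.e. half of the last term in $\sigma _{W}\left( \varphi \right) $. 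Matching what remains then reduces to the single identity
\[
\psi \left( \gamma ,\alpha \times w,w,\beta \right) =\left\vert w\right\vert ^{2}\varphi \left( \alpha ,\beta ,\gamma \right) -\left( w\wedge \left( w\lrcorner \varphi \right) \right) \left( \alpha ,\beta ,\gamma \right) ,
\]
whose $\left\vert w\right\vert ^{2}\varphi $-term converts $w_{0}^{2}$ into $w_{0}^{2}-\left\vert w\right\vert ^{2}$ and whose other term supplies the missing half of the $w$-wedge term. This last identity is the only step that is not purely formal, and it is where the bulk of the work lies: I would prove it by writing $\left( \alpha \times w\right) ^{j}=\varphi ^{j}{}_{pq}\alpha ^{p}w^{q}$, contracting one index of $\psi =\ast \varphi $ against $\varphi $ using the standard one-index $\varphi $–$\psi $ contraction identity (a consequence of \eqref{phiphi1} and Hodge duality), and simplifying with \eqref{phiphi1}; the special cases $w=\alpha $ and $w=\gamma $, where both sides visibly vanish, are a quick consistency check.

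As a cross-check and to explain why it is $\sigma _{V^{3}}$ rather than $\sigma _{V}$ that appears, one may first reduce to a unit $V=\left( \cos \theta ,\sin \theta \,\hat{u}\right) $ using $\func{Ad}_{fV}=\func{Ad}_{V}$ and $\sigma _{fW}=\sigma _{W}$. Then $\func{Ad}_{V^{-1}}$ fixes $\hat{u}$ and acts on $\hat{u}^{\perp }$ as $\cos 2\theta -\sin 2\theta \,\left( \hat{u}\times \cdot \right) $, i.e. as $e^{-2\theta J}$ for the complex structure $J=\hat{u}\times \cdot $ on $\hat{u}^{\perp }$; in the induced $SU\left( 3\right) $-splitting $\varphi =\hat{u}^{\flat }\wedge \omega +\func{Re}\Omega $, $\psi =\frac{1}{2}\omega ^{2}+\hat{u}^{\flat }\wedge \func{Im}\Omega $, this fixes $\hat{u}^{\flat }$ and $\omega $ and multiplies $\Omega $ by $e^{6i\theta }$, so that
\[
\varphi \left( \func{Ad}_{V^{-1}}\cdot ,\func{Ad}_{V^{-1}}\cdot ,\func{Ad}_{V^{-1}}\cdot \right) =\cos 6\theta \,\varphi -\sin 6\theta \,\left( \hat{u}\lrcorner \psi \right) +\left( 1-\cos 6\theta \right) \hat{u}^{\flat }\wedge \left( \hat{u}\lrcorner \varphi \right) =\sigma _{V^{3}}\left( \varphi \right) ,
\]
using $V^{3}=\left( \cos 3\theta ,\sin 3\theta \,\hat{u}\right) $ from Corollary \ref{corrOctopow}. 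Either route establishes \eqref{phiadadad}.
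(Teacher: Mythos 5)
Your proposal is correct and follows essentially the same route as the paper's proof: both use Proposition \ref{PropAdCprod} to identify $\varphi\left( \func{Ad}_{V^{-1}}\cdot ,\func{Ad}_{V^{-1}}\cdot ,\func{Ad}_{V^{-1}}\cdot \right) $ with the $3$-form of the isotope product $\left( AV^{3}\right) \left( V^{-3}B\right) $, and both close the computation with the $\varphi $--$\psi $ contraction identity (\ref{phipsiid}), which is exactly the identity your final step requires (I checked that it does yield $\psi \left( \gamma ,\alpha \times w,w,\beta \right) =\left\vert w\right\vert ^{2}\varphi \left( \alpha ,\beta ,\gamma \right) -\left( w\wedge \left( w\lrcorner \varphi \right) \right) \left( \alpha ,\beta ,\gamma \right) $). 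The only difference is bookkeeping: you expand $\left( \alpha W\right) \left( W^{-1}\beta \right) $ from the raw product definition and the double cross product identity (\ref{doublecrossprod}), whereas the paper reaches the same terms more quickly by writing the correction as $\left[ A,B,V^{3}\right] V^{-3}$ and invoking the associator formula (\ref{octoassoc}).
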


\begin{proof}
Note that for pure imaginary octonions $A,B,C,$ 
\begin{equation*}
\varphi \left( A,B,C\right) =\left\langle A\times B,C\right\rangle
=\left\langle AB,C\right\rangle
\end{equation*}%
Therefore, 
\begin{eqnarray*}
\varphi _{V^{3}}\left( A,B,C\right) &=&\varphi \left( \func{Ad}_{V^{-1}}A,%
\func{Ad}_{V^{-1}}B,\func{Ad}_{V^{-1}}C\right) \\
&=&\left\langle \func{Ad}_{V^{-1}}A\func{Ad}_{V^{-1}}B,\func{Ad}%
_{V^{-1}}C\right\rangle \\
&=&\left\langle \func{Ad}_{V}\left( \func{Ad}_{V^{-1}}A\func{Ad}%
_{V^{-1}}B\right) ,C\right\rangle
\end{eqnarray*}%
where we have used the fact that $\func{Ad}_{V^{-1}}^{\ast }=\func{Ad}_{V}.$
Now, using (\ref{AdinAdAdid}), we have 
\begin{eqnarray*}
\varphi _{V^{3}}\left( A,B,C\right) &=&\left\langle AB+\left[ A,B,V^{3}%
\right] V^{-3},C\right\rangle \\
&=&\varphi \left( A,B,C\right) +\left\langle \left[ A,B,V^{3}\right]
V^{-3},C\right\rangle
\end{eqnarray*}%
Suppose now $V^{3}=\left( u_{0},u\right) $ and for convenience, let $%
\left\vert V^{3}\right\vert ^{2}=u_{0}^{2}+\left\vert u\right\vert ^{2}=M,$
then,%
\begin{equation*}
\left[ A,B,V^{3}\right] V^{-3}=\frac{u_{0}}{M}\left[ A,B,u\right] -\frac{1}{M%
}\left[ A,B,u\right] \times u
\end{equation*}%
In index notation, and using (\ref{octoassoc}) to express the associator in
terms of $\psi ,$ we get 
\begin{equation}
\left( \varphi _{V}^{3}\right) _{abc}=\varphi _{abc}+\frac{2u_{0}}{M}\psi
_{cabd}u^{d}-\frac{2}{M}\varphi _{cmn}\psi _{\ \ abd}^{m}u^{d}u^{n}
\label{phiV2}
\end{equation}%
To simplify the last term we need to use a contraction identity between $%
\varphi $ and $\psi $ (see for example \cite%
{GrigorianG2Torsion1,karigiannis-2005-57}). 
\begin{equation}
\varphi _{abc}\psi _{mnp}^{\ \ \ \ \text{\ }c}=-3\left( g_{a[m}\varphi
_{np]b}-g_{b[m}\varphi _{np]a}\right)  \label{phipsiid}
\end{equation}%
where the square parentheses denote skew-symmetrization. Using this, we get 
\begin{equation*}
\varphi _{cmn}\psi _{\ \ abd}^{m}u^{d}u^{n}=\left\vert u\right\vert
^{2}\varphi _{abc}-3u_{[a}\varphi _{bc]m}u^{m}
\end{equation*}%
Therefore, (\ref{phiV2}), becomes%
\begin{equation*}
\left( \varphi _{V^{3}}\right) _{abc}=\left( 1-\frac{2}{M}\left\vert
u\right\vert ^{2}\right) \varphi _{abc}+\frac{2u_{0}}{M}\psi _{cabd}u^{d}+%
\frac{6}{M}u_{[a}\varphi _{bc]m}u^{m}
\end{equation*}%
This can now be rewritten in coordinate-free notation as%
\begin{equation*}
\varphi _{V^{3}}=\frac{1}{M}\left[ \left( u_{0}^{2}-\left\vert u\right\vert
^{2}\right) \varphi -2u_{0}u\lrcorner \psi +2u\wedge \left( u\lrcorner
\varphi \right) \right]
\end{equation*}%
Comparing with (\ref{sigmaAdef}), this is precisely equal to $\sigma
_{V^{3}}\left( \varphi \right) $.
\end{proof}

\begin{remark}
The expression (\ref{AdinAdAdid}) shows that $\circ _{V^{3}}=\circ $ if and
only if $V^{3}$ is real. So if without loss of generality we assume $V$ is a
unit octonion, then the octonion product is preserved by $\func{Ad}_{V}$ if
and only if $V^{6}=1$. This beautiful fact was originally discovered by
Manogue and Schray in \cite{ManogueOctos}.
\end{remark}

\begin{example}
Suppose $V=\left( \cos \frac{\pi }{3},\left( \sin \frac{\pi }{3}\right)
v\right) =\left( \frac{1}{2},\frac{\sqrt{3}}{2}v\right) $ for some unit
vector $v$. Then, $V^{3}=\left( -1,0\right) ,$ so $\sigma _{V^{3}}\left(
\varphi \right) =\varphi $. However, $\left. \func{Ad}_{V}\right\vert _{%
\func{Im}\mathbb{O}}\in SO\left( 7\right) $ is nontrivial. Using (\ref%
{AdVexpressind}) we have an explicit expression%
\begin{equation}
\left( \left. \func{Ad}_{V}\right\vert _{\func{Im}\mathbb{O}}\right) _{\
b}^{a}=-\frac{1}{2}\delta _{\ b}^{a}-\frac{\sqrt{3}}{2}\left( v\lrcorner
\varphi \right) _{\ b}^{a}+\frac{3}{2}v^{a}v_{b}  \label{AdAG2}
\end{equation}%
Since $\left. \func{Ad}_{V}\right\vert _{\func{Im}\mathbb{O}}$ preserves $%
\varphi $, pointwise, $\left. \func{Ad}_{V}\right\vert _{\func{Im}\mathbb{O}%
}\in G_{2}.$ In particular, for any nowhere-vanishing vector field $v$ on $M$%
, $\func{Ad}_{V}$ defines an octonion-product preserving map at every point
in $M$.
\end{example}

Due to the nonassociativity of octonions, in general $\func{Ad}_{U}\func{Ad}%
_{V}\neq \func{Ad}_{UV}$. However we do have such a composition property for
the map $\sigma _{V}$.

\begin{theorem}
\label{ThmSigmaUV}Let $\left( \varphi ,g\right) $ be a $G_{2}$-structure on
a smooth $7$-dimensional manifold $M$. Then, given nowhere-vanishing
octonions $U$ and $V$, 
\begin{equation}
\sigma _{U}\left( \sigma _{V}\varphi \right) =\sigma _{UV}\left( \varphi
\right)  \label{sigmaUV}
\end{equation}
\end{theorem}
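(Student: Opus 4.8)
The plan is to translate the identity into one about octonion products and then into a purely loop-theoretic computation. A metric-compatible $G_{2}$-structure $\eta$ (sharing the fixed metric $g$) is completely determined by its fibrewise octonion product $\circ _{\eta}$, since $\eta \left( A,B,C\right) =\left\langle A\circ _{\eta }B,C\right\rangle $ for pure imaginary $A,B,C$; so it suffices to show that $\sigma _{U}\left( \sigma _{V}\varphi \right) $ and $\sigma _{UV}\left( \varphi \right) $ induce the \emph{same} octonion product. Because $\sigma _{A}=\sigma _{fA}$ for any nowhere-vanishing $f$, I may take $U$ and $V$ to be unit octonion sections, and the whole argument is pointwise, so everything below is an identity in a single real octonion algebra.

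The preliminary fact I need is an explicit formula for the product induced by $\sigma _{W}\left( \varphi \right) $. By Theorem \ref{ThmSigmaAd}, $\sigma _{W}\left( \varphi \right) $ coincides with the $3$-form $\varphi _{W}$ whose product is $\circ _{W}$; and combining the definition of $\circ _{V^{3}}$ with identity (\ref{AdinAdAdid}) (equivalently Lemma \ref{lemAssocIds2}(2)), writing $W=V^{3}$, yields, for every unit $W$,
\begin{equation*}
A\circ _{W}B=\left( AW\right) \left( W^{-1}B\right) ,
\end{equation*}
all operations on the right being those of $\varphi $. The same manipulation is valid verbatim inside any octonion algebra compatible with $g$, so for any $G_{2}$-structure $\eta $ in the metric class of $\varphi $ the structure $\sigma _{W}\left( \eta \right) $ has product $A\mapsto \left( A\circ _{\eta }W\right) \circ _{\eta }\left( W^{-1}\circ _{\eta }B\right) $, where for unit $W$ the $\eta $-inverse of $W$ is just $\bar{W}=W^{-1}$, conjugation being metric-defined. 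Taking $\eta =\sigma _{V}\left( \varphi \right) $ and writing $\circ _{V}$ for its product, so that $X\circ _{V}Y=\left( XV\right) \left( V^{-1}Y\right) $ by the displayed formula with $W=V$, the product induced by $\sigma _{U}\left( \sigma _{V}\varphi \right) $ is
\begin{equation*}
A\mapsto \left( A\circ _{V}U\right) \circ _{V}\left( U^{-1}\circ _{V}B\right) ,
\end{equation*}
while that of $\sigma _{UV}\left( \varphi \right) $ is $A\mapsto \left( A\left( UV\right) \right) \left( \left( UV\right) ^{-1}B\right) $ (the formula with $W=UV$).

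Hence the theorem reduces to the single identity
\begin{equation*}
\left( A\circ _{V}U\right) \circ _{V}\left( U^{-1}\circ _{V}B\right) =\left( A\left( UV\right) \right) \left( \left( UV\right) ^{-1}B\right) ,
\end{equation*}
which I expect to be the technical core. The plan for it is to substitute $X\circ _{V}Y=\left( XV\right) \left( V^{-1}Y\right) $ and simplify the two octonion factors of the left side separately, using only that the unit octonions form a Moufang loop with the inverse property and that any two octonions and their conjugates generate an associative subalgebra (the fact used in the paper just after Corollary \ref{corrOctopow}); in particular $[V^{-1},U,V]=0$ and $[V^{-1},U^{-1},V]=0$, since for unit $V$ the element $V$ lies in the subalgebra generated by $V^{-1}$. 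Concretely, the right Moufang identity $((xz)w)z=x(z(wz))$ rewrites the first factor $\left( \left( AV\right) \left( V^{-1}U\right) \right) V$ as $A\left( V\left( \left( V^{-1}U\right) V\right) \right) $, which collapses via $\left( V^{-1}U\right) V=V^{-1}\left( UV\right) $ and the inverse property $V\left( V^{-1}\left( UV\right) \right) =UV$ to $A\left( UV\right) $; the left Moufang identity $z(x(zy))=((zx)z)y$ rewrites the second factor $V^{-1}\left( \left( U^{-1}V\right) \left( V^{-1}B\right) \right) $ as $\left( \left( V^{-1}\left( U^{-1}V\right) \right) V^{-1}\right) B$, which collapses via $V^{-1}\left( U^{-1}V\right) =\left( V^{-1}U^{-1}\right) V$ and $\left( \left( V^{-1}U^{-1}\right) V\right) V^{-1}=V^{-1}U^{-1}$ to $\left( V^{-1}U^{-1}\right) B=\left( UV\right) ^{-1}B$; multiplying the two collapsed factors gives exactly the right-hand side. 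The main obstacle is bookkeeping discipline: one must keep every parenthesisation explicit, select the correct Moufang identity at each step, and check that each discarded associator genuinely involves only two independent generators together with their conjugates. Once the reformulation of the first two paragraphs is in hand, no curvature or differential-geometric input enters at all, and the rest is an exercise in the octonion loop laws.
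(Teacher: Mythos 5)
Your proposal is correct, and its computational core is genuinely different from the paper's. Both arguments share the same reduction: a metric-compatible $G_{2}$-structure is determined by its induced octonion product, and the product induced by $\sigma _{W}$ is $A\circ _{W}B=\left( AW\right) \left( W^{-1}B\right) $ (the paper states this as (\ref{OctoVAB}); like you, it implicitly uses that every unit octonion is a cube, which follows from Lemma \ref{lemOctoexp}). From there the routes diverge. The paper works with the associator form $A\circ _{W}B=AB+\left[ A,B,W\right] W^{-1}$, proves the auxiliary Lemma \ref{lemAssocVId} expressing $\left[ \cdot ,\cdot ,\cdot \right] _{V}$ in terms of the original associator, and then carries out a page of nested-associator bookkeeping to collapse $A\tilde{\circ}B$ to $AB+\left[ A,B,UV\right] \left( UV\right) ^{-1}$. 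You instead stay with the "sandwich" form and reduce everything to the single identity
\begin{equation*}
\left( \left( \left( AV\right) \left( V^{-1}U\right) \right) V\right) \left( V^{-1}\left( \left( U^{-1}V\right) \left( V^{-1}B\right) \right) \right) =\left( A\left( UV\right) \right) \left( \left( UV\right) ^{-1}B\right) ,
\end{equation*}
which you dispatch with the left and right Moufang identities plus Artin's theorem (associativity of two-generated subalgebras); I checked that each Moufang identity is correctly stated and correctly matched to its factor, and that every discarded associator does involve only $U$, $V$ and their conjugates. What your route buys is brevity and transparency: Lemma \ref{lemAssocVId} becomes unnecessary and the core identity is a two-line loop computation. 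What it costs is that the Moufang identities, while standard for alternative algebras and implicit in the paper's remark that unit octonions form a Moufang loop, are nowhere proved in the paper, whereas the paper's associator calculus is entirely self-contained via Lemma \ref{lemAssocIds}. If you were writing this up, a one-line citation or derivation of the Moufang identities from alternativity would close that small dependency.
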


Before we prove Theorem \ref{ThmSigmaUV} we need a few more properties of
octonion products.

\begin{remark}
\label{remSigmaUV}In Theorem \ref{ThmSigmaUV}, the octonion product is
defined using $\varphi $. However, we will show that 
\begin{equation*}
U\circ _{\varphi }V=U\circ _{V}V.
\end{equation*}%
Suppose that $V=\left( v_{0},v\right) .$ Then, 
\begin{eqnarray*}
v\lrcorner \sigma _{V}\left( \varphi \right) &=&\frac{1}{M}v\lrcorner \left[
\left( v_{0}^{2}-\left\vert v\right\vert ^{2}\right) \varphi
-2v_{0}v\lrcorner \psi +2v\wedge \left( v\lrcorner \varphi \right) \right] \\
&=&\frac{1}{M}\left( \left( v_{0}^{2}-\left\vert v\right\vert ^{2}\right)
v\lrcorner \varphi +2\left\vert v\right\vert ^{2}v\lrcorner \varphi \right)
\\
&=&v\lrcorner \varphi
\end{eqnarray*}%
Therefore, any product with $V$ using $\circ _{V}$, defined by $\sigma
_{V}\left( \varphi \right) $, will be equal to the product with using $\circ
,$ that is defined by $\varphi $. Hence, the product $UV$ is unambiguous
whether defined using $\varphi $ or using $\sigma _{V}\left( \varphi \right) 
$.
\end{remark}

From Theorem \ref{ThmSigmaAd} and Proposition \ref{PropAdCprod}, we know
that the octonion product defined by $\sigma _{V}\left( \varphi \right) $ is
given by 
\begin{equation}
A\circ _{V}B=AB+\left[ A,B,V\right] V^{-1}=\left( AV\right) \left(
V^{-1}B\right)  \label{OctoVAB}
\end{equation}%
Define $\left[ \cdot ,\cdot ,\cdot \right] _{V}$ to be the associator with
respect to the product $\circ _{V}$. Then using (\ref{OctoVAB}) as well as
the associator identities in Lemma \ref{lemAssocIds}, we obtain the
following expression for $\left[ \cdot ,\cdot ,\cdot \right] _{V}.$ The
proof is given in the Appendix.

\begin{lemma}
\label{lemAssocVId}Let $A,B,C\in \Gamma \left( \mathbb{O}M\right) $, and
define 
\begin{equation}
\left[ A,B,C\right] _{V}=A\circ _{V}\left( B\circ _{V}C\right) -\left(
A\circ _{V}B\right) \circ _{V}C  \label{assocV}
\end{equation}%
where the product $\circ _{V}$ is defined by (\ref{OctoVAB}) for a
nowhere-vanishing octonion $V$. Then, 
\begin{equation}
\left[ A,B,C\right] _{V}=\left[ A,B,CV\right] V^{-1}-\left[ A,B,V\right]
\left( V^{-1}C\right)  \label{AssocVid}
\end{equation}
\end{lemma}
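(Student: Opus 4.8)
The plan is to unwind the definition (\ref{assocV}) by first rewriting the two iterated $\circ_{V}$--products as ordinary octonion products, then expanding in the product $\circ$ and collecting the non-associative remainders with the identities of Lemma \ref{lemAssocIds}. The first rewriting is immediate from (\ref{OctoVAB}): since $([A,B,V]V^{-1})V=[A,B,V]$ we get $(A\circ_{V}B)V=(AB)V+[A,B,V]=A(BV)$, hence $(A\circ_{V}B)\circ_{V}C=(A(BV))(V^{-1}C)$. For the other term I would use that $\mathbb{O}M$ is fibrewise alternative, so the Moufang identities and $[V^{-1},B,V]=0$ hold fibrewise; these give $V^{-1}((BV)(V^{-1}C))=(V^{-1}B)C$, and therefore $A\circ_{V}(B\circ_{V}C)=(AV)((V^{-1}B)C)$.

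The second step is a routine expansion. Writing $(V^{-1}B)C=V^{-1}(BC)-[V^{-1},B,C]$ and $A(BV)=(AB)V+[A,B,V]$, and applying (\ref{OctoVAB}) once more in the form $(XV)(V^{-1}Y)=XY+[X,Y,V]V^{-1}$, the two products become $A(BC)+[A,BC,V]V^{-1}-(AV)[V^{-1},B,C]$ and $(AB)C+[AB,C,V]V^{-1}+[A,B,V](V^{-1}C)$ respectively. Subtracting, and then substituting the elementary identity $[A,BC,V]-[AB,C,V]=[A,B,CV]-A[B,C,V]-[A,B,C]V$ (obtained by expanding both sides and isolating $[A,B,C]$) together with $([A,B,C]V)V^{-1}=[A,B,C]$, everything collapses to
\begin{equation*}
[A,B,C]_{V}=[A,B,CV]V^{-1}-[A,B,V](V^{-1}C)-\Big((A[B,C,V])V^{-1}+(AV)[V^{-1},B,C]\Big).
\end{equation*}
Thus the lemma reduces to showing that the bracketed ``defect'' vanishes.

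That vanishing is the step I expect to be the real obstacle, because it is the only place where the precise octonion structure---alternativity together with the contraction identity realizing the associator through $\psi$---is actually used, rather than formal manipulation. Put $W:=[B,C,V]$; it is purely imaginary. By identity 1 of Lemma \ref{lemAssocIds} and total antisymmetry of the associator, $[V^{-1},B,C]=-|V|^{-2}W$, and $V^{-1}=|V|^{-2}\bar V$, so the defect equals $|V|^{-2}\big((AW)\bar V-(AV)W\big)$ and it is enough to prove $(AV)W=(AW)\bar V$. Expanding both sides in associators, $(AV)W-(AW)\bar V=A(VW-W\bar V)-[A,V,W]+[A,W,\bar V]$. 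Since $W$ is imaginary, (\ref{octoinnerprod}) gives $\langle V,W\rangle=\tfrac12(W\bar V-VW)$, hence $VW-W\bar V=-2\langle V,W\rangle$; and $\langle V,W\rangle=\langle V,[B,C,V]\rangle=0$ because (\ref{octoassoc}) presents $[B,C,V]$ as a contraction of the $4$--form $\psi$, which is therefore orthogonal to each of its three entries. So the first term disappears. Finally the associator depends only on imaginary parts and is totally antisymmetric, so $[A,W,\bar V]=-[\func{Im}A,W,\func{Im}V]=[\func{Im}A,\func{Im}V,W]=[A,V,W]$, and the last two terms cancel. Hence the defect is $0$, which proves (\ref{AssocVid}).

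Other than this last step, the calculation is essentially forced once the iterated products have been rewritten as in the first paragraph; the only subtlety in that part is keeping track of parenthesization and of signs when repeatedly invoking (\ref{OctoVAB}) and Lemma \ref{lemAssocIds}.
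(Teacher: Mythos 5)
Your proof is correct, and it reaches (\ref{AssocVid}) by a genuinely different organization of the same underlying algebra. The paper works directly from the form $A\circ _{V}B=AB+\left[ A,B,V\right] V^{-1}$, so its expansion produces nested associators such as $\left[ A,\left[ B,C,V\right] V^{-1},V\right] V^{-1}$, which it tames with Lemma \ref{lemAssocIds} via the combination $A\left( WV^{-1}\right) -\left[ A,W,V^{-1}\right] =\left( AW\right) V^{-1}$ (for $W=\left[ B,C,V\right] $), and then finishes by brute-force expansion of the remaining associators; no ``defect'' term ever appears. You instead first convert both nested products into the sandwich forms $\left( AV\right) \left( \left( V^{-1}B\right) C\right) $ and $\left( A\left( BV\right) \right) \left( V^{-1}C\right) $ --- the former resting on $V^{-1}\left( \left( BV\right) \left( V^{-1}C\right) \right) =\left( V^{-1}B\right) C$, which you justify by a Moufang identity that the paper never states or proves (it only remarks that unit octonions form a Moufang loop); this step is nonetheless fine, and can alternatively be derived from Lemma \ref{lemAssocIds} together with $\left[ V^{-1},B,C\right] =-\left\vert V\right\vert ^{-2}\left[ B,C,V\right] $ and $V^{-1}\left( \left[ B,C,V\right] V^{-1}\right) =\left\vert V\right\vert ^{-2}\left[ B,C,V\right] $, so no material outside the paper is really needed. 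Your ``elementary identity'' $\left[ A,BC,V\right] -\left[ AB,C,V\right] =\left[ A,B,CV\right] -A\left[ B,C,V\right] -\left[ A,B,C\right] V$ is exactly the expansion the paper performs in its last display, read in reverse, and your residual defect $\left( A\left[ B,C,V\right] \right) V^{-1}+\left( AV\right) \left[ V^{-1},B,C\right] $ vanishes for the same reason that identity 3 of Lemma \ref{lemAssocIds} holds, namely $\left\langle V,\left[ B,C,V\right] \right\rangle =0$ from the skew-symmetry of $\psi $. What your route buys is modularity: the single genuinely octonionic input (that orthogonality, plus alternativity) is isolated in one clearly flagged step, whereas the paper's route stays entirely within its previously proved lemmas at the cost of a longer undifferentiated computation.
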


Using Lemma \ref{lemAssocVId}, we can now prove Theorem \ref{ThmSigmaUV}.

\begin{proof}[Proof of Theorem \protect\ref{ThmSigmaUV}]
Let $A,B\in \Gamma \left( \mathbb{O}M\right) .$ Let $\tilde{\circ}$ be the
octonion product defined by $\sigma _{U}\left( \sigma _{V}\varphi \right) .$
Using (\ref{OctoVAB}), this is then given by 
\begin{equation}
A\tilde{\circ}B=A\circ _{V}B+\left[ A,B,U\right] _{V}\circ _{V}U^{-1}
\label{AcirctildB}
\end{equation}%
since we are now starting with $\circ _{V}$ that is defined by $\sigma
_{V}\left( \varphi \right) $, and are changing it to $\tilde{\circ}$ that is
defined by $\sigma _{U}\left( \sigma _{V}\varphi \right) $. Therefore,
expanding (\ref{AcirctildB}) using (\ref{OctoVAB}) and (\ref{AssocVid}), we
have 
\begin{eqnarray}
A\tilde{\circ}B &=&AB+\left[ A,B,V\right] V^{-1}+\left[ A,B,U\right]
_{V}U^{-1}+\left[ \left[ A,B,U\right] _{V},U^{-1},V\right] V^{-1}  \notag \\
&=&AB+\left[ A,B,V\right] V^{-1}+\left( \left[ A,B,UV\right] V^{-1}-\left[
A,B,V\right] \left( V^{-1}U\right) \right) U^{-1}  \notag \\
&&+\left[ \left[ A,B,UV\right] V^{-1}-\left[ A,B,V\right] \left(
V^{-1}U\right) ,U^{-1},V\right] V^{-1}  \notag \\
&=&AB+\left[ A,B,V\right] V^{-1}+\left( \left[ A,B,UV\right] V^{-1}\right)
U^{-1}+\left[ \left[ A,B,UV\right] V^{-1},U^{-1},V\right] V^{-1}
\label{UVthmexpr1} \\
&&-\left( \left[ A,B,V\right] \left( V^{-1}U\right) \right) U^{-1}-\left[ %
\left[ A,B,V\right] \left( V^{-1}U\right) ,U^{-1},V\right] V^{-1}  \notag
\end{eqnarray}%
Note that using Lemma \ref{lemAssocIds} and the definition of the
associator, we get 
\begin{eqnarray}
\left[ \left[ A,B,UV\right] V^{-1},U^{-1},V\right] V^{-1} &=&\left[ \left[
A,B,UV\right] ,U^{-1},V\right] \frac{\left\vert V\right\vert ^{2}}{%
\left\vert V\right\vert ^{4}}  \notag \\
&=&-\left[ \left[ A,B,UV\right] ,U^{-1},V^{-1}\right]  \notag \\
&=&\left[ \left[ A,B,UV\right] ,V^{-1},U^{-1}\right]  \notag \\
&=&\left[ A,B,UV\right] \left( V^{-1}U^{-1}\right) -\left( \left[ A,B,UV%
\right] V^{-1}\right) U^{-1}  \label{UVthmexpr2}
\end{eqnarray}%
and 
\begin{eqnarray}
\left[ \left[ A,B,V\right] \left( V^{-1}U\right) ,U^{-1},V\right] V^{-1}
&=&\left( \left( \left[ A,B,V\right] \left( V^{-1}U\right) \right) \left(
U^{-1}V\right) ^{{}}\right) V^{-1}-\left( \left[ A,B,V\right] \left(
V^{-1}U\right) \right) U^{-1}  \notag \\
&=&\left[ A,B,V\right] \left( \left( V^{-1}U\right) \left( U^{-1}V\right)
\right) V^{-1}-\left( \left[ A,B,V\right] \left( V^{-1}U\right) \right)
U^{-1}  \notag \\
&=&\left[ A,B,V\right] V^{-1}-\left( \left[ A,B,V\right] \left(
V^{-1}U\right) \right) U^{-1}  \label{UVthmexpr3}
\end{eqnarray}%
where we have used the fact that $V^{-1}U=\left( U^{-1}V\right) ^{-1}$ twice
- in the second line to conclude that $\left[ A,B,V\right] ,$ $V^{-1}U,$ and 
$U^{-1}V$ associate, and in the third line to simply. Now substituting (\ref%
{UVthmexpr2}) and (\ref{UVthmexpr3}) into (\ref{UVthmexpr1}), we are left
with 
\begin{eqnarray*}
A\tilde{\circ}B &=&AB+\left[ A,B,UV\right] \left( V^{-1}U^{-1}\right) \\
&=&AB+\left[ A,B,UV\right] \left( UV\right) ^{-1}
\end{eqnarray*}
Therefore, 
\begin{equation*}
A\tilde{\circ}B=A\circ _{UV}B
\end{equation*}%
Therefore, $\sigma _{U}\left( \sigma _{V}\varphi \right) =\sigma _{UV}\left(
\varphi \right) .$
\end{proof}

From Theorem \ref{ThmSigmaUV} we hence see that the action of $\sigma _{V}$
on positive $3$-forms corresponds to octonion multiplication on the left.
The map $\sigma _{V}$ then also gives a representation of the non-zero
octonion Moufang loop on $3$-forms. Given a fixed \textquotedblleft
reference\textquotedblright\ $G_{2}$-structure, this then allows us to
freely work with octonions rather than $3$-forms.

A few consequences of Theorem \ref{ThmSigmaUV} are the following. First of
all it is clear that $\sigma _{V^{-1}}\left( \sigma _{V}\varphi \right)
=\varphi $. Also, note that%
\begin{equation}
\sigma _{U^{3}}\left( \sigma _{V^{3}}\varphi \right) =\sigma
_{U^{3}V^{3}}\varphi  \label{sigmaU3V3}
\end{equation}%
However, for any octonions $A,B,C$ 
\begin{equation*}
\left( \sigma _{V^{3}}\varphi \right) \left( A,B,C\right) =\varphi \left( 
\func{Ad}_{V^{-1}}A,\func{Ad}_{V^{-1}}B,\func{Ad}_{V^{-1}}C\right)
\end{equation*}%
Hence, 
\begin{eqnarray}
\sigma _{U^{3}}\left( \sigma _{V^{3}}\varphi \right) \left( A,B,C\right)
&=&\left( \sigma _{V^{3}}\varphi \right) \left( \func{Ad}_{U^{-1}}^{\left(
V^{3}\right) }A,\func{Ad}_{U^{-1}}^{\left( V^{3}\right) }B,\func{Ad}%
_{U^{-1}}^{\left( V^{3}\right) }C\right)  \notag \\
&=&\varphi \left( \func{Ad}_{V^{-1}}\left( \func{Ad}_{U^{-1}}^{\left(
V^{3}\right) }A\right) ,\func{Ad}_{V^{-1}}\left( \func{Ad}_{U^{-1}}^{\left(
V^{3}\right) }B\right) ,\func{Ad}_{V^{-1}}\left( \func{Ad}_{U^{-1}}^{\left(
V^{3}\right) }C\right) \right)  \label{sigmaU3V32}
\end{eqnarray}%
where $\func{Ad}_{U^{-1}}^{\left( V^{3}\right) }$ means that we are applying
the $\func{Ad}$ operator with respect to the $G_{2}$-structure $\sigma
_{V^{3}}\varphi $. Finally, 
\begin{equation*}
\left( \sigma _{U^{3}V^{3}}\varphi \right) \left( A,B,C\right) =\varphi
\left( \func{Ad}_{\left( U^{3}V^{3}\right) ^{-\frac{1}{3}}}A,\func{Ad}%
_{\left( U^{3}V^{3}\right) ^{-\frac{1}{3}}}B,\func{Ad}_{\left(
U^{3}V^{3}\right) ^{-\frac{1}{3}}}C\right)
\end{equation*}%
Therefore, we may conclude that 
\begin{equation}
\func{Ad}_{\left( U^{3}V^{3}\right) ^{\frac{1}{3}}}\func{Ad}_{V^{-1}}\func{Ad%
}_{U^{-1}}^{\left( V^{3}\right) }\in G_{2}  \label{AdU3V3}
\end{equation}%
In particular, if $U$ and $V$ are both $6$th roots of unity, then $\sigma
_{U^{3}V^{3}}\varphi =\sigma _{\pm 1}\varphi =\varphi $. Also, $\func{Ad}%
^{\left( V^{3}\right) }=\func{Ad}$. Therefore, we conclude that in this case 
\begin{equation*}
\func{Ad}_{V^{-1}}\func{Ad}_{U^{-1}}\in G_{2}.
\end{equation*}%
This of course is to be expected, since $\func{Ad}_{U^{-1}}\in G_{2}$ and $%
\func{Ad}_{V^{-1}}\in G_{2},$ so their composition is also in $G_{2}$ by the
group property.

\section{Torsion of a $G_{2}$-structure}

\setcounter{equation}{0}\label{secTorsion}So far we have only looked at the
algebraic properties of $G_{2}$-structures and octonions. However, given a $%
G_{2}$-structure $\varphi $ with an associated metric $g$, we may use the
metric to define the Levi-Civita connection $\nabla $. The \emph{intrinsic
torsion }of a $G_{2}$-structure is then defined by $\nabla \varphi $.
Following \cite{GrigorianG2Torsion1,karigiannis-2007}, we can write 
\begin{equation}
\nabla _{a}\varphi _{bcd}=2T_{a}^{\ e}\psi _{ebcd}^{{}}  \label{codiffphi}
\end{equation}%
where $T_{ab}$ is the \emph{full torsion tensor}. Similarly, we can also
write 
\begin{equation}
\nabla _{a}\psi _{bcde}=-8T_{a[b}\varphi _{cde]}  \label{psitorsion}
\end{equation}%
We can also invert (\ref{codiffphi}) to get an explicit expression for $T$ 
\begin{equation}
T_{a}^{\ m}=\frac{1}{48}\left( \nabla _{a}\varphi _{bcd}\right) \psi ^{mbcd}.
\end{equation}%
This $2$-tensor fully defines $\nabla \varphi $ \cite{GrigorianG2Torsion1}.

\begin{remark}
The torsion tensor $T$ as defined here is actually corresponds to $\frac{1}{2%
}T$ in \cite{GrigorianG2Torsion1} and $-\frac{1}{2}T$ in \cite%
{karigiannis-2007}. Even though this requires extra care when translating
various results, it will turn out to be more convenient, because otherwise
we would have a factor of $\frac{1}{2}$ everywhere.
\end{remark}

In general we can obtain an orthogonal decomposition of $T_{ab}$ according
to representations of $G_{2}$ into \emph{torsion components}: 
\begin{equation}
T=\tau _{1}g+\tau _{7}\lrcorner \varphi +\tau _{14}+\tau _{27}
\end{equation}%
where $\tau _{1}$ is a function, and gives the $\mathbf{1}$ component of $T$%
. We also have $\tau _{7}$, which is a $1$-form and hence gives the $\mathbf{%
7}$ component, and, $\tau _{14}\in \Lambda _{14}^{2}$ gives the $\mathbf{14}$
component and $\tau _{27}$ is traceless symmetric, giving the $\mathbf{27}$
component. As it was originally shown by Fern\'{a}ndez and Gray \cite%
{FernandezGray}, there are in fact a total of 16 torsion classes of $G_{2}$%
-structures that arise depending on which of the components are non-zero.
Moreover, as shown in \cite{karigiannis-2007}, the torsion components $\tau
_{i}$ relate directly to the expression for $d\varphi $ and $d\psi $. It can
also be shown \cite{CleytonIvanovCurv,GrigorianG2Torsion1,karigiannis-2007},
that $T$ satisfies a \textquotedblleft Bianchi identity\textquotedblright :

\begin{proposition}
\label{propTorsBianchi}Let $\varphi $ be a $G_{2}$-structure with an
associated metric $g$, and Levi-Civita connection $\nabla $. The torsion
tensor then satisfies%
\begin{equation}
\nabla _{a}T_{bc}-\nabla _{b}T_{ac}+2T_{am}^{{}}T_{bn}^{{}}\varphi _{\ \ \
c}^{mn}-\frac{1}{4}\func{Riem}_{abmn}^{{}}\varphi _{\ \ \ c}^{mn}=0
\label{torsionbianchi}
\end{equation}%
where $\func{Riem}$ is the Riemann curvature tensor of the metric $g$.
\end{proposition}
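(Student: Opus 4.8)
The plan is to differentiate the first structure equation (\ref{codiffphi}) and use the second, (\ref{psitorsion}), to eliminate the derivative of $\psi $. Applying $\nabla _{a}$ to $\nabla _{b}\varphi _{cde}=2T_{b}^{\ m}\psi _{mcde}$ and substituting $\nabla _{a}\psi _{mcde}=-8T_{a[m}\varphi _{cde]}$ yields
\begin{equation*}
\nabla _{a}\nabla _{b}\varphi _{cde}=2\left( \nabla _{a}T_{b}^{\ m}\right)
\psi _{mcde}-16\,T_{b}^{\ m}T_{a[m}\varphi _{cde]}.
\end{equation*}
Antisymmetrizing in $a,b$ and using the Ricci identity to rewrite $\left( \nabla _{a}\nabla _{b}-\nabla _{b}\nabla _{a}\right) \varphi _{cde}$ as a sum of three curvature terms, each of the form $\func{Riem}\cdot \varphi $, produces an identity in which $\func{Riem}$, the antisymmetrized derivative $\nabla T$, and a quadratic term $T\cdot T$ all appear, each contracted against $\psi $ or $\varphi $.

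To extract the $2$-tensor identity (\ref{torsionbianchi}) I would then contract this relation with $\psi ^{cde}{}_{f}$, leaving free indices $a,b,f$. Because $\psi ^{cde}{}_{f}$ is totally skew in $c,d,e$, the three curvature terms contribute equally after relabelling dummy indices, and the same holds for the three nontrivial terms in the expansion
\begin{equation*}
T_{a[m}\varphi _{cde]}=\tfrac{1}{4}\left( T_{am}\varphi _{cde}-T_{ac}\varphi
_{mde}+T_{ad}\varphi _{mce}-T_{ae}\varphi _{mcd}\right) ,
\end{equation*}
the $T_{am}\varphi _{cde}$ piece dropping out because $\varphi _{cde}\psi ^{cde}{}_{f}=0$. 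The auxiliary contractions one needs are $\psi _{mcde}\psi ^{cde}{}_{f}=\pm 24\,g_{mf}$, the vanishing triple contraction just mentioned, and a double contraction $\varphi \cdot \psi =\pm 4\,\varphi $ with the indices in an appropriate order; the latter two follow from (\ref{phiphi1}) and (\ref{phipsiid}). After substitution, the $\psi \psi $ contraction turns the derivative term into a constant multiple of $\nabla _{a}T_{bf}-\nabla _{b}T_{af}$, the $\varphi \psi $ contractions turn the $T\cdot T$ term into a constant multiple of $T_{am}T_{bn}\varphi _{\ \ \ f}^{mn}$ and the single surviving curvature term into a constant multiple of $\func{Riem}_{abmn}\varphi _{\ \ \ f}^{mn}$, and dividing by the common numerical factor gives (\ref{torsionbianchi}).

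The main obstacle is \emph{purely combinatorial}: carefully expanding $T_{a[m}\varphi _{cde]}$, keeping track of the signs produced by reordering the indices of $\psi $ in each of the partial contractions, and pinning down the numerical constants in the auxiliary $\varphi $--$\psi $ and $\psi $--$\psi $ contraction identities in the sign conventions of this paper (recall that here $\psi =\ast \varphi $ carries the opposite sign to some of the references, which affects precisely these constants). No input beyond the two structure equations (\ref{codiffphi})--(\ref{psitorsion}), the Ricci identity, and the algebraic identities (\ref{phiphi1}), (\ref{phipsiid}) is needed; in particular one does not need the first Bianchi identity for $\func{Riem}$, since the curvature term that survives the contraction already has exactly the index structure $\func{Riem}_{abmn}\varphi _{\ \ \ c}^{mn}$ appearing in (\ref{torsionbianchi}).
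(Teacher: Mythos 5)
The paper itself offers no proof of this proposition: it simply cites \cite{CleytonIvanovCurv,GrigorianG2Torsion1,karigiannis-2007}, so there is nothing internal to compare against. Your derivation is exactly the standard argument used in those references, and it is sound. Differentiating (\ref{codiffphi}), substituting (\ref{psitorsion}), antisymmetrizing in $a,b$, invoking the Ricci identity, and contracting with $\psi^{cde}{}_{f}$ is the right sequence, and every auxiliary fact you invoke is correct: $\varphi^{cde}\psi_{cdef}=0$, the double contraction $\varphi_{mde}\psi^{dec}{}_{f}=4\varphi_{m}{}^{c}{}_{f}$ (which follows from contracting one pair of indices in (\ref{phipsiid}); note the sign is $+4$ in this paper's conventions), and the full contraction of two $\psi$'s, where the only trap is that $\psi_{mcde}\psi^{cde}{}_{f}=-24\,g_{mf}$ rather than $+24\,g_{mf}$ because the free index sits in different slots on the two factors — your $\pm$ correctly hedges this. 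Your expansion of $T_{a[m}\varphi_{cde]}$ and the observation that the three nontrivial pieces (and the three curvature pieces) coincide after relabelling are both right, as is the remark that the first Bianchi identity for $\func{Riem}$ is not needed. Carrying the bookkeeping through, the derivative term picks up the factor $-48$, the quadratic term the factor $-96$, and the curvature term the factor $-12$, so dividing by $-48$ yields precisely the coefficients $2$ and $-\tfrac{1}{4}$ of (\ref{torsionbianchi}). The one thing your write-up defers — pinning the signs — does close without incident, so the argument is complete in all essentials.
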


In particular, the term in (\ref{torsionbianchi}) involving $\func{Riem}$,
is precisely the component of the curvature $2$-form that lies in $\Lambda
_{7}^{2}$ , namely $\pi _{7}\func{Riem}$. Proposition \ref{propTorsBianchi}
then tells us that $\pi _{7}\func{Riem}$ is fully determined by the torsion,
and in particular, if the torsion vanishes, then $\pi _{7}\func{Riem}=0$.
Moreover, if we contract (\ref{torsionbianchi}) with $\varphi _{\ \ d}^{bc},$
we obtain an expression for the Ricci curvature $\func{Ric}$ in terms of $T$ 
\cite{CleytonIvanovCurv,GrigorianG2Torsion1,karigiannis-2007}: 
\begin{eqnarray}
\func{Ric}_{ab} &=&2\left( \nabla _{a}^{{}}T_{nm}^{{}}-\nabla
_{n}^{{}}T_{am}^{{}}\right) \varphi _{\ \ \ b}^{nm}-4T_{an}^{{}}T_{\ b}^{n}+4%
\func{Tr}\left( T\right) T_{ab}  \label{torsricci} \\
&&+4T_{ac}^{{}}T_{nm}^{{}}\psi _{\ \ \ \ b}^{nmc\ \ \ }  \notag
\end{eqnarray}%
This then shows that if the torsion vanishes, then so does $\func{Ric}$. Of
course, Ricci curvature is a function of the metric, so it is invariant over
the metric class of $G_{2}$-structures. In particular, the scalar curvature
is given by 
\begin{equation}
\frac{1}{4}R=42\tau _{1}^{2}+30\left\vert \tau _{7}\right\vert
^{2}-\left\vert \tau _{14}\right\vert ^{2}-\left\vert \tau _{27}\right\vert
^{2}+6\func{div}\tau _{7}  \label{torsscalcurv}
\end{equation}

When the torsion vanishes, that is $T=0$, the $G_{2}$-structure is said to
be torsion-free. This is equivalent to $\nabla \varphi =0$ and also
equivalent, by Fern\'{a}ndez and Gray, to $d\varphi =d\psi =0$. Moreover, a $%
G_{2}$-structure is torsion-free if and only if the holonomy group $\func{Hol%
}\left( g\right) $ of the corresponding metric $g$ is contained in $G_{2}$ 
\cite{Joycebook}. The holonomy group is then precisely equal to $G_{2}$ if
and only if the fundamental group $\pi _{1}$ is finite \cite{Joycebook}. As
we have seen, for any metric there is a family of compatible $G_{2}$%
-structures. Holonomy however is a property of the metric, so we can
reformulate the correspondence between $G_{2}$-structures and holonomy as
follows.

\begin{theorem}[\protect\cite{FernandezGray,Joycebook}]
\label{ThmHolonomy}Let $\left( M,g\right) $ be a smooth $7$-dimensional
Riemannian manifold with $w_{1}=w_{2}=0$. Then, $\func{Hol}\left( g\right)
\subseteq G_{2}$ if and only if there exists a torsion-free $G_{2}$%
-structure $\varphi $ that is compatible with $g$.
\end{theorem}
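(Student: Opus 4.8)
The plan is to derive the statement from the fundamental holonomy principle, which puts parallel tensor fields in bijection with tensors at a single point that are fixed by the holonomy group, together with the pointwise facts about positive $3$-forms recalled in Section~\ref{secg2struct}.

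For the direction that a torsion-free $G_{2}$-structure forces $\func{Hol}\left( g\right) \subseteq G_{2}$, I would start from the observation recorded just above, namely that $T=0$ is equivalent to $\nabla \varphi =0$. A parallel $3$-form is invariant under parallel transport around every loop based at a point $p$, hence is fixed by $\func{Hol}_{p}\left( g\right) $, which (since $g$ is parallel and $M$ is oriented) sits inside $SO\left( T_{p}M\right) \cong SO\left( 7\right) $. Identifying $T_{p}M$ with $\mathbb{R}^{7}$ by an orthonormal frame sends $\varphi _{p}$ to a positive $3$-form; by the discussion following (\ref{gphi}), every positive $3$-form is in the $GL\left( 7,\mathbb{R}\right) $-orbit of $\varphi _{0}$, whose stabiliser is exactly $G_{2}$, so after a further change of frame $\varphi _{p}=\varphi _{0}$ and $\func{Hol}_{p}\left( g\right) \subseteq \func{Stab}_{SO\left( 7\right) }\left( \varphi _{0}\right) =G_{2}$.

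For the converse, suppose $\func{Hol}\left( g\right) \subseteq G_{2}$; I may assume $M$ is connected, treating connected components separately otherwise. I would fix $p\in M$, choose an orthonormal frame of $T_{p}M$ with respect to which $\func{Hol}_{p}\left( g\right) $ lands in the standard copy of $G_{2}\subset SO\left( 7\right) $, and let $\varphi _{p}\in \Lambda ^{3}T_{p}^{\ast }M$ be the form corresponding to $\varphi _{0}$. Since $\varphi _{p}$ is fixed by $\func{Hol}_{p}\left( g\right) $, the parallel transport of $\varphi _{p}$ along a path from $p$ to any $q$ does not depend on the path, and therefore defines a smooth $3$-form $\varphi $ on $M$ with $\nabla \varphi =0$. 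Because parallel transport is a $g$-isometry, at every $q$ the form $\varphi _{q}$ lies in the $GL\left( 7,\mathbb{R}\right) $-orbit of $\varphi _{0}$ and, by (\ref{gphi}), has associated metric $g_{q}$; hence $\varphi $ is a $G_{2}$-structure whose associated metric is $g$, and $\nabla \varphi =0$ means it is torsion-free.

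The hard part is really the holonomy principle itself --- that a $\func{Hol}$-invariant tensor at a point extends to a unique parallel tensor field, and conversely --- combined with the algebraic input that the $GL\left( 7,\mathbb{R}\right) $-stabiliser of a positive $3$-form is a conjugate of $G_{2}$ and that such a form pins down the metric via (\ref{gphi}). Once these are in hand the argument is essentially bookkeeping, and indeed this is the classical statement appearing in \cite{FernandezGray, Joycebook}; the role of the octonion formalism here is only to make transparent, via $T=0\iff \nabla \varphi =0$, why ``no torsion'' is the same as ``parallel positive $3$-form''.
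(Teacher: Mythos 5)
Your argument is correct, but there is nothing in the paper to compare it against: Theorem \ref{ThmHolonomy} is stated as a quoted classical result from \cite{FernandezGray,Joycebook} and the paper supplies no proof of its own. What you have written is precisely the standard proof from those references. The forward direction uses $T=0\iff\nabla\varphi=0$ together with the fact that a parallel tensor is fixed by the full holonomy group and that the stabilizer of $\varphi_{0}$ in $O\left( 7\right) $ is exactly $G_{2}$; the converse is the holonomy principle applied to a $\func{Hol}_{p}\left( g\right) $-invariant copy of $\varphi _{0}$ at one point, with the observation that parallel transport by $g$-isometries keeps the resulting $3$-form positive and keeps its associated metric equal to $g$. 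The two inputs you explicitly take on faith --- the holonomy principle and the pointwise classification of positive $3$-forms inducing a given inner product --- are exactly the facts the paper itself quotes in Section \ref{secg2struct}, so the argument is complete at the level of detail appropriate here. The one spot worth a half-sentence more care is the frame change in the forward direction: a priori an orthonormal frame carries $\varphi _{p}$ only into the $O\left( 7\right) $-orbit of $\varphi _{0}$, but since $\func{Stab}_{O\left( 7\right) }\left( \varphi _{0}\right) =G_{2}\subset SO\left( 7\right) $ this still conjugates $\func{Hol}_{p}\left( g\right) $ into $G_{2}$, so the conclusion is unaffected.
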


Therefore, in order to understand the holonomy of a given metric we have to
understand if the corresponding metric class of $G_{2}$-structures contains
a torsion-free $G_{2}$-structure. A\ necessary condition for a metric class
to admit a torsion-free $G_{2}$-structure is $\func{Ric}=0$. The converse
would be true if any metric on a $7$-manifold with $\func{Ric}=0$ has
reduced holonomy.

It is possible to explicitly work out the expression for the torsion of a $%
G_{2}$-structure $\sigma _{V}\left( \varphi \right) $ in terms of the
torsion of $\varphi $, however the expression in index notation is not too
illuminating. To get the expression in terms of octonions, we will first
define a covariant derivative on sections of the octonion bundle.

\section{Octonion covariant derivative}

\setcounter{equation}{0}\label{secOctoCovDiv}

Consider the octonion bundle $\mathbb{O}M$ with the octonion algebra defined
by the $G_{2}$-structure $\varphi $ with torsion tensor $T$. Then, we can
extend the Levi-Civita connection $\nabla $ to sections of $\mathbb{O}M$.
Let $A=\left( a,\alpha \right) \in \Gamma \left( \mathbb{O}M\right) ,$ then
define the covariant derivative on $\mathbb{O}M$ as 
\begin{equation}
\nabla _{X}A=\left( \nabla _{X}a,\nabla _{X}\alpha \right)  \label{delXA}
\end{equation}%
for any $X\in \Gamma \left( TM\right) $. Now the question is how does $%
\nabla $ interact with octonion multiplication.

\begin{proposition}
\label{propOctoLC}Suppose $A,B\in \Gamma \left( \mathbb{O}M\right) $. Then,
for $X\in \Gamma \left( TM\right) $ 
\begin{equation}
\nabla _{X}\left( AB\right) =\left( \nabla _{X}A\right) B+A\left( \nabla
_{X}B\right) -\left[ T_{X},A,B\right]  \label{nablaXAB}
\end{equation}%
where $T_{X}=\left( 0,X\lrcorner T\right) $.\newline
\end{proposition}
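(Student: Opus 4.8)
The plan is to exploit the fact that the octonion product on $\mathbb{O}M$ depends on the $G_{2}$-structure only through the cross product $\times _{\varphi }$, whose structure constants are the components of $\varphi $; hence the only obstruction to an exact Leibniz rule for $\nabla _{X}$ is the term in which $\nabla _{X}$ differentiates $\varphi $ itself, and (\ref{codiffphi}) rewrites precisely that term in terms of the torsion. So the whole proof is essentially the identity $\nabla _{a}\varphi _{bcd}=2T_{a}^{\ e}\psi _{ebcd}$ dressed up in octonionic language.

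Concretely, I would write $A=\left( a,\alpha \right) $ and $B=\left( b,\beta \right) $ and expand $AB$ using (\ref{octoproddef}), then compute $\nabla _{X}(AB)$ componentwise. Because $\nabla $ is the Levi-Civita connection it annihilates the metric, so the scalar part $ab-\left\langle \alpha ,\beta \right\rangle $ obeys the ordinary product rule with no correction. For the vector part $a\beta +b\alpha +\alpha \times _{\varphi }\beta $, the only subtlety is the last summand: in index notation $\left( \alpha \times _{\varphi }\beta \right) ^{a}=\varphi _{\ bc}^{a}\alpha ^{b}\beta ^{c}$, so $\nabla _{X}$ produces $\left( \nabla _{X}\varphi \right) _{\ bc}^{a}\alpha ^{b}\beta ^{c}$ in addition to $\left( \nabla _{X}\alpha \right) \times _{\varphi }\beta +\alpha \times _{\varphi }\left( \nabla _{X}\beta \right) $. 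Next I would write out $\left( \nabla _{X}A\right) B+A\left( \nabla _{X}B\right) $ using (\ref{octoproddef}) and subtract; by metric compatibility every term cancels against $\nabla _{X}(AB)$ except the single contribution from the differentiated structure constants, so that
\[
\nabla _{X}\left( AB\right) -\left( \nabla _{X}A\right) B-A\left( \nabla _{X}B\right) =\left( 0,\ \left( \nabla _{X}\varphi \right) \left( \cdot ^{\sharp },\alpha ,\beta \right) \right) ,
\]
where $\cdot ^{\sharp }$ denotes raising the first slot with $g^{-1}$.

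It then remains to identify the right-hand side with $-\left[ T_{X},A,B\right] $. Substituting (\ref{codiffphi}) gives $\left( \nabla _{X}\varphi \right) _{dbc}=X^{e}\nabla _{e}\varphi _{dbc}=2\left( X\lrcorner T\right) ^{e}\psi _{edbc}$; using the total skew-symmetry of $\psi $ to move the slot to be raised into the first position of $\psi $ introduces one sign, so that $\left( \nabla _{X}\varphi \right) \left( \cdot ^{\sharp },\alpha ,\beta \right) =-2\psi \left( \cdot ^{\sharp },X\lrcorner T,\alpha ,\beta \right) $. By the associator formula (\ref{octoassoc}) this equals $-\left[ T_{X},A,B\right] $ with $T_{X}=\left( 0,X\lrcorner T\right) $, since the associator depends only on the imaginary parts of its arguments. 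Combining the two displays yields (\ref{nablaXAB}).

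The main point to watch is the bookkeeping of indices and signs in the last step: matching the convention for $\psi \left( \cdot ^{\sharp },\cdot ,\cdot ,\cdot \right) $ used in (\ref{octoassoc}) (and implicitly in the double cross product identity (\ref{doublecrossprod})) against the raw contraction coming out of (\ref{codiffphi}). Everything else is routine, and the vanishing of the scalar correction is just the reflection of the fact that $\left[ T_{X},A,B\right] $ is purely imaginary, which in turn is forced by $\nabla g=0$.
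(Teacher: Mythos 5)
Your proposal is correct and follows essentially the same route as the paper: expand $AB$ via (\ref{octoproddef}), use metric compatibility of $\nabla$ so that the only non-Leibniz contribution comes from $\nabla_X\varphi$ in the cross-product term, and then convert that term into $-\left[T_X,A,B\right]$ using (\ref{codiffphi}) together with the associator formula (\ref{octoassoc}). Your sign bookkeeping in the final step (one transposition of $\psi$'s indices to bring the raised slot to the front) is consistent with the paper's computation.
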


\begin{proof}
Suppose $A=\left( a,\alpha \right) $ and $B=\left( b,\beta \right) $, then,
using the definition of octonion multiplication (\ref{octoproddef}) we write 
\begin{equation*}
\nabla _{X}\left( AB\right) =\nabla _{X}\left( 
\begin{array}{c}
ab-\left\langle \alpha ,\beta \right\rangle \\ 
a\beta +b\alpha +\varphi \left( \alpha ,\beta ,\cdot ^{\sharp }\right)%
\end{array}%
\right)
\end{equation*}%
Using the Leibniz property and metric compatibility of $\nabla _{X},$ we
then get 
\begin{equation*}
\nabla _{X}\left( AB\right) =\left( \nabla _{X}A\right) B+B\left( \nabla
_{X}A\right) +\left( 
\begin{array}{c}
0 \\ 
\left( \nabla _{X}\varphi \right) \left( \alpha ,\beta ,\cdot ^{\sharp
}\right)%
\end{array}%
\right)
\end{equation*}%
However, from (\ref{codiffphi}), $\nabla _{X}\varphi =2T_{X}\lrcorner \psi $%
, and using the relationship between $\psi $ and the associator (\ref%
{octoassoc}), we get 
\begin{eqnarray*}
\left( \nabla _{X}\varphi \right) \left( \alpha ,\beta ,\cdot ^{\sharp
}\right) &=&2\psi \left( T_{X},\alpha ,\beta ,\cdot ^{\sharp }\right) \\
&=&-\left[ T_{X},A,B\right]
\end{eqnarray*}%
Therefore, indeed we obtain (\ref{nablaXAB}).
\end{proof}

Note that if either of $A$ or $B$ in (\ref{nablaXAB}) is real, then the
associator vanishes, and we recover the standard Leibniz rule for $\nabla $.

\begin{remark}
Proposition \ref{propOctoLC} has two important implications. Firstly, note
that $\left[ T_{X},A,B\right] $ vanishes for all $X,A,B$ if and only if $%
T=0. $ Therefore, the Levi-Civita connection is compatible with octonion
multiplication if and only if the $G_{2}$-structure is torsion-free.
Therefore, the torsion $T$ is an obstruction to $\nabla $ being compatible
with octonion product. This is of course expected, since the product is
defined by $\varphi $, and $T$ is the precisely given by $\nabla \varphi $.
Secondly, here we are treating the torsion tensor $T$ as a \textquotedblleft
pure octonion-valued $1$-form\textquotedblright\ on $M$. That is, 
\begin{equation}
T\in \Gamma \left( T^{\ast }M\otimes \func{Im}\mathbb{O}M\right) =\Omega
^{1}\left( \func{Im}\mathbb{O}M\right) ,  \label{T1form}
\end{equation}%
so that in particular for any vector $X$ on $M,$ 
\begin{equation*}
T_{X}\in \Gamma \left( \func{Im}\mathbb{O}M\right) .
\end{equation*}%
This presents an important shift in perception of what torsion of a $G_{2}$%
-structure actually is. Recall that a principal bundle connection can be
thought of as a Lie algebra-valued $1$-form. A Lie algebra is the tangent
space to the identity of a Lie group. In our case, the pure imaginary
octonions precisely form the tangent space to $1$ in the Moufang loop of
unit octonions. Therefore, $T$ is the octonionic analog of a
\textquotedblleft Lie algebra-valued $1$-form\textquotedblright .
Alternatively, it can be thought as some kind of a \textquotedblleft gauge
connection\textquotedblright\ for a non-associative gauge theory.
\end{remark}

For a generic $G_{2}$-structure, $\nabla $ does not satisfy the derivation
property with respect to the octonion product. Let us however define an
adapted covariant derivative, using $T$ as a \textquotedblleft
connection\textquotedblright\ $1$-form.

\begin{definition}
Define the octonion covariant derivative $D$ such for any $X\in \Gamma
\left( TM\right) ,$ 
\begin{equation*}
D_{X}:\Gamma \left( \mathbb{O}M\right) \longrightarrow \Gamma \left( \mathbb{%
O}M\right)
\end{equation*}%
given by 
\begin{equation}
D_{X}A=\nabla _{X}A-AT_{X}  \label{octocov}
\end{equation}%
for any $A\in \Gamma \left( \mathbb{O}M\right) .$
\end{definition}

Using the octonion covariant derivative, we in particular have 
\begin{equation}
D_{X}1=-T_{X}  \label{DX1}
\end{equation}%
The idea to use the $G_{2}$-structure torsion to define a new connection is
certainly not new (see for example \cite%
{AgricolaSrni,AgricolaSpinors,AgricolaFriedrich1}, and references therein).
However using it to define an \emph{octonion} covariant derivative is a new
concept. This covariant derivative satisfies a \emph{partial} derivation
property with respect to the octonion product.

\begin{proposition}
\label{propDXprod}Suppose $A,B\in \Gamma \left( \mathbb{O}M\right) $ and $%
X\in \Gamma \left( TM\right) $, then%
\begin{equation}
D_{X}\left( AB\right) =\left( \nabla _{X}A\right) B+A\left( D_{X}B\right)
\label{octocovprod}
\end{equation}
\end{proposition}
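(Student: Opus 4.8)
The plan is to reduce the identity to Proposition~\ref{propOctoLC} together with the skew-symmetry (alternativity) of the associator, so that everything becomes a short formal manipulation. First I would expand the left-hand side using the definition (\ref{octocov}), namely $D_X(AB) = \nabla_X(AB) - (AB)T_X$, and then substitute the Leibniz-type formula (\ref{nablaXAB}) from Proposition~\ref{propOctoLC}, which gives
\begin{equation*}
D_X(AB) = (\nabla_X A)B + A(\nabla_X B) - [T_X, A, B] - (AB)T_X.
\end{equation*}

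The key move is to rewrite the torsion term $(AB)T_X$ via the definition of the associator: from (\ref{octoassoc}), $[A,B,T_X] = A(BT_X) - (AB)T_X$, so $(AB)T_X = A(BT_X) - [A,B,T_X]$. Substituting this in, the right-hand side becomes
\begin{equation*}
(\nabla_X A)B + A(\nabla_X B) - A(BT_X) - [T_X, A, B] + [A, B, T_X].
\end{equation*}
Because the associator is totally skew-symmetric, and the passage from $(T_X, A, B)$ to $(A, B, T_X)$ is a cyclic — hence even — permutation of the three slots, we have $[T_X, A, B] = [A, B, T_X]$; so the last two terms cancel. Using left-distributivity of the octonion product over addition (which requires no associativity), $A(\nabla_X B) - A(BT_X) = A(\nabla_X B - BT_X) = A(D_X B)$ by (\ref{octocov}), and we are left with exactly $(\nabla_X A)B + A(D_X B)$, as claimed.

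I do not expect any genuine obstacle here: once Proposition~\ref{propOctoLC} is available, the argument is purely formal. The only points requiring minor care are the sign bookkeeping for the cyclic permutation of the associator's arguments, and keeping track of why the identity is asymmetric in $A$ and $B$ — a plain $\nabla_X A$ on the left factor but a full $D_X B$ on the right — which is precisely the combined effect of the torsion term $-(AB)T_X$ and the associator term in (\ref{nablaXAB}) conspiring to rebuild $-A(BT_X)$.
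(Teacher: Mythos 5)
Your proof is correct and follows essentially the same route as the paper's: expand $D_X(AB)$ via Proposition \ref{propOctoLC}, then use the definition of the associator together with its total skew-symmetry (so that the cyclic permutation $[T_X,A,B]=[A,B,T_X]$ costs no sign) to convert $-[T_X,A,B]-(AB)T_X$ into $-A(BT_X)$. The only difference is cosmetic — you expand $(AB)T_X$ through the associator before invoking the cyclic symmetry, whereas the paper permutes first and expands second.
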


\begin{proof}
We use the definition of $D$ (\ref{octocov}) to write out $D_{X}\left(
AB\right) $:%
\begin{equation*}
D_{X}\left( AB\right) =\nabla _{X}\left( AB\right) -\left( AB\right) T_{X}
\end{equation*}%
Then expanding $\nabla _{X}\left( AB\right) $ using Proposition \ref%
{propOctoLC}, and applying properties of the associator we get 
\begin{eqnarray*}
D_{X}\left( AB\right) &=&\left( \nabla _{X}A\right) B+A\left( \nabla
_{X}B\right) -\left[ T_{X},A,B\right] -\left( AB\right) T_{X} \\
&=&\left( \nabla _{X}A\right) B+A\left( \nabla _{X}B\right) -\left[ A,B,T_{X}%
\right] -\left( AB\right) T_{X} \\
&=&\left( \nabla _{X}A\right) B+A\left( \nabla _{X}B\right) -A\left(
BT_{X}\right) +\left( AB\right) T_{X}-\left( AB\right) T_{X} \\
&=&\left( \nabla _{X}A\right) B+A\left( \nabla _{X}B-BT_{X}\right) \\
&=&\left( \nabla _{X}A\right) B+A\left( D_{X}B\right)
\end{eqnarray*}
\end{proof}

Therefore, $D_{X}$ satisfies a \textquotedblleft
one-sided\textquotedblright\ derivation identity - the derivative on the
first term of the right hand side of (\ref{octocovprod}) is a standard $%
\nabla ,$ however on the second term we have a $D.$ However, if $A$ is real
in (\ref{octocovprod}), then we see that it does give us what we would
expect. Moreover, we now show that $D$ is metric-compatible. Recall from\
Section \ref{secOctobundle} that we extend the metric $g$ on $M$ to $\mathbb{%
O}M$ by setting: 
\begin{equation}
g\left( A,B\right) =\left( \func{Re}A\right) \left( \func{Re}B\right)
+g\left( \func{Im}A,\func{Im}B\right)  \label{gABdef}
\end{equation}%
where the imaginary parts are now regarded as vectors on $M$.

\begin{proposition}
\label{propDXmetric}Suppose $A,B\in \Gamma \left( \mathbb{O}M\right) $ and $%
X\in \Gamma \left( TM\right) $, then%
\begin{equation}
\nabla _{X}\left( g\left( A,B\right) \right) =g\left( D_{X}A,B\right)
+g\left( A,D_{X}B\right) \,  \label{DXmet}
\end{equation}
\end{proposition}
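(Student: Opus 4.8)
The plan is to unwind the definition $D_XA=\nabla_XA-AT_X$ and reduce everything to two facts already available: the metric compatibility of the Levi--Civita connection $\nabla$ on $\mathbb{O}M$, and the adjointness property of right translation from Lemma \ref{lemRLmult}. First I would note that the extended metric $g$ on $\mathbb{O}M$ defined by (\ref{gABdef}) coincides with the octonion inner product $\left\langle\cdot,\cdot\right\rangle_{\mathbb{O}}$ of (\ref{Omet}), so I may freely use either notation. Since $\mathbb{O}M\cong\Lambda^0\oplus TM$ as a tensor bundle and $\nabla$ acts componentwise via (\ref{delXA}), the metric compatibility of Levi--Civita on functions and on $TM$ immediately gives
\begin{equation*}
\nabla_X\left(g(A,B)\right)=g\left(\nabla_XA,B\right)+g\left(A,\nabla_XB\right).
\end{equation*}

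Next I would expand the right-hand side of the claimed identity directly:
\begin{equation*}
g\left(D_XA,B\right)+g\left(A,D_XB\right)=g\left(\nabla_XA,B\right)+g\left(A,\nabla_XB\right)-g\left(AT_X,B\right)-g\left(A,BT_X\right).
\end{equation*}
Comparing with the previous display, the claim is equivalent to the vanishing of the ``correction'' terms, i.e.
\begin{equation*}
g\left(AT_X,B\right)+g\left(A,BT_X\right)=0.
\end{equation*}

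To establish this I would invoke Lemma \ref{lemRLmult}, which says that the adjoint of the right translation $R_{T_X}$ with respect to the octonion metric is $R_{\overline{T_X}}$; thus $g\left(AT_X,B\right)=\left\langle R_{T_X}A,B\right\rangle=\left\langle A,R_{\overline{T_X}}B\right\rangle=g\left(A,B\overline{T_X}\right)$. The key point is that $T_X=\left(0,X\lrcorner T\right)$ is a pure imaginary octonion section, so by the definition (\ref{octoconj}) of conjugation $\overline{T_X}=-T_X$. Hence $g\left(AT_X,B\right)=-g\left(A,BT_X\right)$, and the two correction terms cancel, which proves (\ref{DXmet}). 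There is no real obstacle here beyond keeping the bookkeeping straight: the only thing to be careful about is recognizing that the metric in the statement is exactly the octonion inner product so that Lemma \ref{lemRLmult} applies, and that the imaginary nature of $T_X$ is what produces the crucial sign. If one prefers, the same cancellation can be seen componentwise from (\ref{octoproddef}): writing $A=(a,\alpha)$, $B=(b,\beta)$, $T_X=(0,\tau)$, one checks that $\operatorname{Re}(AT_X)\cdot b+\operatorname{Re}(A)\cdot\operatorname{Re}(BT_X)+g\!\left(\operatorname{Im}(AT_X),\beta\right)+g\!\left(\alpha,\operatorname{Im}(BT_X)\right)$ collapses to zero after using the skew-symmetry of $\times_\varphi$ and of $\varphi$, but the adjointness argument is cleaner.
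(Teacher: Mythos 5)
Your proof is correct and follows essentially the same route as the paper: both reduce the claim to the vanishing of the cross terms via Lemma \ref{lemRLmult} together with the fact that $T_{X}$ is pure imaginary. The only cosmetic difference is that you move $T_{X}$ across the inner product with the right-translation adjoint and use $\overline{T_{X}}=-T_{X}$, whereas the paper moves $A$ and $B$ across with the left-translation adjoint and observes that $\bar{A}B+\bar{B}A$ is real, hence orthogonal to the imaginary $T_{X}$.
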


\begin{proof}
We can rewrite $g\left( D_{X}A,B\right) $ as%
\begin{eqnarray}
g\left( \nabla _{X}A-AT_{X},B\right) &=&g\left( \nabla _{X}A,B\right)
-g\left( AT_{X},B\right)  \notag \\
&=&g\left( \nabla _{X}A,B\right) -g\left( T_{X},\bar{A}B\right)
\label{gDXAB}
\end{eqnarray}%
where we have used Lemma \ref{lemRLmult}: $L_{A}^{\ast }=L_{\bar{A}}.$
Similarly, 
\begin{eqnarray}
g\left( A,D_{X}B\right) &=&g\left( A,\nabla _{X}B\right) -g\left(
A,BT_{X}\right)  \notag \\
&=&g\left( A,\nabla _{X}B\right) -g\left( T_{X},\bar{B}A\right)
\label{gBDXA}
\end{eqnarray}%
Combining (\ref{gDXAB}) and (\ref{gBDXA}), we obtain 
\begin{equation*}
g\left( D_{X}A,B\right) +g\left( A,D_{X}B\right) =g\left( \nabla
_{X}A,B\right) +g\left( A,\nabla _{X}B\right) -g\left( T_{X},\bar{A}B+\bar{B}%
A\right)
\end{equation*}%
However, $T_{X}$ is pure imaginary, while $\bar{A}B+\bar{B}A$ is real.
Therefore, their inner product vanishes. Hence, 
\begin{equation*}
g\left( D_{X}A,B\right) +g\left( A,D_{X}B\right) =g\left( \nabla
_{X}A,B\right) +g\left( A,\nabla _{X}B\right) =\nabla _{X}\left( g\left(
A,B\right) \right) .
\end{equation*}
\end{proof}

Further, given $\mathbb{O}M$-valued differential forms, we can extend $D$ to
an exterior covariant derivative $d_{D}$. Define $\Omega ^{p}\left( \mathbb{O%
}M\right) $ to be sections of the bundle $\Lambda ^{p}\left( T^{\ast
}M\right) \otimes \mathbb{O}M$, that is, octonion-valued $p$-forms. Then, 
\begin{equation}
d_{D}:\Omega ^{p}\left( \mathbb{O}M\right) \longrightarrow \Omega
^{p+1}\left( \mathbb{O}M\right)  \label{Dextdiff}
\end{equation}%
is such that 
\begin{equation}
d_{D}Q=d_{\nabla }Q-\left( -1\right) ^{p}Q\overset{\circ }{\wedge }T
\label{Dextdiff2}
\end{equation}%
where $d_{\nabla }$ is the skew-symmetrized $\nabla $ and $\overset{\circ }{%
\wedge }$ is a combination of exterior product and octonion product. More
concretely, in index notation, if we suppose $Q$ is given by 
\begin{equation*}
Q=\frac{1}{p!}Q_{a_{1}..a_{p}}dx^{a_{1}}\wedge ...\wedge dx^{a_{p}}
\end{equation*}%
where $Q_{a_{1}..a_{p}}$ is an octonion section for any fixed $\left\{
a_{1},...,a_{p}\right\} $. Then%
\begin{equation}
\left( d_{D}Q\right) _{b_{1}...b_{p+1}}=\left( p+1\right) \left( \nabla
_{\lbrack b_{1}}Q_{b_{2}...b_{p+1}]}-\left( -1\right)
^{p}Q_{[b_{1}...b_{p}}\circ T_{b_{p+1}]}\right)  \label{Dextdiff3}
\end{equation}%
where each $T_{b_{k}}$ is an imaginary octonion for any $b_{k}$. Suppose $A$
is an $\mathbb{O}M$-valued $0$-form, $B$ is an $\mathbb{O}M$-value $1$-form,
and $C$ is an $\mathbb{O}M$-valued $2$-form, we have the following explicit
formulas for $d_{D}$: 
\begin{subequations}%
\begin{eqnarray}
\left( d_{D}A\right) _{k} &=&\nabla _{k}A-AT_{k} \\
\left( d_{D}B\right) _{kl} &=&2\nabla _{\lbrack
k}B_{l]}+B_{k}T_{l}-B_{l}T_{k} \\
\left( d_{D}C\right) _{klm} &=&3\nabla _{\lbrack
k}C_{lm]}-C_{kl}T_{m}-C_{lm}T_{k}-C_{mk}T_{l}
\end{eqnarray}%
\end{subequations}%

Note that even if $T=0$, $d_{D}^{2}=d_{\nabla }^{2}$ is a function of $\func{%
Riem}$ and is in general not equal to zero. Lemma \ref{lemd2nap} below gives
the precise statement. This is a standard result for vector-valued
differential forms, so the proof is given in Appendix \ref{appProofs}.

\begin{lemma}
\label{lemd2nap}The operator $d_{\nabla }^{2}:\Omega ^{p}\left( \mathbb{O}%
M\right) \longrightarrow \Omega ^{p+2}\left( \mathbb{O}M\right) \ $is given
by 
\begin{equation}
d_{\nabla }^{2}P=\func{Riem}\wedge \left( \func{Im}P\right)  \label{d2nab0}
\end{equation}%
here $\func{Riem}$ is regarded as a section of $\Omega ^{2}\left( M\right)
\otimes \func{End}\left( \func{Im}\mathbb{O}M\right) \cong \Omega ^{2}\left(
M\right) \otimes \func{End}\left( TM\right) $ so that in (\ref{d2nab0}), $%
\func{Riem}\wedge \func{Im}P$ is a wedge product in $\Omega ^{\ast }\left(
M\right) $ and moreover $\func{Riem}$ acts as an endomorphism on $\func{Im}P$%
.
\end{lemma}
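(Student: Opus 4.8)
The plan is to recognize that the connection on $\mathbb{O}M$ defined by (\ref{delXA}) is nothing but the direct-sum connection on the splitting $\mathbb{O}M\cong\Lambda^{0}\oplus TM$: it acts as ordinary exterior differentiation on the trivial scalar summand and as the Levi-Civita connection on $TM$, with no mixing between the two. Hence $d_{\nabla}$, and therefore $d_{\nabla}^{2}$, preserves the decomposition $P=\func{Re}P+\func{Im}P$ of an octonion-valued $p$-form into a real-valued $p$-form and a $TM$-valued $p$-form, and it suffices to treat each summand separately.

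On the scalar summand $\nabla$ is just $d$ on differential forms, so $d_{\nabla}^{2}(\func{Re}P)=d^{2}(\func{Re}P)=0$. This is exactly why only $\func{Im}P$ survives on the right-hand side of (\ref{d2nab0}). On the $TM$-valued summand, I would invoke the standard fact that for any connection on a vector bundle $E$, the operator $d_{\nabla}^{2}$ on $E$-valued forms equals wedging with the curvature $2$-form regarded as a section of $\Omega^{2}(M)\otimes\func{End}(E)$; specializing $E=TM$ and $\nabla$ the Levi-Civita connection, this curvature is precisely $\func{Riem}$ viewed as an element of $\Omega^{2}(M)\otimes\func{End}(TM)$. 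Combining the two computations gives $d_{\nabla}^{2}P=\func{Riem}\wedge(\func{Im}P)$, which is (\ref{d2nab0}).

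If one prefers a self-contained computation rather than quoting the general identity, the cleanest route is to iterate the index formula (\ref{Dextdiff3}) with $T=0$: for $Q=\frac{1}{p!}Q_{a_{1}\dots a_{p}}\,dx^{a_{1}}\wedge\cdots\wedge dx^{a_{p}}$ one gets $(d_{\nabla}^{2}Q)_{b_{1}\dots b_{p+2}}$ proportional to the fully antisymmetrized double covariant derivative $\nabla_{[b_{1}}\nabla_{b_{2}}Q_{b_{3}\dots b_{p+2}]}$, and the antisymmetrization in the first two indices collapses the second covariant derivative via the Ricci identity to a single term in which $\func{Riem}_{b_{1}b_{2}}$ acts as an endomorphism on the $TM$-index of $Q$; the scalar part contributes nothing because it carries no such index.

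The only point that requires any care --- and hence the main (modest) obstacle --- is the bookkeeping of signs and numerical factors: one must check that the endomorphism $\func{Riem}_{ab}{}^{c}{}_{d}$ produced by the Ricci identity matches the curvature convention (\ref{riemconvention}) fixed in the paper, and that the combinatorial constants from the antisymmetrization are consistent with the normalization of the product $\overset{\circ}{\wedge}$ implicit in (\ref{Dextdiff2})--(\ref{Dextdiff3}) and of the wedge appearing in (\ref{d2nab0}). Once these are pinned down the identity follows immediately.
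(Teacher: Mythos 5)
Your proposal is correct and follows essentially the same route as the paper: split $P$ into its real and imaginary parts, kill the real part with $d^{2}=0$, and identify $d_{\nabla}^{2}$ on the $TM$-valued part with wedging by the curvature of the Levi-Civita connection, exactly as in the paper's index computation. The one detail worth making explicit in your self-contained variant is that the Ricci identity applied to $\nabla_{[b_{1}}\nabla_{b_{2}}Q_{b_{3}\dots b_{p+2}]}$ also produces terms in which $\func{Riem}$ acts on the \emph{form} indices of $Q$, and these vanish only because of the first Bianchi identity $\func{Riem}_{\ [b_{0}b_{1}b_{2}]}^{c}=0$ --- a step the paper spells out and your sketch elides.
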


Let us now assume that $M$ is compact. We can then define the $L_{2}$-inner
product of octonion-valued forms. Suppose $P$ and $Q$ are octonion-valued $p$%
-forms, then let%
\begin{equation}
\left\langle P,Q\right\rangle _{L^{2}}=\int_{M}\left\langle P,Q\right\rangle 
\func{vol}  \label{OctoL2ip}
\end{equation}%
where $\func{vol}$ is the standard volume form on $M$ defined by the metric
and the orientation and $\left\langle \cdot ,\cdot \right\rangle $ is the
canonical extension of $g$ from $\Lambda ^{p}\left( T^{\ast }M\right) $ to $%
\Lambda ^{p}\left( T^{\ast }M\right) \otimes \mathbb{O}M$. Using (\ref%
{OctoL2ip}), we then define the adjoint operator to $d_{D}$ - the
codifferential $d_{D}^{\ast }$. Let $P$ be an $\mathbb{O}M$-valued $p$-form,
and $Q$ an $\mathbb{O}M$-valued $\left( p-1\right) $-form, then 
\begin{equation}
\left\langle d_{D}^{\ast }P,Q\right\rangle _{L^{2}}=\left\langle
P,d_{D}Q\right\rangle _{L^{2}}  \label{Dcodiff1}
\end{equation}%
The codifferential $d_{D}^{\ast }$ is then a map 
\begin{equation}
d_{D}^{\ast }:\Omega ^{p}\left( \mathbb{O}M\right) \longrightarrow \Omega
^{p-1}\left( \mathbb{O}M\right)
\end{equation}

A direct computation using Stokes' Theorem and Lemma \ref{lemRLmult}, gives 
\begin{eqnarray*}
\left\langle P,d_{D}Q\right\rangle _{L^{2}} &=&\int_{M}\frac{p}{p!}%
\left\langle P^{b_{1}...b_{p}},\nabla _{b_{1}}Q_{b_{2}...b_{p}}-\left(
-1\right) ^{p-1}Q_{b_{1}...b_{p-1}}T_{b_{p}}\right\rangle _{\mathbb{O}}\func{%
vol} \\
&=&-\frac{1}{\left( p-1\right) !}\int \left\langle \nabla
_{b_{1}}P^{b_{1}b_{2}...b_{p}}+\left( -1\right) ^{p-1}P^{b_{2}...b_{p}b_{1}}%
\bar{T}_{b_{1}},Q_{b_{2}...b_{p}}\right\rangle _{\mathbb{O}}\func{vol} \\
&=&-\frac{1}{\left( p-1\right) !}\int \left\langle \nabla
_{b_{1}}P^{b_{1}b_{2}...b_{p}}-P^{b_{1}b_{2}...b_{p}}T_{b_{1}},Q_{b_{2}...b_{p}}\right\rangle _{%
\mathbb{O}}\func{vol} \\
&=&\left\langle d_{D}^{\ast }P,Q\right\rangle _{L^{2}}
\end{eqnarray*}%
Therefore, 
\begin{equation*}
\left( d_{D}^{\ast }P\right) _{b_{2}...b_{p}}=-\left( \nabla _{b_{1}}P_{\
b_{2}...b_{p}}^{b_{1}}-P_{\ b_{2}...b_{p}}^{b_{1}}T_{b_{1}}\right)
\end{equation*}%
In particular, 
\begin{equation}
\left( d_{D}^{\ast }P\right) _{b_{2}...b_{p}}=-D_{b_{1}}P_{\
b_{2}..b_{p}}^{b_{1}}.  \label{ddsformula}
\end{equation}%
We will thus define the \emph{divergence }of a $p$-form \ $P$ with respect
to $D$ as the $\left( p-1\right) $-form $\func{Div}P,$ given by 
\begin{equation}
\left( \func{Div}P\right) _{b_{2}...b_{p}}=-D_{b_{1}}P_{\
b_{2}..b_{p}}^{b_{1}}  \label{DivPdef}
\end{equation}%
and thus, 
\begin{equation*}
d_{D}^{\ast }P=-\func{Div}P.
\end{equation*}%
This is the complete analog of the standard codifferential $d^{\ast }$ being
equal to $-\func{div},$ where divergence is now with respect to $\nabla .$
Let us now consider the derivatives of the torsion $T$, which we know is an $%
\func{Im}\mathbb{O}M$-valued $1$-form. Applying the definition (\ref{octocov}%
) of $D,$ we obtain%
\begin{equation*}
D_{i}T_{j}=\nabla _{i}T_{j}-T_{j}T_{i}
\end{equation*}%
Expanding the octonion product $T_{j}T_{i},$ we then have 
\begin{eqnarray}
D_{i}T_{j} &=&\nabla _{i}T_{j}-T_{j}\times T_{i}+\left\langle
T_{j},T_{i}\right\rangle _{\mathbb{O}}  \notag \\
&=&\nabla _{i}T_{j}+T_{i}\times T_{j}+\left\langle T_{i},T_{j}\right\rangle
_{\mathbb{O}}  \label{DTors}
\end{eqnarray}%
We then use (\ref{DTors}) to find $d_{D}T$ and $d_{D}^{\ast }T$.

\begin{proposition}
\label{PropTorsDeriv} Suppose the octonion product on $\mathbb{O}M$ is
defined by the $G_{2}$-structure $\varphi $ with torsion $T$. Then, 
\begin{eqnarray}
d_{D}T &=&\frac{1}{4}\left( \pi _{7}\func{Riem}\right)  \label{extDT} \\
d_{D}^{\ast }T &=&-\func{div}T-\left\vert T\right\vert ^{2}  \label{dsT}
\end{eqnarray}%
where $\pi _{7}\func{Riem}\in \Omega ^{2}\left( \func{Im}\mathbb{O}M\right) $
$\cong \Omega ^{2}\left( TM\right) $- a vector-valued $2$-form$.$ Similarly, 
$\func{div}T\in \Omega ^{0}\left( \func{Im}\mathbb{O}M\right) $ and $%
\left\vert T\right\vert ^{2}\in \Omega ^{0}\left( \func{Re}\mathbb{O}%
M\right) $.
\end{proposition}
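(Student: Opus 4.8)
The plan is to prove the two identities separately, in each case reducing to the explicit component formulas for $d_D$ and $d_D^{\ast}$ recorded above together with the torsion Bianchi identity of Proposition~\ref{propTorsBianchi}; the first uses the exterior covariant derivative of the $1$-form $T$, the second its codifferential.

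For (\ref{extDT}), I would start from the degree-one instance of $d_D$, namely $(d_DB)_{kl}=2\nabla_{[k}B_{l]}+B_kT_l-B_lT_k$, applied to $B=T$. Since each $T_k$ is a pure imaginary octonion, $T_kT_l=-\langle T_k,T_l\rangle+T_k\times T_l$, so the symmetric scalar parts cancel in $T_kT_l-T_lT_k$ and, by (\ref{octocomm}), one obtains $(d_DT)_{kl}=\nabla_kT_l-\nabla_lT_k+2\,T_k\times T_l$. Writing the cross product through $\varphi$ and using total antisymmetry of $\varphi$, the component in the $\func{Im}\mathbb{O}M$ direction $c$ becomes $(d_DT)_{kl}{}^{c}=\nabla_kT_l{}^{c}-\nabla_lT_k{}^{c}+2T_{km}T_{ln}\varphi^{mnc}$, which is exactly the combination on the left-hand side of (\ref{torsionbianchi}). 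Proposition~\ref{propTorsBianchi} then gives $(d_DT)_{kl}{}^{c}=\frac{1}{4}\func{Riem}_{klmn}\varphi^{mnc}$, and, reading $\pi_7\func{Riem}$ as in the discussion following Proposition~\ref{propTorsBianchi} — the $\func{Im}\mathbb{O}M$-valued $2$-form corresponding under $\Lambda^2_7\cong TM$ to the $\mathbf{7}$-component of the curvature — this is precisely $\frac{1}{4}\pi_7\func{Riem}$.

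For (\ref{dsT}), the codifferential formula (\ref{ddsformula}) gives $d_D^{\ast}T=-D^iT_i$ for the $1$-form $T$. Substituting the expansion (\ref{DTors}), $D_iT_j=\nabla_iT_j+T_i\times T_j+\langle T_i,T_j\rangle_{\mathbb{O}}$, and contracting with $g^{ij}$: the term $g^{ij}(T_i\times T_j)$ vanishes because $\times$ is antisymmetric in its two arguments; $g^{ij}\nabla_iT_j=\func{div}T$ is the divergence of $T$ as an $\func{Im}\mathbb{O}M$-valued $1$-form, hence an element of $\Omega^0(\func{Im}\mathbb{O}M)$; and $g^{ij}\langle T_i,T_j\rangle_{\mathbb{O}}=|T|^2$ is the tensor norm-squared, a scalar, hence in $\Omega^0(\func{Re}\mathbb{O}M)$. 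Thus $D^iT_i=\func{div}T+|T|^2$ and $d_D^{\ast}T=-\func{div}T-|T|^2$.

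The substantive step is (\ref{extDT}), and within it the appeal to Proposition~\ref{propTorsBianchi}: one must check that the quadratic term $2\,T_k\times T_l$ produced by the octonion commutator (\ref{octocomm}) coincides with the $\varphi$-contracted quadratic-torsion term in (\ref{torsionbianchi}) — a symmetry/bookkeeping exercise using the contraction identities for $\varphi$ — and then invoke the already-established identification of $\func{Riem}_{abmn}\varphi^{mn}{}_{c}$ with the $\mathbf{7}$-component $\pi_7\func{Riem}$ of the curvature. The identity (\ref{dsT}) is, by contrast, an immediate trace computation once (\ref{DTors}) is in hand.
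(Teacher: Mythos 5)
Your proof is correct and follows essentially the same route as the paper: skew-symmetrize $D_iT_j$ (equivalently, apply the explicit degree-one formula for $d_D$ to $T$), observe that the scalar part of $T_kT_l$ is symmetric and drops out while the commutator gives $2\,T_k\times T_l$, and then identify the result with the left-hand side of the torsion Bianchi identity of Proposition~\ref{propTorsBianchi}; the codifferential identity is the same trace computation the paper performs. No gaps.
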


\begin{proof}
To obtain the exterior derivative $d_{D}T,$ we skew-symmetrize (\ref{DTors}):%
\begin{equation}
\left( d_{D}T\right) _{ij}=2\left( \nabla _{\lbrack i}T_{j]}+T_{i}\times
T_{j}\right)  \label{extDT1}
\end{equation}%
So far we have considered $T$ as an $\func{Im}\mathbb{O}M$-valued $1$-form,
and have suppressed the octonion index on $T$. Writing out (\ref{extDT1}) in
full, we have%
\begin{equation}
\left( d_{D}T\right) _{ij}^{\ \alpha }=2\left( \nabla _{\lbrack i}T_{j]}^{\
\ \alpha }+T_{i}^{\ \beta }T_{j}^{\ \gamma }\varphi _{\ \beta \gamma
}^{\alpha }\right)  \label{extDt2}
\end{equation}%
However using the Bianchi identity for $G_{2}$ torsion (Proposition \ref%
{propTorsBianchi}), we see that the right hand side of (\ref{extDt2}) is
precisely $\frac{1}{4}\left( \pi _{7}\func{Riem}\right) :$%
\begin{equation*}
2\left( \nabla _{\lbrack i}T_{j]}^{\ \ \alpha }+T_{i}^{\ \beta }T_{j}^{\
\gamma }\varphi _{\ \beta \gamma }^{\alpha }\right) =\frac{1}{4}\func{Riem}%
_{ij}^{\ \ \beta \gamma }\varphi _{\ \ \beta \gamma }^{\alpha }
\end{equation*}%
Therefore, 
\begin{equation*}
\left( d_{D}T\right) _{ij}^{\ \alpha }=\frac{1}{4}\func{Riem}_{ij}^{\ \
\beta \gamma }\varphi _{\ \ \beta \gamma }^{\alpha }=\frac{1}{4}\left( \pi
_{7}\func{Riem}\right) _{ij}^{\ \ \alpha }.
\end{equation*}%
To find $d_{D}^{\ast }T$, we write 
\begin{equation*}
d_{D}^{\ast }T=-D^{i}T_{i}=-\func{div}T-\left\vert T\right\vert ^{2}.
\end{equation*}
\end{proof}

In particular, using Proposition \ref{PropTorsDeriv} and Lemma \ref{lemd2nap}%
, we can now work out the action of $d_{D}^{2}$ on octonion-valued forms.

\begin{proposition}
\label{propdD2}Suppose $P\in \Omega ^{p}\left( \mathbb{O}M\right) $. Then, 
\begin{eqnarray}
d_{D}^{2}P &=&\func{Riem}\wedge \left( \func{Im}P\right) -P\overset{\circ }{%
\wedge }d_{D}T  \notag \\
&=&\func{Riem}\wedge \left( \func{Im}P\right) -\frac{1}{4}P\overset{\circ }{%
\wedge }\left( \pi _{7}\func{Riem}\right)  \label{d2D}
\end{eqnarray}
\end{proposition}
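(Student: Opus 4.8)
The plan is to recognise $d_{D}$ as the exterior covariant derivative of an ordinary linear connection and then to compute its curvature. Forgetting the algebra, $\mathbb{O}M$ is just a rank-$8$ real vector bundle and $D_{X}A=\nabla _{X}A-AT_{X}$ is a genuine (metric) connection on it: it is $\mathbb{R}$-linear in $A$ and satisfies $D_{X}(fA)=(Xf)A+fD_{X}A$, since multiplication by a function commutes with the octonion product. Comparing the index formula (\ref{Dextdiff3}) with the definition $(d_{D}Q)_{b_{1}\dots b_{p+1}}=(p+1)D_{[b_{1}}Q_{b_{2}\dots b_{p+1}]}$ of the exterior covariant derivative --- the factor $(-1)^{p}$ in (\ref{Dextdiff3}) being exactly the sign for moving the derivative index past the remaining $p$ indices of an antisymmetrisation --- shows that $d_{D}$ is precisely this operator. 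Hence the standard identity $d_{D}^{2}P=F^{D}\wedge P$ holds, where $F^{D}\in \Omega ^{2}(M,\func{End}(\mathbb{O}M))$ is the curvature of $D$ acting as an endomorphism; this is the same argument that proves Lemma \ref{lemd2nap}, which is the special case $T=0$ (where $F^{\nabla }=\func{Riem}$ acting on the imaginary part).

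The substance of the proof is the computation of $F^{D}(X,Y)=[D_{X},D_{Y}]-D_{[X,Y]}$ from $D_{X}A=\nabla _{X}A-AT_{X}$. The $\nabla \nabla $ terms produce $\func{Riem}(X,Y)\func{Im}A$, since $\func{Riem}$ acts trivially on the flat summand $\Lambda ^{0}$; the mixed terms $(\nabla _{X}A)T_{Y}$ and $(\nabla _{Y}A)T_{X}$ cancel in the commutator; differentiating the product $AT_{Y}$ requires Proposition \ref{propOctoLC}, which introduces an associator $[T_{X},A,T_{Y}]$, and the terms in which $\nabla $ lands on $T$ reassemble into $-A\,(d_{\nabla }T)(X,Y)$; this leaves over $2[T_{X},A,T_{Y}]+(AT_{Y})T_{X}-(AT_{X})T_{Y}$. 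The remaining task is a purely algebraic identity: setting $a:=T_{X}$, $b:=T_{Y}$, one must check
\begin{equation*}
2[a,A,b]+(Ab)a-(Aa)b=-A(ab)+A(ba),
\end{equation*}
which falls out immediately upon expanding each triple product via $[A,B,C]=A(BC)-(AB)C$ and using alternativity, i.e. total antisymmetry of the octonion associator as recorded after (\ref{octoassoc}) and in Lemma \ref{lemAssocIds}. Since $(T\overset{\circ }{\wedge }T)(X,Y)=T_{X}T_{Y}-T_{Y}T_{X}$ --- this is $[T_{X},T_{Y}]=2\,T_{X}\times T_{Y}$, matching the quadratic term of (\ref{extDT1}) --- this identity says exactly that $F^{D}(X,Y)A=\func{Riem}(X,Y)\func{Im}A-A\circ \bigl((d_{\nabla }T+T\overset{\circ }{\wedge }T)(X,Y)\bigr)$, and $d_{\nabla }T+T\overset{\circ }{\wedge }T=d_{D}T$ is the $p=1$ case of (\ref{Dextdiff2}).

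Feeding $F^{D}$ back into $d_{D}^{2}P=F^{D}\wedge P$ gives the first line $d_{D}^{2}P=\func{Riem}\wedge (\func{Im}P)-P\overset{\circ }{\wedge }d_{D}T$, the sign working out because wedging the $\func{End}$-valued $2$-form ``right multiplication by $d_{D}T$'' into $P$ is, by definition, $P\overset{\circ }{\wedge }d_{D}T$. The second line is then immediate from Proposition \ref{PropTorsDeriv}, which gives $d_{D}T=\tfrac{1}{4}\pi _{7}\func{Riem}$.

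I expect the main obstacle to be the bookkeeping in the middle step: keeping the non-associativity corrections straight so that the associator terms forced by Proposition \ref{propOctoLC} exactly absorb the difference between right-multiplying by $T$ twice and right-multiplying once by $T\overset{\circ }{\wedge }T$ --- alternativity is precisely what makes this work, and it pays to isolate the small identity above rather than pushing everything through in indices. One could instead avoid the curvature language entirely and simply iterate (\ref{Dextdiff2}), using $d_{\nabla }^{2}P=\func{Riem}\wedge (\func{Im}P)$ from Lemma \ref{lemd2nap}, a Leibniz rule for $d_{\nabla }(P\overset{\circ }{\wedge }T)$ carrying the same associator defect, and the same identity to conclude; the essential difficulty is identical.
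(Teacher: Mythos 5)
Your argument is correct, but it reaches the result by a different route than the paper. The paper's proof simply iterates the defining formula $d_{D}=d_{\nabla }-\left( -1\right) ^{p}\left( \cdot \right) \overset{\circ }{\wedge }T$: applying $d_{D}$ twice and using the one-sided Leibniz rule $d_{D}\left( P\overset{\circ }{\wedge }T\right) =\left( d_{\nabla }P\right) \overset{\circ }{\wedge }T+\left( -1\right) ^{p}P\overset{\circ }{\wedge }d_{D}T$ (the form-level version of Proposition \ref{propDXprod}), the two cross terms $\pm \left( d_{\nabla }P\right) \overset{\circ }{\wedge }T$ cancel and one is left with $d_{\nabla }^{2}P-P\overset{\circ }{\wedge }d_{D}T$ in three lines, after which Lemma \ref{lemd2nap} and Proposition \ref{PropTorsDeriv} finish the job; this is exactly the alternative you sketch in your last sentence. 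You instead compute the curvature endomorphism $F^{D}\left( X,Y\right) =\left[ D_{X},D_{Y}\right] -D_{\left[ X,Y\right] }$ of the underlying linear connection and invoke the general identity $d_{D}^{2}=F^{D}\wedge \cdot $. The associator bookkeeping that the paper buries inside the Leibniz rule for $\overset{\circ }{\wedge }$ surfaces in your version as the explicit identity $2\left[ a,A,b\right] +\left( Ab\right) a-\left( Aa\right) b=A\left( ba\right) -A\left( ab\right) $, which does follow from alternativity (write $\left( Ab\right) a-\left( Aa\right) b=A\left( ba\right) -A\left( ab\right) +2\left[ A,a,b\right] $ and use $\left[ a,A,b\right] =-\left[ A,a,b\right] $). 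Two things your route buys: it makes explicit that the sign $\left( -1\right) ^{p}$ in (\ref{Dextdiff3}) is precisely the cyclic-permutation sign identifying $d_{D}$ with the exterior covariant derivative of $D$, a point the paper leaves implicit; and it produces the curvature operator on sections directly, which is what (\ref{d2D0}) and the later Weitzenb\"{o}ck computation actually use. The paper's route is shorter but leans on a form-level Leibniz rule it does not prove; both arguments ultimately rest on the same alternativity identity.
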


\begin{proof}
From the definition of $d_{D}$ (\ref{Dextdiff2}), we have 
\begin{equation*}
d_{D}P=d_{\nabla }P-\left( -1\right) ^{p}P\overset{\circ }{\wedge }T
\end{equation*}%
Hence, 
\begin{eqnarray*}
d_{D}^{2}P &=&d_{D}\left( d_{\nabla }P\right) -\left( -1\right)
^{p}d_{D}\left( P\overset{\circ }{\wedge }T\right) \\
&=&d_{\nabla }^{2}P-\left( -1\right) ^{p+1}d_{\nabla }P\overset{\circ }{%
\wedge }T-\left( -1\right) ^{p}\left( d_{\nabla }P\right) \overset{\circ }{%
\wedge }T-\left( -1\right) ^{2p}P\overset{\circ }{\wedge }d_{D}T \\
&=&d_{\nabla }^{2}P-P\overset{\circ }{\wedge }d_{D}T
\end{eqnarray*}%
The expression (\ref{d2D}) then follows when we use Lemma \ref{lemd2nap} to
rewrite $d_{\nabla }^{2}$ and Proposition \ref{PropTorsDeriv} to rewrite $%
d_{D}T$.
\end{proof}

In particular, if $P=\left( p_{0},p\right) \in \Omega ^{0}\left( \mathbb{O}%
M\right) $, i.e. a section of $\Gamma \left( \mathbb{O}M\right) $ then (\ref%
{d2D}) gives us: 
\begin{equation}
\left( d_{D}^{2}P\right) =\func{Riem}\left( p\right) -\frac{1}{4}P\circ
\left( \pi _{7}\func{Riem}\right)  \label{d2D0}
\end{equation}

\begin{remark}
In the expression (\ref{d2D}) we see that $d_{D}^{2}P$ has two components - $%
\func{Riem}\wedge \left( \func{Im}P\right) ,$ which comes from $d_{\nabla
}^{2}$ and does not depend on the octonion product, and an \textquotedblleft
octonionic\textquotedblright\ part $-P\overset{\circ }{\wedge }d_{D}T$ which
is fully determined by the torsion and involves octonion multiplication.
This gives $\pi _{7}\func{Riem}$ a new interpretation as an octonionic
curvature. In particular, $\pi _{7}\func{Riem}$ completely determines the
real part of $d_{D}^{2}$:%
\begin{equation}
\func{Re}\left( d_{D}^{2}P\right) =\frac{1}{4}\left( -p_{0}\wedge \pi _{7}%
\func{Riem}+\left\langle \rho \wedge \pi _{7}\func{Riem}\right\rangle _{%
\mathbb{O}}\right)  \label{Red2D}
\end{equation}%
where in the second term of (\ref{Red2D}), the octonion inner product is
combined with the wedge product of differential forms.
\end{remark}

\section{Change of reference $G_{2}$-structure}

\setcounter{equation}{0}\label{secDeform}In the previous section we have
considered the octonion covariant derivative with respect to a fixed $G_{2}$%
-structure $\varphi $. However from Section \ref{secIsomG2}, we know that
any nowhere-vanishing octonion section $V$ defines a new $G_{2}$-structure $%
\sigma _{V}\left( \varphi \right) $ in the same metric class. Recall from (%
\ref{OctoVAB}) that the octonion product defined by $\sigma _{V}\left(
\varphi \right) $ is given by 
\begin{equation}
A\circ _{V}B=AB+\left[ A,B,V\right] V^{-1}=\left( AV\right) \left(
V^{-1}B\right)
\end{equation}%
We can work out $\nabla _{X}\left( A\circ _{V}B\right) $ directly.

\begin{lemma}
\label{lemNabAVB}Let $V$ be a nowhere-vanishing octonion section, and
suppose $\circ _{V}$ is the octonion product defined by the $G_{2}$%
-structure $\sigma _{V}\left( \varphi \right) $ and $\left[ \cdot ,\cdot
,\cdot \right] _{V}$ is the corresponding associator. Then, for any $A,B\in
\Gamma \left( \mathbb{O}M\right) ,$ and any vector field $X,$%
\begin{equation}
\nabla _{X}\left( A\circ _{V}B\right) =\left( \nabla _{X}A\right) \circ
_{V}B+A\circ _{V}\nabla _{X}B-\left[ \func{Ad}_{V}T_{X}+V\left( \nabla
_{X}V^{-1}\right) ,A,B\right] _{V}  \label{NabAVB}
\end{equation}
\end{lemma}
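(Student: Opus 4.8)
The plan is to compute $\nabla_X\left(A\circ_V B\right)$ by hand, starting from the factorization $A\circ_V B = \left(AV\right)\left(V^{-1}B\right)$ of $(\ref{OctoVAB})$ and pushing $\nabla_X$ through with the $\varphi$-product Leibniz rule of Proposition \ref{propOctoLC}, then reorganizing the leftover terms by means of the associator identities in Lemmas \ref{lemAssocIds}, \ref{lemAssocIds2} and \ref{lemAssocVId}.

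Concretely, first I would apply Proposition \ref{propOctoLC} to the outer product $\left(AV\right)\left(V^{-1}B\right)$ and then once more to each inner product $\nabla_X\left(AV\right)$ and $\nabla_X\left(V^{-1}B\right)$. This produces, among other terms, $\bigl(\left(\nabla_X A\right)V\bigr)\left(V^{-1}B\right)=\left(\nabla_X A\right)\circ_V B$ and $\left(AV\right)\bigl(V^{-1}\left(\nabla_X B\right)\bigr)=A\circ_V\left(\nabla_X B\right)$, which are exactly the two Leibniz terms in $(\ref{NabAVB})$. Everything else is a ``correction'' term $\mathcal C$ assembled from $A$, $B$, $V$, $\nabla_X V$ (equivalently $\nabla_X V^{-1}$) and $T_X$: the two pieces $\bigl(A\left(\nabla_X V\right)\bigr)\left(V^{-1}B\right)$ and $\left(AV\right)\bigl(\left(\nabla_X V^{-1}\right)B\bigr)$ coming from differentiating $V$, and the three associator pieces $-\bigl(\left[T_X,A,V\right]\bigr)\left(V^{-1}B\right)$, $-\left(AV\right)\bigl[T_X,V^{-1},B\bigr]$ and $-\bigl[T_X,AV,V^{-1}B\bigr]$ coming from the defect in Proposition \ref{propOctoLC}.

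It then remains to identify $\mathcal C$ with $-\bigl[\func{Ad}_V T_X + V\left(\nabla_X V^{-1}\right),\,A,\,B\bigr]_V$. For this I would first record the two elementary facts that $V\left(\nabla_X V^{-1}\right) = -\left(\nabla_X V\right)V^{-1}$ — obtained by differentiating $VV^{-1}=1$ with Proposition \ref{propOctoLC} and using that $\left[T_X,V,V^{-1}\right]=0$, since $T_X$, $V$ and $V^{-1}$ all lie in the associative subalgebra generated by $T_X$ and $V$ — and that $\overline{T_X}=-T_X$, because $T_X$ is imaginary. Then I would expand the target via Lemma \ref{lemAssocVId}, writing $\left[W,A,B\right]_V = \left[W,A,BV\right]V^{-1} - \left[W,A,V\right]\left(V^{-1}B\right)$ with $W = \func{Ad}_V T_X + V\left(\nabla_X V^{-1}\right)$, and expand the associator pieces of $\mathcal C$ using Lemma \ref{lemAssocIds2} (to move the factors $V$ and $V^{-1}$ across octonion products) together with the slide and skew-symmetry rules of Lemma \ref{lemAssocIds}. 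Matching the two expansions term by term yields $(\ref{NabAVB})$.

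The main obstacle is precisely this last matching: non-associativity forbids free regrouping, so each rebracketing must be justified by one of the listed identities, and there are enough terms — and enough occurrences of $V^{\pm1}$ — that the reorganization is delicate rather than routine. A useful guide, and a sanity check on signs and on the order of multiplications, is the conceptual reason the answer must take this shape: $\circ_V$ is the octonion product of the $G_2$-structure $\sigma_V\left(\varphi\right)$, so Proposition \ref{propOctoLC} applied to that structure forces $\nabla_X\left(A\circ_V B\right) = \left(\nabla_X A\right)\circ_V B + A\circ_V\left(\nabla_X B\right) - \bigl[T^{(V)}_X, A, B\bigr]_V$ for its torsion $T^{(V)}$; thus the computation above simultaneously establishes $T^{(V)}_X = \func{Ad}_V T_X + V\left(\nabla_X V^{-1}\right)$.
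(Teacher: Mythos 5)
Your plan is precisely the paper's proof: it too starts from $A\circ _{V}B=\left( AV\right) \left( V^{-1}B\right) $, applies Proposition \ref{propOctoLC} to the outer product and to each inner product (producing exactly the two Leibniz terms and the five correction terms you list), and then collapses the corrections to $-\left[ \func{Ad}_{V}T_{X}+V\left( \nabla _{X}V^{-1}\right) ,A,B\right] _{V}$ using $\nabla _{X}V=-V\left( \nabla _{X}V^{-1}\right) V$ together with Lemmas \ref{lemAssocIds}, \ref{lemAssocIds2} and \ref{lemAssocVId}. The only reservation is that the term-by-term rebracketing you defer as ``delicate'' is in fact the bulk of the paper's argument (about a page and a half in the appendix), so what you have is the correct strategy with all the right ingredients rather than a fully executed proof.
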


The proof of Lemma \ref{lemNabAVB} is straightforward, but technical, so we
give it in Appendix \ref{appProofs}. However, from (\ref{nablaXAB}), the
torsion $T^{\left( V\right) }$ of $\sigma _{V}\left( \varphi \right) $ is
given by 
\begin{equation}
\nabla _{X}\left( A\circ _{V}B\right) =\left( \nabla _{X}A\right) \circ
_{V}B+A\circ _{V}\left( \nabla _{X}B\right) -\left[ T_{X}^{V},A,B\right] _{V}
\label{TorsV1}
\end{equation}%
for any octonion sections $A$, $B$, and vector field $X$. Comparing (\ref%
{NabAVB}) and (\ref{TorsV1}), we therefore have the following result.

\begin{theorem}
\label{ThmTorsV}Let $M$ be a smooth $7$-dimensional manifold with a $G_{2}$%
-structure $\left( \varphi ,g\right) $ with torsion $T\in \Omega ^{1}\left( 
\func{Im}\mathbb{O}M\right) $. For a nowhere-vanishing $V\in \Gamma \left( 
\mathbb{O}M\right) ,$ consider the $G_{2}$-structure $\sigma _{V}\left(
\varphi \right) .$ Then, the torsion $T^{\left( V\right) }$ of $\sigma
_{V}\left( \varphi \right) $ is given by%
\begin{equation}
T^{\left( V\right) }=\func{Im}\left( \func{Ad}_{V}T+V\left( \nabla
V^{-1}\right) \right)  \label{TorsV2}
\end{equation}%
In particular, if $V$ has constant norm, $T^{\left( V\right) }$ is given by 
\begin{equation}
T^{\left( V\right) }=-\left( DV\right) V^{-1}  \label{TorsV3}
\end{equation}
\end{theorem}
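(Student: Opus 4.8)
The plan is to obtain $T^{(V)}$ by matching two computations of $\nabla_X(A\circ_V B)$. Lemma \ref{lemNabAVB} gives one expression for the failure of the Leibniz rule, while Proposition \ref{propOctoLC}, applied to the $G_2$-structure $\sigma_V(\varphi)$ rather than $\varphi$, gives that failure as $-[T_X^{(V)},A,B]_V$; this is exactly (\ref{TorsV1}). Subtracting would yield
\[
[T_X^{(V)},A,B]_V=[\func{Ad}_V T_X+V(\nabla_X V^{-1}),A,B]_V
\qquad\text{for all }A,B\in\Gamma(\mathbb{O}M)\text{ and all }X.
\]
The key point is then that $\circ_V$ is the honest octonion product of the $G_2$-structure $\sigma_V(\varphi)$ (same metric $g$, dual $4$-form $\psi^{(V)}=\ast\sigma_V(\varphi)$), so the analogue of (\ref{octoassoc}) gives $[P,Q,R]_V=2\psi^{(V)}(\cdot^\sharp,\func{Im}P,\func{Im}Q,\func{Im}R)$; in particular the associator depends only on the imaginary parts of its slots and is itself imaginary. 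Hence the displayed identity is unchanged upon replacing $\func{Ad}_V T_X+V(\nabla_X V^{-1})$ by its imaginary part, and — since $T_X^{(V)}$ is a pure imaginary octonion by definition of torsion — it reduces to $[\xi_X,A,B]_V=0$ for all $A,B$, where $\xi_X=T_X^{(V)}-\func{Im}(\func{Ad}_V T_X+V(\nabla_X V^{-1}))$ is imaginary. Because $[\xi_X,A,B]_V=2\psi^{(V)}(\cdot^\sharp,\xi_X,\func{Im}A,\func{Im}B)$ and the contraction $v\mapsto v\lrcorner\psi^{(V)}$ is injective on vector fields, this forces $\xi_X=0$, which is (\ref{TorsV2}).

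For the constant-norm case I would first record the identity $V(\nabla_X V^{-1})=-(\nabla_X V)V^{-1}$, valid for any nowhere-vanishing $V$: differentiating $VV^{-1}=1$ via Proposition \ref{propOctoLC} gives $0=(\nabla_X V)V^{-1}+V(\nabla_X V^{-1})-[T_X,V,V^{-1}]$, and $[T_X,V,V^{-1}]=0$ since any associator containing both $V$ and $V^{-1}$ vanishes (from $(AV^{-1})V=A=V(V^{-1}A)$ and skew-symmetry of the associator). Using in addition $(VT_X)V^{-1}=\func{Ad}_V T_X$ (again $V,V^{-1}$ associate with anything) and linearity of $A\mapsto AV^{-1}$, the right-hand side of (\ref{TorsV2}) becomes $\func{Im}\big((VT_X-\nabla_X V)V^{-1}\big)=\func{Im}\big(-(D_X V)V^{-1}\big)$, where $D_X V=\nabla_X V-VT_X$. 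Finally, when $|V|$ is constant, metric compatibility of $D$ (Proposition \ref{propDXmetric}) gives $g(D_X V,V)=\frac{1}{2}\nabla_X|V|^2=0$, hence $\func{Re}\big((D_X V)V^{-1}\big)=\langle D_X V,V\rangle/|V|^2=0$; so $-(D_X V)V^{-1}$ is already pure imaginary and the $\func{Im}$ may be dropped, giving (\ref{TorsV3}).

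The main obstacle is the passage from equality of associators to equality of imaginary octonions, i.e. making rigorous that $[\cdot,\cdot,\cdot]_V$ sees only imaginary parts and is nondegenerate in its first slot; this rests on the version of (\ref{octoassoc}) for $\sigma_V(\varphi)$ together with the standard injectivity of $v\mapsto v\lrcorner\psi$. A secondary point is that $\func{Im}$ and the associator here must be read with respect to $\circ_V$; but the splitting $\mathbb{O}M=\Lambda^0\oplus TM$ and the metric are common to the whole metric class, so $\func{Im}_V=\func{Im}$ and no inconsistency arises. Everything else is bookkeeping with the Moufang/inverse identities and the conjugation and inner-product relations of Section \ref{secOctobundle}.
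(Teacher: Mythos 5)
Your proof is correct and follows essentially the same route as the paper: deduce (\ref{TorsV2}) by comparing Lemma \ref{lemNabAVB} with the defining relation (\ref{TorsV1}), then obtain (\ref{TorsV3}) by checking that the real part vanishes when $\left\vert V\right\vert$ is constant. The only difference is that you make explicit the nondegeneracy step (the associator seeing only imaginary parts, plus injectivity of $v\mapsto v\lrcorner \psi^{(V)}$) that the paper leaves implicit when it asserts the imaginary parts must agree.
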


\begin{proof}
This is a direct consequence of Lemma \ref{lemNabAVB}. Since (\ref{NabAVB})
is defined for arbitrary $A,B$, by comparing it with (\ref{TorsV1}), we find
that the imaginary parts of $T^{\left( V\right) }$ and $\func{Ad}%
_{V}T+V\left( \nabla V^{-1}\right) $ must agree. However, by definition, $%
T^{\left( V\right) }\in \Omega ^{1}\left( \func{Im}\mathbb{O}M\right) $ is
pure imaginary, so (\ref{TorsV2}) holds. Note that in general $\func{Ad}%
_{V}T+V\left( \nabla V^{-1}\right) $ has a real part:%
\begin{eqnarray}
\func{Re}\left( \func{Ad}_{V}T+V\left( \nabla V^{-1}\right) \right)
&=&\left\langle \func{Ad}_{V}T+V\left( \nabla V^{-1}\right) ,1\right\rangle 
\notag \\
&=&\left\langle V\left( \nabla V^{-1}\right) ,1\right\rangle  \notag \\
&=&-\left\langle \left( \nabla V\right) V^{-1},1\right\rangle  \notag \\
&=&-\frac{1}{\left\vert V\right\vert ^{2}}\left\langle \nabla
V,V\right\rangle  \notag \\
&=&-\frac{1}{2}\frac{1}{\left\vert V\right\vert ^{2}}\nabla \left\vert
V\right\vert ^{2}=-\nabla \ln \left\vert V\right\vert  \label{ReTorsV}
\end{eqnarray}%
In particular, if $\left\vert V\right\vert $ is constant, then the real part
vanishes, and hence 
\begin{eqnarray*}
T^{\left( V\right) } &=&\func{Ad}_{V}T+V\left( \nabla V^{-1}\right) \\
&=&VTV^{-1}-\left( \nabla V\right) V^{-1} \\
&=&-\left( \nabla V-VT\right) V^{-1} \\
&=&-\left( DV\right) V^{-1}.
\end{eqnarray*}
\end{proof}

\begin{remark}
Theorem \ref{ThmTorsV} shows that under a transformation of $G_{2}$%
-structures given by $\varphi \longrightarrow \sigma _{V}\left( \varphi
\right) $, the torsion $1$-form transforms in a way similar to the
transformation of a principal bundle connection $1$-form under a change of
trivialization. Since in our case $T$ is not the full connection $1$-form -
it is only part of the connection that also includes the Levi-Civita
connection, the transformation rule (\ref{TorsV2}) involves $\nabla $ rather
than standard partial derivatives as one has on a principal bundle. As
Corollary \ref{corrTorsV0} shows below, the existence of torsion-free $G_{2}$%
-structures in a given metric class reduces to solving the equation $DV=0$
for some nowhere-zero $V\in \Gamma \left( \mathbb{O}M\right) .$
\end{remark}

\begin{corollary}
\label{corrTorsV0}Let $M$ be a smooth $7$-dimensional manifold with a $G_{2}$%
-structure $\left( \varphi ,g\right) $ with torsion $T\in \Omega ^{1}\left( 
\func{Im}\mathbb{O}M\right) $. There exists a torsion-free $G_{2}$-structure 
$\tilde{\varphi}$ in the same metric class as $\varphi $ if and only if
there exists a nowhere-zero octonion section $V$ such that 
\begin{equation}
DV=0  \label{DV0}
\end{equation}%
where the covariant derivative $D$ is defined by (\ref{octocov}) using the
torsion $1$-form $T$. Moreover, $\tilde{\varphi}=\sigma _{V}\left( \varphi
\right) $.
\end{corollary}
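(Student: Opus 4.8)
The plan is to obtain both implications directly from Theorem \ref{ThmTorsV}, which already expresses the torsion $T^{(V)}$ of $\sigma_V(\varphi)$ in terms of $T$ and $V$; the only auxiliary fact needed is that a $D$-parallel section automatically has constant norm, which is itself just the metric compatibility of $D$ (Proposition \ref{propDXmetric}). So the work is essentially book-keeping plus one invocation of the parametrization of isometric $G_2$-structures.

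For the implication ``$DV=0\ \Rightarrow$ torsion-free $\tilde\varphi$ exists'', I would start with a nowhere-vanishing $V\in\Gamma(\mathbb{O}M)$ satisfying $DV=0$. Applying Proposition \ref{propDXmetric} with $A=B=V$ gives $\nabla_X(|V|^2)=2g(D_XV,V)=0$ for all vector fields $X$, so $|V|$ is constant. Hence Theorem \ref{ThmTorsV} applies in its constant-norm form and yields $T^{(V)}=-(DV)V^{-1}=0$. Therefore $\tilde\varphi:=\sigma_V(\varphi)$ is a $G_2$-structure whose torsion vanishes, and by Theorem \ref{ThmSamegfam} together with the definition (\ref{sigmaAdef}) it has the same associated metric $g$ as $\varphi$, i.e.\ lies in the same metric class; this also gives $\tilde\varphi=\sigma_V(\varphi)$.

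For the converse, I would take any torsion-free $G_2$-structure $\tilde\varphi$ in the metric class of $\varphi$. By Theorem \ref{ThmSamegfam} it has the form (\ref{phisameg1}) for a pair $(a,\alpha)$ with $a^2+|\alpha|^2=1$; reading $V=(a,\alpha)$ as a unit octonion section, comparison with (\ref{sigmaAdef}) shows $\tilde\varphi=\sigma_V(\varphi)$. The torsion of $\tilde\varphi$ is then $T^{(V)}$, which is zero by hypothesis, and since $|V|=1$ is constant Theorem \ref{ThmTorsV} gives $0=T^{(V)}=-(DV)V^{-1}$; right-multiplying by $V$ yields $DV=0$. The same $V$ satisfies $\tilde\varphi=\sigma_V(\varphi)$, proving the ``moreover'' clause.

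The one point I would treat with a little care is the global existence of the octonion section $V$ in the converse: a priori the metric class is parametrized only by \emph{projective} classes of unit octonion sections, so one must know that $\tilde\varphi$ lifts to an honest global section of the unit octonion bundle. This, however, is not really an obstacle, since it is already built into the description of isometric $G_2$-structures preceding Theorem \ref{ThmSamegfam} (the metric class is presented there as $\{\sigma_A(\varphi)\}$ over nowhere-vanishing sections $A$, and Theorem \ref{ThmSamegfam} exhibits such an $A$ explicitly via $(a,\alpha)$). Everything else reduces to substitution into Theorem \ref{ThmTorsV}, with the independence of the equation $DV=0$ from the initial choice of $\varphi$ following from the covariance of $D$ recorded just before this corollary.
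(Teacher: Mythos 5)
Your proof is correct and follows essentially the same route as the paper: metric compatibility of $D$ (Proposition \ref{propDXmetric}) to get constant norm in the forward direction, and Theorem \ref{ThmTorsV} in both directions. The only (harmless) difference is in the converse, where you invoke the unit-norm parametrization of Theorem \ref{ThmSamegfam} directly, whereas the paper starts from an arbitrary nowhere-vanishing $U$ and normalizes to $V=U/\left\vert U\right\vert$, checking explicitly that the real part $\nabla \ln \left\vert U\right\vert$ drops out of $\func{Im}\left( \func{Ad}_{U}T+U\nabla U^{-1}\right)$ --- a computation your shortcut avoids.
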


\begin{proof}
If $DV=0$, then using the metric compatibility of $D$ from Proposition \ref%
{propDXmetric}, we get $\nabla \left\vert V\right\vert =0.$ Hence $%
\left\vert V\right\vert $ is a non-zero constant. Therefore, from Theorem %
\ref{ThmTorsV}, the torsion of $\sigma _{V}\left( \varphi \right) $ is given
by 
\begin{equation*}
T^{\left( V\right) }=-\left( DV\right) V^{-1}=0.
\end{equation*}%
Conversely, suppose there exists a torsion-free $G_{2}$-structure $\tilde{%
\varphi}$ in the same metric class as $\varphi $. Then, by Theorem \ref%
{ThmSamegfam}, $\ \tilde{\varphi}=\sigma _{U}\left( \varphi \right) $ for
some nowhere-vanishing octonion section $U$. By Theorem \ref{ThmTorsV}, 
\begin{equation*}
T^{\left( U\right) }=\func{Im}\left( \func{Ad}_{U}T+U\nabla U^{-1}\right) =0
\end{equation*}%
However, let $V=\frac{U}{\left\vert U\right\vert },$ so that $V$ is a unit
octonion. Then, 
\begin{eqnarray*}
\func{Ad}_{U}T &=&\func{Ad}_{V}T \\
U\nabla U^{-1} &=&-\left( \nabla U\right) U^{-1} \\
&=&-\nabla \left( V\left\vert U\right\vert \right) \left\vert U\right\vert
^{-1}V^{-1}=-\frac{1}{\left\vert U\right\vert }\nabla \left\vert
U\right\vert -\left( \nabla V\right) V^{-1}
\end{eqnarray*}%
But, $\frac{1}{\left\vert U\right\vert }\nabla \left\vert U\right\vert $ is
real, so 
\begin{equation*}
0=\func{Im}\left( \func{Ad}_{U}T+U\nabla U^{-1}\right) =\func{Im}\left( 
\func{Ad}_{V}T-\left( \nabla V\right) V^{-1}\right) =-\left( DV\right) V^{-1}
\end{equation*}%
Thus $DV=0$.
\end{proof}

An interesting special case of Corollary \ref{corrTorsV0} is when $\varphi $
is already torsion-free. In this case, $D=\nabla $, hence the condition (\ref%
{DV0}) becomes simply $\nabla V=0.$ Moreover, now the real and pure
imaginary parts of $V$ are differentiated separately, so we just require $%
\nabla v=0$ for some vector field $v$ on $M$. Given a unit parallel vector
field $v,$ any unit octonion $V=\left( a,bv\right) ,$ for constants $a$ and $%
b$ such that $a^{2}+b^{2}=1$, will define a torsion-free $G_{2}$-structure.
This shows that all the other torsion-free $G_{2}$-structures in the same
metric class as $\varphi $ are parametrized by parallel vector fields on $M$
together with a choice of a phase factor.

\begin{definition}
Let $\mathcal{F}_{g}$ be the space of torsion-free $G_{2}$-structures that
are compatible with the metric $g.$
\end{definition}

\begin{theorem}
\label{ThmFgBetti}Suppose $\left( M,g\right) $ is a $7$-dimensional
Riemannian manifold with $\func{Hol}\left( g\right) \subseteq G_{2}$. If $M$
admits $m$ linearly independent parallel vectors, then 
\begin{equation}
\mathcal{F}_{g}\cong \mathbb{R}P^{m}  \label{FgRP}
\end{equation}%
If moreover, $M$ is compact, then $\mathcal{F}_{g}\cong \mathbb{R}P^{b^{1}}$%
, where $b^{1}$ is the first Betti number. In particular, if $b^{1}=0$, the
torsion-free $G_{2}$-structure in this metric class is unique
\end{theorem}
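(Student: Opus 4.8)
The plan is to pin everything to Corollary \ref{corrTorsV0} after choosing a torsion-free reference. Since $\func{Hol}\left( g\right) \subseteq G_{2}$, Theorem \ref{ThmHolonomy} supplies a torsion-free $G_{2}$-structure $\varphi $ compatible with $g$; fix it once and for all. As $\varphi $ is torsion-free, $T=0$, so the octonion covariant derivative reduces to $D=\nabla $. Every $\tilde{\varphi }\in \mathcal{F}_{g}$ has associated metric $g$, hence lies in the metric class of $\varphi $, so Corollary \ref{corrTorsV0} applies and gives $\tilde{\varphi }=\sigma _{V}\left( \varphi \right) $ for some nowhere-vanishing $V$ with $DV=\nabla V=0$; conversely any such $V$ yields an element of $\mathcal{F}_{g}$. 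Since $\sigma _{V}=\sigma _{fV}$ and a nowhere-vanishing parallel section has constant positive norm, I may always rescale to $\left\vert V\right\vert =1$. Thus $\mathcal{F}_{g}=\left\{ \sigma _{V}\left( \varphi \right) :\nabla V=0,\ \left\vert V\right\vert =1\right\} $.

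Next I would solve $\nabla V=0$. Writing $V=\left( a,v\right) $, equation (\ref{delXA}) shows $\nabla V=0$ is equivalent to $\nabla a=0$ and $\nabla v=0$; assuming $M$ connected, $a$ is a real constant and $v$ a parallel vector field. The space of parallel vector fields is, by hypothesis, an $m$-dimensional real vector space $\mathcal{P}$, so the space of parallel octonion sections is $\mathbb{R}\oplus \mathcal{P}\cong \mathbb{R}^{m+1}$, on which $V\mapsto \left\vert V\right\vert ^{2}=a^{2}+\left\vert v\right\vert ^{2}$ is a positive-definite quadratic form (a nonzero parallel vector field is nowhere zero, with constant norm). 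Hence the unit parallel octonion sections form a copy of $S^{m}$, and all of them are nowhere-vanishing. The resulting surjection $S^{m}\longrightarrow \mathcal{F}_{g}$, $V\mapsto \sigma _{V}\left( \varphi \right) $, identifies $V$ with $-V$ because (\ref{sigmaAdef}) is even in $V$, and separates points otherwise: by the parametrization in Theorem \ref{ThmSamegfam}, $\sigma _{V}\left( \varphi \right) =\sigma _{W}\left( \varphi \right) $ for unit octonions forces $V=\pm W$. It therefore descends to a continuous bijection $\mathbb{R}P^{m}=S^{m}/\mathbb{Z}_{2}\longrightarrow \mathcal{F}_{g}$, a homeomorphism since $\mathbb{R}P^{m}$ is compact, which is (\ref{FgRP}).

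For the compact case it remains to prove $m=b^{1}$. A torsion-free $G_{2}$-structure forces $\func{Ric}=0$ (as recorded after (\ref{torsricci})), so the Weitzenb\"{o}ck formula for $1$-forms $\Delta =\nabla ^{\ast }\nabla +\func{Ric}$, together with $\left\langle \Delta \omega ,\omega \right\rangle _{L^{2}}=0$ and $\left\langle \nabla ^{\ast }\nabla \omega ,\omega \right\rangle _{L^{2}}=\left\Vert \nabla \omega \right\Vert _{L^{2}}^{2}$ for harmonic $\omega $, gives $\nabla \omega =0$; conversely a parallel $1$-form is closed and coclosed, hence harmonic. So the harmonic $1$-forms are exactly the parallel ones, and via the $\nabla $-parallel metric isomorphism $TM\cong T^{\ast }M$ their space has dimension $m$; by Hodge theory this is $b^{1}$, whence $\mathcal{F}_{g}\cong \mathbb{R}P^{b^{1}}$. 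When $b^{1}=0$, $\mathbb{R}P^{0}$ is a point, and since $\mathcal{F}_{g}$ is nonempty by Theorem \ref{ThmHolonomy}, the torsion-free $G_{2}$-structure compatible with $g$ is unique.

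The bulk of this is assembling results already in the paper; the one external ingredient is the Bochner vanishing step (Weitzenb\"{o}ck for $1$-forms under $\func{Ric}=0$) identifying harmonic with parallel, and the one point requiring care is the injectivity in the second paragraph — that non-antipodal unit parallel octonions give genuinely different $3$-forms — which I read off from the bijective parametrization around Theorem \ref{ThmSamegfam}.
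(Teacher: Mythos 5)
Your proof is correct and follows essentially the same route as the paper: reduce via Corollary \ref{corrTorsV0} to parallel octonion sections, identify these with $\mathbb{R}\oplus\{\text{parallel vector fields}\}\cong\mathbb{R}^{m+1}$, pass to the projectivization to get $\mathbb{R}P^{m}$, and invoke the Ricci-flat Bochner identification of harmonic with parallel $1$-forms for $m=b^{1}$. You merely make explicit two points the paper leaves implicit — the injectivity of $V\mapsto\sigma_{V}(\varphi)$ up to sign (which the paper absorbs into the parametrization around Theorem \ref{ThmSamegfam}) and the Weitzenb\"ock argument it cites as a standard fact.
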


\begin{proof}
Suppose $\varphi $ is a torsion-free $G_{2}$-structure on $M$ that is
compatible with $g$. From Corollary \ref{corrTorsV0}, we know that any
torsion-free $G_{2}$-structure $\tilde{\varphi}$ in the same metric class is
given by $\tilde{\varphi}=\sigma _{V}\left( \varphi \right) $ for an
octonion $V$ with $\nabla V=0$. This is equivalent to $\nabla v=0$ where $v=%
\func{Im}V$ is a vector field on $M$. If $M$ does admit parallel vector
fields, then any parallel vector field $v$ defines a torsion-free $G_{2}$%
-structure. Suppose now $M$ admits $m$\thinspace $>0$ linearly independent
parallel vectors $v_{1},...,v_{m}$. We can then define a global $\left(
m+1\right) $-subframe on $\mathbb{O}M$ spanned by $V_{0}=\left( 1,0\right) $
and $V_{i}=\left( 0,v_{i}\right) $ for $1\leq i\leq m$.

Therefore, any octonion in the space spanned by $\left\langle
V_{k}\right\rangle $ for $0\leq k\leq m$ defines a torsion-free $G_{2}$%
-structure. However, as we see from the definition of the map $\sigma _{V}$ (%
\ref{sigmaAdef}), any constant multiple of $V_{k}$ defines the same $G_{2}$%
-structure. Therefore, the torsion-free $G_{2}$-structures are in a 1-1
correspondence with projective lines in $\left\langle V_{k}\right\rangle .$
Hence $\mathcal{F}_{g}\cong \mathbb{R}P^{m}$.

If $M$ is compact, then it is a standard fact that, since $\func{Ric}=0$,
parallel vector fields are in a 1-1 correspondence with harmonic forms, and
thus $m=b^{1}$. If $b^{1}=0$, then there exist no parallel vectors on $M$,
and therefore $\varphi $ is the unique torsion-free $G_{2}$-structure that
is compatible with the metric $g$.
\end{proof}

\begin{remark}
If a compact Riemannian 7-manifold $\left( M,g\right) $ has $\func{Hol}%
\left( g\right) \subseteq G_{2}$, then the only possible values for $b^{1}$
are $0,1,3,7$ \cite{Joycebook} and the number of linearly independent
parallel vector fields on $M$ is equal to $b^{1}$. However even if $M$ is
non-compact, the number of parallel vectors $m$ can also be only $0,1,3,7$,
even if $m\neq b^{1}$\ (in fact $m\leq b^{1}$)$.$ This is easy to see -
suppose we have two orthogonal parallel vector fields, $v_{1}$ and $v_{2}$.
Then, using a torsion-free $G_{2}$-structure $\varphi $, we can define $%
v_{3}=v_{1}\times _{\varphi }v_{2}$. This will also be parallel (since $%
\varphi $ is parallel), and it will be orthogonal to both $v_{1}$ and $v_{2}$%
. Thus once we have at least 2 parallel vector fields, we must actually have 
$3$. Similarly, if we have at least $4$ orthogonal parallel vector fields,
by considering cross products, we find that we actually must have $7$.
Another way of looking at this is that if we have have no parallel vector
fields, then the octonion bundle globally splits only as $\mathbb{O}\cong 
\mathbb{R}\oplus \func{Im}\mathbb{O}$ where $\func{Im}\mathbb{O}$
corresponds to $TM$. If we have one parallel vector field, then we in fact
have a globally-defined \emph{complex plane} inside the octonion bundle, so
now we get a splitting as $\mathbb{O\cong C\oplus }V^{6}$ where $V^{6}$ now
corresponds to the tangent bundle of Calabi-Yau $3$-fold. Further, if we
have three parallel vector fields, together with the octonion $\left(
1,0\right) $ these form a globally defined \emph{quaternion} subspace, so
that $\mathbb{O\cong H}\oplus V^{4},$ where $V^{4}$ now corresponds to the
tangent bundle of a 4 real dimensional Hyperk\"{a}hler manifold. Finally, if
we have a global frame of parallel vector fields, then the octonion bundle
just becomes a direct product of $\mathbb{R}^{7}$ and the standard octonion
algebra.
\end{remark}

The condition (\ref{DV0}) for existence of a torsion-free $G_{2}$-structure
depends on the initial choice of the $G_{2}$-structure $\varphi $. This is
the reference $G_{2}$-structure. However if we chose a different reference $%
G_{2}$-structure, the covariant derivative $D$ would be defined differently.
As we have seen from the deformation of the torsion, we can interpret the
choice of the reference $G_{2}$-structure as a choice of trivialization.
Therefore we need to understand whether the condition (\ref{DV0}) is
invariant under a change of trivialization.

\begin{proposition}
\label{propDV}Suppose $\left( \varphi ,g\right) $ is a $G_{2}$-structure on
a $7$-manifold $M$, with torsion $T$ and corresponding octonion covariant
derivative $D$. Suppose $V$ is a unit octonion section, and $\tilde{\varphi}%
=\sigma _{V}\left( \varphi \right) $ is the corresponding $G_{2}$-structure,
that has torsion $\tilde{T}$, given by (\ref{TorsV3}), and an octonion
covariant derivative $\tilde{D}$. Then, for any octonion section $A$, we
have 
\begin{equation}
\tilde{D}\left( AV^{-1}\right) =\left( DA\right) V^{-1}  \label{DtildeAV}
\end{equation}%
and equivalently, 
\begin{equation}
\tilde{D}A=\left( D\left( AV\right) \right) V^{-1}  \label{DtildeAV2}
\end{equation}
\end{proposition}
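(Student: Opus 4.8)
The plan is to unwind both sides from the definitions and then let the weak‑associativity identities of the octonion algebra do all the work. Since $V$ is a unit section, $|V|$ is constant, so by Theorem~\ref{ThmTorsV} the torsion of $\tilde\varphi=\sigma_V(\varphi)$ is $\tilde T_X=-(D_XV)V^{-1}$, and by the definition mirroring \eqref{octocov} the associated covariant derivative is $\tilde D_X B=\nabla_X B-B\circ_V\tilde T_X$, where $\circ_V$ is the product determined by $\sigma_V(\varphi)$, which by \eqref{OctoVAB} satisfies $P\circ_V Q=(PV)(V^{-1}Q)$. First I would expand $\tilde D_X(AV^{-1})$. Using $(AV^{-1})\circ_V\tilde T_X=\big((AV^{-1})V\big)(V^{-1}\tilde T_X)=A(V^{-1}\tilde T_X)$ — the inner cancellation $(AV^{-1})V=A$ coming from $[A,V^{-1},V]=0$, since $V^{-1}$ is a real linear combination of $1$ and $V$ — together with Proposition~\ref{propOctoLC} applied to $\nabla_X(AV^{-1})$ and the linearity of left multiplication, one gets
\begin{equation*}
\tilde D_X(AV^{-1})=(\nabla_X A)V^{-1}-[T_X,A,V^{-1}]+A\big(\nabla_X V^{-1}-V^{-1}\tilde T_X\big).
\end{equation*}

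The second step is to show that $\nabla_X V^{-1}-V^{-1}\tilde T_X=-T_XV^{-1}$. Differentiating $VV^{-1}=1$ with Proposition~\ref{propOctoLC} (the associator term drops out because $[T_X,V,V^{-1}]=0$) and using flexibility $x(yx)=(xy)x$ gives $\nabla_X V^{-1}=-(V^{-1}\nabla_X V)V^{-1}$. On the other hand, expanding $\tilde T_X=-(\nabla_X V-VT_X)V^{-1}$, again using flexibility and the identity $V^{-1}(VT_X)=T_X$, yields $V^{-1}\tilde T_X=-(V^{-1}\nabla_X V)V^{-1}+T_XV^{-1}$; subtracting produces the claimed identity. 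Plugging this back in, I obtain
\begin{equation*}
\tilde D_X(AV^{-1})=(\nabla_X A)V^{-1}-[T_X,A,V^{-1}]-A(T_XV^{-1}).
\end{equation*}
Finally, skew‑symmetry of the associator (Lemma~\ref{lemAssocIds}) gives $-[T_X,A,V^{-1}]=[A,T_X,V^{-1}]=A(T_XV^{-1})-(AT_X)V^{-1}$, so the two $A(T_XV^{-1})$ terms cancel and $\tilde D_X(AV^{-1})=(\nabla_X A-AT_X)V^{-1}=(D_XA)V^{-1}$, which is \eqref{DtildeAV}. The equivalent form \eqref{DtildeAV2} then follows at once by substituting $A\mapsto AV$ and using $(AV)V^{-1}=A$.

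A slightly more conceptual alternative is to first record that $\tilde D$ obeys the $\circ_V$‑analogue of the partial Leibniz rule of Proposition~\ref{propDXprod}, namely $\tilde D_X(A\circ_V B)=(\nabla_X A)\circ_V B+A\circ_V(\tilde D_X B)$ (its proof copies that of Proposition~\ref{propDXprod} verbatim, since $(\mathbb{O}M,\circ_V)$ is again an alternative algebra with torsion $\tilde T$). Taking $B=V^{-1}$ and using $A\circ_V V^{-1}=AV^{-1}$ reduces the statement to the single computation $\tilde D_X V^{-1}=-T_XV^{-1}$ above, followed by the Moufang identity $(AV)\big(V^{-1}(T_XV^{-1})\big)=(AT_X)V^{-1}$ to evaluate $A\circ_V(-T_XV^{-1})$. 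In either approach there are no conceptual hurdles; the one thing that needs genuine care is the nonassociative bookkeeping — at each step one must reparenthesise using exactly the right identity (flexibility, the right‑alternative law, the fact that $1,V,V^{-1},\dots$ generate an associative subalgebra, and the alternating property of the associator), and getting these bracketings and signs straight is the only real obstacle.
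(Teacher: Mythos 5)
Your proof is correct and follows essentially the same route as the paper: expand $\tilde{D}(AV^{-1})$ using $P\circ_V Q=(PV)(V^{-1}Q)$ to reduce the torsion term to $A(V^{-1}\tilde{T}_X)$, identify $\nabla_XV^{-1}-V^{-1}\tilde{T}_X=-T_XV^{-1}$, expand $\nabla_X(AV^{-1})$ via Proposition \ref{propOctoLC}, and cancel with the alternating associator identity $[A,T_X,V^{-1}]=A(T_XV^{-1})-(AT_X)V^{-1}$. The only difference is presentational (you start from $\tilde{T}=-(DV)V^{-1}$ where the paper substitutes $\tilde{T}=VTV^{-1}+V(\nabla V^{-1})$ directly), so no further comment is needed.
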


\begin{proof}
Since $\left\vert V\right\vert =1,$ recall from Theorem \ref{ThmTorsV}, that 
$\tilde{T}$ is given by 
\begin{equation}
\tilde{T}=VTV^{-1}+V\left( \nabla V^{-1}\right)  \label{propDVTtilde}
\end{equation}%
Hence, 
\begin{eqnarray}
\tilde{D}\left( AV^{-1}\right) &=&\nabla \left( AV^{-1}\right) -\left(
AV^{-1}\right) \circ _{V}\tilde{T}  \notag \\
&=&\nabla \left( AV^{-1}\right) -\left( AV^{-1}V\right) \left( V^{-1}\tilde{T%
}\right)  \label{propDVdavin}
\end{eqnarray}%
where we have used the expression (\ref{OctoVAB}) for $\circ _{V}.$
Substituting (\ref{propDVTtilde}) into (\ref{propDVdavin}) and using
Proposition \ref{propOctoLC} to expand the first term in (\ref{propDVdavin}%
), we obtain 
\begin{eqnarray}
\tilde{D}\left( AV^{-1}\right) &=&\nabla \left( AV^{-1}\right) -A\left(
TV^{-1}+\nabla V^{-1}\right)  \notag \\
&=&\left( \nabla A\right) V^{-1}+A\left( \nabla V^{-1}\right) -\left[
T,A,V^{-1}\right] -A\left( TV^{-1}\right) -A\left( \nabla V^{-1}\right) 
\notag \\
&=&\left( \nabla A\right) V^{-1}+\left[ A,T,V^{-1}\right] -A\left(
TV^{-1}\right)  \notag \\
&=&\left( \nabla A-AT\right) V^{-1}=\left( DA\right) V^{-1}
\end{eqnarray}%
The equivalent expression (\ref{DtildeAV2}) follows immediately.
\end{proof}

\begin{remark}
From Proposition \ref{propDV} we also obtain that the octonion covariant
exterior derivative (\ref{Dextdiff}) also transforms in a similar way, and
hence, for $P\in \Omega ^{0}\left( \mathbb{O}M\right) $, we see that 
\begin{eqnarray*}
\tilde{d}_{D}^{2}P &=&\tilde{d}_{D}\left( d_{D}\left( PV\right) V^{-1}\right)
\\
&=&\left( d_{D}^{2}\left( PV\right) \right) V^{-1}
\end{eqnarray*}%
Therefore, if $F$ and $\tilde{F}$ are the curvature operators for $G_{2}$%
-structures $\varphi $ and $\sigma _{V}\left( \varphi \right) $,
respectively, then we see that 
\begin{equation}
\tilde{F}=R_{V^{-1}}FR_{V}  \label{curvtransform}
\end{equation}%
where $R_{V}$ is the right multiplication map by $V$ and the right hand side
of (\ref{curvtransform}) is a composition of maps. Further study of the
algebraic properties of these octonion structures is needed to understand
how to interpret (\ref{curvtransform}) and whether there are any invariants
of the octonion curvature with respect to the transformation (\ref%
{curvtransform}).
\end{remark}

From (\ref{DtildeAV}), we see that $DA=0$ with respect to the $G_{2}$%
-structure $\varphi $ if and only if $\tilde{D}\left( AV^{-1}\right) =0$
with respect to the $G_{2}$-structure $\sigma _{V}\left( \varphi \right) $.
Therefore, the solutions of the equation $DA=0$ are in a 1-1 correspondence
with solutions of the equation $\tilde{D}A=0$.

Suppose $\tilde{\varphi}=\sigma _{V}\left( \varphi \right) $, so that $%
\varphi =\sigma _{V^{-1}}\left( \tilde{\varphi}\right) $. Then, using
Theorem \ref{ThmSigmaUV}, 
\begin{eqnarray}
\sigma _{A}\varphi &=&\sigma _{A}\sigma _{V^{-1}}\tilde{\varphi}  \notag \\
&=&\sigma _{AV^{-1}}\tilde{\varphi}
\end{eqnarray}%
The torsion $T^{\left( A\right) }$ of $\sigma _{A}\varphi $ is then given by 
$T^{\left( A\right) }=-\left( DA\right) A^{-1}$ with respect to $G_{2}$%
-structure $\varphi ,$ but with respect to the $G_{2}$-structure $\tilde{%
\varphi}$, the expression is 
\begin{eqnarray*}
T^{\left( A\right) } &=&-\tilde{D}\left( AV^{-1}\right) \circ _{V}\left(
AV^{-1}\right) ^{-1} \\
&=&-\tilde{D}\left( AV^{-1}\right) \circ _{V}\left( VA^{-1}\right) \\
&=&-\left( \left( DA\right) V^{-1}\right) \circ _{V}\left( VA^{-1}\right) \\
&=&-\left( DA\right) A^{-1}
\end{eqnarray*}%
where we have used (\ref{DtildeAV}) and the definition of $\circ _{V}$ (\ref%
{OctoVAB}). Therefore, the two descriptions give the same result and hence
the torsion is well-defined.

\begin{remark}
This shows that the \textquotedblleft covariant\textquotedblright\
derivative $D$ is indeed \emph{covariant }under change of trivialization. In
particular, if we have different choices of the reference $G_{2}$-structure,
say $\varphi $ and $\tilde{\varphi}$, the octonion solutions of $DA=0$ and $%
\tilde{D}\tilde{A}=0$ will be different, however they still define the same $%
G_{2}$-structures under the map $\sigma .$ That is, $\sigma _{A}\left(
\varphi \right) =\sigma _{\tilde{A}}\left( \tilde{\varphi}\right) $.
\end{remark}

The equation $DA=0$ is linear, but it is an overdetermined PDE, since
pointwise, we have 56 equations for 8 variables of $A$. If we however
restrict to compact manifolds, we can show that this is equivalent to an
elliptic equation $D^{2}A=0$ where we define $D^{2}=D_{i}D^{i}$. From the
definition of $D$ this is clearly elliptic, since the leading term is just
the ordinary rough Laplacian $\nabla ^{2}=\nabla _{i}\nabla ^{i}$.

\begin{proposition}
\label{propDAD2A}Suppose $\left( \varphi ,g\right) $ is a $G_{2}$-structure
on a compact $7$-dimensional manifold $M$. Then, if $A\in C^{2}\left( 
\mathbb{O}M\right) $ is a twice differentiable octonion section, $DA=0$ if
and only if $D^{2}A=0.$
\end{proposition}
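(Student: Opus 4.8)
The forward implication is trivial, since $D^{2}=D_{i}D^{i}$ and so $DA=0$ immediately gives $D^{2}A=0$. For the converse the plan is to run the standard Bochner argument for a metric connection. The key observation is that $D$ is a genuine connection on $\mathbb{O}M$: one checks $D_{X}\left( fA\right) =\left( Xf\right) A+fD_{X}A$ directly from (\ref{octocov}), and $D$ is metric compatible by Proposition \ref{propDXmetric}. Thus $\left( \mathbb{O}M,\left\langle \cdot ,\cdot \right\rangle ,D\right) $ is a Riemannian vector bundle with a metric connection, and $D^{2}=D_{i}D^{i}$ is, up to sign, the associated connection (rough) Laplacian.

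Concretely, I would introduce the vector field $W$ with components $W^{i}=\left\langle D^{i}A,A\right\rangle $ and establish the pointwise identity
\begin{equation*}
\func{div}W=\left\langle D^{2}A,A\right\rangle +\left\vert DA\right\vert ^{2},
\end{equation*}
which follows from the metric compatibility of $D$ — either by working at an arbitrary point in normal coordinates, or by contracting Proposition \ref{propDXmetric} applied to the pair of sections $D^{i}A$ and $A$ (here one uses that $D^{i}A=g^{ij}D_{j}A$ with $g$ covariantly constant, so that $D_{i}\left( D^{i}A\right) =D^{2}A$). Integrating this identity over the compact manifold $M$ and using Stokes' theorem to annihilate $\int_{M}\func{div}W\,\func{vol}$ yields
\begin{equation*}
\int_{M}\left\langle D^{2}A,A\right\rangle \func{vol}+\int_{M}\left\vert DA\right\vert ^{2}\func{vol}=0 .
\end{equation*}
Hence if $D^{2}A=0$ then $\int_{M}\left\vert DA\right\vert ^{2}\func{vol}=0$, and therefore $DA\equiv 0$. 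The hypothesis $A\in C^{2}\left( \mathbb{O}M\right) $ is used only to make $D^{2}A$ and the integration by parts classically meaningful.

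There is no serious obstacle here beyond the bookkeeping: the only thing that could spoil the naive integration by parts is the appearance of uncontrolled torsion terms (since $D\neq \nabla $), and these cancel for precisely the reason that $D$ is metric compatible — equivalently, because right multiplication by the imaginary octonion $T_{i}$ is skew-adjoint for the octonion inner product (Lemma \ref{lemRLmult}). It is worth noting that the ellipticity of $D^{2}$ mentioned before the statement is not needed for the equivalence itself; it is relevant only later, for the regularity theory and finite-dimensionality of the solution space.
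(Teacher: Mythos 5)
Your argument is correct and rests on the same key identity as the paper's proof: metric compatibility of $D$ (Proposition \ref{propDXmetric}) gives $\nabla ^{i}\left\langle D_{i}A,A\right\rangle _{\mathbb{O}}=\left\langle D^{2}A,A\right\rangle _{\mathbb{O}}+\left\vert DA\right\vert ^{2}$, and your vector field $W$ is just $\frac{1}{2}\func{grad}\left\vert A\right\vert ^{2}$. The only difference is the final step: you integrate this identity and apply the divergence theorem, whereas the paper reads it as saying that $\left\vert A\right\vert ^{2}$ is subharmonic when $D^{2}A=0$ and invokes the weak maximum principle; both closings are standard and equally valid on a compact manifold.
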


\begin{proof}
Clearly, if $DA=0,$ then $D^{2}A$ also vanishes. Conversely, suppose $A$ is
an octonion section that satisfies $D^{2}A=0$. Using the compatibility of $D$
with the metric (\ref{DXmet}), we have the following identity:%
\begin{eqnarray}
\nabla ^{2}\left\vert A\right\vert ^{2} &=&2\nabla ^{i}\left( \left\langle
D_{i}A,A\right\rangle _{\mathbb{O}}\right)  \notag \\
&=&2\left( \left\langle D^{2}A,A\right\rangle _{\mathbb{O}}+\left\vert
DA\right\vert ^{2}\right)  \label{nabsqnormA}
\end{eqnarray}%
However, if $D^{2}A=0$, we get 
\begin{equation}
\nabla ^{2}\left\vert A\right\vert ^{2}=2\left\vert DA\right\vert ^{2}
\label{nabsqnormA2}
\end{equation}%
Therefore, $\nabla ^{2}\left\vert A\right\vert ^{2}\geq 0$ on $M,$ and by
the Weak Maximum Principle, $\left\vert A\right\vert ^{2}$ must be constant,
and hence $\left\vert DA\right\vert ^{2}=0$ everywhere on $M$.
\end{proof}

Combining Corollary \ref{corrTorsV0} and Proposition \ref{propDAD2A}, we
thus have the following important result.

\begin{theorem}
\label{ThmD2V0}Let $M$ be a smooth $7$-dimensional compact manifold with a $%
G_{2}$-structure $\left( \varphi ,g\right) $ with torsion $T\in \Omega
^{1}\left( \func{Im}\mathbb{O}M\right) $. There exists a torsion-free $G_{2}$%
-structure $\tilde{\varphi}$ in the same metric class as $\varphi $ if and
only if there exists a non-zero $V\in C^{2}\left( \mathbb{O}M\right) $ that
satisfies the linear elliptic PDE 
\begin{equation}
D^{2}V=0  \label{D2Veq}
\end{equation}%
where the covariant derivative $D$ is defined by (\ref{octocov}) using the
torsion $1$-form $T$. Moreover, $\tilde{\varphi}=\sigma _{V}\left( \varphi
\right) $.
\end{theorem}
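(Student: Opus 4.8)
The plan is to obtain this theorem almost immediately by splicing together two results already established in the excerpt: Corollary \ref{corrTorsV0}, which characterizes the existence of a torsion-free $G_{2}$-structure in the metric class of $\varphi$ by the solvability of $DV=0$ for a nowhere-zero section $V$, and Proposition \ref{propDAD2A}, which on a compact manifold turns the overdetermined first-order equation $DV=0$ into the elliptic second-order equation $D^{2}V=0$. So the real work is just to check that the hypotheses line up, in particular that the genuinely-nonzero $C^{2}$ solution of $D^{2}V=0$ is actually nowhere-vanishing, which is what Corollary \ref{corrTorsV0} requires as input.

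For the forward implication I would argue: if $\tilde{\varphi}$ is a torsion-free $G_{2}$-structure in the same metric class as $\varphi$, then Corollary \ref{corrTorsV0} supplies a nowhere-zero (smooth, hence $C^{2}$) octonion section $V$ with $DV=0$ and $\tilde{\varphi}=\sigma_{V}(\varphi)$. Applying $D$ once more gives $D^{2}V=0$, and since $V$ is nowhere-zero it is in particular not identically zero, so $V$ solves (\ref{D2Veq}). For the converse, suppose $V\in C^{2}(\mathbb{O}M)$ is non-zero with $D^{2}V=0$. Since $M$ is compact, Proposition \ref{propDAD2A} gives $DV=0$. By the metric compatibility of $D$ (Proposition \ref{propDXmetric}) one has $\nabla|V|^{2}=2\langle DV,V\rangle_{\mathbb{O}}=0$, so $|V|$ is a constant; as $V$ is not identically zero this constant is nonzero, i.e. $V$ is nowhere-vanishing. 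Corollary \ref{corrTorsV0} then produces the torsion-free $G_{2}$-structure $\sigma_{V}(\varphi)$ in the metric class of $\varphi$, as desired.

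The only substantive point — and the place where compactness is essential — is already encapsulated in Proposition \ref{propDAD2A}: the implication $D^{2}V=0\Rightarrow DV=0$ rests on the identity $\nabla^{2}|V|^{2}=2|DV|^{2}\ge 0$ together with the Weak Maximum Principle forcing $|V|^{2}$ to be constant. I do not expect any obstacle beyond this; the one thing to be careful about in the write-up is the upgrade from ``$V$ non-zero'' to ``$V$ nowhere-zero,'' which comes for free from the constancy of $|V|$ and is exactly the bridge that makes Corollary \ref{corrTorsV0} applicable. (Ellipticity of $D^{2}$, referenced in the statement, was already noted before Proposition \ref{propDAD2A}, since the leading symbol of $D^{2}=D_{i}D^{i}$ agrees with that of the rough Laplacian $\nabla_{i}\nabla^{i}$, so nothing further is needed there.)
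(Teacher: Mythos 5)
Your proposal is correct and is exactly the paper's argument: the paper derives Theorem \ref{ThmD2V0} by combining Corollary \ref{corrTorsV0} with Proposition \ref{propDAD2A}, and your careful upgrade from ``non-zero'' to ``nowhere-vanishing'' via metric compatibility and the constancy of $\left\vert V\right\vert$ is precisely the bridge the paper relies on (and notes in the remark following the theorem). Nothing further is needed.
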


\begin{remark}
Given a fixed metric $g$ on a compact manifold $M$, it is well-known that a $%
G_{2}$-structure $\varphi $ in the metric class of $g$ is torsion-free if
and only if $\Delta \varphi =0$. Although this is a linear elliptic PDE for
the $3$-form $\varphi ,$ there is an extra non-linear condition to make sure
that $\varphi $ is actually compatible with $g$. Equation (\ref{D2Veq}) is
also linear and elliptic, but it does not require any further conditions.
Since any non-zero solution of (\ref{D2Veq}) will actually be a parallel
octonion (with respect to $D$), it will have constant norm, and will be
nowhere-vanishing. The trade-off here is that the equation (\ref{D2Veq})
depends on the choice of trivialization, but in a covariant way. This is to
be expected, because the existence of solutions to (\ref{D2Veq}) is a
property of the metric - the dimension of the kernel of $D^{2}$ tells us the
holonomy group of the metric.
\end{remark}

\section{Relationship to the spinor bundle}

\setcounter{equation}{0}\label{secSpinor}It is well-known (e.g. \cite%
{BaezOcto,HarveyBook}) that quaternions and octonions have a very close
relationship with spinors in $3,4$ and $7,8$ dimensions, respectively. In
particular, multiplication by imaginary octonions is equivalent to Clifford
multiplication on spinors in $7$ dimensions. More precisely, the \emph{%
enveloping algebra} of the octonion algebra is isomorphic to the Clifford
algebra in $7$ dimensions. The (left) enveloping algebra of $\mathbb{O}$
consists of left multiplication maps $L_{A}:V\longrightarrow AV$ for $A,V\in 
\mathbb{O},$ under composition \cite{SchaferNonassoc}. Similarly, a right
enveloping algebra may also be defined. Since the binary operation in the
enveloping algebra is defined to be composition, it is associative. For
octonions $A,B,V$ we thus have 
\begin{eqnarray*}
L_{A}L_{B}\left( V\right) +L_{B}L_{A}\left( V\right) &=&A\left( BV\right)
+B\left( AV\right) \\
&=&\left( AB\right) V+\left[ A,B,V\right] +\left( BA\right) V+\left[ B,A,V%
\right] \\
&=&\left( AB+BA\right) V
\end{eqnarray*}%
Hence, if $A,B$ are pure imaginary, then indeed, 
\begin{equation}
L_{A}L_{B}+L_{B}L_{A}=-2\left\langle A,B\right\rangle \func{Id}
\label{octoenvelopCliff}
\end{equation}%
which is the defining identity for a Clifford algebra. We see that while the
octonion algebra does give rise to the Clifford algebra, in the process we
lose the nonassociative structure, and hence the octonion algebra has more
structure than the corresponding Clifford algebra. Note that also due to
non-associativity of the octonions, in general that $L_{A}L_{B}\neq L_{AB}.$
In fact, we have the following relationship.

\begin{lemma}
\label{lemABC}Let $A,B,C$ be octonion sections, with multiplication defined
by the $G_{2}$-structure $\varphi $, then 
\begin{equation*}
A\left( BC\right) =\left( A\circ _{C}B\right) C
\end{equation*}%
where $\circ _{C}$ denotes octonion multiplication with respect to the $%
G_{2} $-structure $\sigma _{C}\left( \varphi \right) .$ In particular, 
\begin{equation*}
L_{A}L_{B}C=L_{A\circ _{C}B}C
\end{equation*}
\end{lemma}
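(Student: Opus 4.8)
The plan is to reduce everything to the explicit formula (\ref{OctoVAB}) for the deformed product together with the definition of the associator and alternativity. Recall from (\ref{OctoVAB}) that $A\circ _{C}B=AB+\left[ A,B,C\right] C^{-1}$. So the natural first move is simply to multiply this on the right by $C$ and expand:
\begin{equation*}
\left( A\circ _{C}B\right) C=\left( AB\right) C+\left( \left[ A,B,C\right] C^{-1}\right) C.
\end{equation*}

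The only point that needs a word of justification is the second term. I would argue that $\left( \left[ A,B,C\right] C^{-1}\right) C=\left[ A,B,C\right] $. This is an instance of $R_{C}R_{C^{-1}}=\func{Id}$, which was already noted right after the definition of the translation maps, and which holds because $C^{-1}$ lies in the subalgebra generated by $C$, so $\left[ \left[ A,B,C\right] ,C^{-1},C\right] =0$ by alternativity (equivalently, by the fact recorded in the text that the subalgebra generated by two elements and their conjugates is associative). Hence
\begin{equation*}
\left( A\circ _{C}B\right) C=\left( AB\right) C+\left[ A,B,C\right] =\left( AB\right) C+\left( A\left( BC\right) -\left( AB\right) C\right) =A\left( BC\right),
\end{equation*}
where the middle step is just the definition (\ref{octoassoc}) of the associator.

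The ``in particular'' clause then follows immediately from the definitions of the left translation maps: $L_{A}L_{B}C=A\left( BC\right) =\left( A\circ _{C}B\right) C=L_{A\circ _{C}B}C$. There is essentially no obstacle here; the only place demanding mild care is keeping track of which triples associate, and that is entirely controlled by alternativity since every expression involves only $C$, $C^{-1}$, and one further octonion.
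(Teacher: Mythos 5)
Your proof is correct and is essentially the paper's own argument: both rest on the formula $A\circ _{C}B=AB+\left[ A,B,C\right] C^{-1}$ from (\ref{OctoVAB}), the definition (\ref{octoassoc}) of the associator, and the fact that $\left( \left[ A,B,C\right] C^{-1}\right) C=\left[ A,B,C\right] $ (which the paper uses implicitly in the same place you justify it explicitly). You merely run the chain of equalities in the opposite direction, starting from $\left( A\circ _{C}B\right) C$ rather than from $A\left( BC\right) $.
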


\begin{proof}
We can write 
\begin{eqnarray*}
A\left( BC\right) &=&\left( AB\right) C+\left[ A,B,C\right] \\
&=&\left[ AB+\left[ A,B,C\right] C^{-1}\right] C \\
&=&\left( A\circ _{C}B\right) C
\end{eqnarray*}%
where we have used (\ref{OctoVAB}).
\end{proof}

Let $\mathcal{S}$ be the spinor bundle on the $7$-manifold $M.$ It is then
well-known that a nowhere-vanishing spinor on $M$ defines a $G_{2}$%
-structure via a bilinear expression involving Clifford multiplication. In
fact, given a unit norm spinor $\xi \in \Gamma \left( \mathcal{S}\right) ,$
we may define 
\begin{equation}
\varphi _{\xi }\left( \alpha ,\beta ,\gamma \right) =-\left\langle \xi
,\alpha \cdot \left( \beta \cdot \left( \gamma \cdot \xi \right) \right)
\right\rangle _{S}  \label{phixispin}
\end{equation}%
where $\cdot $ denotes Clifford multiplication, $\alpha ,\beta ,\gamma $ are
arbitrary vector fields and $\left\langle \cdot ,\cdot \right\rangle _{S}$
is the inner product on the spinor bundle. The next lemma shows that we get
exactly the same expression if we use the octonion representation of the
Clifford algebra.

\begin{lemma}
Let $\alpha ,\beta ,\gamma \in \func{Im}\Gamma \left( \mathbb{O}M\right) ,$
and suppose $V\in \Gamma \left( \mathbb{O}M\right) $ is a unit octonion
section. Then, 
\begin{equation}
\left( \sigma _{V}\varphi \right) \left( \alpha ,\beta ,\gamma \right)
=-\left\langle V,\alpha \left( \beta \left( \gamma V\right) \right)
\right\rangle _{\mathbb{O}}  \label{phiVocto}
\end{equation}
\end{lemma}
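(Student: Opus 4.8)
The plan is to compute the right-hand side of (\ref{phiVocto}) directly using octonion algebra and reduce it to the known expression (\ref{phiadadad}) for $\sigma_{V}\varphi$ in terms of the adjoint map. First I would note that for a unit octonion $V$ we have $V^{-1} = \bar V$, so $\gamma V = \gamma V$ and I want to move the three imaginary factors past $V$ on the right. The key observation is Lemma \ref{lemABC}: repeatedly applying $A(BC) = (A\circ_C B)C$ lets me strip off the trailing $V$. Concretely, $\gamma V$ is already in the desired form; then $\beta(\gamma V) = (\beta \circ_V \gamma)V$, and then $\alpha(\beta(\gamma V)) = \alpha((\beta\circ_V\gamma)V) = (\alpha \circ_V (\beta\circ_V\gamma))V$. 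So the right-hand side becomes $-\langle V, (\alpha\circ_V(\beta\circ_V\gamma))V\rangle_{\mathbb{O}}$.

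Next I would use Lemma \ref{lemRLmult} (the adjointness $R_V^* = R_{\bar V}$) to write $\langle V, XV\rangle_{\mathbb{O}} = \langle V\bar V, X\rangle_{\mathbb{O}} = \langle 1, X\rangle_{\mathbb{O}}\,|V|^2 = \langle 1, X\rangle_{\mathbb{O}}$ for unit $V$, where $X = \alpha\circ_V(\beta\circ_V\gamma)$. Since $\alpha,\beta,\gamma$ are imaginary, I need to check that $\alpha\circ_V(\beta\circ_V\gamma)$ has the property that its real part computes the $3$-form $\sigma_V\varphi$. Here I would invoke the identity used inside the proof of Theorem \ref{ThmSigmaAd}: for imaginary octonions $A,B,C$, $\varphi(A,B,C) = \langle AB, C\rangle = -\mathrm{Re}(A(BC))$ — more precisely $\langle 1, A(BC)\rangle_{\mathbb{O}} = \mathrm{Re}(A(BC)) = -\langle AB,C\rangle = -\varphi(A,B,C)$ when all three are imaginary. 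The analogous statement with $\circ_V$ in place of $\circ$ gives $\langle 1, \alpha\circ_V(\beta\circ_V\gamma)\rangle_{\mathbb{O}} = -(\sigma_V\varphi)(\alpha,\beta,\gamma)$, since $\sigma_V\varphi$ is exactly the $3$-form defining $\circ_V$ (by Theorem \ref{ThmSigmaAd} together with the fact that $\circ_V$ is defined by $\sigma_V(\varphi)$ via (\ref{OctoVAB})).

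Putting these together: the right-hand side of (\ref{phiVocto}) equals $-\langle 1, \alpha\circ_V(\beta\circ_V\gamma)\rangle_{\mathbb{O}} = (\sigma_V\varphi)(\alpha,\beta,\gamma)$, which is the claim. I expect the main technical point — and the only place where care is genuinely needed — is verifying that "real part of the $\circ_V$-triple product of imaginary octonions recovers the defining $3$-form of $\circ_V$" holds verbatim for the deformed product; this follows because $\circ_V$ is itself an octonion product (coming from the positive $3$-form $\sigma_V\varphi$ with the same metric $g$, by Theorem \ref{ThmSamegfam} and the discussion around (\ref{OctoVAB})), and every octonion product satisfies $\varphi'(A,B,C) = \langle A\circ' B, C\rangle$ for imaginary $A,B,C$ by the very definition (\ref{octoproddef}) of the product in terms of its structure $3$-form. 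A secondary subtlety is making sure the associators generated by Lemma \ref{lemABC} are consistently tracked, but since each application only introduces a correction already absorbed into the definition of $\circ_V$, no extra terms survive. One should also double-check the sign conventions in (\ref{phixispin}) versus (\ref{octoproddef}) match up, but that is routine bookkeeping.
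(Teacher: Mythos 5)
Your proposal is correct and follows essentially the same route as the paper: two applications of Lemma \ref{lemABC} to get $\alpha(\beta(\gamma V))=(\alpha\circ_{V}(\beta\circ_{V}\gamma))V$, the adjointness $R_{V}^{\ast}=R_{\bar V}$ with $|V|=1$ to reduce to $\langle 1,\alpha\circ_{V}(\beta\circ_{V}\gamma)\rangle_{\mathbb{O}}$, and then the identification of the real part of the $\circ_{V}$-triple product of imaginary octonions with $-(\sigma_{V}\varphi)(\alpha,\beta,\gamma)$. The only (harmless) extra is your remark about (\ref{phixispin}), which plays no role in this purely octonionic lemma.
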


\begin{proof}
Using Lemma \ref{lemABC}, we have 
\begin{eqnarray*}
\alpha \left( \beta \left( \gamma V\right) \right) &=&\alpha \left( \left(
\beta \circ _{V}\gamma \right) V\right) \\
&=&\left( \alpha \circ _{V}\left( \beta \circ _{V}\gamma \right) \right) V
\end{eqnarray*}%
Hence, using the fact that $\left\vert V\right\vert =1$, 
\begin{eqnarray*}
\left\langle V,\alpha \left( \beta \left( \gamma V\right) \right)
\right\rangle _{\mathbb{O}} &=&\left\langle V,\left( \alpha \circ _{V}\left(
\beta \circ _{V}\gamma \right) \right) V\right\rangle _{\mathbb{O}} \\
&=&\left\langle 1,\alpha \circ _{V}\left( \beta \circ _{V}\gamma \right)
\right\rangle _{\mathbb{O}} \\
&=&-\left\langle \alpha ,\beta \circ _{V}\gamma \right\rangle _{\mathbb{O}}
\end{eqnarray*}%
Therefore, 
\begin{eqnarray*}
\left\langle V,\alpha \left( \beta \left( \gamma V\right) \right)
\right\rangle _{\mathbb{O}} &=&-\left\langle \alpha ,\beta \circ _{V}\gamma
\right\rangle _{\mathbb{O}} \\
&=&-\left( \sigma _{V}\varphi \right) \left( \alpha ,\beta ,\gamma \right)
\end{eqnarray*}
\end{proof}

The main difference between (\ref{phixispin}) and (\ref{phiVocto}) is that
the right hand side of (\ref{phixispin}) only depends on the Clifford
algebra (and hence only on the metric), while the right hand side of (\ref%
{phiVocto}) already assumes a choice of a reference $G_{2}$-structure.
Suppose the reference $G_{2}$-structure $\varphi _{\xi }$ is defined by a
unit norm spinor $\xi $ using (\ref{phixispin}). This choice of a reference $%
G_{2}$-structure then induces a correspondence between spinors and
octonions. Define the linear map $j_{\xi }:\Gamma \left( \mathcal{S}\right)
\longrightarrow \Gamma \left( \mathbb{O}M\right) $ by 
\begin{subequations}%
\label{jxiprops} 
\begin{eqnarray}
j_{\xi }\left( \xi \right) &=&1 \\
j_{\xi }\left( V\cdot \eta \right) &=&V\circ _{\varphi _{\xi }}j_{\xi
}\left( \eta \right)  \label{jxiprops2}
\end{eqnarray}%
\end{subequations}%
for any octonion $V$ and spinor $\eta $ and where $\circ _{\varphi _{\xi }}$%
denotes octonion multiplication with respect to the $G_{2}$-structure $%
\varphi _{\xi },$ and for $V=\left( v_{0},v\right) ,$ the Clifford product
is given by $V\cdot \eta =v_{0}\eta +v\cdot \eta .$ Now if $\eta =A\cdot \xi 
$ for some octonion section $A,$ then in (\ref{jxiprops2}) 
\begin{equation}
j_{\xi }\left( \eta \right) =A
\end{equation}%
Note that in $7$ dimensions, if we fix a nowhere-zero spinor $\xi ,$ then we
get a pointwise decomposition of $\mathcal{S\ }$as $\mathbb{R}\cdot \xi
\oplus \left\{ X\cdot \xi :X\in \mathbb{R}^{7}\right\} $ \cite%
{AgricolaSpinors,FriedrichNPG2}, so given any spinor $\eta $, we can write
it as $\eta =A\cdot \xi $ for some octonion section $A$. Therefore, the map $%
j_{\xi }$ is in fact pointwise an isomorphism of real vector spaces from
spinors to octonions.

\begin{lemma}
\label{lemjxi}The map $j_{\xi }$ respects the inner product. That is, for
spinors $\eta _{1}$ and $\eta _{2}$, 
\begin{equation}
\left\langle \eta _{1},\eta _{2}\right\rangle _{S}=\left\langle j_{\xi
}\left( \eta _{1}\right) ,j_{\xi }\left( \eta _{2}\right) \right\rangle _{%
\mathbb{O}}
\end{equation}
\end{lemma}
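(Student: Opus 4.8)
The plan is to reduce the assertion to a pointwise identity about Clifford multiplication and then invoke the standard compatibility of the spinor inner product with Clifford multiplication by vectors. Since $j_{\xi }$ is pointwise an isomorphism of real vector spaces, it suffices to work at a fixed point $p\in M$. Using the pointwise decomposition $\mathcal{S}_{p}=\mathbb{R}\cdot \xi \oplus \left\{ X\cdot \xi :X\in T_{p}M\right\} $ we may write $\eta _{1}=A\cdot \xi $ and $\eta _{2}=B\cdot \xi $ for octonions $A,B\in \mathbb{O}_{p}M$, and then by (\ref{jxiprops}) --- applying (\ref{jxiprops2}) with $\eta =\xi $ and using $A\circ _{\varphi _{\xi }}1=A$ --- we have $j_{\xi }\left( \eta _{1}\right) =A$ and $j_{\xi }\left( \eta _{2}\right) =B$. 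Thus the claim is equivalent to
\begin{equation*}
\left\langle A\cdot \xi ,B\cdot \xi \right\rangle _{S}=\left\langle A,B\right\rangle _{\mathbb{O}}.
\end{equation*}

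First I would record two elementary facts about the real spinor inner product $\left\langle \cdot ,\cdot \right\rangle _{S}$ in seven Riemannian dimensions: it is symmetric, and Clifford multiplication by a vector is skew-adjoint, $\left\langle v\cdot \eta _{1},\eta _{2}\right\rangle _{S}=-\left\langle \eta _{1},v\cdot \eta _{2}\right\rangle _{S}$ (this is exactly the sign convention implicit in the definition (\ref{phixispin}) of $\varphi _{\xi }$). Combined with the Clifford relation $v\cdot v=-\left\vert v\right\vert ^{2}$ and $\left\vert \xi \right\vert _{S}=1$, these give $\left\langle \xi ,v\cdot \xi \right\rangle _{S}=0$ and $\left\langle v\cdot \xi ,v\cdot \xi \right\rangle _{S}=-\left\langle \xi ,v\cdot v\cdot \xi \right\rangle _{S}=\left\vert v\right\vert ^{2}$ for every vector $v$. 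Polarizing the latter identity in $v$ and using symmetry of $\left\langle \cdot ,\cdot \right\rangle _{S}$ yields $\left\langle v\cdot \xi ,w\cdot \xi \right\rangle _{S}=\left\langle v,w\right\rangle $ for all vectors $v,w$.

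Then I would decompose $A=\left( a,\alpha \right) $ and $B=\left( b,\beta \right) $ into real and imaginary parts, so that $A\cdot \xi =a\,\xi +\alpha \cdot \xi $ and $B\cdot \xi =b\,\xi +\beta \cdot \xi $. Expanding $\left\langle A\cdot \xi ,B\cdot \xi \right\rangle _{S}$ bilinearly, the two cross terms vanish by $\left\langle \xi ,v\cdot \xi \right\rangle _{S}=0$, the scalar term contributes $ab\left\vert \xi \right\vert _{S}^{2}=ab$, and the imaginary term contributes $\left\langle \alpha ,\beta \right\rangle $ by the polarized identity above. Hence $\left\langle A\cdot \xi ,B\cdot \xi \right\rangle _{S}=ab+\left\langle \alpha ,\beta \right\rangle =\left\langle A,B\right\rangle _{\mathbb{O}}$, which is the assertion. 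The only points needing care are the well-definedness of $j_{\xi }$ pointwise (guaranteed by the decomposition of $\mathcal{S}_{p}$) and fixing the skew-adjoint sign convention, so there is no substantive obstacle. One could equally argue entirely within the octonion formalism: by Lemma \ref{lemABC} the map $A\mapsto A\cdot \xi $ corresponds to left octonion multiplication, which is skew-adjoint on imaginary octonions by Lemma \ref{lemRLmult} together with alternativity, and the same bilinear expansion then goes through verbatim.
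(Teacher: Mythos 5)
Your proof is correct and follows essentially the same route as the paper: write each spinor as $V\cdot \xi$, split into real and imaginary parts, and expand bilinearly, with the cross terms killed and the vector--vector term evaluated via the skew-adjointness of Clifford multiplication (which the paper simply cites from \cite{HarveyBook} while you derive the two needed consequences explicitly). The extra detail you supply --- $\left\langle \xi ,v\cdot \xi \right\rangle _{S}=0$ and the polarization giving $\left\langle v\cdot \xi ,w\cdot \xi \right\rangle _{S}=\left\langle v,w\right\rangle $ --- is exactly what the paper's one-line justification is implicitly using, so there is nothing to correct.
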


\begin{proof}
Suppose $\eta _{1}=V_{1}\cdot \xi $ and $\eta _{2}=V_{2}\cdot \xi $ where $%
V_{1}=\left( a_{1},v_{1}\right) $ and $V_{2}=\left( a_{2},v_{2}\right) $%
.Then, 
\begin{eqnarray*}
\left\langle \eta _{1},\eta _{2}\right\rangle _{S} &=&\left\langle
V_{1}\cdot \xi ,V_{2}\cdot \xi \right\rangle _{S} \\
&=&a_{1}a_{2}\left\vert \xi \right\vert ^{2}+\left\langle v_{1}\cdot \xi
,v_{2}\cdot \xi \right\rangle _{S} \\
&=&a_{1}a_{2}\left\vert \xi \right\vert ^{2}+\left\langle
v_{1},v_{2}\right\rangle \left\vert \xi \right\vert ^{2} \\
&=&\left\langle V_{1},V_{2}\right\rangle _{\mathbb{O}} \\
&=&\left\langle j_{\xi }\left( \eta _{1}\right) ,j_{\xi }\left( \eta
_{2}\right) \right\rangle _{\mathbb{O}}
\end{eqnarray*}%
where we have the property that the Clifford product is skew-adjoint with
respect to the spinor inner product \cite{HarveyBook}.
\end{proof}

Under the map $j_{\xi }$ we then have 
\begin{eqnarray*}
\varphi _{\xi }\left( \alpha ,\beta ,\gamma \right) &=&-\left\langle \xi
,\alpha \cdot \left( \beta \cdot \left( \gamma \cdot \xi \right) \right)
\right\rangle _{S} \\
&=&-\left\langle j_{\xi }\left( \xi \right) ,\alpha \left( \beta \left(
\gamma \left( j_{\xi }\left( \xi \right) \right) \right) \right)
\right\rangle _{\mathbb{O}} \\
&=&-\left\langle 1,\alpha \left( \beta \gamma \right) \right\rangle \\
&=&\left\langle \alpha ,\beta \gamma \right\rangle
\end{eqnarray*}%
as expected. Then, for $\eta =A\cdot \xi $, using Lemma \ref{lemjxi}, we get 
\begin{eqnarray*}
\varphi _{\eta }\left( \alpha ,\beta ,\gamma \right) &=&-\left\langle \eta
,\alpha \cdot \left( \beta \cdot \left( \gamma \cdot \eta \right) \right)
\right\rangle _{S} \\
&=&-\left\langle j_{\xi }\left( \eta \right) ,\alpha \left( \beta \left(
\gamma \left( j_{\xi }\left( \eta \right) \right) \right) \right)
\right\rangle _{\mathbb{O}} \\
&=&-\left\langle A,\alpha \left( \beta \left( \gamma A\right) \right)
\right\rangle
\end{eqnarray*}%
where the octonion multiplication is with respect to $\varphi _{\xi }.$
Using (\ref{phiVocto}) this then shows that 
\begin{equation}
\varphi _{A\cdot \xi }=\sigma _{A}\left( \varphi _{\xi }\right)
\label{phiAxisigmaA}
\end{equation}%
This shows that our Theorem \ref{ThmSigmaUV} on composition of $\sigma _{U}$
and $\sigma _{V}$ can be restated in terms of spinors.

\begin{corollary}
Let $\xi $ be a spinor of unit norm on a $7$-dimensional manifold and let $%
\varphi _{\xi }$ be the $G_{2}$-structure defined by (\ref{phixispin}).
Then, for any unit octonions $U,V$%
\begin{equation}
\varphi _{U\cdot \left( V\cdot \xi \right) }=\varphi _{\left( UV\right)
\cdot \xi }  \label{phiUVxi}
\end{equation}%
where the octonion product $UV$ on the right hand side is defined
unambiguously using $\varphi _{\xi }$ or $\varphi _{V\cdot \xi }$ (see
Remark \ref{remSigmaUV}).
\end{corollary}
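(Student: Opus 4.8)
The plan is to deduce the statement entirely from the already-established identity (\ref{phiAxisigmaA}), namely $\varphi_{A\cdot\xi}=\sigma_A\left(\varphi_\xi\right)$ for a unit-norm reference spinor and any unit octonion $A$, together with the composition law for $\sigma$ from Theorem \ref{ThmSigmaUV}. First I would record that $V\cdot\xi$ is again a unit-norm spinor: this is precisely Lemma \ref{lemjxi} applied with $\eta_1=\eta_2=V\cdot\xi$, or equivalently a one-line computation using that Clifford multiplication is skew-adjoint (so the cross term $\left\langle\xi,v\cdot\xi\right\rangle_S$ vanishes) and norm-multiplicative on vectors, giving $\left\vert V\cdot\xi\right\vert^2=\left\vert V\right\vert^2\left\vert\xi\right\vert^2=1$. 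Hence $\varphi_{V\cdot\xi}$ is a genuine $G_2$-structure, and by (\ref{phiAxisigmaA}) it equals $\sigma_V\left(\varphi_\xi\right)$.

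Next, since $V\cdot\xi$ is itself a unit-norm spinor, I would apply (\ref{phiAxisigmaA}) a second time, now with $V\cdot\xi$ playing the role of the reference spinor and $U$ the octonion. Because $U\cdot\left(V\cdot\xi\right)$ is an honest Clifford product — associativity of the Clifford algebra makes the bracketing harmless, and it depends only on the metric — this yields
\[
\varphi_{U\cdot\left(V\cdot\xi\right)}=\sigma_U\left(\varphi_{V\cdot\xi}\right)=\sigma_U\left(\sigma_V\left(\varphi_\xi\right)\right).
\]
Theorem \ref{ThmSigmaUV} then gives $\sigma_U\left(\sigma_V\left(\varphi_\xi\right)\right)=\sigma_{UV}\left(\varphi_\xi\right)$, where $UV$ is the octonion product taken with respect to $\varphi_\xi$, and applying (\ref{phiAxisigmaA}) once more in the other direction converts the right side back to $\varphi_{\left(UV\right)\cdot\xi}$, which is the desired identity (\ref{phiUVxi}).

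Finally I would settle the parenthetical remark about which $G_2$-structure defines the product $UV$: by Remark \ref{remSigmaUV} one has $U\circ_{\varphi_\xi}V=U\circ_V V$, i.e.\ multiplying by $V$ using $\varphi_\xi$ agrees with multiplying by $V$ using $\sigma_V\left(\varphi_\xi\right)=\varphi_{V\cdot\xi}$, so the right-hand side of (\ref{phiUVxi}) is unambiguous. I do not expect any serious obstacle here; the only points needing care are (i) checking that (\ref{phiAxisigmaA}) is genuinely re-applicable with an arbitrary unit-norm reference spinor — which it is, since its derivation through the isomorphism $j_\xi$ of (\ref{jxiprops}) and formula (\ref{phiVocto}) used nothing about $\xi$ beyond unit norm — and (ii) confirming that the two $\sigma$'s are composed in the order matching the convention of Theorem \ref{ThmSigmaUV} that $\sigma_V$ implements left multiplication by $V$, so that the outer map $\sigma_U$ really does correspond to the leftmost factor in $UV$.
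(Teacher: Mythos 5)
Your argument is correct and is essentially the paper's own proof: both combine the composition law $\sigma _{U}\left( \sigma _{V}\varphi _{\xi }\right) =\sigma _{UV}\varphi _{\xi }$ from Theorem \ref{ThmSigmaUV} with the identity (\ref{phiAxisigmaA}) applied to each side, the only difference being that you spell out the (routine but worthwhile) checks that $V\cdot \xi $ is again a unit spinor and that (\ref{phiAxisigmaA}) may be re-used with $V\cdot \xi $ as the reference spinor.
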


\begin{proof}
Theorem \ref{ThmSigmaUV} tells us that 
\begin{equation*}
\sigma _{U}\left( \sigma _{V}\varphi _{\xi }\right) =\sigma _{UV}\varphi
_{\xi }
\end{equation*}%
However, from (\ref{phiAxisigmaA}), we get 
\begin{eqnarray*}
\sigma _{U}\left( \sigma _{V}\varphi _{\xi }\right) &=&\sigma _{U}\left(
\varphi _{V\cdot \xi }\right) =\varphi _{U\cdot \left( V\cdot \xi \right) }
\\
\sigma _{UV}\varphi _{\xi } &=&\varphi _{\left( UV\right) \cdot \xi },
\end{eqnarray*}%
which gives us (\ref{phiUVxi}).
\end{proof}

It is also well-known (e.g. \cite[Definition 4.2 and Lemma 4.3]%
{AgricolaSpinors}) that given the spinorial covariant derivative $\nabla
^{S} $ on $\mathcal{S},$ which is obtained by lifting the Levi-Civita
connection to $\mathcal{S},$ we get 
\begin{equation}
\nabla _{X}^{S}\xi =-T_{X}^{\left( \xi \right) }\cdot \xi  \label{LCspinxi}
\end{equation}%
where $T^{\left( \xi \right) }$ is the torsion tensor of $\varphi _{\xi }.$
Note that in \cite{AgricolaSpinors}, the torsion endomorphism is denoted by $%
S,$ and compared to our conventions $S=-T.$ The negative sign is due to a
different sign in the definition (\ref{phixispin}) of $\varphi _{\xi }$ in
terms of the spinor $\xi $. Equation (\ref{LCspinxi}) gives us an important
relationship between the spinorial covariant derivative and the octonion
covariant derivative.

\begin{theorem}
\label{thmLCspinOctCov}Let $\xi \in \Gamma \left( \mathcal{S}\right) $ be a
unit spinor on a $7$-manifold $M$ and let $\varphi _{\xi }$ be the $G_{2}$%
-structure defined by $\xi $ via (\ref{phixispin}). Then, for any $\eta \in
\Gamma \left( \mathcal{S}\right) $ 
\begin{equation}
j_{\xi }\left( \nabla _{X}^{S}\eta \right) =D_{X}^{\left( \xi \right)
}\left( j_{\xi }\left( \eta \right) \right)  \label{jxispinoctder}
\end{equation}%
where $D^{\left( \xi \right) }$ is the octonion covariant derivative (\ref%
{octocov}) with respect to the $G_{2}$-structure $\varphi _{\xi }.$
\end{theorem}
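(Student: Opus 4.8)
The plan is to reduce the statement to the defining properties of $j_{\xi}$ in (\ref{jxiprops}) together with the spinorial identity (\ref{LCspinxi}). Since $\xi$ is a nowhere-vanishing unit spinor, the pointwise decomposition $\mathcal{S}=\mathbb{R}\xi\oplus\{X\cdot\xi\}$ recalled just after (\ref{jxiprops}) lets us write any $\eta\in\Gamma\left(\mathcal{S}\right)$ uniquely as $\eta=A\cdot\xi$ for a smooth octonion section $A$, and then by construction $j_{\xi}\left(\eta\right)=A$. Thus (\ref{jxispinoctder}) is equivalent to the identity $j_{\xi}\left(\nabla^{S}_{X}\left(A\cdot\xi\right)\right)=D^{\left(\xi\right)}_{X}A$, and it suffices to compute each side separately.

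First I would expand $\nabla^{S}_{X}\left(A\cdot\xi\right)$. Writing $A=\left(a,\alpha\right)$ so that $A\cdot\xi=a\xi+\alpha\cdot\xi$, and using that Clifford multiplication $TM\otimes\mathcal{S}\to\mathcal{S}$ is parallel with respect to the Levi-Civita connection and $\nabla^{S}$, the Leibniz rule gives $\nabla^{S}_{X}\left(A\cdot\xi\right)=\left(\nabla_{X}A\right)\cdot\xi+A\cdot\left(\nabla^{S}_{X}\xi\right)$, where $\nabla_{X}A=\left(\nabla_{X}a,\nabla_{X}\alpha\right)$ is the componentwise covariant derivative (\ref{delXA}) and $A\cdot$ on the right again denotes Clifford multiplication by the octonion $A$. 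Substituting (\ref{LCspinxi}), namely $\nabla^{S}_{X}\xi=-T^{\left(\xi\right)}_{X}\cdot\xi$, this becomes $\nabla^{S}_{X}\left(A\cdot\xi\right)=\left(\nabla_{X}A\right)\cdot\xi-A\cdot\left(T^{\left(\xi\right)}_{X}\cdot\xi\right)$.

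Next I would apply $j_{\xi}$ and use (\ref{jxiprops}) together with $j_{\xi}\left(\xi\right)=1$. The first term becomes $j_{\xi}\left(\left(\nabla_{X}A\right)\cdot\xi\right)=\left(\nabla_{X}A\right)\circ_{\varphi_{\xi}}j_{\xi}\left(\xi\right)=\nabla_{X}A$. For the second term, (\ref{jxiprops2}) applied with $V=A$ and spinor $T^{\left(\xi\right)}_{X}\cdot\xi$ gives $j_{\xi}\left(A\cdot\left(T^{\left(\xi\right)}_{X}\cdot\xi\right)\right)=A\circ_{\varphi_{\xi}}j_{\xi}\left(T^{\left(\xi\right)}_{X}\cdot\xi\right)$, and applying (\ref{jxiprops2}) once more with $V=T^{\left(\xi\right)}_{X}$ gives $j_{\xi}\left(T^{\left(\xi\right)}_{X}\cdot\xi\right)=T^{\left(\xi\right)}_{X}\circ_{\varphi_{\xi}}1=T^{\left(\xi\right)}_{X}$. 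Hence $j_{\xi}\left(\nabla^{S}_{X}\eta\right)=\nabla_{X}A-A\,T^{\left(\xi\right)}_{X}$, all octonion products being taken with respect to $\varphi_{\xi}$. Comparing with the definition (\ref{octocov}) of the octonion covariant derivative $D^{\left(\xi\right)}$ associated to the $G_{2}$-structure $\varphi_{\xi}$ (for which the connection $1$-form is the torsion $T^{\left(\xi\right)}$), the right-hand side equals $D^{\left(\xi\right)}_{X}A=D^{\left(\xi\right)}_{X}\left(j_{\xi}\left(\eta\right)\right)$, which is the claim.

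I do not anticipate a genuine obstacle. The only points requiring care are (i) that the Leibniz rule for $\nabla^{S}$ is applied to the full octonion $A$, i.e. to both its scalar and its vector part, which is immediate from $\mathbb{R}$-linearity of $\nabla^{S}$ and parallelism of Clifford multiplication; and (ii) the bookkeeping between Clifford multiplication by octonions and octonion multiplication with respect to $\varphi_{\xi}$, which is exactly the content of (\ref{jxiprops}). If desired one may double-check consistency of that definition by verifying $L_{v}L_{v}=-\left\vert v\right\vert^{2}\func{Id}$ on $\mathbb{O}M$ for imaginary $v$, which follows from alternativity, but this is already implicit in the discussion of the octonion representation of the Clifford algebra preceding the statement.
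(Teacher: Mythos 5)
Your proposal is correct and follows essentially the same route as the paper's own proof: decompose $\eta =A\cdot \xi $, apply the Leibniz rule for $\nabla ^{S}$ together with (\ref{LCspinxi}), and then use the defining properties (\ref{jxiprops}) of $j_{\xi }$ to convert Clifford products into octonion products, arriving at $\nabla _{X}A-AT_{X}^{\left( \xi \right) }=D_{X}^{\left( \xi \right) }A$. The only cosmetic difference is that you spell out the double application of (\ref{jxiprops2}) to the nested Clifford product, which the paper compresses into a single line.
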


\begin{proof}
Using (\ref{LCspinxi}), we have 
\begin{eqnarray}
j_{\xi }\left( \nabla _{X}^{S}\xi \right) &=&-T_{X}^{\left( \xi \right) }
\label{jxiLCspin} \\
&=&D_{X}^{\left( \xi \right) }1=D_{X}^{\left( \xi \right) }j_{\xi }\left(
\xi \right)
\end{eqnarray}%
Then, for an arbitrary spinor $\eta =A\cdot \xi ,$ 
\begin{equation*}
\nabla _{X}^{S}\eta =\left( \nabla _{X}A\right) \cdot \xi +A\cdot \nabla
_{X}^{S}\xi
\end{equation*}%
and using the properties of $j_{\xi }$ (\ref{jxiprops}), we conclude 
\begin{eqnarray}
j_{\xi }\left( \nabla _{X}^{S}\eta \right) &=&\left( \nabla _{X}A\right)
\cdot j_{\xi }\left( \xi \right) +A\cdot j_{\xi }\left( \nabla _{X}^{S}\xi
\right)  \notag \\
&=&\nabla _{X}A-AT_{X}^{\left( \xi \right) }  \notag \\
&=&D_{X}^{\left( \xi \right) }A=D_{X}^{\left( \xi \right) }j_{\xi }\left(
\eta \right) .  \label{jxiLCspineta}
\end{eqnarray}
\end{proof}

\begin{remark}
Theorem \ref{thmLCspinOctCov} thus shows that with respect to the real
vector bundle isomorphism $S\cong \mathbb{O}M$ given by $j_{\xi },$ the spin
bundle connection $\nabla ^{S}$ is mapped precisely to the octonion
connection $D$. From the above properties it may seem that given a fixed
nowhere vanishing spinor, the octonion bundle is isomorphic to the spinor
bundle. The isomorphism however is only at the level of real vector bundles
and connections. However, similar to the relationship between the Clifford
algebra and the enveloping algebra of the octonions, the two structures are
not fully isomorphic, precisely due to the fact that the octonion bundle has
a nonassociative product which is not present in the spinor bundle.
Therefore, the octonion bundle retains all of the information from the
spinor bundle, but has some additional structure. This is also reflected in
the fact that there is no natural binary operation on spinors. When
transitioning to octonions, we are implicitly applying the \emph{triality }%
relationship between spinors and vectors to define the octonion
multiplication \cite{BaezOcto}.
\end{remark}

\begin{remark}
Theorem \ref{thmLCspinOctCov} also shows that our condition $DV=0$ (\ref{DV0}%
) for the existence of a torsion-free $G_{2}$-structure is equivalent to the
well-known condition $\nabla ^{S}\eta =0$ for some nowhere-vanishing spinor $%
\eta .$
\end{remark}

\section{Dirac operator}

\setcounter{equation}{0}\label{secDirac}We may define a distinguished $\func{%
Im}\mathbb{O}$-valued $1$-form $\delta \in \Omega ^{1}\left( \func{Im}%
\mathbb{O}M\right) $ such that for any vector $X$ on $M,$ $\delta \left(
X\right) \mathcal{\in }\Gamma \left( \func{Im}\mathbb{O}M\right) ,$ with
components given by 
\begin{equation}
\delta \left( X\right) =\left( 0,X\right) .  \label{deltadef}
\end{equation}%
Therefore in particular, $\delta $ is the isomorphism that takes vectors to
imaginary octonions. In components, its imaginary part is simply represented
by the Kronecker delta:%
\begin{equation}
\delta _{i}=\left( 0,\delta _{i}^{\ \alpha }\right) .  \label{deltadef2}
\end{equation}%
Note that left multiplication by $\delta $ gives a representation of the
Clifford algebra, so these are precise analogs of the gamma-matrices used in
physics. Below are some properties of $\delta $

\begin{lemma}
\label{lemDelProps} Suppose $\delta \in \Omega ^{1}\left( \func{Im}\mathbb{O}%
M\right) $ is defined by (\ref{deltadef2}) on a $7$-manifold $M$ with $G_{2}$%
-structure $\varphi $ and metric $g$. It then satisfies the following
properties, where octonion multiplication is with respect to $\varphi $

\begin{enumerate}
\item $\nabla \delta =0$

\item $\delta _{i}\delta _{j}=\left( -g_{ij},\varphi _{ij}^{\ \ \alpha
}\right) $

\item $\delta _{i}\left( \delta _{j}\delta _{k}\right) =\left( -\varphi
_{ijk},\psi _{\ ijk}^{\alpha }-\delta _{i}^{\alpha }g_{jk}+\delta _{\
j}^{\alpha }g_{ik}-\delta _{\ k}^{\alpha }g_{ij}\right) $

\item For any $A=\left( a_{0},\alpha \right) \in \Gamma \left( \mathbb{O}%
M\right) ,$ 
\begin{equation}
\delta _{i}A=\left( 
\begin{array}{c}
-\alpha _{i} \\ 
a_{0}\delta _{i}-\left( \alpha \lrcorner \varphi \right) _{i}%
\end{array}%
\right)
\end{equation}
\end{enumerate}

\begin{proof}
It is obvious that $\delta $ is parallel with respect to the Levi-Civita
connection. Consider now the octonion product $\delta _{i}\delta _{j}.$
Writing octonion real and imaginary parts in column notation for clarity, we
have 
\begin{equation*}
\delta _{i}\delta _{j}=\left( 
\begin{array}{c}
0 \\ 
\delta _{i}%
\end{array}%
\right) \left( 
\begin{array}{c}
0 \\ 
\delta _{j}%
\end{array}%
\right) =\left( 
\begin{array}{c}
-\left\langle \delta _{i},\delta _{j}\right\rangle _{\func{Im}\mathbb{O}} \\ 
\delta _{i}\times \delta _{j}%
\end{array}%
\right) =\left( 
\begin{array}{c}
-g_{ij} \\ 
\varphi _{\ ij}^{\alpha }%
\end{array}%
\right)
\end{equation*}%
and similarly, 
\begin{eqnarray*}
\delta _{i}\left( \delta _{j}\delta _{k}\right) &=&\left( 
\begin{array}{c}
0 \\ 
\delta _{i}%
\end{array}%
\right) \left( 
\begin{array}{c}
-g_{jk} \\ 
\varphi _{\ jk}^{\alpha }%
\end{array}%
\right) \\
&=&\left( 
\begin{array}{c}
-\varphi _{ijk} \\ 
-\delta _{i}^{\alpha }g_{jk}+\varphi _{\ \ i}^{\alpha \ \gamma }\varphi
_{\gamma jk}%
\end{array}%
\right) \\
&=&\left( 
\begin{array}{c}
-\varphi _{ijk} \\ 
\psi _{\ ijk}^{\alpha }-\delta _{i}^{\alpha }g_{jk}+\delta _{\ j}^{\alpha
}g_{ik}-\delta _{\ k}^{\alpha }g_{ij}%
\end{array}%
\right)
\end{eqnarray*}%
where we have used the contraction identity (\ref{phiphi1}) for $\varphi .$

Finally, consider $\delta _{i}A.$ We can write 
\begin{eqnarray*}
\delta _{i}A &=&a_{0}\delta _{i}+\delta _{i}\left( 0,\alpha \right) \\
&=&a_{0}\delta _{i}+\alpha ^{j}\delta _{i}\delta _{j} \\
&=&a_{0}\delta _{i}+\alpha ^{j}\left( 
\begin{array}{c}
-g_{ij} \\ 
\varphi _{\ ij}^{\alpha }%
\end{array}%
\right) \\
&=&\left( 
\begin{array}{c}
-\alpha _{i} \\ 
a_{0}\delta _{i}-\left( \alpha \lrcorner \varphi \right) _{i}%
\end{array}%
\right)
\end{eqnarray*}
\end{proof}
\end{lemma}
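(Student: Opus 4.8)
The plan is to verify the four items in sequence, using only the explicit octonion product formula (\ref{octoproddef}), the index form (\ref{vcrossdef2}) of the vector cross product, and the $\varphi\varphi$ contraction identity (\ref{phiphi1}); nothing deeper is required.

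For item 1, I would note that $\delta$ is simply the canonical bundle isomorphism $TM\to\func{Im}\mathbb{O}M$, whose components are the Kronecker symbol $\delta_i^{\ \alpha}$; since $\func{Im}\mathbb{O}M$ is just $TM$ as a metric bundle carrying the same Levi-Civita connection, this tensor is parallel, i.e. $\nabla g=0$ forces $\nabla\delta=0$. For item 2, I would substitute $A=(0,\delta_i)$ and $B=(0,\delta_j)$ into (\ref{octoproddef}): the scalar part is $-\langle\delta_i,\delta_j\rangle=-g_{ij}$ and the vector part is $\delta_i\times_\varphi\delta_j$, which by (\ref{vcrossdef2}) has components $\varphi_{ij}^{\ \ \alpha}$.

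For item 3, I would feed the result of item 2 back into (\ref{octoproddef}), computing the product $(0,\delta_i)\circ(-g_{jk},\varphi_{jk}^{\ \ \alpha})$. Its scalar part is $-\langle\delta_i,\varphi_{jk}^{\ \ \alpha}\rangle=-\varphi_{jki}=-\varphi_{ijk}$ by cyclic symmetry, and its vector part is $-g_{jk}\delta_i+\delta_i\times_\varphi\varphi_{jk}^{\ \ \alpha}$, the second summand being $\varphi_{\ \ i}^{\alpha\ \gamma}\varphi_{\gamma jk}$ in index notation. Rewriting this contraction with (\ref{phiphi1}) gives $\psi_{\ ijk}^{\alpha}+\delta_{\ j}^{\alpha}g_{ik}-\delta_{\ k}^{\alpha}g_{ij}$, and absorbing the $-g_{jk}\delta_i$ term yields exactly the stated expression. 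Finally, item 4 follows by linearity: writing $A=a_0\cdot 1+(0,\alpha)$ gives $\delta_i A=a_0\delta_i+\alpha^j(\delta_i\delta_j)$, and inserting item 2 produces scalar part $-\alpha^j g_{ij}=-\alpha_i$ and vector part $a_0\delta_i+\alpha^j\varphi_{ij}^{\ \ \alpha}=a_0\delta_i-(\alpha\lrcorner\varphi)_i$, the sign coming from the definition $(\alpha\lrcorner\varphi)_{mn}=\alpha^a\varphi_{amn}$ in (\ref{contractdef}) together with antisymmetry of $\varphi$.

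The only step with any real content is item 3 — reducing $\varphi_{\ \ i}^{\alpha\ \gamma}\varphi_{\gamma jk}$ via (\ref{phiphi1}) — where the care is entirely in tracking index positions and the sign of $\psi$ (the paper uses Bryant's orientation, opposite to Karigiannis's). Everything else is a single substitution into (\ref{octoproddef}).
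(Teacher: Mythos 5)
Your proof is correct and follows essentially the same route as the paper: item 1 by parallelism of the canonical identification, items 2 and 4 by direct substitution into the octonion product formula (\ref{octoproddef}), and item 3 by reducing $\varphi_{\ \ i}^{\alpha\ \gamma}\varphi_{\gamma jk}$ via the contraction identity (\ref{phiphi1}), with the signs tracked correctly. No gaps.
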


We can now define the octonion Dirac operator $\NEG{D}$ using $\delta $ and
the octonion covariant derivative $D$ (\ref{octocov}). Let $A\in \Gamma
\left( \mathbb{O}M\right) ,$ then define $\NEG{D}A$ as 
\begin{equation}
\NEG{D}A=\delta \overset{\circ }{\lrcorner }\left( DA\right)
\label{defDiracOp}
\end{equation}%
where $\overset{\circ }{\lrcorner }$ is a combination of contraction and
octonion multiplication. In coordinates, (\ref{defDiracOp}) is given by%
\begin{equation}
\NEG{D}A=\delta ^{i}\circ \left( D_{i}A\right) .
\end{equation}%
This operator is precisely what we obtain by applying the map $j_{\xi }$ to
the standard Dirac operator on the spinor bundle. We can use this definition
to work some properties of the operator. First of all, let us prove that $%
\NEG{D}$ is covariant under a change of the reference $G_{2}$-structure:

\begin{proposition}
\label{propDiracV}Suppose $\left( \varphi ,g\right) $ is a $G_{2}$-structure
on a $7$-manifold $M$, with torsion $T$ and corresponding octonion covariant
derivative $D$ and Dirac operator $\NEG{D}.$ Suppose $V$ is a unit octonion
section, and $\tilde{\varphi}=\sigma _{V}\left( \varphi \right) $ is the
corresponding $G_{2}$-structure, that has torsion $\tilde{T}$, given by (\ref%
{TorsV3}), an octonion covariant derivative $\tilde{D}$ and Dirac operator $%
\widetilde{\NEG{D}}.$ Then, for any octonion section $A$, we have 
\begin{equation}
\widetilde{\NEG{D}}\left( AV^{-1}\right) =\left( \NEG{D}A\right) V^{-1}
\end{equation}
\end{proposition}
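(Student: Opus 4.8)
The plan is to reduce this to the two facts already established: Proposition \ref{propDV}, which says $\tilde{D}(AV^{-1}) = (DA)V^{-1}$ for any octonion section $A$, and Lemma \ref{lemABC}, which relates left multiplication under two $G_2$-structures, namely $X(YC) = (X\circ_C Y)C$. First I would unwind the definition $\widetilde{\NEG{D}}(AV^{-1}) = \tilde\delta^i \circ_V (\tilde{D}_i(AV^{-1}))$. Here $\tilde\delta$ is the distinguished $\func{Im}\mathbb{O}$-valued $1$-form for $\sigma_V(\varphi)$; but since $\delta$ simply sends a vector $X$ to $(0,X)$ and this assignment does not depend on the $G_2$-structure at all (only on the splitting $\mathbb{O}M \cong \Lambda^0 \oplus TM$), we have $\tilde\delta = \delta$. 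So $\widetilde{\NEG{D}}(AV^{-1}) = \delta^i \circ_V (\tilde{D}_i(AV^{-1}))$.

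Next I would apply Proposition \ref{propDV} to rewrite $\tilde{D}_i(AV^{-1}) = (D_i A)V^{-1}$, giving
\begin{equation*}
\widetilde{\NEG{D}}(AV^{-1}) = \delta^i \circ_V \left( (D_i A)V^{-1} \right).
\end{equation*}
Now the key algebraic step: I want to move the $V^{-1}$ out to the right. Using the identity $X \circ_V Y = (XV)(V^{-1}Y)$ from (\ref{OctoVAB}) with $X = \delta^i$ and $Y = (D_iA)V^{-1}$, we get $\delta^i \circ_V ((D_iA)V^{-1}) = (\delta^i V)(V^{-1}((D_iA)V^{-1}))$. Alternatively, and more cleanly, Lemma \ref{lemABC} in the form $\alpha(BC) = (\alpha \circ_C B)C$ read with $C = V^{-1}$, $B = D_iA$, $\alpha = \delta^i$ gives directly
\begin{equation*}
\delta^i \circ_{V^{-1}} (D_i A) \cdot V^{-1} = \delta^i \left( (D_i A) V^{-1} \right).
\end{equation*}
The mismatch here is between $\circ_V$ and $\circ_{V^{-1}}$, so I expect the main obstacle to be bookkeeping the precise form of the product that appears: I need $\delta^i \circ_V ((D_iA)V^{-1}) = (\delta^i \circ_\varphi (D_iA))V^{-1} = (\NEG{D}A)V^{-1}$, and verifying this requires carefully combining Lemma \ref{lemABC} with the relation between $\circ_V$ and $\circ_\varphi$. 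Concretely, $\delta^i \circ_V ((D_iA)V^{-1}) = (\delta^i V)(V^{-1}(D_iA)V^{-1})$ by (\ref{OctoVAB}); on the other hand $(\delta^i (D_iA))V^{-1}$ needs to equal this, which by Lemma \ref{lemABC} (with $C=V^{-1}$) equals $(\delta^i \circ_{V^{-1}} (D_iA))V^{-1}$, and $\delta^i \circ_{V^{-1}} (D_iA) = (\delta^i V^{-1})(V (D_iA))$ — so I must reconcile the two. The safest route is probably to avoid Lemma \ref{lemABC} and instead just expand both sides via (\ref{OctoVAB}) and the alternative/Moufang identities of Lemma \ref{lemAssocIds}, checking that $(\delta^i V)(V^{-1}((D_iA)V^{-1})) = (\delta^i(D_iA))V^{-1}$ term by term; this is the technical heart.

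Finally, summing over $i$ gives $\widetilde{\NEG{D}}(AV^{-1}) = (\delta^i \circ_\varphi (D_iA))V^{-1} = (\NEG{D}A)V^{-1}$, which is the claim. I would structure the write-up as: (i) note $\tilde\delta = \delta$; (ii) invoke Proposition \ref{propDV}; (iii) do the one-line-per-term algebraic identity moving $V^{-1}$ to the right, citing Lemma \ref{lemABC} or Lemma \ref{lemAssocIds} as needed; (iv) conclude. An alternative, even shorter route worth mentioning: since $\NEG{D}$ is by construction the image under $j_\xi$ of the spinorial Dirac operator, and by Theorem \ref{thmLCspinOctCov} the octonion covariant derivative corresponds to $\nabla^S$, one could deduce covariance of $\NEG{D}$ from covariance of the spinor Dirac operator under change of unit spinor $\xi \mapsto V\cdot\xi$; but the direct computation above is self-contained and I would present that as the primary proof.
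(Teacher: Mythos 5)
Your proposal follows essentially the same route as the paper: observe that $\delta$ does not depend on the choice of $G_{2}$-structure, invoke Proposition \ref{propDV} to replace $\tilde{D}(AV^{-1})$ by $(D A)V^{-1}$, and then reduce everything to the single identity $\delta^{i}\circ_{V}\bigl((D_{i}A)V^{-1}\bigr)=\bigl(\delta^{i}(D_{i}A)\bigr)V^{-1}$. The step you defer as ``the technical heart'' is exactly the paper's computation, and the route you call safest is the right one: writing $\delta\circ_{V}X=\delta X+[\delta,X,V]V^{-1}$ with $X=(DA)V^{-1}$, the definition of the associator gives $\delta\bigl((DA)V^{-1}\bigr)=(\delta(DA))V^{-1}+[\delta,DA,V^{-1}]$, while Lemma \ref{lemAssocIds} (part 5 with $k=-1$, together with $\bar{V}^{-1}V^{-1}=1$ for unit $V$) gives $[\delta,(DA)V^{-1},V]V^{-1}=[\delta,DA,V]$; since $V^{-1}=\bar{V}$ for unit $V$ and the associator sees only imaginary parts, $[\delta,DA,V^{-1}]=-[\delta,DA,V]$, so the two correction terms cancel and the identity follows. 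The detour through Lemma \ref{lemABC} and the product $\circ_{V^{-1}}$ is unnecessary and is where your bookkeeping was getting tangled; dropping it, your outline coincides with the paper's proof.
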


\begin{proof}
This follows from the covariant nature of $D.$ Suppose $\tilde{\circ}$
denotes octonion product with respect to $\tilde{\varphi}.$ By definition of 
$\widetilde{\NEG{D}}$, we then have 
\begin{equation*}
\widetilde{\NEG{D}}\left( AV^{-1}\right) =\delta \overset{\tilde{\circ}}{%
\lrcorner }\left( \tilde{D}\left( AV^{-1}\right) \right)
\end{equation*}%
Using Proposition \ref{propDV} and (\ref{OctoVAB}), we rewrite this as 
\begin{eqnarray*}
\widetilde{\NEG{D}}\left( AV^{-1}\right) &=&\delta \overset{\tilde{\circ}}{%
\lrcorner }\left( \left( DA\right) V^{-1}\right) \\
&=&\delta \overset{\circ }{\lrcorner }\left( \left( DA\right) V^{-1}\right) +%
\left[ \delta ,\left( DA\right) V^{-1},V\right] V^{-1} \\
&=&\left( \delta \overset{\circ }{\lrcorner }\left( DA\right) \right) V^{-1}+%
\left[ \delta ,DA,V^{-1}\right] +\left[ \delta ,DA,V\right] \\
&=&\left( DA\right) V^{-1}
\end{eqnarray*}%
where we have also used Lemma \ref{lemAssocIds}.
\end{proof}

\begin{theorem}
\label{thmDirac}Suppose $V$ is a unit octonion section, and suppose $\tilde{%
\varphi}=\sigma _{V}\left( \varphi \right) $ has torsion tensor $\tilde{T}$.
Then, 
\begin{equation}
\NEG{D}V=\left( 
\begin{array}{c}
7\tilde{\tau}_{1} \\ 
-6\tilde{\tau}_{7}%
\end{array}%
\right) V  \label{DirV}
\end{equation}%
where $\tilde{\tau}_{1}=\frac{1}{7}\func{Tr}\tilde{T}$ and $\tilde{\tau}_{7}=%
\frac{1}{6}\tilde{T}\lrcorner \tilde{\varphi}$ are the $1$-dimensional and $%
7 $-dimensional components of $\tilde{T},$ respectively. In particular, 
\begin{equation}
\NEG{D}1=\left( 
\begin{array}{c}
7\tau _{1} \\ 
-6\tau _{7}%
\end{array}%
\right)  \label{Dir1}
\end{equation}%
where $\tau _{1}$ and $\tau _{7}$ are the corresponding components of $T$ -
the torsion tensor of the $G_{2}$-structure $\varphi .$
\end{theorem}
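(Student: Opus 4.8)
The plan is to prove the special case (\ref{Dir1}) first, since it contains the only real computation, and then obtain the general formula (\ref{DirV}) formally from the covariance of $\NEG{D}$ under a change of reference $G_{2}$-structure (Proposition \ref{propDiracV}).

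For (\ref{Dir1}) I would start from the definition $\NEG{D}1=\delta^{i}\circ(D_{i}1)$ together with $D_{i}1=-T_{i}$ from (\ref{DX1}), so $\NEG{D}1=-\sum_{i}\delta^{i}\circ T_{i}$. Since $\delta^{i}$ and $T_{i}$ are both imaginary octonions, (\ref{octoproddef}) gives $\delta^{i}\circ T_{i}=\left(-\langle\delta^{i},T_{i}\rangle,\ \delta^{i}\times_{\varphi}T_{i}\right)$. Summing over $i$, the real part is $-\langle\delta^{i},T_{i}\rangle=-T_{i}{}^{i}=-\func{Tr}T$, hence $\func{Re}(\NEG{D}1)=\func{Tr}T=7\tau_{1}$. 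For the imaginary part one computes $\big(\sum_{i}\delta^{i}\times_{\varphi}T_{i}\big)_{c}=\varphi_{cab}T^{ab}$; because $\varphi$ is totally skew only the antisymmetric part of $T$ contributes, and in the decomposition $T=\tau_{1}g+\tau_{7}\lrcorner\varphi+\tau_{14}+\tau_{27}$ the symmetric pieces $\tau_{1}g,\tau_{27}$ drop out, the $\tau_{14}$ piece drops out since $\tau_{14}\lrcorner\varphi=0$ by the definition of $\Lambda_{14}^{2}$, and the $\tau_{7}\lrcorner\varphi$ piece is evaluated by the one-index contraction $\varphi_{cab}\varphi_{e}{}^{ab}=6g_{ce}$ derived from (\ref{phiphi1}), giving $6\tau_{7}$. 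Thus $\func{Im}(\NEG{D}1)=-6\tau_{7}$, i.e. $\NEG{D}1=\left(7\tau_{1},-6\tau_{7}\right)$ and equivalently $\sum_{i}\delta^{i}\circ T_{i}=\left(-7\tau_{1},6\tau_{7}\right)$.

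To pass to arbitrary unit $V$ I would apply Proposition \ref{propDiracV} with $A=V$: since $VV^{-1}=1$ this gives $\widetilde{\NEG{D}}(1)=(\NEG{D}V)V^{-1}$, and right-multiplying by $V$ and using $(xV^{-1})V=x$ (invertibility of $R_{V}$) yields $\NEG{D}V=(\widetilde{\NEG{D}}1)V$. Now $\widetilde{\NEG{D}}1$ is exactly the quantity just computed, but for the $G_{2}$-structure $\tilde{\varphi}=\sigma_{V}(\varphi)$, whose associated metric is still $g$ and whose torsion is $\tilde{T}$ with $\tilde{\tau}_{1}=\tfrac17\func{Tr}\tilde{T}$, $\tilde{\tau}_{7}=\tfrac16\tilde{T}\lrcorner\tilde{\varphi}$; applying (\ref{Dir1}) in that structure gives $\widetilde{\NEG{D}}1=\left(7\tilde{\tau}_{1},-6\tilde{\tau}_{7}\right)$, hence $\NEG{D}V=\left(7\tilde{\tau}_{1},-6\tilde{\tau}_{7}\right)V$, which is (\ref{DirV}); the product by $V$ on the right is unambiguous by Remark \ref{remSigmaUV}. (Alternatively one can bypass Proposition \ref{propDiracV} altogether: Theorem \ref{ThmTorsV} gives $D_{i}V=-\tilde{T}_{i}V$, Lemma \ref{lemABC} rewrites $\delta^{i}\circ(\tilde{T}_{i}\circ V)=(\delta^{i}\circ_{V}\tilde{T}_{i})\circ V$, and the same index computation, carried out with $\tilde{\varphi},\tilde{T}$, finishes.) The only genuine work is the routine index computation in the second paragraph, and the one thing to be careful about there is the bookkeeping — which $G_{2}$-structure's product and contraction identities are in force — while the passage from (\ref{Dir1}) to (\ref{DirV}) is purely formal, using only the covariance of $\NEG{D}$ and the invertibility of right multiplication.
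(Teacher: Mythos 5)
Your proposal is correct and follows essentially the same route as the paper: compute $\NEG{D}1=-\delta^{i}T_{i}=\left(\func{Tr}T,-T\lrcorner \varphi \right)$ directly from the octonion product of imaginary sections, then obtain the general case by applying Proposition \ref{propDiracV} to $\widetilde{\NEG{D}}1=\widetilde{\NEG{D}}\left( VV^{-1}\right) =\left( \NEG{D}V\right) V^{-1}$ and right-multiplying by $V$. The only cosmetic difference is that you derive the identities $\func{Tr}T=7\tau _{1}$ and $T\lrcorner \varphi =6\tau _{7}$ from the decomposition of $T$ and the contraction identity (\ref{phiphi1}), whereas the paper cites them; both are fine.
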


\begin{proof}
Let us first verify (\ref{Dir1}). Indeed, since from (\ref{DX1}), $D1=-T$ we
have 
\begin{eqnarray*}
\NEG{D}1 &=&\delta ^{i}D_{i}1=-\delta ^{i}T_{i} \\
&=&-T_{i}^{\ j}\left( \delta ^{i}\delta _{j}\right) \\
&=&-T_{i}^{\ j}\left( 
\begin{array}{c}
-g_{\ j}^{i} \\ 
\varphi _{\ \ j}^{\alpha i}%
\end{array}%
\right)
\end{eqnarray*}%
where have also used Lemma \ref{lemDelProps}. Thus, 
\begin{equation}
\NEG{D}1=\left( 
\begin{array}{c}
\func{Tr}T \\ 
-T\lrcorner \varphi%
\end{array}%
\right)  \label{Dir1T}
\end{equation}%
and (\ref{Dir1}) follows.

Now let $\widetilde{\NEG{D}}$ be the Dirac operator with respect to the $%
G_{2}$-structure $\tilde{\varphi}=\sigma _{V}\left( \varphi \right) .$ To
get (\ref{DirV}), we note that 
\begin{equation*}
\widetilde{\NEG{D}}1=\left( 
\begin{array}{c}
7\tilde{\tau}_{1} \\ 
-6\tilde{\tau}_{7}%
\end{array}%
\right) ,
\end{equation*}%
however from Proposition \ref{propDiracV}, 
\begin{equation*}
\widetilde{\NEG{D}}1=\widetilde{\NEG{D}}\left( VV^{-1}\right) =\left( \NEG%
{D}V\right) V^{-1}
\end{equation*}%
Hence, 
\begin{equation*}
\left( \NEG{D}V\right) =\left( \widetilde{\NEG{D}}1\right) V
\end{equation*}%
and thus we get (\ref{DirV}).
\end{proof}

Equivalently we can translate the results of Theorem \ref{thmDirac} into the
language of spinors.

\begin{corollary}
\label{corrDirSpin}Let $\xi $ be a unit spinor and suppose $\tau _{1}$ and $%
\tau _{7}$ are the $1$- and $7$-dimensional components of the torsion of the 
$G_{2}$-structure $\varphi _{\xi }.$ Then, $\xi $ satisfies 
\begin{equation}
\slashed{\nabla}\xi =7\tau _{1}\xi -6\tau _{7}\cdot \xi
\end{equation}%
where $\slashed{\nabla}$ is the Dirac operator on the spinor bundle.
\end{corollary}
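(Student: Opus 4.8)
The plan is to transport the identity (\ref{Dir1}) from Theorem \ref{thmDirac} across the isomorphism $j_{\xi}$ set up in Section \ref{secSpinor}. Recall that $j_{\xi}\colon\Gamma(\mathcal{S})\to\Gamma(\mathbb{O}M)$ is a pointwise real-linear isomorphism with $j_{\xi}(\xi)=1$, that it intertwines Clifford multiplication by a vector with octonion left multiplication (property (\ref{jxiprops2}), keeping in mind that $\delta_{i}$ is the octonionic avatar of $e_{i}\cdot$ noted after (\ref{deltadef2})), that it intertwines the spin connection $\nabla^{S}$ with the octonion covariant derivative $D^{(\xi)}$ by Theorem \ref{thmLCspinOctCov}, and that it is a fibrewise isometry by Lemma \ref{lemjxi}. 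First I would assemble these facts to check the intertwining of Dirac operators,
\begin{equation*}
j_{\xi}\bigl(\slashed{\nabla}\eta\bigr)=\NEG{D}\,\bigl(j_{\xi}(\eta)\bigr)\qquad\text{for all }\eta\in\Gamma(\mathcal{S}),
\end{equation*}
where $\NEG{D}$ is the octonionic Dirac operator of (\ref{defDiracOp}): writing $\slashed{\nabla}\eta=e^{i}\cdot\nabla^{S}_{e_{i}}\eta$ in a local orthonormal frame and applying $j_{\xi}$ term by term, the connection factor becomes $D^{(\xi)}_{e_{i}}$ by Theorem \ref{thmLCspinOctCov}, the Clifford factor becomes left multiplication by $\delta_{i}$ by (\ref{jxiprops2}), and the frame contraction passes through because $j_{\xi}$ is real-linear and metric-preserving.

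Next I would specialize to $\eta=\xi$. Since $j_{\xi}(\xi)=1$, the left side of the intertwining identity becomes $j_{\xi}(\slashed{\nabla}\xi)=\NEG{D}\,1$, which by (\ref{Dir1}) of Theorem \ref{thmDirac} equals $(7\tau_{1},-6\tau_{7})$. On the other hand, because $\tau_{7}$ is a $1$-form and therefore a purely imaginary octonion section with $\tau_{7}\circ 1=(0,\tau_{7})$, one has
\begin{equation*}
j_{\xi}\bigl(7\tau_{1}\xi-6\,\tau_{7}\cdot\xi\bigr)=7\tau_{1}\cdot 1-6\,\tau_{7}\circ 1=(7\tau_{1},-6\tau_{7}).
\end{equation*}
Since $j_{\xi}$ is injective at every point, the two spinors $\slashed{\nabla}\xi$ and $7\tau_{1}\xi-6\,\tau_{7}\cdot\xi$ coincide, which is exactly the assertion.

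The one step that deserves genuine care is the Dirac-operator intertwining $j_{\xi}\circ\slashed{\nabla}=\NEG{D}\circ j_{\xi}$: the decomposition of $\slashed{\nabla}$ as (frame contraction)$\,\circ\,$(Clifford multiplication)$\,\circ\,\nabla^{S}$ must be matched factor by factor, and one should note that the resulting identity is frame-independent because both sides are linear in the contracted vector field. Everything after that is a direct substitution. As a cross-check (and an alternative route that avoids invoking $\NEG{D}$ abstractly), one can instead plug (\ref{LCspinxi}) directly into $\slashed{\nabla}\xi=e^{i}\cdot\nabla^{S}_{e_{i}}\xi$, apply $j_{\xi}$ together with Lemma \ref{lemABC}, and reduce to the computation of $-\delta^{i}\circ T_{i}$ already performed in the proof of Theorem \ref{thmDirac}, recovering $(\operatorname{Tr}T,-T\lrcorner\varphi_{\xi})=(7\tau_{1},-6\tau_{7})$.
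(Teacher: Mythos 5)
Your argument is correct and is precisely the route the paper intends: the corollary is stated as the translation of Theorem \ref{thmDirac} (specifically (\ref{Dir1})) through the isomorphism $j_{\xi}$, and you have simply made explicit the Dirac-operator intertwining $j_{\xi}\circ\slashed{\nabla}=\NEG{D}\circ j_{\xi}$ that the paper asserts without detail after (\ref{defDiracOp}). No gaps.
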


\begin{corollary}
\label{Corr1427}Let $\xi $ be a unit spinor. Then the corresponding $G_{2}$%
-structure $\varphi _{\xi }$ has only $14$- and $27$-dimensional torsion
components if and only if $\slashed{\nabla}\xi =0,$ that is, $\xi $ is a
harmonic spinor.
\end{corollary}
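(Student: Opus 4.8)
The plan is to read this off directly from Corollary \ref{corrDirSpin}, which identifies the spinorial Dirac operator applied to $\xi$ as $\slashed{\nabla}\xi = 7\tau_1\xi - 6\tau_7\cdot\xi$. The first observation is purely representation-theoretic: in the orthogonal decomposition $T = \tau_1 g + \tau_7\lrcorner\varphi + \tau_{14} + \tau_{27}$ of the torsion tensor of $\varphi_\xi$, the statement ``$\varphi_\xi$ has only $14$- and $27$-dimensional torsion components'' is by definition equivalent to $\tau_1 = 0$ and $\tau_7 = 0$. So the corollary reduces to showing that $\tau_1 = \tau_7 = 0$ if and only if $\slashed{\nabla}\xi = 0$.

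The forward implication is immediate: if $\tau_1 = 0$ and $\tau_7 = 0$, then Corollary \ref{corrDirSpin} gives $\slashed{\nabla}\xi = 0$ at once. For the converse I would argue pointwise. Assume $\slashed{\nabla}\xi = 0$, so $7\tau_1\xi = 6\tau_7\cdot\xi$ at every point. Here I would invoke the fibrewise splitting $\mathcal{S} \cong \mathbb{R}\cdot\xi \oplus \{X\cdot\xi : X\in\mathbb{R}^7\}$ already used in Section \ref{secSpinor}, and note that it is an \emph{orthogonal} splitting: since Clifford multiplication by a vector is skew-adjoint with respect to the spinor inner product (the fact used in the proof of Lemma \ref{lemjxi}), one has $\langle\xi, X\cdot\xi\rangle_S = -\langle X\cdot\xi,\xi\rangle_S = -\langle\xi, X\cdot\xi\rangle_S$, hence $\langle\xi, X\cdot\xi\rangle_S = 0$ for all $X$. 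Consequently the identity $7\tau_1\xi = 6\tau_7\cdot\xi$ forces both sides to vanish separately. Because $\xi$ is a unit spinor, hence nowhere zero, $\tau_1 \equiv 0$; and because $X\mapsto X\cdot\xi$ is a fibrewise isomorphism onto the $7$-dimensional orthogonal complement of $\mathbb{R}\xi$, the vanishing of $\tau_7\cdot\xi$ gives $\tau_7 \equiv 0$ as a $1$-form. Thus $\varphi_\xi$ has only $14$- and $27$-dimensional torsion.

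There is no real obstacle here: all the analytic and algebraic content sits in Corollary \ref{corrDirSpin} (equivalently Theorem \ref{thmDirac}), which is already in hand. The only point deserving a line of care is the orthogonality of the two summands of the spinor decomposition, since that is precisely what permits splitting the scalar part ($\tau_1$) from the vector part ($\tau_7$) of the Dirac equation; this follows immediately from skew-adjointness of Clifford multiplication.
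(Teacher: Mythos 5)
Your proof is correct and follows the same route the paper intends: the corollary is an immediate consequence of Corollary \ref{corrDirSpin} (equivalently Theorem \ref{thmDirac}), with the only point of substance being that the equation $7\tau_{1}\xi=6\tau_{7}\cdot\xi$ splits into its two components separately. Your justification of that splitting via skew-adjointness of Clifford multiplication is exactly the orthogonality of $\func{Re}\mathbb{O}M$ and $\func{Im}\mathbb{O}M$ under the isometry $j_{\xi}$ of Lemma \ref{lemjxi}, so nothing is missing.
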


The result in Corollary \ref{Corr1427} has also been proved recently using a
different method in \cite{AgricolaSpinors}.

The octonionic Dirac operator also satisfies the Lichnerowicz-Weitzenb\"{o}%
ck formula. Of course given the spinorial Lichnerowicz-Weitzenb\"{o}ck
formula, we immediately obtain the octonionic analog using the map $j_{\xi
}, $ however we can also prove it using octonionic techniques.

\begin{theorem}[Lichnerowicz-Weitzenb\"{o}ck]
For any smooth octonion section $V,$ we have 
\begin{equation}
\NEG{D}^{2}V=d_{D}^{\ast }d_{D}V+\frac{1}{4}RV  \label{BWformula}
\end{equation}%
where $R$ is the scalar curvature.
\end{theorem}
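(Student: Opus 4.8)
The plan is to expand $\NEG{D}^{2}V$ in a local frame, split off a ``rough Laplacian'' term and a ``curvature'' term exactly as in the classical Lichnerowicz computation, and then show the curvature term collapses to $\frac{1}{4}RV$ via the octonionic curvature identity of Proposition~\ref{propdD2} together with the first Bianchi identity for $\func{Riem}$.

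Working in an orthonormal frame that is parallel at the point under consideration, I would first write $\NEG{D}^{2}V=\delta^{i}\circ\bigl(D_{i}(\delta^{j}\circ(D_{j}V))\bigr)$. Since $\nabla\delta=0$ by Lemma~\ref{lemDelProps}(1), Proposition~\ref{propDXprod} gives $D_{i}(\delta^{j}\circ(D_{j}V))=(\nabla_{i}\delta^{j})\circ(D_{j}V)+\delta^{j}\circ(D_{i}D_{j}V)=\delta^{j}\circ(D_{i}D_{j}V)$, hence $\NEG{D}^{2}V=\delta^{i}\circ(\delta^{j}\circ(D_{i}D_{j}V))$. Now decompose $D_{i}D_{j}=D_{(i}D_{j)}+D_{[i}D_{j]}$. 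For the symmetric part, Lemma~\ref{lemDelProps}(2) gives $\delta^{(i}\circ\delta^{j)}=-g^{ij}$ (the $\varphi$-term being skew in $ij$), while the associator $[\delta^{(i},\delta^{j)},\,\cdot\,]$ vanishes by alternativity; therefore the symmetric part contributes $-g^{ij}D_{i}D_{j}V=-D^{i}D_{i}V$, which by~(\ref{ddsformula}), applied to the $1$-form $d_{D}V$ whose components are $D_{i}V$, is exactly $d_{D}^{\ast}d_{D}V$.

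The antisymmetric part is $\delta^{i}\circ(\delta^{j}\circ(D_{[i}D_{j]}V))=\frac{1}{2}\delta^{i}\circ(\delta^{j}\circ(F_{ij}V))$, where $F_{ij}=[D_{i},D_{j}]$ is the curvature of the connection $D$ on $\mathbb{O}M$, i.e. $F_{ij}V=(d_{D}^{2}V)_{ij}$; by Proposition~\ref{propdD2} in the form~(\ref{d2D0}) this equals $F_{ij}V=\func{Riem}_{ij}(\func{Im}V)-\frac{1}{4}V\circ(\pi_{7}\func{Riem})_{ij}$. The crux is to recognise this as the spin-representation curvature, $F_{ij}V=\frac{1}{4}\func{Riem}_{ijkl}\,\delta^{k}\circ(\delta^{l}\circ V)$; establishing this uses the $\varphi$--$\psi$ contraction identities~(\ref{phiphi1}) and~(\ref{phipsiid}) together with Lemma~\ref{lemDelProps}(2)--(3) to match the Levi-Civita action on $\func{Im}V$, corrected by the $\pi_{7}\func{Riem}$ term, with $\frac{1}{4}\func{Riem}_{ijkl}e^{k}\wedge e^{l}$ acting on the whole octonion $V$. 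Granting this, the antisymmetric part equals $\frac{1}{8}\func{Riem}_{ijkl}\,\delta^{i}\circ(\delta^{j}\circ(\delta^{k}\circ(\delta^{l}\circ V)))=\frac{1}{8}\func{Riem}_{ijkl}\,L_{\delta^{i}}L_{\delta^{j}}L_{\delta^{k}}L_{\delta^{l}}V$, and since the $L_{\delta^{i}}$ generate the \emph{associative} enveloping algebra, which is the Clifford algebra~(\ref{octoenvelopCliff}), this iterated product can be manipulated as in the usual spinorial calculation: the totally skew term drops out by the first Bianchi identity $\func{Riem}_{i[jkl]}=0$, the single contraction vanishes against the symmetric Ricci tensor, and the double contraction produces $\frac{1}{4}RV$ (with the sign fixed by the convention~(\ref{riemconvention})). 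Summing the two parts gives~(\ref{BWformula}).

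The main obstacle is precisely this middle identification: that the curvature $F_{ij}$ of $D$ on the $8$-dimensional bundle $\mathbb{O}M$, which Proposition~\ref{propdD2} presents as the $7$-dimensional Levi-Civita curvature acting on $\func{Im}V$ plus the torsion-generated correction $-\frac{1}{4}V\circ(\pi_{7}\func{Riem})$, is genuinely the spinor-representation curvature $\frac{1}{4}\func{Riem}_{ijkl}\delta^{k}\circ(\delta^{l}\circ\,\cdot\,)$ on all of $\mathbb{O}M$; equivalently, that the ``extra'' octonionic term in Proposition~\ref{propdD2} is exactly what promotes the vector-representation curvature to the spinor one, so that the scalar part of $V$ is also rotated and a full trace, rather than a partial contraction, appears. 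A shortcut, if one is content to pass through the spinor picture, is Theorem~\ref{thmLCspinOctCov}: $j_{\xi}$ is a connection-preserving isomorphism intertwining $\nabla^{S}$ with $D$ and Clifford multiplication with octonion left-multiplication, so $\NEG{D}$ corresponds to $\slashed{\nabla}$ and~(\ref{BWformula}) follows at once from the classical spinorial Lichnerowicz--Weitzenb\"{o}ck formula; the computation sketched above instead keeps everything intrinsically octonionic.
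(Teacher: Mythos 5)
Your reduction of the symmetric part is correct and essentially the same as the paper's: $\nabla\delta=0$ plus Proposition \ref{propDXprod} gives $\NEG{D}^{2}V=\delta^{i}(\delta^{j}(D_{i}D_{j}V))$, alternativity kills the associator on the symmetrized indices, and $\delta^{(i}\delta^{j)}=-g^{ij}$ produces $d_{D}^{\ast}d_{D}V$ via (\ref{ddsformula}). The problem is the antisymmetric part. Your entire argument there rests on the identification $F_{ij}V=\frac{1}{4}\func{Riem}_{ijkl}\,\delta^{k}(\delta^{l}V)$, which you introduce with ``Granting this'' and yourself label the main obstacle. That identification is true, but it is not a small step: it says that the curvature of $D$ on the rank-$8$ bundle $\mathbb{O}M$ is the spin-representation curvature, i.e.\ it is the curvature-level content of Theorem \ref{thmLCspinOctCov}. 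Proving it intrinsically from (\ref{d2D0}) --- matching $\func{Riem}(\func{Im}V)-\frac{1}{4}V\circ(\pi_{7}\func{Riem})$ against $\frac{1}{4}\func{Riem}_{ijkl}\delta^{k}(\delta^{l}V)$ --- requires exactly the kind of $\varphi$--$\psi$ contraction gymnastics you are trying to defer, and with the signs unpinned. As written, the intrinsically octonionic version of your proof has a genuine gap at its central step. (Your fallback through $j_{\xi}$ and the classical spinorial Lichnerowicz--Weitzenb\"ock formula is valid, and the paper explicitly concedes that route; but the stated purpose of this proof is to avoid importing the spinorial result.)

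It is worth seeing how the paper sidesteps precisely this difficulty. Instead of identifying the full curvature operator, it uses gauge covariance: by Proposition \ref{propDiracV}, $\NEG{D}^{2}V=(\widetilde{\NEG{D}}^{2}1)V$ where $\widetilde{\NEG{D}}$ is the Dirac operator of $\sigma_{V}(\varphi)$, so (locally, where $V\neq 0$) it suffices to prove the formula for $V=1$. For $V=1$ the term $\func{Riem}\wedge(\func{Im}V)$ in Proposition \ref{propdD2} vanishes identically, leaving only $D_{[i}D_{j]}1=-\frac{1}{8}\pi_{7}\func{Riem}$; the real part of this is zero, and two applications of (\ref{phiphi1}) and one of (\ref{phipsiid}), combined with the first Bianchi identity and the symmetry of $\func{Ric}$, collapse the remaining contractions to $\frac{1}{4}R$ with no imaginary residue. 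A short continuity argument then handles points where $V$ vanishes. So the paper's decomposition into ``Levi-Civita curvature on $\func{Im}V$ plus octonionic correction'' is never converted into the Clifford form at all --- the reduction to $V=1$ makes the hard half of that conversion unnecessary. If you want to keep your structure, you must either supply the proof of $F_{ij}=\frac{1}{4}\func{Riem}_{ijkl}L_{\delta^{k}}L_{\delta^{l}}$ (essentially re-deriving Theorem \ref{thmLCspinOctCov} plus its curvature consequence) or adopt the paper's gauge-fixing reduction.
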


\begin{proof}
If $V$ is identically zero at a point $p$ and a neighborhood around it, then
(\ref{BWformula}) is trivially true, since all the operators are local.
Suppose $V\neq 0$ at $p$, then at least locally we can change the reference $%
G_{2}$-structure to $\sigma _{V}\left( \varphi \right) .$ Then, if $%
\widetilde{\NEG{D}}$ is the Dirac operator corresponding to $\sigma
_{V}\left( \varphi \right) ,$ Proposition \ref{propDiracV} shows that 
\begin{eqnarray}
\left( \NEG{D}V\right) &=&\left( \widetilde{\NEG{D}}1\right) V  \notag \\
\NEG{D}^{2}V &=&\NEG{D}\left( \left( \widetilde{\NEG{D}}1\right) V\right) 
\notag \\
&=&\left( \widetilde{\NEG{D}}^{2}1\right) V  \label{BWproof0}
\end{eqnarray}%
So in this case, it is enough to verify (\ref{BWformula}) for $V=1.$ We then
have%
\begin{eqnarray*}
\NEG{D}^{2}1 &=&\delta ^{i}D_{i}\left( \delta ^{j}D_{j}1\right) \\
&=&\delta ^{i}\left( \delta ^{j}\left( D_{i}D_{j}1\right) \right)
\end{eqnarray*}%
since $\delta $ is parallel with respect to $D.$ Applying the associator, we
get 
\begin{equation}
\NEG{D}^{2}1=\left( \delta ^{i}\delta ^{j}\right) \left( D_{i}D_{j}1\right) +%
\left[ \delta ^{i},\delta ^{j},D_{i}D_{j}1\right]  \label{BWproof1}
\end{equation}%
From Lemma \ref{lemDelProps}, $\delta ^{i}\delta ^{j}=\left( 
\begin{array}{c}
-g^{ij} \\ 
\varphi ^{\alpha ij}%
\end{array}%
\right) ,$ hence the first term in (\ref{BWproof1}) becomes 
\begin{eqnarray}
\left( \delta ^{i}\delta ^{j}\right) \left( D_{i}D_{j}1\right) &=&\left( 
\begin{array}{c}
-g^{ij} \\ 
\varphi ^{\alpha ij}%
\end{array}%
\right) \left( 
\begin{array}{c}
\func{Re}D_{i}D_{j}1 \\ 
\func{Im}D_{i}D_{j}1%
\end{array}%
\right)  \notag \\
&=&d_{D}^{\ast }d_{D}1+\left( 
\begin{array}{c}
0 \\ 
\varphi ^{\alpha ij}%
\end{array}%
\right) \left( 
\begin{array}{c}
\func{Re}D_{i}D_{j}1 \\ 
\func{Im}D_{i}D_{j}1%
\end{array}%
\right)  \label{BWproof2}
\end{eqnarray}%
where we have used the formula (\ref{ddsformula}) for $d_{D}^{\ast }$ in
coordinates. Note that now in the second term of (\ref{BWproof1}) as well as
(\ref{BWproof2}), the indices $i$ and $j$ are skew-symmetrized, but from
Proposition \ref{propdD2},%
\begin{equation}
D_{[i}D_{j]}1=\frac{1}{2}d_{D}^{2}1=-\frac{1}{8}\pi _{7}\func{Riem}
\label{BWproof4}
\end{equation}%
Hence, $\func{Re}D_{[i}D_{j]}1=0.$ Using (\ref{BWproof2}) and (\ref%
{octoassoc}), equation (\ref{BWproof1}) becomes%
\begin{equation*}
\NEG{D}^{2}1=d_{D}^{\ast }d_{D}1-\frac{1}{8}\varphi ^{kij}\left( \pi _{7}%
\func{Riem}\right) _{ij}^{\ \ l}\left( \delta _{k}\delta _{l}\right) -\frac{1%
}{4}\psi _{\ \ \ }^{kijl}\left( \pi _{7}\func{Riem}\right) _{ijl}\delta _{k}
\end{equation*}%
Using Lemma \ref{lemDelProps}, $\delta _{k}\delta _{l}=-g_{kl}+\varphi
_{kl}^{\ \ m}\delta _{m},$ thus 
\begin{equation}
\NEG{D}^{2}1=d_{D}^{\ast }d_{D}1+\frac{1}{8}\varphi ^{kij}\left( \pi _{7}%
\func{Riem}\right) _{ijk}^{\ \ }-\frac{1}{8}\varphi ^{kij}\left( \pi _{7}%
\func{Riem}\right) _{ij}^{\ \ l}\varphi _{kl}^{\ \ m}\delta _{m}-\frac{1}{4}%
\psi _{\ \ \ }^{kijl}\left( \pi _{7}\func{Riem}\right) _{ijl}\delta _{k}
\label{BWproof5}
\end{equation}%
However, using the identity (\ref{phiphi1}), as well as the Riemannian
Bianchi identity, 
\begin{eqnarray*}
\varphi ^{kij}\left( \pi _{7}\func{Riem}\right) _{ijk}^{\ \ } &=&\varphi
^{kij}\func{Riem}_{ijmn}\varphi _{\ \ \ \ k}^{mn} \\
&=&\func{Riem}_{ijmn}\left( \psi ^{ijmn}+g^{im}g^{jn}-g^{in}g^{jm}\right) \\
&=&2R
\end{eqnarray*}%
where we have used (\ref{riemconvention}). Similarly, using the identity (%
\ref{phiphi1}) and the Bianchi identity, 
\begin{eqnarray*}
\varphi ^{kij}\left( \pi _{7}\func{Riem}\right) _{ij}^{\ \ l}\varphi
_{kl}^{\ \ m} &=&\left( \pi _{7}\func{Riem}\right) _{ijl}^{\ \ }\left( \psi
_{\ }^{ijlm}+g^{il}g^{jm}-g^{im}g^{jl}\right) \\
&=&\left( \pi _{7}\func{Riem}\right) _{ijl}^{\ \ }\psi _{\ }^{ijlm}
\end{eqnarray*}%
Now we are left with 
\begin{equation}
\NEG{D}^{2}1=d_{D}^{\ast }d_{D}1+\frac{1}{4}R+\frac{1}{8}\left( \pi _{7}%
\func{Riem}\right) _{ijl}^{\ \ }\psi _{\ }^{ijlm}\delta _{m}.
\label{BWproof6}
\end{equation}%
Consider 
\begin{equation*}
\left( \pi _{7}\func{Riem}\right) _{ijl}^{\ \ }\psi _{\ }^{ijlm}=-\func{Riem}%
_{ijpq}\varphi _{\ \ l}^{pq}\psi _{\ }^{ijml}
\end{equation*}%
Using the identity (\ref{phipsiid}),%
\begin{eqnarray*}
\func{Riem}_{ijpq}\varphi _{\ \ l}^{pq}\psi _{\ }^{ijml} &=&-3\func{Riem}%
_{ijpq}\left( g^{p[i}\varphi ^{jm]q}-g^{q[i}\varphi ^{jm]p}\right) \\
&=&-6\func{Riem}_{ijpq}g^{p[i}\varphi ^{jm]q} \\
&=&6\func{Riem}_{ijq}^{\ \ [i}\varphi ^{jm]q} \\
&=&-2\left( \func{Ric}\right) _{jq}\varphi ^{jmq}+2\left( \func{Ric}\right)
_{iq}\varphi ^{miq}+2\func{Riem}_{ijq}^{\ \ \ m}\varphi ^{ijq} \\
&=&0
\end{eqnarray*}%
Thus, we are left with 
\begin{equation*}
\NEG{D}^{2}1=d_{D}^{\ast }d_{D}1+\frac{1}{4}R.
\end{equation*}%
For an arbitrary octonion section $V$ that is nonzero at a point $p,$ using (%
\ref{BWproof0}) we obtain (\ref{BWformula}).

Now suppose $V\left( p\right) =0$ and is not identically zero in any
neighborhood of $p$. Let $L=$ $\NEG{D}^{2}V-d_{D}^{\ast }d_{D}V-\frac{1}{4}%
RV.$ Since $V$ is fixed, this is in particular a continuous map from a
neighborhood of $p$ to the octonions. We can then find a sequence of points $%
\left\{ p_{n}\right\} $ in a neighborhood of $p$ such that $%
p_{n}\longrightarrow p$ and such that for each $n$, either $V\left(
p_{n}\right) \neq 0$ or $V\left( p_{n}\right) =0$ and $V\equiv 0$ in a
neighborhood of $p_{n}.$ Using the previous cases in the proof, we find that 
$L\left( p_{n}\right) =0$ for all $n$. Therefore, by continuity, $L\left(
p\right) =0,$ and thus the identity is satisfied. We thus conclude that (\ref%
{BWformula}) is true for any (smooth) octonion section $V$.
\end{proof}

\begin{remark}
On a compact manifold $M$, for a unit octonion $V,$ we then find that 
\begin{equation}
\int_{M}\left\vert \NEG{D}V\right\vert ^{2}\func{vol}=\int_{M}\left\vert
DV\right\vert ^{2}\func{vol}+\frac{1}{4}\int_{M}R\func{vol}
\label{dvintegral}
\end{equation}%
This shows that if $\int_{M}R\func{vol}=0$, i.e. the total scalar curvature
is zero, then $\NEG{D}V=0$ if and only if $DV=0.$ This then implies that on
a compact manifold, a $G_{2}$-structure that is compatible with a metric
that has vanishing total scalar curvature, is torsion-free if and only if
its $\tau _{1}$ and $\tau _{7}$ torsion components both vanish. This can
also be obtained directly by integrating the expression (\ref{torsscalcurv})
for the scalar curvature in terms of torsion components. Note that if the
scalar curvature vanishes \emph{pointwise}, then also from (\ref%
{torsscalcurv}), we see that even without requiring compactness we find that
a compatible $G_{2}$-structure is torsion-free if and only if $\tau _{1}$
and $\tau _{7}$ both vanish.
\end{remark}

We can also write out the octonionic Dirac operator explicitly in terms of
the $G_{2}$-structure torsion.

\begin{theorem}
\label{thmDiracexp}Suppose $A=\left( a_{0},\alpha \right) \in \Gamma \left( 
\mathbb{O}M\right) .$ Then, 
\begin{equation}
\NEG{D}A=\left( 
\begin{array}{c}
-\func{div}_{T}A \\ 
\func{grad}_{T}A+\func{curl}_{T}A%
\end{array}%
\right)  \label{octodiracexp}
\end{equation}%
where $\func{div}_{T},$ $\func{grad}_{T}$ and $\func{curl}_{T}$ are given by 
\begin{subequations}%
\label{Tdivgradcurl} 
\begin{eqnarray}
\func{div}_{T}A &=&\func{div}\alpha -a_{0}\func{Tr}T+\left\langle \alpha
,T\lrcorner \varphi \right\rangle =\func{div}\alpha -7a_{0}\tau
_{1}-6\left\langle \alpha ,\tau _{7}\right\rangle \\
\func{grad}_{T}A &=&\func{grad}a_{0}+T\left( v\right) \\
\func{curl}_{T}A &=&\func{curl}\alpha +a_{0}\left( T\lrcorner \varphi
\right) -\alpha \func{Tr}T+T_{\alpha }-\alpha \lrcorner \left( T\lrcorner
\psi \right) \\
&=&\func{curl}\alpha +6a_{0}\tau _{7}-7\alpha \tau _{1}+T_{\alpha }+2\alpha
\lrcorner \tau _{14}+4\alpha \times \tau _{7}  \notag
\end{eqnarray}%
\end{subequations}
where $\left( T_{\alpha }\right) ^{k}=\alpha ^{i}T_{i}^{\ k}$ and 
\begin{equation}
\left( \func{curl}\alpha \right) ^{i}=\varphi ^{iab}\nabla _{a}\alpha _{b}.
\label{curl1}
\end{equation}
\end{theorem}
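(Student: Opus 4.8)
The plan is to expand the definition $\NEG{D}A=\delta^{i}\circ\left(D_{i}A\right)$ directly, using $D_{i}A=\nabla_{i}A-AT_{i}$ from (\ref{octocov}), the octonion product (\ref{octoproddef}), and the properties of $\delta$ in Lemma \ref{lemDelProps}. Write $A=\left(a_{0},\alpha\right)$ and let $t_{i}$ denote the imaginary octonion $T_{i}=\left(0,\partial_{i}\lrcorner T\right)$, so $\left(t_{i}\right)^{m}=T_{i}{}^{m}$. Since $t_{i}$ is imaginary, (\ref{octoproddef}) gives $AT_{i}=\left(-\left\langle\alpha,t_{i}\right\rangle,\ a_{0}t_{i}+\alpha\times t_{i}\right)$, hence $\func{Re}\left(D_{i}A\right)=\nabla_{i}a_{0}+\left\langle\alpha,t_{i}\right\rangle$ and $\func{Im}\left(D_{i}A\right)=\nabla_{i}\alpha-a_{0}t_{i}-\alpha\times t_{i}$. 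Multiplying on the left by $\delta^{i}=\left(0,e^{i}\right)$ using (\ref{octoproddef}) once more and summing over $i$ splits $\NEG{D}A$ into the block form of (\ref{octodiracexp}); it then remains to identify each block with the expressions in (\ref{Tdivgradcurl}).

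For the real part, left multiplication by $\delta^{i}$ contributes $-\left\langle e^{i},\func{Im}\left(D_{i}A\right)\right\rangle$, so $\func{Re}\left(\NEG{D}A\right)=-\sum_{i}\left\langle e^{i},\nabla_{i}\alpha-a_{0}t_{i}-\alpha\times t_{i}\right\rangle$. The three resulting contractions are $\func{div}\alpha$, $a_{0}\func{Tr}T$, and $\sum_{i}\left(\alpha\times t_{i}\right)^{i}$, and the last is re-expressed through $T\lrcorner\varphi$ using only (\ref{vcrossdef2}) and the (anti)symmetry of $\varphi$. This gives $\func{Re}\left(\NEG{D}A\right)=-\func{div}_{T}A$, and the identities $\func{Tr}T=7\tau_{1}$, $T\lrcorner\varphi=6\tau_{7}$ — consequences of the decomposition $T=\tau_{1}g+\tau_{7}\lrcorner\varphi+\tau_{14}+\tau_{27}$ and the contraction identity (\ref{phiphi1}) — then yield the second form of $\func{div}_{T}A$.

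For the imaginary part, left multiplication by $\delta^{i}$ contributes $\func{Re}\left(D_{i}A\right)e^{i}+e^{i}\times\func{Im}\left(D_{i}A\right)$. Summing the first term over $i$ assembles to $\func{grad}a_{0}+T\left(\alpha\right)=\func{grad}_{T}A$. In the second term, $\sum_{i}e^{i}\times\nabla_{i}\alpha$ is $\func{curl}\alpha$ by (\ref{curl1}); $-a_{0}\sum_{i}e^{i}\times t_{i}$ contributes the $a_{0}\left(T\lrcorner\varphi\right)$ term; and $-\sum_{i}e^{i}\times\left(\alpha\times t_{i}\right)$ is handled by the double cross product identity (\ref{doublecrossprod}), which produces $-\left(\func{Tr}T\right)\alpha+T_{\alpha}$ together with the $\psi$-term $-\sum_{i}\psi\left(\cdot^{\sharp},e^{i},\alpha,t_{i}\right)=-\alpha\lrcorner\left(T\lrcorner\psi\right)$. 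Collecting these gives the first displayed formula for $\func{curl}_{T}A$.

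To reach the second displayed line for $\func{curl}_{T}A$ I would decompose $T$ into its four components and evaluate each against $\psi$: the symmetric parts $\tau_{1}g$ and $\tau_{27}$ are annihilated, while $\tau_{14}\lrcorner\psi$ and $\left(\tau_{7}\lrcorner\varphi\right)\lrcorner\psi$ are computed from (\ref{phiphi1}) and (\ref{phipsiid}) as fixed multiples of $\tau_{14}$ and $\tau_{7}\lrcorner\varphi$ (the eigenvalues of $\omega\mapsto\omega\lrcorner\psi$ on $\Lambda_{7}^{2}\oplus\Lambda_{14}^{2}$). Then $\alpha\lrcorner\left(T\lrcorner\psi\right)$ reorganizes, via $\alpha\lrcorner\left(\beta\lrcorner\varphi\right)\sim\alpha\times\beta$, into $-2\alpha\lrcorner\tau_{14}-4\alpha\times\tau_{7}$ after absorbing the residual $\tau_{1}$ and $\tau_{7}$ contributions from the preceding step, and substituting $\func{Tr}T=7\tau_{1}$, $T\lrcorner\varphi=6\tau_{7}$ finishes the identification. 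This last bookkeeping with the $\psi$-contractions and the irreducible components is the only genuinely delicate part; everything before it is a mechanical expansion of (\ref{octoproddef}), (\ref{octocov}) and Lemma \ref{lemDelProps}.
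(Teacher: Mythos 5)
Your proposal is correct and follows essentially the same route as the paper: expand $\NEG{D}A=\delta^{i}\circ D_{i}A$ using the octonion product, identify the torsion-independent part with $(-\func{div}\alpha,\func{grad}a_{0}+\func{curl}\alpha)$, and reduce the $\delta^{i}\circ(AT_{i})$ part via the triple products of $\delta$ — your use of the double cross product identity (\ref{doublecrossprod}) is exactly the content of the formula for $\delta^{i}\left(\delta_{j}\delta_{k}\right)$ in Lemma \ref{lemDelProps} that the paper uses. The final conversion to torsion components via $\func{Tr}T=7\tau_{1}$, $T\lrcorner\varphi=6\tau_{7}$, $T\lrcorner\psi=4\tau_{7}\lrcorner\varphi-2\tau_{14}$ is also the paper's closing step, so the only difference is bookkeeping (you assemble $D_{i}A$ first and then left-multiply, rather than distributing $\delta^{i}$ termwise).
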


\begin{remark}
In \cite{karigiannis-2006notes}, Karigiannis gave an expression for an
octonionic Dirac operator in the torsion-free case. In the case $T=0$, (\ref%
{octodiracexp}) reduces to%
\begin{equation}
\NEG{D}A=\left( -\func{div}\alpha ,\func{grad}a_{0}+\func{curl}\alpha \right)
\label{diracT0}
\end{equation}%
which is precisely the expression given by Karigiannis.
\end{remark}

\begin{proof}[Proof of Theorem \protect\ref{thmDiracexp}]
By definition (\ref{defDiracOp}), we have 
\begin{equation}
\NEG{D}A=\delta ^{i}\left( \nabla _{i}A-AT_{i}\right)  \label{dirAexp1}
\end{equation}%
The first term is then 
\begin{eqnarray*}
\delta ^{i}\nabla _{i}A &=&\left( 
\begin{array}{c}
0 \\ 
g^{ij}%
\end{array}%
\right) \left( 
\begin{array}{c}
\nabla _{i}a_{0} \\ 
\nabla _{i}\alpha ^{k}%
\end{array}%
\right) \\
&=&\left( 
\begin{array}{c}
-g^{ij}\nabla _{i}\alpha _{j} \\ 
\nabla ^{l}a_{0}+\varphi _{\ jk}^{l}g^{ij}\nabla _{i}\alpha ^{k}%
\end{array}%
\right) \\
&=&\left( -\func{div}\alpha ,\func{grad}a_{0}+\func{curl}\alpha \right)
\end{eqnarray*}%
which is precisely (\ref{diracT0}) - the term which is independent of the
torsion. Now let us look at the second term of (\ref{dirAexp1}). We have%
\begin{equation}
\delta ^{i}\left( AT_{i}\right) =a_{0}\delta ^{i}T_{i}+\alpha ^{j}\delta
^{i}\left( \delta _{j}T_{i}\right) .  \label{dirAexp2}
\end{equation}%
Now again look at each term separately. The first term in (\ref{dirAexp2})
is then just 
\begin{eqnarray*}
a_{0}\delta ^{i}T_{i} &=&-a_{0}\NEG{D}1 \\
&=&-a_{0}\left( 
\begin{array}{c}
\func{Tr}T \\ 
-T\lrcorner \varphi%
\end{array}%
\right)
\end{eqnarray*}%
where we have used (\ref{Dir1T}) and the computation before (\ref{Dir1T}).
Since the components $T_{i}^{\ k}$ are real, the second term in (\ref%
{dirAexp2}) can be re-written in the following way%
\begin{eqnarray*}
\alpha ^{j}\delta ^{i}\left( \delta _{j}T_{i}\right) &=&\alpha ^{j}T_{i}^{\
k}\delta ^{i}\left( \delta _{j}\delta _{k}\right) \\
&=&\alpha ^{j}T_{i}^{\ k}\left( 
\begin{array}{c}
-\varphi _{\ jk}^{i} \\ 
\psi _{\ \ jk}^{mi}-g^{mi}g_{jk}+\delta _{\ j}^{m}\delta _{k}^{i}-\delta _{\
k}^{m}\delta _{\ j}^{i}%
\end{array}%
\right) \\
&=&\left( 
\begin{array}{c}
\,\left\langle \alpha ,T\lrcorner \varphi \right\rangle \\ 
\alpha \lrcorner \left( T\lrcorner \psi \right) -T\left( \alpha \right)
+\alpha \func{Tr}T-T_{\alpha }%
\end{array}%
\right)
\end{eqnarray*}%
where we have also used Lemma \ref{lemDelProps}. Overall, 
\begin{equation*}
\NEG{D}A=\left( 
\begin{array}{c}
-\func{div}\alpha +a_{0}\func{Tr}T-\left\langle \alpha ,T\lrcorner \varphi
\right\rangle \\ 
\func{grad}a_{0}+a_{0}T\lrcorner \varphi +\func{curl}\alpha -\alpha
\lrcorner \left( T\lrcorner \psi \right) +T\left( \alpha \right) -\alpha 
\func{Tr}T+T_{\alpha }%
\end{array}%
\right)
\end{equation*}%
Applying the definitions (\ref{Tdivgradcurl}) of $\func{div}_{T},$ $\func{%
grad}_{T}$ and $\func{curl}_{T},$ we obtain (\ref{octodiracexp}). The
alternative expressions using components of $T$ are then derived using the
identities%
\begin{eqnarray*}
\func{Tr}T &=&7\tau _{1} \\
T\lrcorner \varphi &=&6\tau _{7} \\
T\lrcorner \psi &=&4\tau _{7}\lrcorner \varphi -2\tau _{14}
\end{eqnarray*}
\end{proof}

\begin{remark}
The motivation for the definitions (\ref{Tdivgradcurl}) is the following.
The standard $\func{div},$ $\func{grad}$ and $\func{curl}$ are obtained by
computing $\delta ^{i}\nabla _{i}A.$ In particular, 
\begin{subequations}%
\label{divgradcurl} 
\begin{eqnarray}
\func{div}\alpha &=&\left\langle \delta ^{i},\func{Im}\nabla
_{i}A\right\rangle \\
\func{grad}a_{0} &=&\delta ^{i}\left( \func{Re}\nabla _{i}A\right) \\
\func{curl}\alpha &=&\delta ^{i}\times \left( \func{Im}\nabla _{i}A\right) .
\end{eqnarray}%
\end{subequations}%
The expressions (\ref{Tdivgradcurl}) are then similarly obtained by
replacing $\nabla A$ in (\ref{divgradcurl}) by $DA:$ 
\begin{subequations}%
\label{divTgradTcurlT}%
\begin{eqnarray}
\func{div}_{T}A &=&\left\langle \delta ^{i},\func{Im}D_{i}A\right\rangle \\
\func{grad}_{T}A &=&\delta ^{i}\left( \func{Re}D_{i}A\right) \\
\func{curl}_{T}A &=&\delta ^{i}\times \left( \func{Im}D_{i}A\right) .
\end{eqnarray}%
\end{subequations}%
Note that since now there is some mixing of real and imaginary parts of $A,$
it only makes sense to apply these operators to $A$ as a whole. It is then
clear that 
\begin{equation*}
\NEG{D}A=\delta ^{i}D_{i}A=\left( -\func{div}_{T}A,\func{grad}_{T}A+\func{%
curl}_{T}A\right)
\end{equation*}%
It then takes a routine calculation to actually obtain the expressions (\ref%
{Tdivgradcurl}) from (\ref{divTgradTcurlT}).
\end{remark}

\section{Energy functional}

\setcounter{equation}{0}\label{secEnergy}Given a $7$-dimensional Riemannian
manifold that admit $G_{2}$-structures, we have a choice of $G_{2}$%
-structures that correspond to the given Riemannian metric $g$. After fixing
an arbitrary $G_{2}$-structure $\varphi $ in this metric class, all the
other $G_{2}$-structures that are compatible with $g$ are parametrized by
unit octonion sections, up to a sign. Given a unit octonion section $V,$ the
corresponding $G_{2}$-structure $\sigma _{V}\left( \varphi \right) $ will
have torsion $T^{\left( V\right) }$ given by $T^{\left( V\right) }=-\left(
DV\right) V^{-1},$ where $D$ is the octonion covariant derivative with
respect to $\varphi $. The question is how to pick the \textquotedblleft
best\textquotedblright\ representative of this metric class. The choice of a
particular $G_{2}$-structure in a fixed metric class is akin to choosing a
gauge in gauge theory. Obviously, if the metric has holonomy contained in $%
G_{2},$ then the \textquotedblleft best\textquotedblright\ representative
should be a torsion-free $G_{2}$-structure that corresponds to that metric.
In general however, one approach, at least on compact manifolds, would be to
pick a gauge that minimizes some functional. The obvious choice is the $%
L_{2} $-norm of the torsion. Suppose $M$ is now compact, and define a
functional $\mathcal{E}:\Gamma \left( S\mathbb{O}M\right) \longrightarrow 
\mathbb{R}$ , where $S\mathbb{O}M$ is the unit sphere subbundle, by 
\begin{eqnarray}
\mathcal{E}\left( V\right) &=&\frac{1}{2}\int_{M}\left\vert T^{\left(
V\right) }\right\vert ^{2}\func{vol} \\
&=&\frac{1}{2}\int_{M}\left\vert \left( DV\right) V^{-1}\right\vert ^{2}%
\func{vol} \\
&=&\frac{1}{2}\int_{M}\left\vert DV\right\vert ^{2}\func{vol}  \label{eV3}
\end{eqnarray}%
Thus, this is simply the energy functional for unit octonion sections. Note
that a similar energy functional for spinors has been recently studied by
Ammann, Weiss and Witt \cite{AmmannWeissWitt1}, however in their case, the
metric was unconstrained, and so the functional was both on spinors and
metrics. Using the properties of $D,$ we obtain the critical points using
standard methods:

\begin{proposition}
The critical points of $\mathcal{E}$ satisfy 
\begin{equation}
D^{\ast }DV-\left\vert DV\right\vert ^{2}V=0.  \label{unitvecteq}
\end{equation}
\end{proposition}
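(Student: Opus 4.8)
The plan is to treat this as a constrained variational problem: $\mathcal{E}$ is defined on the unit-norm sections $\Gamma\left( S\mathbb{O}M\right) $, so its critical points are computed by varying within that constraint surface. I would fix $V\in \Gamma\left( S\mathbb{O}M\right) $ and, for an arbitrary smooth section $W\in \Gamma\left( \mathbb{O}M\right) $ with $\left\langle W,V\right\rangle =0$ pointwise, use the variation $V_{t}=\left( V+tW\right) /\left\vert V+tW\right\vert $; for $\left\vert t\right\vert $ small this is well-defined and lies in $\Gamma\left( S\mathbb{O}M\right) $ by compactness of $M$, with $V_{0}=V$ and $\left.\frac{d}{dt}\right\vert _{t=0}V_{t}=W$. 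Conversely, differentiating $\left\vert V_{t}\right\vert ^{2}\equiv 1$ shows that any admissible infinitesimal variation $\dot{V}$ satisfies $\left\langle \dot{V},V\right\rangle =0$ pointwise, so the admissible variations are precisely the sections of the orthogonal complement of $V$ inside $\mathbb{O}M$. The key point enabling the computation is that the octonion covariant derivative $D=d_{D}|_{\Omega ^{0}\left( \mathbb{O}M\right) }$ of (\ref{octocov}) is a fixed linear first-order operator, depending only on $\varphi $ and $T$ and not on $V$, so $\left.\frac{d}{dt}\right\vert _{t=0}DV_{t}=DW$.

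Next I would differentiate under the integral sign (legitimate since $M$ is compact and $V_{t}$ is smooth in $t$) to obtain the first variation
\begin{equation}
\left.\frac{d}{dt}\right\vert _{t=0}\mathcal{E}\left( V_{t}\right) =\int_{M}\left\langle DW,DV\right\rangle \func{vol}=\int_{M}\left\langle W,d_{D}^{\ast }d_{D}V\right\rangle \func{vol},
\end{equation}
where the second equality is the defining property (\ref{Dcodiff1}) of $d_{D}^{\ast }$. Abbreviating $D^{\ast }DV:=d_{D}^{\ast }d_{D}V$, which by (\ref{ddsformula}) equals $-D^{i}D_{i}V$, a critical point is then characterized by $\int_{M}\left\langle W,D^{\ast }DV\right\rangle \func{vol}=0$ for every $W$ orthogonal to $V$. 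Writing $D^{\ast }DV=fV+R$ with $R\perp V$ pointwise and $f=\left\langle D^{\ast }DV,V\right\rangle $, and testing against $W=\rho R$ for bump functions $\rho \geq 0$, forces $R\equiv 0$; hence $D^{\ast }DV=fV$ pointwise.

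It remains to identify $f$ with $\left\vert DV\right\vert ^{2}$, which I would do using metric compatibility of $D$ (Proposition \ref{propDXmetric}) twice. First, applying it with both arguments equal to $V$ and using $\left\vert V\right\vert \equiv 1$ gives $\left\langle D_{i}V,V\right\rangle =\frac{1}{2}\nabla _{i}\left\vert V\right\vert ^{2}=0$, so the $1$-form $i\mapsto \left\langle D_{i}V,V\right\rangle $ vanishes identically and in particular is divergence-free. Second, applying metric compatibility to the contracted identity $\nabla ^{i}\left\langle D_{i}V,V\right\rangle =\left\langle D^{i}D_{i}V,V\right\rangle +\left\langle D_{i}V,D^{i}V\right\rangle $ and using the vanishing of the left side yields $\left\langle D^{i}D_{i}V,V\right\rangle =-\left\vert DV\right\vert ^{2}$, i.e. $f=\left\langle D^{\ast }DV,V\right\rangle =-\left\langle D^{i}D_{i}V,V\right\rangle =\left\vert DV\right\vert ^{2}$. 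Substituting back gives $D^{\ast }DV-\left\vert DV\right\vert ^{2}V=0$, as claimed. I expect the only genuinely delicate step to be the constraint bookkeeping --- verifying that the space of admissible variations is exactly $\left\{ W:\left\langle W,V\right\rangle =0\right\} $ and that this suffices to annihilate the $V$-orthogonal part of $D^{\ast }DV$ --- together with keeping the sign convention for $d_{D}^{\ast }$ consistent; the rest is a short computation with metric compatibility.
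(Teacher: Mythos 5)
Your proof is correct and follows essentially the same route as the paper: compute the first variation to get $\int_M\left\langle W,D^{\ast }DV\right\rangle \func{vol}=0$ for admissible variations, then use metric compatibility of $D$ twice to show $\left\langle DV,V\right\rangle =0$ and $\left\langle D^{\ast }DV,V\right\rangle =\left\vert DV\right\vert ^{2}$, identifying the multiplier. The only (cosmetic) difference is that the paper enforces the unit-norm constraint with a Lagrange multiplier $\lambda$ and then evaluates it, whereas you parametrize the constraint surface directly by normalized paths and project onto the $V$-orthogonal complement; both are standard and your constraint bookkeeping is sound.
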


\begin{proof}
To make the restriction to unit octonions explicit, let us introduce a
Lagrange multiplier function $\lambda ,$ so that now the functional is given
by 
\begin{equation*}
\mathcal{E}\left( V\right) =\frac{1}{2}\int_{M}\left( \left\vert
DV\right\vert ^{2}-\lambda \left( \left\vert V\right\vert ^{2}-1\right)
\right) \func{vol}
\end{equation*}%
Variations of $\lambda $ give the pointwise constraint 
\begin{equation}
\left\vert V\right\vert ^{2}=1  \label{critpt1}
\end{equation}
Now, suppose $V\left( t\right) $ is a $1$-parameter family of unit octonion
sections, then 
\begin{eqnarray*}
\frac{d}{dt}\mathcal{E}\left( V\left( t\right) \right) &=&\frac{1}{2}%
\int_{M}\left( \frac{d}{dt}\left\vert DV\left( t\right) \right\vert
^{2}-\lambda \frac{d}{dt}\left\vert V\right\vert ^{2}\right) \func{vol} \\
&=&\int \left\langle D\frac{d}{dt}V\left( t\right) ,DV\left( t\right)
\right\rangle _{\mathbb{O}}-\lambda \left\langle V\left( t\right) ,\frac{d}{%
dt}V\left( t\right) \right\rangle _{\mathbb{O}}\func{vol} \\
&=&\int \left\langle \frac{d}{dt}V\left( t\right) ,D^{\ast }DV\left(
t\right) -\lambda V\left( t\right) \right\rangle _{\mathbb{O}}\func{vol}
\end{eqnarray*}%
Therefore, at a critical point, we must also have 
\begin{equation}
D^{\ast }DV-\lambda V=0  \label{critpoint2}
\end{equation}%
From (\ref{critpt1}), using the metric-compatible property of $D$ (\ref%
{DXmet}), we find that 
\begin{eqnarray}
\left\langle DV,V\right\rangle &=&0  \label{dvv1} \\
\left\langle D^{\ast }DV,V\right\rangle -\left\vert DV\right\vert ^{2} &=&0
\label{d2vv1}
\end{eqnarray}%
Therefore, by taking the inner product of (\ref{critpoint2}) with $V,$ we
conclude that $\lambda =\left\vert DV\right\vert ^{2}$, and hence obtain (%
\ref{unitvecteq}).
\end{proof}

\begin{remark}
In the case when the torsion vanishes, and when restricted to imaginary
octonions, since $D=\nabla $, the functional (\ref{eV3}) reduces to the
well-known energy functional of a unit vector field. In that case, the
critical points of this energy functional are known as \emph{harmonic unit
vector fields}. The energy functional of a unit vector fields (sometimes
also known as the \emph{total bending}) have been studied independently by
Gil-Medrano, Wiegmink, and Wood, among others (\cite%
{GilMedranoVF1,WiegminkBending,WoodUnitVF}). In particular, the equation
satisfied by a harmonic unit vector field $v$ is very similar to the
equation (\ref{unitvecteq}):%
\begin{equation}
\nabla ^{\ast }\nabla v-\left\vert \nabla v\right\vert ^{2}v=0
\label{harmunitvecteq}
\end{equation}%
Such unit vectors are called harmonic, because the equation (\ref%
{harmunitvecteq}) is one of the equations obtained when considering \emph{%
harmonic maps} from a manifold into the unit tangent bundle. A unit vector
field can be regarded as a harmonic map if and only if it satisfies (\ref%
{harmunitvecteq}) as well as an equation that depends on the curvature. More
details on harmonic unit vector fields can be in particular be found in \cite%
{BoeckxVanhecke1,HanYim,Ishihara1}.
\end{remark}

Given a unit octonion that is a critical point of the functional $\mathcal{E}%
,$ we can interpret the equation (\ref{unitvecteq}) in terms of the torsion
of the corresponding $G_{2}$-structure $\sigma _{V}\left( \varphi \right) .$

\begin{corollary}
\label{corrDivT}A unit octonion $V$ is a critical point of the functional $%
\mathcal{E}$ if and only if the torsion $T^{\left( V\right) }$ of the $G_{2}$%
-structure $\sigma _{V}\left( \varphi \right) $ satisfies%
\begin{equation}
\func{div}T^{\left( V\right) }=0.  \label{divT0}
\end{equation}
\end{corollary}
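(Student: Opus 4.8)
The plan is to pull the critical point equation $D^{\ast }DV-\left\vert DV\right\vert ^{2}V=0$ (the preceding Proposition, equation (\ref{unitvecteq})) over to the $G_{2}$-structure $\sigma _{V}\left( \varphi \right) $, where it becomes a statement purely about $T^{\left( V\right) }$, and then invoke the formula $d_{D}^{\ast }T=-\func{div}T-\left\vert T\right\vert ^{2}$ of Proposition \ref{PropTorsDeriv}. Throughout, $\tilde{D}$ and $\tilde{d}_{D}$ denote the octonion covariant derivative and its exterior version for $\sigma _{V}\left( \varphi \right) $, whose torsion is $T^{\left( V\right) }$.

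The first and main step is to establish the covariance identity $\tilde{d}_{D}^{\ast }\tilde{d}_{D}1=\left( D^{\ast }DV\right) V^{-1}$. Note $\tilde{d}_{D}1=\tilde{D}1=-T^{\left( V\right) }$ — this is (\ref{DX1}) applied to $\sigma _{V}\left( \varphi \right) $, and equals $\left( DV\right) V^{-1}$ by Theorem \ref{ThmTorsV} since $\left\vert V\right\vert =1$ — so the claim is really the covariance of the codifferential. I would obtain it either from the remark after Proposition \ref{propDV}, that $d_{D}$ transforms by conjugation with the right-multiplication map $R_{V}$, which for $\left\vert V\right\vert =1$ is a fibrewise isometry so that its $L^{2}$-adjoint transforms as $\tilde{d}_{D}^{\ast }=R_{V^{-1}}d_{D}^{\ast }R_{V}$; or, more directly, by a short $L^{2}$ computation: for any section $Q$, pairing $\tilde{d}_{D}^{\ast }P$ against $Q$ amounts to pairing $P$ against $\tilde{d}_{D}Q=\left( D\left( QV\right) \right) V^{-1}$ (Proposition \ref{propDV}), and Lemma \ref{lemRLmult} lets one slide $V$ and $V^{-1}$ across the octonion inner product to conclude $\tilde{d}_{D}^{\ast }P=\left( d_{D}^{\ast }\left( PV\right) \right) V^{-1}$; taking $P=\tilde{d}_{D}1=\left( DV\right) V^{-1}$ and using $\left( \left( D_{i}V\right) V^{-1}\right) V=D_{i}V$ (power-associativity) gives the identity.

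The rest is bookkeeping. Apply Proposition \ref{PropTorsDeriv} to $\sigma _{V}\left( \varphi \right) $ to get $\tilde{d}_{D}^{\ast }\tilde{d}_{D}1=\tilde{d}_{D}^{\ast }\left( -T^{\left( V\right) }\right) =\func{div}T^{\left( V\right) }+\left\vert T^{\left( V\right) }\right\vert ^{2}$, and observe $\left\vert T^{\left( V\right) }\right\vert ^{2}=\left\vert DV\right\vert ^{2}$ since $T^{\left( V\right) }=-\left( DV\right) V^{-1}$ and right-multiplication by the unit octonion $V^{-1}$ preserves fibrewise norms. Comparing with $\tilde{d}_{D}^{\ast }\tilde{d}_{D}1=\left( D^{\ast }DV\right) V^{-1}$ yields $\left( D^{\ast }DV\right) V^{-1}-\left\vert DV\right\vert ^{2}=\func{div}T^{\left( V\right) }$. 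Finally, since $V^{-1}$ is nowhere vanishing, $\left( D^{\ast }DV-\left\vert DV\right\vert ^{2}V\right) V^{-1}=0$ iff $D^{\ast }DV-\left\vert DV\right\vert ^{2}V=0$, and the left side equals $\left( D^{\ast }DV\right) V^{-1}-\left\vert DV\right\vert ^{2}$ (the scalar $\left\vert DV\right\vert ^{2}$ commutes and associates, and $VV^{-1}=1$); hence $V$ is a critical point of $\mathcal{E}$ exactly when $\func{div}T^{\left( V\right) }=0$.

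The genuine content is the covariance $\tilde{d}_{D}^{\ast }\tilde{d}_{D}1=\left( D^{\ast }DV\right) V^{-1}$: one must check that the right-translation manipulations are all legitimate (they are, since $V,V^{-1}$ generate an associative subalgebra by power-associativity) and that the signs in Proposition \ref{PropTorsDeriv} and in (\ref{DX1}) line up. Everything else is routine.
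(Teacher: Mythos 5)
Your argument is correct, and it takes a genuinely different route from the paper's. The paper proves the corollary by a direct pointwise computation: it substitutes $DV=-\tilde{T}V$ into the critical-point equation, expands $D^{a}\left( \tilde{T}_{a}V\right) $ with the one-sided Leibniz rule (\ref{octocovprod}), and uses the symmetry of $\tilde{T}_{a}^{\ m}\tilde{T}^{\ an}$ in $m,n$ to show that both the cross-product term in $\tilde{T}_{a}\tilde{T}^{a}$ and the associator $\left[ \tilde{T}_{a},\tilde{T}^{a},V\right] $ vanish, which yields $D^{\ast }DV-\left\vert DV\right\vert ^{2}V=\left( \func{div}\tilde{T}\right) V$ directly. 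You instead transport the whole problem to the gauge $\sigma _{V}\left( \varphi \right) $, where the critical-point operator becomes $\tilde{d}_{D}^{\ast }\tilde{d}_{D}1$ and the conclusion is just (\ref{dsT}) of Proposition \ref{PropTorsDeriv} read in that gauge. The one ingredient you need that the paper never states explicitly is the covariance of the codifferential, $\tilde{d}_{D}^{\ast }P=\left( d_{D}^{\ast }\left( PV\right) \right) V^{-1}$; your $L^{2}$-adjoint derivation of it from Proposition \ref{propDV} and Lemma \ref{lemRLmult} is sound (only the transformation of $\tilde{d}_{D}$ on $0$-forms is needed, which is exactly Proposition \ref{propDV}), and since both sides are local operators agreeing against all test sections they agree pointwise. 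Two small remarks: the cancellation $\left( \left( D_{i}V\right) V^{-1}\right) V=D_{i}V$ rests on alternativity (skew-symmetry of the associator, Lemma \ref{lemAssocIds}) rather than power-associativity per se; and your consistency check is confirmed by (\ref{d2vv1}), which says $\func{Re}\left( \left( D^{\ast }DV\right) V^{-1}\right) =\left\langle D^{\ast }DV,V\right\rangle =\left\vert DV\right\vert ^{2}$, so the imaginary part of $\left( D^{\ast }DV\right) V^{-1}$ is precisely $\func{div}T^{\left( V\right) }$. Your approach is less computational and makes the Coulomb-gauge interpretation transparent, at the cost of having to justify the adjoint covariance; the paper's is self-contained and purely pointwise.
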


\begin{proof}
For convenience, let $\tilde{T}=T^{\left( V\right) }.$ Note that from (\ref%
{TorsV3}), 
\begin{equation*}
\tilde{T}=-\left( DV\right) V^{-1}
\end{equation*}%
Hence, the equation (\ref{unitvecteq}) can be rewritten as 
\begin{equation*}
-D^{\ast }\left( \tilde{T}V\right) -\left\vert \tilde{T}\right\vert ^{2}V=0
\end{equation*}%
Using the expression (\ref{ddsformula}) for $D^{\ast },$ and the property of 
$D$ (\ref{octocovprod}) we can write%
\begin{equation*}
D^{a}\left( \tilde{T}_{a}V\right) -\left\vert \tilde{T}\right\vert
^{2}V=\left( \nabla ^{a}\tilde{T}_{a}\right) V+\tilde{T}_{a}D^{a}V-\left%
\vert \tilde{T}\right\vert ^{2}V
\end{equation*}%
However, $DV=-\tilde{T}V,$ so 
\begin{eqnarray*}
\tilde{T}_{a}D^{a}V &=&-\tilde{T}_{a}\left( \tilde{T}^{a}V\right) \\
&=&-\left( \tilde{T}_{a}\tilde{T}^{a}\right) V-\left[ \tilde{T}_{a},\tilde{T}%
^{a},V\right]
\end{eqnarray*}%
Now, 
\begin{equation*}
\tilde{T}_{a}\tilde{T}^{a}=\left( 
\begin{array}{c}
0 \\ 
\tilde{T}_{a}^{\ m}%
\end{array}%
\right) \left( 
\begin{array}{c}
0 \\ 
\tilde{T}_{{}}^{\ an}%
\end{array}%
\right) =\left( 
\begin{array}{c}
-\left\vert \tilde{T}\right\vert ^{2} \\ 
\tilde{T}_{a}^{\ m}\tilde{T}_{{}}^{\ an}\varphi _{mn}^{\ \ \ \ p}%
\end{array}%
\right)
\end{equation*}%
however, $\tilde{T}_{a}^{\ m}\tilde{T}_{{}}^{\ an}$ is symmetric in $m$ and $%
n$, so $\tilde{T}_{a}\tilde{T}^{a}=-\left\vert \tilde{T}\right\vert ^{2}.$
For the same reason, the associator $\left[ \tilde{T}_{a},\tilde{T}^{a},V%
\right] =0.$ Hence, 
\begin{equation*}
\tilde{T}_{a}D^{a}V=\left\vert \tilde{T}\right\vert ^{2}V
\end{equation*}%
Therefore, 
\begin{equation*}
D^{\ast }DV-\left\vert DV\right\vert ^{2}V=\left( \func{div}\tilde{T}\right)
V
\end{equation*}%
Since $V$ is nowhere-vanishing, $D^{\ast }DV-\left\vert DV\right\vert
^{2}V=0 $ if and only if $\func{div}\tilde{T}=0.$
\end{proof}

\begin{remark}
From Corollary \ref{corrDivT} we obtain a different interpretation of the
critical points of $\mathcal{E}$ - the critical points correspond to $G_{2}$%
-structures that have divergence-free torsion. This description fits very
well with the interpretation of the $G_{2}$-structure torsion as a
connection for a non-associative gauge theory. The condition $\func{div}T=0$
is then simply the analog of the Coulomb gauge. It is well-known (e.g. \cite%
{DonaldsonGauge,TaoBook}) that in gauge theory, the Coulomb gauge $d^{\ast
}A=0,$ for the gauge connection $A,$ corresponds to critical points of the $%
L_{2}$-norm of $A$. In our situation, we have an exactly similar thing
happening. This gives an interesting link between the harmonic map point of
view and the Coulomb gauge point of view. Given Uhlenbeck's existence result
for the Coulomb gauge \cite{UhlenbeckConnection}, there may be a possibility
of an existence result for divergence-free torsion.
\end{remark}

The characterization of divergence-free torsion as corresponding to critical
points of the functional $\mathcal{E}$ shows that $G_{2}$-structures with
such torsion (whenever they exist) are in some sense special. However the
significance of divergence-free torsion still needs to be investigated. In
particular, we can see that unit norm eigensections of $\NEG{D}$ are in fact
critical points of $\mathcal{E}.$

\begin{proposition}
If $V$ is a unit eigensection of the Dirac operator $\NEG{D},$ then $V$ is a
critical point of the functional $\mathcal{E}.$
\end{proposition}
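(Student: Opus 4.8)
The plan is to combine the octonionic Lichnerowicz--Weitzenb\"ock formula (\ref{BWformula}) with the divergence computation already carried out inside the proof of Corollary \ref{corrDivT}. Throughout I take ``eigensection'' to mean $\NEG{D}V=\mu V$ for a \emph{constant} real number $\mu $, with $M$ the compact manifold on which $\mathcal{E}$ is defined. By (\ref{unitvecteq}) it suffices to verify $D^{\ast }DV-\left\vert DV\right\vert ^{2}V=0$ pointwise; equivalently, by Corollary \ref{corrDivT}, it suffices to show $\func{div}T^{\left( V\right) }=0$, where $T^{\left( V\right) }=-\left( DV\right) V^{-1}$ (Theorem \ref{ThmTorsV}, using $\left\vert V\right\vert =1$) is the torsion of $\sigma _{V}\left( \varphi \right) $.

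First I would compute $D^{\ast }DV$ in two ways. On the one hand, since $\mu $ is constant, $\NEG{D}^{2}V=\mu \NEG{D}V=\mu ^{2}V$, so (\ref{BWformula}) gives $D^{\ast }DV=\NEG{D}^{2}V-\frac{1}{4}RV=\left( \mu ^{2}-\frac{1}{4}R\right) V$, a \emph{real} scalar function times $V$. On the other hand, the chain of identities in the proof of Corollary \ref{corrDivT} shows $D^{\ast }DV-\left\vert DV\right\vert ^{2}V=\left( \func{div}T^{\left( V\right) }\right) V$; and since $\left\vert V\right\vert =1$ and octonion multiplication preserves norms, $\left\vert DV\right\vert ^{2}=\left\vert T^{\left( V\right) }V\right\vert ^{2}=\left\vert T^{\left( V\right) }\right\vert ^{2}$, whence $D^{\ast }DV=\bigl( \left\vert T^{\left( V\right) }\right\vert ^{2},\func{div}T^{\left( V\right) }\bigr) V$, the octonion $\bigl( \left\vert T^{\left( V\right) }\right\vert ^{2},\func{div}T^{\left( V\right) }\bigr) $ acting on $V$ by right multiplication. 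Equating the two expressions gives $\bigl( \mu ^{2}-\frac{1}{4}R-\left\vert T^{\left( V\right) }\right\vert ^{2},\,-\func{div}T^{\left( V\right) }\bigr) \,V=0$. Because $V$ is a nonzero (unit) octonion, right multiplication by $V$ is invertible, so this octonion vanishes; its imaginary part yields $\func{div}T^{\left( V\right) }=0$, and Corollary \ref{corrDivT} then gives that $V$ is a critical point of $\mathcal{E}$. (Its real part yields $\mu ^{2}-\frac{1}{4}R=\left\vert T^{\left( V\right) }\right\vert ^{2}$, which one may cross-check against the scalar-curvature identity (\ref{torsscalcurv}) for $\sigma _{V}\left( \varphi \right) $ using that, by Theorem \ref{thmDirac}, $\NEG{D}V=\mu V$ forces $\tilde{\tau}_{7}=0$ and $7\tilde{\tau}_{1}=\mu $.)

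The argument is short, so there is no single hard step; the points needing care are that the eigenvalue $\mu $ must genuinely be constant -- if one only assumed $\NEG{D}V=\mu V$ for a function $\mu $, the same computation would produce $\func{div}T^{\left( V\right) }=\func{grad}\mu $ rather than $0$ -- and that $D^{\ast }DV$ arises as a \emph{real} multiple of $V$ from (\ref{BWformula}) but a priori only as a general octonion multiple of $V$ from the Corollary \ref{corrDivT} computation, so one must extract the imaginary component correctly. An alternative that bypasses (\ref{BWformula}) is to use Theorem \ref{thmDirac} to obtain $\tilde{\tau}_{7}=0$, $7\tilde{\tau}_{1}=\mu $ constant, and then substitute into (\ref{torsscalcurv}) for $\sigma _{V}\left( \varphi \right) $ to verify $\mu ^{2}-\frac{1}{4}R=\left\vert T^{\left( V\right) }\right\vert ^{2}$ directly, hence $D^{\ast }DV=\left\vert DV\right\vert ^{2}V$; but the Lichnerowicz--Weitzenb\"ock route is cleaner and reuses machinery just established.
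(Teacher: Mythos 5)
Your argument is correct and hinges on the same key step as the paper's proof: applying the Lichnerowicz--Weitzenb\"{o}ck formula (\ref{BWformula}) to the eigensection (with constant eigenvalue) to conclude that $D^{\ast }DV=\left( \mu ^{2}-\frac{1}{4}R\right) V$ is a \emph{real} multiple of $V$. The only difference is the closing move: you detour through the identity $D^{\ast }DV-\left\vert DV\right\vert ^{2}V=\left( \func{div}T^{\left( V\right) }\right) V$ from the proof of Corollary \ref{corrDivT} and extract the imaginary part, whereas the paper simply pairs with $V$ and uses $\left\langle D^{\ast }DV,V\right\rangle =\left\vert DV\right\vert ^{2}$ (equation (\ref{d2vv1}), valid pointwise for any unit section) to identify $\left\vert DV\right\vert ^{2}=\mu ^{2}-\frac{1}{4}R$ directly -- both closings are valid.
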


\begin{proof}
Suppose $V$ is a unit eigensection of $\NEG{D}$ with eigenvalue $\lambda ,$
then%
\begin{eqnarray*}
\NEG{D}V &=&\lambda V \\
\NEG{D}^{2}V &=&\lambda ^{2}V
\end{eqnarray*}%
However, from the Lichnerowicz-Weitzenb\"{o}ck formula (\ref{BWformula}) we
have 
\begin{eqnarray*}
D^{\ast }DV &=&\NEG{D}^{2}V-\frac{1}{4}RV \\
&=&\left( \lambda ^{2}-\frac{1}{4}R\right) V
\end{eqnarray*}%
Since $\left\vert V\right\vert ^{2}=1,$ from (\ref{d2vv1}), we have 
\begin{eqnarray*}
\left\vert DV\right\vert ^{2} &=&\left\langle D^{\ast }DV,V\right\rangle \\
&=&\lambda ^{2}-\frac{1}{4}R
\end{eqnarray*}%
Therefore, indeed, 
\begin{equation*}
D^{\ast }DV-\left\vert DV\right\vert ^{2}V=0
\end{equation*}%
and $V$ is hence a critical point of $\mathcal{E}.$
\end{proof}

\begin{remark}
From (\ref{DirV}) we see that the condition that $\NEG{D}V=\lambda V$ for a
constant $\lambda ,$ simply means that the torsion $\tilde{T}$ of $\sigma
_{V}\left( \varphi \right) $ has a constant $1$-dimensional component and a
vanishing $7$-dimensional component. Using the $G_{2}$-structure Bianchi
identity (\ref{torsionbianchi}), and in particular, the expressions from 
\cite[Proposition 3.3]{GrigorianG2Torsion1}, which are derived from it, we
then obtain that this implies both $14$- and $27$-dimensional components of $%
\tilde{T}$ are divergence-free, and hence $\func{div}\tilde{T}=0.$ This is
an alternative way to prove the above result. In fact, if $\tilde{T}$ has a
vanishing $7$-dimensional component, it is true that $\func{div}\tilde{T}=0$
if and only if $V$ is a constant norm eigenvalue of $\NEG{D}$.
\end{remark}

In general, however, we don't know if the functional $\mathcal{E}$ has any
critical points for a given metric, or if a critical point does exist,
whether it corresponds to a minimum. However, another approach, that has
been successful in the study of harmonic maps (for example, \cite{StruweChen}%
) as well as for other functionals of $G_{2}$-structures \cite%
{bryant-2003,BryantXu,GrigorianCoflow,karigiannis-2007,WeissWitt1,XuYe}
would be to consider a gradient flow of $\mathcal{E}$. Since we can always
redefine the reference $G_{2}$-structure to correspond to the initial value
of the flow, this would give the following initial value problem

\begin{equation}
\left\{ 
\begin{array}{c}
\frac{\partial V}{\partial t}=-D^{\ast }DV+\left\vert DV\right\vert ^{2}V \\ 
V\left( 0\right) =1%
\end{array}%
\right.  \label{heatflow}
\end{equation}%
The properties of this flow will be the subject of further study.

\section{Concluding remarks}

\setcounter{equation}{0}The octonion bundle formalism for $G_{2}$-structures
that has been developed in this paper raises multiple directions for further
research. The interpretation of the $G_{2}$-structure torsion as a
connection on a non-associative bundle and $\pi _{7}\func{Riem}$ as its
curvature leads to natural questions such as, what is the analogue of a
Yang-Mills connection in this case, and what is its interpretation in terms
of $G_{2}$-structures? This also ties in with the interpretation and
significance of divergence-free torsion which corresponds to the Coulomb
gauge. Then there is also the question of existence of divergence-free
connections, that is, critical points of the functional $\mathcal{E}\left(
V\right) .$ The equation (\ref{unitvecteq}) for the critical points is very
similar to the harmonic unit vector field equation, however in this case we
have additional structure - the equation can be split into real and
imaginary octonion parts and we also have the octonion product structure, so
it is possible that this could be exploited to give some answers regarding
existence. Another possible direction is to consider harmonic maps from $M$
to the unit octonion bundle of $M.$ One of the equations would be precisely (%
\ref{unitvecteq}) and there would also be an equation involving curvature.
This may also have further interpretation in terms of $G_{2}$-structures.

In this paper we have been using octonion-valued $1$-forms and $2$-forms,
however for further progress a more rigorous theory of octonion-valued
bundles is needed. For quaternions, which are non-commutative but
associative, a theory of quaternion-valued modules and bundles has been
developed by Joyce \cite{JoyceHmodules,JoyceHmodules2} and Widdows \cite%
{Widdows1,Widdows2}. For octonion-valued modules and bundles the
corresponding theory would necessarily be even more subtle due to the added
non-associativity.

Due to the relationship with octonions, manifolds with $G_{2}$-structure
have an intrinsic non-associativity, therefore it is likely that the
enigmatic nature of $G_{2}$-structure can only be truly understood by
embracing the non-associativity and using it to define new mathematical
structures.

\appendix

\section{Proofs of identities}

\setcounter{equation}{0}\label{appProofs}

\begin{proof}[Proof of Lemma \protect\ref{lemOctoexp}]
Let $A=\left( 0,\alpha \right) \in \func{Im}\Gamma \left( \mathbb{O}M\right)
,$ then the exponential of $A$\ is defined to be $e^{A}=\sum_{k=1}^{\infty }%
\frac{1}{k!}A^{k}$. From the definition (\ref{octoproddef}) of octonion
multiplication, we have 
\begin{eqnarray*}
A &=&\alpha \\
A^{2} &=&-\left\vert \alpha \right\vert ^{2} \\
A^{3} &=&-\left\vert \alpha \right\vert ^{2}\alpha \\
A^{4} &=&\left\vert \alpha \right\vert ^{4} \\
&&...
\end{eqnarray*}%
Therefore, 
\begin{eqnarray}
e^{A} &=&\left( 1-\frac{1}{2}\left\vert \alpha \right\vert ^{2}+\frac{1}{4!}%
\left\vert \alpha \right\vert ^{4}+...\right)  \notag \\
&&+\left( 1-\frac{1}{3!}\left\vert \alpha \right\vert ^{2}+\frac{1}{5!}%
\left\vert \alpha \right\vert ^{4}-...\right) \alpha  \label{octoexp2} \\
&=&\cos \left\vert \alpha \right\vert +\alpha \frac{\sin \left\vert \alpha
\right\vert }{\left\vert \alpha \right\vert }  \notag
\end{eqnarray}%
Note that this converges for any $\alpha $.
\end{proof}

\begin{proof}[Proof of Corollary \protect\ref{corrOctopow}]
Suppose $B=\left( b,\beta \right) \in \Gamma \left( \mathbb{O}M\right) .$
Since we are proving a pointwise identity, without loss of generality, we
may assume $\beta $ (and hence $B$ itself) is nowhere vanishing, since
whenever $\beta $ does vanish, the identity (\ref{octopower}) is satisfied
trivially. Then, 
\begin{equation*}
B=\left\vert B\right\vert \left( \hat{b}+\hat{\beta}\right)
\end{equation*}%
where $\hat{b}=\frac{b}{\left\vert B\right\vert }$ and $\hat{\beta}=\frac{%
\beta }{\left\vert B\right\vert }$. Since now $\hat{b}^{2}+\left\vert \hat{%
\beta}\right\vert ^{2}=1$, we can find a non-negative real number $\theta $
such that $\cos \theta =\hat{b}$ and $\sin \theta =\left\vert \hat{\beta}%
\right\vert .$ Therefore, we write 
\begin{equation*}
B=\left\vert B\right\vert \left( \cos \theta +\alpha \frac{\sin \theta }{%
\theta }\right)
\end{equation*}%
where $\alpha =\frac{\hat{\beta}}{\left\vert \hat{\beta}\right\vert }\theta
, $ so that $\left\vert \alpha \right\vert =\theta $. From Lemma \ref%
{lemOctoexp}, we can then rewrite $B=\left\vert B\right\vert e^{A}$ where $%
A=\left( 0,\alpha \right) .$ Hence, for $k\in \mathbb{Z}$ 
\begin{equation*}
B^{k}=\left\vert B\right\vert ^{k}\left( e^{A}\right) ^{k}
\end{equation*}%
It is clear from the expansion (\ref{octoexp2}) that $\left( e^{A}\right)
^{k}=e^{kA}=\cos k\theta +\alpha \frac{\sin k\theta }{\theta }.$ Hence the
result.
\end{proof}

\begin{proof}[Proof of Lemma \protect\ref{lemAssocIds}]
Let $A,B,C\in \Gamma \left( \mathbb{O}M\right) $. Then

\begin{enumerate}
\item From the expression (\ref{octoassoc}) of the associator in terms of
the $4$-form $\psi ,$ we see that $\left[ A,B,C\right] $ only depends on the
imaginary parts of $A,B,C$ - $\alpha ,\beta ,\gamma ,$ respectively. Hence, 
\begin{equation}
\left[ \bar{A},B,C\right] =\left[ -\alpha ,\beta ,\gamma \right] =-\left[
A,B,C\right]  \label{negaabcassoc}
\end{equation}

\item From Corollary \ref{corrOctopow}, $\func{Im}A^{k}=\tilde{a}\alpha $
for some $\tilde{a}\in \mathbb{R}$, hence, 
\begin{equation*}
\left[ A^{k},A,C\right] =\tilde{a}\left[ \alpha ,\alpha ,\gamma \right] =0%
\text{.}
\end{equation*}

\item From the expression for the inner product (\ref{octoinnerprod}), we
have 
\begin{eqnarray}
\left\langle A,\left[ A,B,C\right] \right\rangle &=&\frac{1}{2}\left( A%
\overline{\left[ A,B,C\right] }+\left[ A,B,C\right] \bar{A}\right)  \notag \\
&=&\frac{1}{2}\left( -A\left[ A,B,C\right] +\left[ A,B,C\right] \bar{A}%
\right)  \label{aaassoc}
\end{eqnarray}%
since $\left[ A,B,C\right] $ is pure imaginary. However, from (\ref%
{octoassoc}), 
\begin{equation*}
\left\langle A,\left[ A,B,C\right] \right\rangle =2\psi \left(
A,A,B,C\right) =0
\end{equation*}%
since $\psi $ is totally skew-symmetric. Therefore (\ref{aaassoc}) yields%
\begin{equation}
A\left[ A,B,C\right] =\left[ A,B,C\right] \bar{A}  \label{assocaa}
\end{equation}

\item First let us consider $\left[ A,AB,C\right] .$ Using (\ref%
{negaabcassoc}), and then the definition of the associator, we can write%
\begin{eqnarray}
\left[ A,AB,C\right] &=&-\left[ \bar{A},AB,C\right]  \notag \\
&=&-\bar{A}\left( \left( AB\right) C\right) +\left( \bar{A}\left( AB\right)
\right) C  \notag \\
&=&-\bar{A}\left( A\left( BC\right) -\left[ A,B,C\right] \right) +\left\vert
A\right\vert ^{2}\left( BC\right)  \notag \\
&=&\bar{A}\left[ A,B,C\right]  \label{aabcassoc}
\end{eqnarray}%
where we have used the fact that $\left[ \bar{A},A,\cdot \right] =0$.
Continuing by induction, we conclude that $\left[ A,A^{k}B,C\right] =\bar{A}%
^{k}\left[ A,B,C\right] .$

\item Similarly as above, consider $\left[ A,BA,C\right] .$ Then, 
\begin{equation*}
\left[ A,BA,C\right] =-\left[ A,\bar{A}\bar{B},C\right] =\left[ \bar{A},\bar{%
A}\bar{B},C\right]
\end{equation*}%
Applying (\ref{aabcassoc}) and (\ref{assocaa}) we hence get 
\begin{equation*}
\left[ A,BA,C\right] =A\left[ \bar{A},\bar{B},C\right] =\left[ A,B,C\right] 
\bar{A}.
\end{equation*}%
Continuing by induction for arbitrary $k$, we conclude that $\left[
A,BA^{k},C\right] =\left[ A,B,C\right] \bar{A}^{k}$.

\item Consider $\left[ A^{k},B,CA\right] .$ Using the definition of the
associator,%
\begin{eqnarray}
\left[ A^{k},B,CA\right] &=&\left[ B,CA,A^{k}\right]  \notag \\
&=&B\left( CAA^{k}\right) -\left( B\left( CA\right) \right) A^{k}  \notag \\
&=&B\left( CA^{k+1}\right) -\left( \left( BC\right) A+\left[ B,C,A\right]
\right) A^{k}  \notag \\
&=&\left[ B,C,A^{k+1}\right] -\left[ A,B,C\right] A^{k}  \label{akbcaassoc}
\end{eqnarray}%
However, note that $\func{Im}A^{k}=\tilde{a}\alpha $ for some $\tilde{a}\in 
\mathbb{R}$, hence using the skew-symmetry of the associator and part 5 of
this proof, we have 
\begin{equation*}
\left[ A^{k},B,CA\right] =\left[ A^{k},B,C\right] \bar{A}
\end{equation*}%
Therefore, we indeed obtain 
\begin{equation}
\left[ A^{k+1},B,C\right] =\left[ A^{k},B,C\right] \bar{A}+\left[ A,B,C%
\right] A^{k}  \label{akp1assoc}
\end{equation}
\end{enumerate}
\end{proof}

\begin{proof}[Proof of Lemma \protect\ref{lemRLmult}]
Let $A,B,C\in \Gamma \left( \mathbb{O}M\right) $, and consider 
\begin{eqnarray*}
\left\langle R_{B}A,C\right\rangle &=&\left\langle AB,C\right\rangle \\
&=&\frac{1}{2}\left( \left( AB\right) \bar{C}+C\left( \bar{B}\bar{A}\right)
\right) \\
&=&\frac{1}{2}\left( A\left( B\bar{C}\right) -\left[ A,B,\bar{C}\right]
+\left( C\bar{B}\right) \bar{A}+\left[ C,\bar{B},\bar{A}\right] \right) \\
&=&\frac{1}{2}\left( A\left( B\bar{C}\right) +\left( C\bar{B}\right) \bar{A}%
\right) \\
&=&\left\langle A,C\bar{B}\right\rangle =\left\langle A,R_{\bar{B}%
}C\right\rangle
\end{eqnarray*}%
where we have used the expression for the metric (\ref{octoinnerprod}) and
properties of the associator from Lemma \ref{lemAssocIds}. Similarly we
obtain the result for $L_{B}$.\newline
\end{proof}

\begin{proof}[Proof of Lemma \protect\ref{lemAssocIds2}]
Let $V$ be a nowhere-vanishing octonion, and $A,B$ arbitrary octonion
sections.

\begin{enumerate}
\item Using the associator, 
\begin{eqnarray*}
\left( VA\right) \left( BV^{-1}\right) &=&\left( \left( VA\right) B\right)
V^{-1}+\left[ VA,B,V^{-1}\right] \\
&=&\left( V\left( AB\right) -\left[ V,A,B\right] \right) V^{-1}-\frac{1}{%
\left\vert V\right\vert ^{2}}\left[ VA,B,V\right] \\
&=&\func{Ad}_{V}\left( AB\right) -\left[ A,B,V\right] V^{-1}-\frac{\bar{V}}{%
\left\vert V\right\vert ^{2}}\left[ A,B,V\right]
\end{eqnarray*}%
where we have used identity 4 from Lemma \ref{lemAssocIds}. Now using
identity 3 from Lemma \ref{lemAssocIds}, we conclude that, 
\begin{eqnarray}
\left( VA\right) \left( BV^{-1}\right) &=&\func{Ad}_{V}\left( AB\right) -%
\frac{1}{\left\vert V\right\vert ^{2}}\left[ A,B,V\right] \left( V+\bar{V}%
\right)  \notag \\
&=&\func{Ad}_{V}\left( AB\right) +\left[ A,B,V^{-1}\right] \left( V+\bar{V}%
\right)  \label{cabcinid}
\end{eqnarray}

\item Using the same identities from Lemma \ref{lemAssocIds}, we get 
\begin{eqnarray}
\left( AV^{-1}\right) \left( VB\right) &=&\left( \left( AV^{-1}\right)
V\right) B+\left[ AV^{-1},V,B\right]  \notag \\
&=&AB-\frac{1}{\left\vert V\right\vert ^{2}}\left[ A,B,V\right] V  \notag \\
&=&AB+\left[ A,B,V^{-1}\right] V  \label{acincbid}
\end{eqnarray}
\end{enumerate}
\end{proof}

\begin{proof}[Proof of Lemma \protect\ref{lemAssocVId}]
Let $A,B,C\in \Gamma \left( \mathbb{O}M\right) $, then%
\begin{eqnarray*}
\left[ A,B,C\right] _{V} &=&A\circ _{V}\left( B\circ _{V}C\right) -\left(
A\circ _{V}B\right) \circ _{V}C \\
&=&A\circ _{V}\left( BC+\left[ B,C,V\right] V^{-1}\right) -\left( AB+\left[
A,B,V\right] V^{-1}\right) \circ _{V}C \\
&=&A\left( BC\right) +\left[ A,BC,V\right] V^{-1}+A\left( \left[ B,C,V\right]
V^{-1}\right) +\left[ A,\left[ B,C,V\right] V^{-1},V\right] V^{-1} \\
&&-\left( AB\right) C-\left[ AB,C,V\right] V^{-1}-\left( \left[ A,B,V\right]
V^{-1}\right) C-\left[ \left[ A,B,V\right] V^{-1},C,V\right] V^{-1}
\end{eqnarray*}%
Now note that using Lemma \ref{lemAssocIds}, 
\begin{eqnarray*}
\left[ A,\left[ B,C,V\right] V^{-1},V\right] V^{-1} &=&\left[ A,\left[ B,C,V%
\right] ,V\right] \bar{V}^{-1}V^{-1} \\
&=&-\left[ A,\left[ B,C,V\right] ,V^{-1}\right]
\end{eqnarray*}%
and similarly, 
\begin{eqnarray*}
\left[ \left[ A,B,V\right] V^{-1},C,V\right] V^{-1} &=&-\left[ \left[ A,B,V%
\right] ,C,V^{-1}\right] \\
&=&\left[ \left[ A,B,V\right] ,V^{-1},C\right]
\end{eqnarray*}%
However, 
\begin{eqnarray*}
A\left( \left[ B,C,V\right] V^{-1}\right) -\left[ A,\left[ B,C,V\right]
,V^{-1}\right] &=&\left( A\left[ B,C,V\right] \right) V^{-1} \\
-\left( \left[ A,B,V\right] V^{-1}\right) C-\left[ \left[ A,B,V\right]
,V^{-1},C\right] &=&-\left[ A,B,V\right] \left( V^{-1}C\right)
\end{eqnarray*}%
Therefore, 
\begin{equation*}
\left[ A,B,C\right] _{V}=\left[ A,BC,V\right] V^{-1}+\left( A\left[ B,C,V%
\right] \right) V^{-1}-\left[ AB,C,V\right] V^{-1}+\left[ A,B,C\right] -%
\left[ A,B,V\right] \left( V^{-1}C\right)
\end{equation*}%
Expanding each of the first three associators, we get%
\begin{eqnarray*}
\left[ A,B,C\right] _{V} &=&\left[ A\left( \left( BC\right) V\right) -\left(
A\left( BC\right) \right) V-\left( AB\right) \left( CV\right) +\left( \left(
AB\right) C\right) V+A\left( B\left( CV\right) \right) -A\left( \left(
BC\right) V\right) \right] V^{-1} \\
&&+\left[ A,B,C\right] -\left[ A,B,V\right] \left( V^{-1}C\right) \\
&=&\left[ A\left( \left( BC\right) V\right) \right] V^{-1}-A\left( BC\right)
-\left[ \left( AB\right) \left( CV\right) \right] V^{-1}+\left( AB\right) C+%
\left[ A\left( B\left( CV\right) \right) \right] V^{-1} \\
&&-\left[ A\left( \left( BC\right) V\right) \right] V^{-1}+\left[ A,B,C%
\right] -\left[ A,B,V\right] \left( V^{-1}C\right) \\
&=&\left[ A\left( B\left( CV\right) \right) \right] V^{-1}-\left[ \left(
AB\right) \left( CV\right) \right] V^{-1}-\left[ A,B,V\right] \left(
V^{-1}C\right) \\
&=&\left[ A,B,CV\right] V^{-1}-\left[ A,B,V\right] \left( V^{-1}C\right)
\end{eqnarray*}
\end{proof}

\begin{proof}[Proof of Lemma \protect\ref{lemd2nap}]
Suppose $P=\left( p_{0},\rho \right) $ where $p_{0}\in \Omega ^{p}\left( 
\func{Re}\mathbb{O}M\right) \cong \Omega ^{p}\left( M\right) $ and $\rho \in
\Omega ^{p}\left( \func{Im}\mathbb{O}M\right) \cong \Omega ^{p}\left(
TM\right) $. Then, 
\begin{equation*}
d_{\nabla }P=\left( dp_{0},d_{\nabla }\rho \right)
\end{equation*}%
where $d$ is the ordinary exterior derivative on $\Omega ^{p}\left( M\right)
,$ and $d_{\nabla }\rho $ is given by%
\begin{equation*}
\left( d_{\nabla }\rho \right) _{b_{1}...b_{p+1}}^{\ \ \ \ \ \ \ \ \ \
\alpha }=\left( p+1\right) \nabla _{\lbrack b_{1}}\rho
_{b_{2}...b_{p+1}]}^{\ \ \ \ \ \ \ \ \ \ \ \alpha }
\end{equation*}%
Hence, 
\begin{equation*}
d_{\nabla }^{2}P=\left( d^{2}p_{0},d_{\nabla }^{2}\rho \right) =\left(
0,d_{\nabla }^{2}\rho \right)
\end{equation*}%
where 
\begin{eqnarray}
\left( d_{\nabla }^{2}\rho \right) _{b_{0}...b_{p+1}}^{\ \ \ \ \ \ \ \ \ \
\alpha } &=&\left( p+2\right) \left( p+1\right) \nabla _{\lbrack
b_{0}}\nabla _{b_{1}}\rho _{b_{2}...b_{p+1}]}^{\ \ \ \ \ \ \ \ \alpha } 
\notag \\
&=&-\frac{1}{2}\left( p+2\right) \left( p+1\right) p\left( \func{Riem}%
\right) _{\ [b_{0}b_{1}b_{2}}^{c}\rho _{\left\vert c\right\vert
b_{3}..b_{p+1}]}^{\ \ \ \ \ \ \ \ \ \ \ \ \alpha }  \label{d2nab1} \\
&&+\frac{1}{2}\left( p+2\right) \left( p+1\right) \left( \func{Riem}\right)
_{\ \beta \lbrack b_{0}b_{1}}^{\alpha }\rho _{b_{2}...b_{p+1}]}^{\ \ \ \ \ \
\ \ \ \beta }  \notag
\end{eqnarray}%
However, from the Bianchi identity for the Riemann tensor$,$ $\func{Riem}_{\
[b_{0}b_{1}b_{2}]}^{c}=0,$ therefore the first term in (\ref{d2nab1})
vanishes. The remaining term is then a combination of the wedge product
between the $2$-form $\func{Riem}$ and the $p$-form $\rho $, together with
the $\func{Riem}$ acting as an endomorphism on the $\func{Im}\mathbb{O}$
index of $\rho $. Therefore, indeed $d_{\nabla }^{2}P$ only has a pure
imaginary part which is given by $\func{Riem}\wedge \left( \func{Im}P\right) 
$.
\end{proof}

\begin{proof}[Proof of Lemma \protect\ref{lemNabAVB}]
Recall that 
\begin{equation}
A\circ _{V}B=AB+\left[ A,B,V\right] V^{-1}=\left( AV\right) \left(
V^{-1}B\right) .  \label{AVB2}
\end{equation}%
Since we want to rewrite $\nabla _{X}\left( A\circ _{V}B\right) $ in terms
of $T$, we will first express $\circ _{V}$ in terms of the original product
using (\ref{AVB2}), and we will evaluate the derivatives using the relation
from Proposition \ref{propOctoLC}:%
\begin{equation}
\nabla _{X}\left( AB\right) =\left( \nabla _{X}A\right) B+A\left( \nabla
_{X}B\right) -\left[ T_{X},A,B\right] .  \label{DXprod2}
\end{equation}%
Then, we will use (\ref{AVB2}) to rewrite all the products in terms of $%
\circ _{V}$ again.

Consider 
\begin{eqnarray}
\nabla _{X}\left( A\circ _{V}B\right) &=&\nabla _{X}\left( \left( AV\right)
\left( V^{-1}B\right) \right)  \notag \\
&=&\left( \nabla _{X}\left( AV\right) \right) \left( V^{-1}B\right) +\left(
AV\right) \nabla _{X}\left( V^{-1}B\right) -\left[ T_{X},AV,V^{-1}B\right]
\label{nabAVB1}
\end{eqnarray}%
Now let us expand the first term in (\ref{nabAVB1}) 
\begin{equation}
\left( \nabla _{X}\left( AV\right) \right) \left( V^{-1}B\right) =\left(
\left( \nabla _{X}A\right) V\right) \left( V^{-1}B\right) +\left( A\nabla
_{X}V\right) \left( V^{-1}B\right) -\left[ T_{X},A,V\right] \left(
V^{-1}B\right)  \label{nabAVB1a}
\end{equation}%
We can rewrite 
\begin{equation*}
\nabla _{X}V=-V\left( \nabla _{X}V^{-1}\right) V
\end{equation*}%
and the first term in (\ref{nabAVB1a}) can be rewritten in terms of $\circ
_{V}$. So that, 
\begin{eqnarray}
\left( \nabla _{X}\left( AV\right) \right) \left( V^{-1}B\right) &=&\left(
\nabla _{X}A\right) \circ _{V}B-\left[ A\left( V\left( \nabla
_{X}V^{-1}\right) V\right) \right] \left[ V^{-1}B\right] -\left[ T_{X},A,V%
\right] \left( V^{-1}B\right)  \notag \\
&=&\left( \nabla _{X}A\right) \circ _{V}B-\left[ \left( A\left( V\nabla
_{X}V^{-1}\right) \right) V\right] \left[ V^{-1}B\right] -\left[ A,V\nabla
_{X}V^{-1},V\right] \left( V^{-1}B\right)  \notag \\
&&-\left[ T_{X},A,V\right] \left( V^{-1}B\right)  \notag \\
&=&\left( \nabla _{X}A\right) \circ _{V}B-\left( A\left( V\nabla
_{X}V^{-1}\right) \right) \circ _{V}B-\left[ T_{X}+V\nabla _{X}V^{-1},A,V%
\right] \left( V^{-1}B\right)  \label{nabAVB1b}
\end{eqnarray}%
where again we keep rewriting products in terms of $\circ _{V}$. Note that
in the second term of (\ref{nabAVB1b}), 
\begin{equation*}
A\left( V\nabla _{X}V^{-1}\right) =A\circ _{V}\left( V\nabla
_{X}V^{-1}\right) -\left[ A,V\nabla _{X}V^{-1},V\right] V^{-1}
\end{equation*}%
and then, 
\begin{eqnarray*}
\left( A\left( V\nabla _{X}V^{-1}\right) \right) \circ _{V}B &=&\left(
A\circ _{V}\left( V\nabla _{X}V^{-1}\right) \right) \circ _{V}B+\left( \left[
A,V\nabla _{X}V^{-1},V\right] V^{-1}\right) \circ _{V}B \\
&=&\left( A\circ _{V}\left( V\nabla _{X}V^{-1}\right) \right) \circ _{V}B+ 
\left[ \left( \left[ A,V\nabla _{X}V^{-1},V\right] V^{-1}\right) V\right]
\left( V^{-1}B\right) \\
&=&\left( A\circ _{V}\left( V\nabla _{X}V^{-1}\right) \right) \circ _{V}B- 
\left[ V\nabla _{X}V^{-1},A,V\right] \left( V^{-1}B\right)
\end{eqnarray*}%
Thus, (\ref{nabAVB1b}) becomes 
\begin{equation}
\left( \nabla _{X}\left( AV\right) \right) \left( V^{-1}B\right) =\left(
\nabla _{X}A\right) \circ _{V}B-\left( A\circ _{V}\left( V\nabla
_{X}V^{-1}\right) \right) \circ _{V}B-\left[ T_{X},A,V\right] \left(
V^{-1}B\right)  \label{nabAVB1c}
\end{equation}%
Similarly, let us consider the second term of (\ref{nabAVB1}): 
\begin{eqnarray}
\left( AV\right) \nabla _{X}\left( V^{-1}B\right) &=&\left( AV\right) \left[
\left( \nabla _{X}V^{-1}\right) B+V^{-1}\nabla _{X}B-\left[ T_{X},V^{-1},B%
\right] \right]  \notag \\
&=&\left( AV\right) \left[ \left( V^{-1}\left( V\nabla _{X}V^{-1}\right)
\right) B+V^{-1}\nabla _{X}B-\left[ T_{X},V^{-1},B\right] \right]  \notag \\
&=&\left( AV\right) \left[ V^{-1}\left( \left( V\nabla _{X}V^{-1}\right)
B\right) -\left[ V^{-1},V\nabla _{X}V^{-1},B\right] +V^{-1}\nabla _{X}B%
\right]  \notag \\
&&-\left( AV\right) \left[ T_{X},V^{-1},B\right]  \notag \\
&=&A\circ _{V}\left( \left( V\nabla _{X}V^{-1}\right) B\right) +A\circ
_{V}\nabla _{X}B+\left( AV\right) \left[ T_{X}-V\nabla _{X}V^{-1},B,V^{-1}%
\right]  \label{nabAVB2a}
\end{eqnarray}%
and in the first term in (\ref{nabAVB2a}) we can write 
\begin{eqnarray}
\left( V\nabla _{X}V^{-1}\right) B &=&\left( V\nabla _{X}V^{-1}\right) \circ
_{V}B-\left[ V\nabla _{X}V^{-1},B,V\right] V^{-1}  \notag \\
A\circ _{V}\left( \left( V\nabla _{X}V^{-1}\right) B\right) &=&A\circ
_{V}\left( \left( V\nabla _{X}V^{-1}\right) \circ _{V}B\right) -A\circ
_{V}\left( \left[ V\nabla _{X}V^{-1},B,V\right] V^{-1}\right)  \notag \\
&=&A\circ _{V}\left( \left( V\nabla _{X}V^{-1}\right) \circ _{V}B\right)
-\left( AV\right) \left( V^{-1}\left[ V\nabla _{X}V^{-1},B,V\right]
V^{-1}\right)  \notag \\
&=&A\circ _{V}\left( \left( V\nabla _{X}V^{-1}\right) \circ _{V}B\right)
-\left( AV\right) \left[ V\nabla _{X}V^{-1},B,V\right] \left\vert
V^{-1}\right\vert ^{2}  \notag \\
&=&A\circ _{V}\left( \left( V\nabla _{X}V^{-1}\right) \circ _{V}B\right)
+\left( AV\right) \left[ V\nabla _{X}V^{-1},B,V^{-1}\right]
\label{nabAVB2a2}
\end{eqnarray}%
where we have used Lemma \ref{lemAssocIds} in the last two lines. Therefore,
(\ref{nabAVB2a}) becomes 
\begin{equation}
\left( AV\right) \nabla _{X}\left( V^{-1}B\right) =A\circ _{V}\left( \left(
V\nabla _{X}V^{-1}\right) \circ _{V}B\right) +A\circ _{V}\nabla _{X}B+\left(
AV\right) \left[ T_{X},B,V^{-1}\right]  \label{nabAVB2b}
\end{equation}%
Thus, replacing the first term in (\ref{nabAVB1}) with (\ref{nabAVB1c}) and
the second term in (\ref{nabAVB1})\ with (\ref{nabAVB2b}), we get 
\begin{eqnarray}
\nabla _{X}\left( A\circ _{V}B\right) &=&\left( \nabla _{X}A\right) \circ
_{V}B+A\circ _{V}\nabla _{X}B-\left[ V\nabla _{X}V^{-1},A,B\right] _{V}
\label{nabAVB3} \\
&&-\left[ T_{X},A,V\right] \left( V^{-1}B\right) +\left( AV\right) \left[
T_{X},B,V^{-1}\right]  \notag \\
&&-\left[ T_{X},AV,V^{-1}B\right]  \notag
\end{eqnarray}%
Consider the terms containing $T_{X}$ in (\ref{nabAVB3}).\ We reorder the
associators, so that when we expand them, up to parentheses, the order in
each term is $AVT_{X}V^{-1}B$%
\begin{eqnarray}
&&-\left[ T_{X},A,V\right] \left( V^{-1}B\right) -\left( AV\right) \left[
V^{-1},B,T_{X}\right] -\left[ T_{X},AV,V^{-1}B\right]  \notag \\
&=&-\left[ A,V,T_{X}\right] \left( V^{-1}B\right) -\left( AV\right) \left[
T_{X},V^{-1},B\right] +\left[ AV,T_{X},V^{-1}B\right]  \notag \\
&=&-\left[ A\left( VT_{X}\right) -\left( AV\right) T_{X}\right] \left(
V^{-1}B\right) -\left( AV\right) \left[ T_{X}\left( V^{-1}B\right) -\left(
T_{X}V^{-1}\right) B\right]  \notag \\
&&+\left( AV\right) \left( T_{X}\left( V^{-1}B\right) \right) -\left( \left(
AV\right) T_{X}\right) \left( V^{-1}B\right)  \notag \\
&=&\left( AV\right) \left( \left( T_{X}V^{-1}\right) B\right) -\left(
A\left( VT_{X}\right) \right) \left( V^{-1}B\right)  \label{nabAVB3a}
\end{eqnarray}%
Now note that 
\begin{eqnarray}
\left( T_{X}V^{-1}\right) B &=&\left( V^{-1}\left( VT_{X}V^{-1}\right)
\right) B  \notag \\
&=&V^{-1}\left( \left( \func{Ad}_{V}T_{X}\right) B\right) -\left[ V^{-1},%
\func{Ad}_{V}T_{X},B\right]  \label{nabAVB3a2}
\end{eqnarray}%
Therefore, the first term in (\ref{nabAVB3a}) becomes 
\begin{eqnarray}
\left( AV\right) \left( \left( T_{X}V^{-1}\right) B\right) &=&\left(
AV\right) \left( V^{-1}\left( \left( \func{Ad}_{V}T_{X}\right) B\right)
\right) -\left( AV\right) \left[ V^{-1},\func{Ad}_{V}T_{X},B\right]  \notag
\\
&=&A\circ _{V}\left( \left( \func{Ad}_{V}T_{X}\right) B\right) -\left(
AV\right) \left[ V^{-1},\func{Ad}_{V}T_{X},B\right]  \label{nabAVB3a3}
\end{eqnarray}%
Similarly, 
\begin{eqnarray}
A\left( VT_{X}\right) &=&A\left( \left( \func{Ad}_{V}T_{X}\right) V\right) 
\notag \\
&=&\left( A\left( \func{Ad}_{V}T_{X}\right) \right) V+\left[ A,\func{Ad}%
_{V}T_{X},V\right]  \label{nabAVB3b2}
\end{eqnarray}%
Hence, the second term in (\ref{nabAVB3a}) becomes 
\begin{equation}
\left( A\left( VT_{X}\right) \right) \left( V^{-1}B\right) =\left( A\left( 
\func{Ad}_{V}T_{X}\right) \right) \circ _{V}B+\left[ A,\func{Ad}_{V}T_{X},V%
\right] \left( V^{-1}B\right)  \label{nabAVB3b3}
\end{equation}%
Using (\ref{OctoVAB}), we rewrite 
\begin{eqnarray}
\left( \func{Ad}_{V}T_{X}\right) B &=&\left( \func{Ad}_{V}T_{X}\right) \circ
_{V}B-\left[ \func{Ad}_{V}T_{X},B,V\right] V^{-1}  \label{nabAVB3c1} \\
A\left( \func{Ad}_{V}T_{X}\right) &=&A\circ _{V}\func{Ad}_{V}T_{X}-\left[ A,%
\func{Ad}_{V}T_{X},V\right] V^{-1}  \label{nabAVB3c2}
\end{eqnarray}%
Thus, using (\ref{nabAVB3c1}) to rewrite $\left( \func{Ad}_{V}T_{X}\right)
B, $ the first term in (\ref{nabAVB3a3}) is now 
\begin{equation}
A\circ _{V}\left( \left( \func{Ad}_{V}T_{X}\right) B\right) =A\circ
_{V}\left( \left( \func{Ad}_{V}T_{X}\right) \circ _{V}B\right) -A\circ
_{V}\left( \left[ \func{Ad}_{V}T_{X},B,V\right] V^{-1}\right)
\label{nabAVB3d1}
\end{equation}%
But, 
\begin{eqnarray*}
A\circ _{V}\left( \left[ \func{Ad}_{V}T_{X},B,V\right] V^{-1}\right)
&=&\left( AV\right) \left( V^{-1}\left[ \func{Ad}_{V}T_{X},B,V\right]
V^{-1}\right) \\
&=&\left( AV\right) \left( \left[ \func{Ad}_{V}T_{X},B,V\right] \left\vert
V^{-1}\right\vert ^{2}\right) \\
&=&\left( AV\right) \left[ \func{Ad}_{V}T_{X},B,\bar{V}^{-1}\right] \\
&=&-\left( AV\right) \left[ \func{Ad}_{V}T_{X},B,V^{-1}\right]
\end{eqnarray*}%
Therefore, (\ref{nabAVB3a3}) becomes 
\begin{equation}
\left( AV\right) \left( \left( T_{X}V^{-1}\right) B\right) =A\circ
_{V}\left( \left( \func{Ad}_{V}T_{X}\right) \circ _{V}B\right)
\label{nabAVB3a4}
\end{equation}%
Similarly, using (\ref{nabAVB3c2}) to rewrite $A\left( \func{Ad}%
_{V}T_{X}\right) $, the first term in (\ref{nabAVB3b3}) is now 
\begin{eqnarray}
\left( A\left( \func{Ad}_{V}T_{X}\right) \right) \circ _{V}B &=&\left(
A\circ _{V}\func{Ad}_{V}T_{X}\right) \circ _{V}B-\left( \left[ A,\func{Ad}%
_{V}T_{X},V\right] V^{-1}\right) \circ _{V}B  \notag \\
&=&\left( A\circ _{V}\func{Ad}_{V}T_{X}\right) \circ _{V}B-\left[ A,\func{Ad}%
_{V}T_{X},V\right] \left( V^{-1}B\right)  \label{nabAVB3b4}
\end{eqnarray}%
Thus, (\ref{nabAVB3b3}) becomes: 
\begin{equation}
\left( A\left( VT_{X}\right) \right) \left( V^{-1}B\right) =\left( A\circ
_{V}\func{Ad}_{V}T_{X}\right) \circ _{V}B  \label{nabAVB3a5}
\end{equation}%
Using (\ref{nabAVB3a4}) and (\ref{nabAVB3a5}) in (\ref{nabAVB3a}), we get
that 
\begin{eqnarray}
\left( AV\right) \left( \left( T_{X}V^{-1}\right) B\right) -\left( A\left(
VT_{X}\right) \right) \left( V^{-1}B\right) &=&\left[ A,\func{Ad}_{V}T_{X},B%
\right] _{V}  \notag \\
&=&-\left[ \func{Ad}_{V}T_{X},A,B\right] _{V}  \label{nabAVB4}
\end{eqnarray}%
So overall, the terms in (\ref{nabAVB3}) that contain $T_{X}$ simplify to (%
\ref{nabAVB4}). Therefore, overall, (\ref{nabAVB3}) simplifies to 
\begin{equation}
\nabla _{X}\left( A\circ _{V}B\right) =\left( \nabla _{X}A\right) \circ
_{V}B+A\circ _{V}\nabla _{X}B-\left[ \func{Ad}_{V}T_{X}+V\nabla
_{X}V^{-1},A,B\right] _{V}  \label{nabAVB5}
\end{equation}%
which is precisely (\ref{NabAVB}) which we were trying to prove.
\end{proof}

\bibliographystyle{jhep-a}
\bibliography{refs2}

\end{document}